\theoremstyle{plain}
\newtheorem*{thmu}{Theorem}
\newtheorem*{IFT}{Implicit Function Theorem}
\newtheorem{prop}{Proposition}[section]
\newtheorem{thm}{Theorem}
\newtheorem{thmp}{Theorem}
\newtheorem{cor}{Corollary}
\newtheorem{lemma}[prop]{Lemma}
\newtheorem*{conju}{Conjecture}
\newtheorem*{question}{Question}
\theoremstyle{definition}
\newtheorem{example}[prop]{Example}
\newtheorem{defn}[prop]{Definition}
\newtheorem*{notation}{Notation}
\theoremstyle{remark}
\newtheorem{remark}[prop]{Remark}
\newcommand{\AT}{\operatorname{AT}}
\newcommand{\End}{\operatorname{End}}
\newcommand{\ECH}{\operatorname{ECH}}
\newcommand{\Hom}{\operatorname{Hom}}
\newcommand{\ind}{\operatorname{ind}}
\newcommand{\interior}{\operatorname{int}}
\newcommand{\Lie}{\mathcal{L}}
\newcommand{\muCZ}{\mu_{\operatorname{CZ}}}
\newcommand{\PT}{\operatorname{PT}}
\newcommand{\SFT}{{\operatorname{SFT}}}
\newcommand{\wind}{\operatorname{wind}}
\newcommand{\Spp}{\operatorname{Sp}}
\newcommand{\CC}{{\mathbb C}}
\newcommand{\DD}{{\mathbb D}}
\newcommand{\NN}{{\mathbb N}}
\newcommand{\QQ}{{\mathbb Q}}
\newcommand{\RR}{{\mathbb R}}
\newcommand{\ZZ}{{\mathbb Z}}
\newcommand{\aA}{{\mathcal A}}
\newcommand{\fF}{{\mathcal F}}
\newcommand{\hH}{{\mathcal H}}
\newcommand{\iI}{{\mathcal I}}
\newcommand{\jJ}{{\mathcal J}}
\newcommand{\mM}{{\mathcal M}}
\newcommand{\nN}{{\mathcal N}}
\newcommand{\sS}{{\mathcal S}}
\newcommand{\tT}{{\mathcal T}}
\newcommand{\uU}{{\mathcal U}}
\newcommand{\wW}{{\mathcal W}}
\newcommand{\1}{\mathds{1}}
\newcommand{\p}{\partial}
\newcommand{\inter}{\bullet}
\newcommand{\defin}[1]{\textbf{#1}}
\numberwithin{equation}{section}
\title[A Hierarchy of Filling Obstructions for Contact Manifolds]{A 
Hierarchy of Local Symplectic Filling Obstructions for Contact $3$-Manifolds}
\author{Chris Wendl}
\address{Department of Mathematics \\ 
University College London \\
Gower Street \\
London WC1E 6BT \\ 
United Kingdom}
\email{c.wendl@ucl.ac.uk}
\urladdr{http://www.homepages.ucl.ac.uk/~ucahcwe/}
\thanks{Research supported by an Alexander von Humboldt
Foundation Fellowship.}
\subjclass[2010]{Primary 57R17; Secondary 53D10, 32Q65, 53D42}
\begin{document}

\begin{abstract}
We generalize the familiar notions of overtwistedness and Giroux torsion
in $3$-dimensional contact manifolds, defining an infinite hierarchy of
local filling obstructions called \emph{planar torsion}, whose 
integer-valued \emph{order} $k \ge 0$ 
can be interpreted as measuring a gradation in ``degrees of tightness'' of  
contact manifolds.  We show in particular that any contact manifold
with planar torsion admits no contact type embeddings into any closed 
symplectic $4$-manifold, and has vanishing contact invariant in Embedded 
Contact Homology, and we give examples of contact manifolds that have
planar $k$-torsion for any $k \ge 2$ but no Giroux torsion.  We also show
that the complement of the binding of a supporting open book 
never has planar torsion.  The unifying idea in the background
is a decomposition of contact manifolds in terms of contact fiber sums of 
open books along their binding.  As the technical basis of these 
results, we establish existence, uniqueness and compactness theorems 
for certain classes of $J$-holomorphic curves in blown up summed
open books; these also imply algebraic obstructions to
planarity and embeddings of partially planar domains.  
\end{abstract}

\maketitle

\tableofcontents

\newpage

\section{Introduction}
\label{sec:intro}

Contact structures for odd-dimensional manifolds 
arise naturally on boundaries of symplectic manifolds
via the notion of \emph{convexity}.  A symplectic manifold $(W,\omega)$
is said to have \defin{convex boundary} if, on a neighborhood of~$\p W$,
there exists a vector field~$Y$ that points transversely outward at~$\p W$
and whose flow is a symplectic dilation, i.e.~$\Lie_Y\omega = \omega$.  
Writing $M = \p W$, the co-oriented hyperplane field 
$\xi = \ker\left( \iota_Y\omega|_{TM}\right) \subset TM$ 
then satisfies a certain
``maximal nonintegrability'' condition which makes it a \emph{contact structure},
and up to isotopy, it depends only on the symplectic structure
of $(W,\omega)$ near~$M$, not on the choice of vector field~$Y$.

Given the above relationship, it is interesting to ask which isomorphism
classes of contact manifolds $(M,\xi)$ do \emph{not} arise as boundaries 
of compact symplectic manifolds, i.e.~which ones are not \emph{symplectically
fillable}.  A variety of obstructions to symplectic filling are known,
and the following two examples give some hint as to the diversity of
such results:
\begin{itemize}
\item Lisca \cites{Lisca:curvature,Lisca:fillings3manifolds} used the
Seiberg-Witten monopole invariants of Kronheimer and Mrowka
\cite{KronheimerMrowka:contact} together with Donaldson's theorem
on the intersection forms of smooth $4$-manifolds 
\cite{Donaldson:intersectionForms} to find examples of oriented $3$-manifolds
that admit no symplectically fillable contact structures.
\item The author \cite{Wendl:fillable} used punctured holomorphic curve 
techniques to show that a contact $3$-manifold has no symplectic
filling if it is supported by a planar open book whose
monodromy is not a product of right-handed Dehn twists.
(See \cites{PlamenevskayaVanHorn,Plamenevskaya:surgeries} for some 
applications of this result.)
\end{itemize}
One common feature of the above examples is that they depend fundamentally
on the \emph{global} properties of the manifolds involved.  In contrast,
one can also consider filling obstructions which are \emph{local}, in
the sense that they answer the following question:

\vspace{6pt}
\begin{center}
\begin{minipage}{5in}
\textit{What kinds of contact subdomains can never exist in the
convex boundary of a compact symplectic manifold?}
\end{minipage}
\end{center}
\vspace{6pt}

The first known example of a symplectic filling obstruction was essentially
local in this sense: Gromov \cite{Gromov} and 
Eliashberg \cite{Eliashberg:diskFilling}
established that contact type boundaries of symplectic $4$-manifolds can
never contain an \emph{overtwisted disk}, and significantly, 
the related distinction between so-called ``overtwisted'' and
``tight'' contact structures, discovered by Eliashberg \cite{Eliashberg:overtwisted}, 
has played a pivotal role in classification
questions for contact structures in dimension three.  
This non-fillability result can be rephrased in terms of a certain 
$3$-dimensional contact domain with boundary that we call a
\emph{Lutz tube}: this is a solid torus $S^1 \times \DD$ with a radially
symmetric contact structure that makes a half-twist along radii from the
center to the boundary (see Figure~\ref{fig:LutzTube} and
Definition~\ref{defn:LutzTube}).  One can show (e.g.~using \cite{Eliashberg:overtwisted})
that a closed contact $3$-manifold contains an overtwisted disk if and only if
it contains a Lutz tube, thus the latter may be regarded as the
prototypical example of a local filling obstruction.

A more general local filling obstruction is furnished by the so-called
\emph{Giroux torsion domain}, a thickened torus $[0,1] \times T^2$ with
a $T^2$-invariant contact structure that makes one full twist from one
end of the interval to the other
(see Figure~\ref{fig:GirouxTorsionDomain} and
Definition~\ref{defn:GirouxTorsion}).  Contact manifolds containing
such an object are said to have \emph{Giroux torsion}, and the fact that
they are not fillable in general is a comparatively recent result, due to
Gay \cite{Gay:GirouxTorsion}.  Giroux
torsion domains have also played an important role in the classification
of contact structures, most notably through the work of Colin, Giroux and Honda
\cites{ColinGirouxHonda:coarse,ColinGirouxHonda:finitude}.

\begin{figure}
\begin{center}
\includegraphics{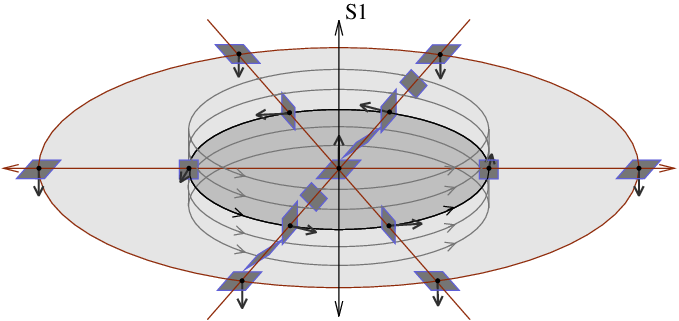}
\caption{\label{fig:LutzTube}
Contact planes twist around the radii emerging from the central axis
of a Lutz tube.  The picture also shows an embedded $J$-holomorphic plane 
asymptotic to a Reeb orbit of small
period in a Morse-Bott family (arrows indicate the Reeb vector field);
every Lutz tube contains such planes, which are the reason why the
contact homology of an overtwisted contact manifold vanishes.}
\end{center}
\end{figure}
\begin{figure}
\begin{center}
\includegraphics{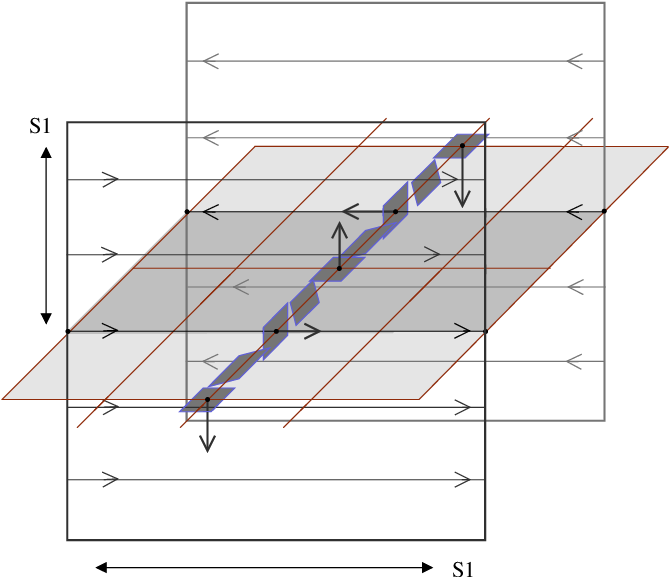}
\caption{\label{fig:GirouxTorsionDomain}
In a Giroux torsion domain $[0,1] \times T^2$, contact planes twist 
around segments in the $[0,1]$-direction.  Such domains are
foliated by $J$-holomorphic cylinders asymptotic to Morse-Bott Reeb orbits.}
\end{center}
\end{figure}

These two examples of local filling obstructions 
create the intuitive impression that contact
manifolds tend to become non-fillable whenever they contain regions where
the contact planes exhibit some threshold amount of \emph{twisting}.
In this paper we shall introduce a geometric formalism that makes 
this notion precise, and in so doing, greatly expands the known repertoire of 
local filling obstructions.  We will demonstrate in particular 
that the examples above occupy the
first two levels in an infinite \emph{hierarchy}: for each integer 
$k \ge 0$, we shall define a special class of
compact contact $3$-manifolds, possibly with boundary, which we call
\emph{planar $k$-torsion domains}, such that the Lutz tube and Giroux
torsion domain are special cases with $k=0$ and~$1$ respectively.
Our use of the word ``hierarchy'' is not incidental, as it turns out that
a planar torsion domain yields quantifiably stricter or less strict filling
obstructions depending on its \emph{order}, i.e.~the integer~$k$.
In particular, the overtwisted contact manifolds are precisely those which
have planar $0$-torsion, and these can be thought of as the
``most non-fillable'' among all contact $3$-manifolds, while the
fillable contact manifolds are the ``tightest,'' and those which
have only higher orders of planar torsion are non-fillable but are
in some sense ``tighter'' than their lower order counterparts.

The definition
of planar torsion, which will be given in a precise form in
\S\ref{sec:definitions}, 
combines the fundamental contact topological notion of a
\emph{supporting open book decomposition}, as introduced by
Giroux \cite{Giroux:openbook}, with a simple topological operation
known as the \emph{contact fiber sum} along codimension~2
contact submanifolds,
originally due to Gromov \cite{Gromov:PDRs} and Geiges 
\cite{Geiges:constructions}.  Roughly speaking, a planar $k$-torsion domain
is a compact contact $3$-manifold $(M,\xi)$, possibly with boundary, 
that contains a non-empty set of disjoint pre-Lagrangian tori  
dividing it into two pieces:
\begin{itemize}
\item A \emph{planar piece} $M^P$, which is disjoint from $\p M$ and
looks like a connected open book with some binding components blown up
and/or attached to each other by contact fiber sums.
The pages must have genus zero and $k+1$ boundary components.
\item The \emph{padding} $M \setminus M^P$, which contains $\p M$ and
consists of one or more arbitrary open books, again with some binding
components blown up or fiber summed together.
\end{itemize}
Planar torsion domains are thus examples of what are called
\emph{partially planar domains}, a notion that was first hinted at in
\cite{AlbersBramhamWendl}.  
The interior of such a domain~$M$ always contains a special 
set $\iI \subset M$ of pre-Lagrangian tori which arise by blowing up 
binding components of open books: we refer to these tori all together as
the \emph{interface} of~$(M,\xi)$.  Postponing the exact definitions
until \S\ref{sec:definitions}, let us for now merely point out
that in a Lutz tube $M = S^1 \times \DD$ (Figure~\ref{fig:LutzTube}), 
the planar piece is some smaller solid torus $M^P = S^1 \times \DD_r$
for $0 < r < 1$, and the pages of the blown up open book in~$M^P$ are
the disks $\{*\} \times \DD_r$.  Likewise,
the planar piece in a Giroux torsion domain $M = [0,1] \times T^2$
(Figure~\ref{fig:GirouxTorsionDomain}) is a smaller thickened torus
$M^P = [r_1,r_2] \times T^2$ for $0 < r_1 < r_2 < 1$, foliated by
cylindrical pages of the form $[r_1,r_2] \times S^1 \times \{*\}$,
and for both examples $\iI = \p M^P$.
We will see that in the more general definition, the topology of the
planar piece and the whole domain may differ from each other considerably,
and interface tori may also be found in the interior of the planar piece
or the padding.  Some simple examples of the form $S^1 \times \Sigma$ are 
shown in Figure~\ref{fig:torsionDomains}.  We should also mention that the
idea of decomposing contact manifolds in this way via fiber sums of
open books has further applications beyond filling obstructions, e.g.~it is
used in \cite{Wendl:cobordisms} to define a ``blown up'' version of
Eliashberg's capping construction \cite{Eliashberg:cap}, producing a
wide range of existence results for non-exact symplectic cobordisms.

Let us now recall some basic definitions in preparation for stating the
main results.  A \defin{contact structure} on an oriented $3$-dimensional
manifold is a hyperplane distribution~$\xi$ that can be written locally as the
kernel of a smooth $1$-form~$\alpha$ with $\alpha \wedge d\alpha \ne 0$.
We call~$\xi$ \defin{positive} if $\alpha \wedge d\alpha > 0$.  Every
contact structure in this paper will be assumed to be positive and to
carry a \emph{co-orientation}, which can be defined via a global choice
of $1$-form~$\alpha$; any~$\alpha$ with $\ker\alpha = \xi$ that is
compatible with the chosen co-orientation is called a \defin{contact form}
for $(M,\xi)$.  Note that a co-oriented contact structure also inherits
a natural orientation.  Given two contact $3$-manifolds $(M_0,\xi_0)$ and
$(M,\xi)$, a \defin{contact embedding} of $(M_0,\xi_0)$ into $(M,\xi)$ is
an orientation preserving embedding $\iota : M_0 \hookrightarrow M$ such
that $\iota_* : TM_0 \hookrightarrow TM$ defines an orientation 
preserving map of~$\xi_0$ to~$\xi$.

Suppose $(W,\omega)$ is a compact $4$-dimensional 
symplectic manifold (oriented by $\omega \wedge \omega$) and $(M,\xi)$ is a 
closed contact $3$-manifold.  
A \defin{weak contact type embedding} of $(M,\xi)$
into $(W,\omega)$ is an embedding $\iota : M \hookrightarrow W$ for which 
$\iota^*\omega|_\xi > 0$.  It is called a (strong) \defin{contact type embedding}
if a neighborhood of $\iota(M) \subset W$ admits a $1$-form~$\lambda$ such that
$d\lambda = \omega$ and $\iota^*\lambda$ defines a contact form for $(M,\xi)$;
note that in this case, the vector field $\omega$-dual to~$\lambda$ defines 
a symplectic dilation positively transverse to~$\iota(M)$.  The image of
a (weak or strong) contact type embedding is called a (weak or strong)
\defin{contact type hypersurface} in $(W,\omega)$.
If the image is~$\p W$ and
$\iota$ maps the orientation of~$M$ to the natural boundary orientation,
then we say $(W,\omega)$ is a (weak or strong) 
\defin{symplectic filling} of~$(M,\xi)$.

\subsection{Obstructions to symplectic fillings}

Given the notion of a planar $k$-torsion domain which was sketched above and
will be explained
fully in \S\ref{sec:definitions}, it is natural to define the following.

\begin{defn}
\label{defn:planarTorsion}
A contact $3$-manifold is said to have \defin{planar torsion of order~$k$}
(or \defin{planar $k$-torsion}) if it admits a contact embedding of a
planar $k$-torsion domain (see Definition~\ref{defn:planarTorsionDomain}).
\end{defn}

\begin{thm}
\label{thm:non-fillable}
If $(M,\xi)$ is a closed contact $3$-manifold with planar torsion of any
order, then it does not admit a contact type embedding into
any closed symplectic $4$-manifold.  In particular, it is not strongly
fillable.
\end{thm}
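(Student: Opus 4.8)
The plan is to argue by contradiction using punctured holomorphic curve techniques, in the spirit of the author's earlier work \cite{Wendl:fillable} on planar open books. Suppose $(M,\xi)$ contained a planar $k$-torsion domain and admitted a contact type embedding into a closed symplectic $4$-manifold $(W,\omega)$. First I would replace $(W,\omega)$ by a noncompact symplectic manifold: cut $W$ along $M$ and attach a cylindrical end $([0,\infty) \times M, d(e^t\alpha))$ to the piece containing the outward-pointing dilation, obtaining a symplectic manifold with cylindrical positive end modeled on the symplectization of $(M,\xi)$. One then chooses an almost complex structure $J$ that is $\omega$-compatible on the compact part and cylindrical (translation-invariant, tamed by $d\alpha$) on the end, adapted moreover to a Morse--Bott contact form on the planar torsion domain constructed so that the pages of the blown-up summed open book lift to $J$-holomorphic curves. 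This is the setup in which the existence, uniqueness and compactness theorems for holomorphic curves in blown up summed open books (promised in the abstract) apply.

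The core of the argument is the existence of a distinguished noncompact family of embedded finite-energy $J$-holomorphic curves in the symplectization end: the pages of the planar piece $M^P$, having genus zero with $k+1$ boundary components, lift to punctured holomorphic spheres with $k+1$ positive ends asymptotic to the Reeb orbits living on the interface tori $\iI$ and on $\p M^P$. These curves come in a smooth $2$-parameter family (one parameter from the $S^1$-direction of the open book, one from $\RR$-translation), they foliate a region of the symplectization, and one of the asymptotic orbits — the one coming from the blown-up binding — is of small period and appears with multiplicity. The next step is to let this family run into $W$: because $W$ is compact, the curves cannot escape to infinity in the compact part, so by SFT compactness they must degenerate, and the uniqueness statement forces the limiting building to again consist of pages of this type, producing a properly embedded $2$-dimensional moduli space whose curves sweep out all of $W$. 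Running along the family and tracking the asymptotic orbits then yields the contradiction: either the count of ends forced by the interface orbit is incompatible with what a closed filling can support, or, as in \cite{Wendl:fillable}, the holomorphic foliation forces the monodromy of the ambient open book to be a product of positive Dehn twists in a way that the blown-up/summed structure of the torsion domain makes impossible. A cleaner way to package this last step is to observe that the same holomorphic curves are the ones that compute the ECH contact invariant, and their existence in the noncompact setting forces that invariant to vanish (this is stated separately in the abstract), while a strong filling would give a nonzero ECH contact class — but for the purposes of this theorem the direct contradiction via the moduli space of pages suffices.

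The main obstacle is the bubbling-and-breaking analysis: one must show that as the family of pages is pushed into the compact region $W$, no sphere bubbles off in $W$ and no unexpected breaking occurs along orbits outside the torsion domain, so that the limiting configuration is again a page (or a controlled building of pages) rather than something one cannot analyze. This is exactly where the uniqueness and compactness theorems for holomorphic curves in blown up summed open books do the heavy lifting: the asymptotic orbits on the interface have small action and controlled Conley--Zehnder index, which is what rules out nodal degenerations and pins down the ends of the limiting curves. Subtleties with transversality for multiply covered orbits on $\iI$ are handled by working in the Morse--Bott setting and perturbing only away from the torsion domain. Once the moduli space of pages in $W$ is established as a $2$-manifold carrying the right asymptotic data, the contradiction — and hence non-fillability — is essentially formal.
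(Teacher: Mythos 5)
Your overall framework is right: attach a cylindrical end, lift the planar pages of the torsion domain to a $2$-parameter family of embedded finite-energy $J$-holomorphic curves, establish via the implicit function theorem and an SFT-type compactness theorem that these curves foliate the completed filling away from a codimension-two subset, and derive a contradiction. That is precisely the architecture of the paper's argument (Propositions~\ref{prop:IFT} and~\ref{thm:compactnessSimple}/Theorem~\ref{thm:compactness} plus Theorem~\ref{thm:openbook}).

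However, the endgame you propose is not the one that actually closes the argument, and the alternatives you sketch would not work in this generality. The paper does not argue by counting ends or by reducing to the positive-Dehn-twist criterion of \cite{Wendl:fillable} --- that criterion concerns an honest planar open book for the whole of $M$, whereas here one only has a blown-up \emph{summed} open book on a subdomain $M_0\subset M$, and there is no global planar open book whose monodromy could be analyzed. The ECH route you mention as a ``cleaner packaging'' also does not serve: turning vanishing of $c(\xi)$ into non-fillability goes through Taubes's isomorphism and Kronheimer--Mrowka, i.e.\ through Seiberg--Witten theory, which the paper explicitly wants to avoid in the direct proof. The actual contradiction is structural and hinges on a clause of Definition~\ref{defn:planarTorsionDomain} that your proposal never invokes. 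Once the moduli space $\mM_0(J)$ is known to foliate $W^\infty$ off a codimension-two set, one picks a point $p \in M\setminus M_0^P$ (i.e.\ outside the planar piece but still in the torsion domain --- the padding is nonempty by definition) and a sequence of curves through $(k,p)$ with $k\to\infty$. By Theorem~\ref{thm:compactness} a subsequence converges to a curve in $\RR\times M$, and the uniqueness part of Theorem~\ref{thm:openbook} forces that limit to be a lift of a page of the blown-up summed open book on $M_0$. This shows $M_0 = M$ and, more to the point, that $M\setminus M_0^P$ is itself swept by pages with the same asymptotics as those in $M_0^P$; in other words $M_0$ carries a \emph{symmetric} summed open book. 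But symmetric summed open books are explicitly excluded in Definition~\ref{defn:planarTorsionDomain}, and they must be, since examples like the tight $T^3$ are symmetric and fillable. That exclusion clause is the crux --- without identifying it as the source of the contradiction, the proof does not close.

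Two smaller points. First, Theorem~\ref{thm:non-fillable} asserts no contact type embedding into \emph{any} closed symplectic $4$-manifold, not merely non-fillability; the separating case reduces to a strong filling, but the non-separating case requires the argument of Theorem~\ref{thm:nonseparating} (cutting along $M$ and building an infinite cyclic cover, or equivalently the geometrically bounded noncompact cobordism), which your writeup does not address. Second, the claim that the family ``must foliate all of $W$'' needs the caveat present in Theorem~\ref{thm:compactness}: the compactification does admit finitely many nodal/multiply-covered degenerations, so the foliation is only away from a codimension-two subvariety, and one must then argue that the set swept out is both open and closed relative to that exceptional set.
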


\begin{figure}
\begin{center}
\includegraphics{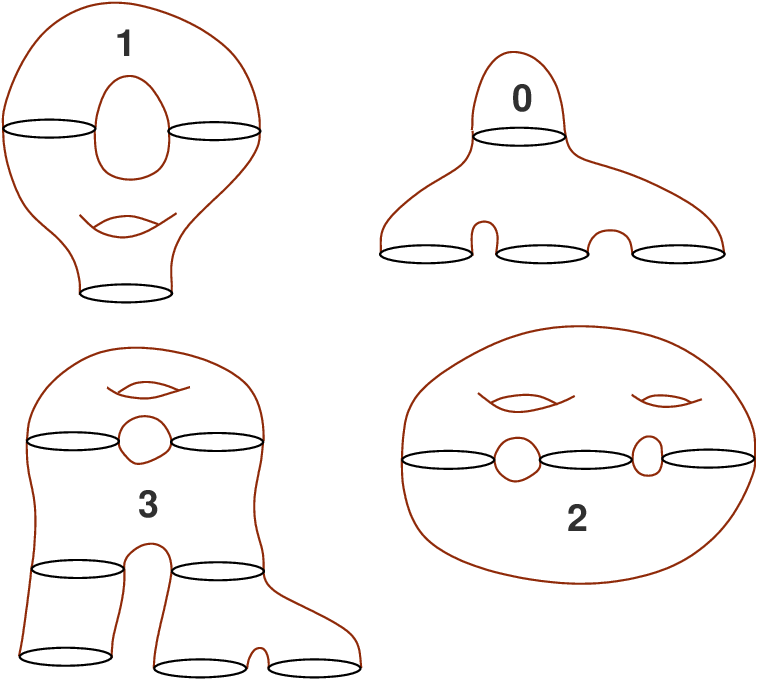}
\caption{\label{fig:torsionDomains}
Various planar $k$-torsion domains, with the order~$k \ge 0$ indicated 
within the planar piece.  Each picture shows a surface $\Sigma$ that
defines a manifold $S^1 \times \Sigma$ with an
$S^1$-invariant contact structure~$\xi$.  The multicurves that divide
$\Sigma$ are the sets of all points $z \in \Sigma$ at which
$S^1 \times \{z\}$ is Legendrian.  
See also Example~\ref{ex:S1invariant} and Figure~\ref{fig:moreTorsion}.}
\end{center}
\end{figure}

Though our proof of non-fillability will not depend on it,
the implication that $(M,\xi)$ is not strongly fillable follows from
the above statement due to
a result of Etnyre and Honda \cite{EtnyreHonda:cobordisms}, that
every contact $3$-manifold is \emph{concave} fillable: this means that 
strong fillings can always be capped off to produce closed symplectic
$4$-manifolds containing contact type hypersurfaces.  

We will also prove an algebraic counterpart to the above result in terms of
Embedded Contact Homology, or ``ECH'' for short
(see e.g.~\cite{Hutchings:ICM}).  
The definition of ECH will be reviewed in \S\ref{sec:ECH};
for now it suffices to recall that given a closed contact $3$-manifold
$(M,\xi)$ with nondegenerate contact form~$\lambda$ and generic compatible
complex structure $J : \xi \to \xi$, one can define a chain complex
generated by so-called \emph{orbit sets},
$$
\boldsymbol{\gamma} = ((\gamma_1,m_1),\ldots,(\gamma_n,m_n)),
$$
where $\gamma_1,\ldots,\gamma_n$ are distinct simply covered periodic
Reeb orbits and $m_1,\ldots,m_n$ are positive
integers, called \emph{multiplicities}.  A differential operator is
then defined by counting a certain class of embedded rigid 
$J$-holomorphic curves in the
symplectization of $(M,\xi)$, which can be viewed as cobordisms 
between orbit sets.  The homology of the resulting
chain complex is the Embedded Contact Homology 
$\ECH_*(M,\lambda,J)$.  Though the complex obviously 
depends on~$\lambda$ and~$J$, Taubes has shown
\cites{Taubes:ECH=SWF1,Taubes:ECH=SWF5} that $\ECH_*(M,\lambda,J)$ is
isomorphic to a version of Seiberg-Witten Floer
homology, and thus actually only depends (up to natural isomorphisms) 
on the contact manifold $(M,\xi)$, so we can write
$$
\ECH_*(M,\xi) := \ECH(M,\lambda,J).
$$
The case $n=0$ is also allowed
among the generators, i.e.~the ``empty'' orbit set
$\boldsymbol{\emptyset} := ()$,
which is always a cycle in the homology, thus defining
a distinguished class 
$$
c(\xi) := [\boldsymbol{\emptyset}] \in \ECH(M,\xi),
$$
which we call the \defin{ECH contact invariant}.  It corresponds under
Taubes's isomorphism to a similar contact invariant in Seiberg-Witten
theory, and conjecturally\footnote{Recent progress on this
conjecture has been made in parallel projects by 
Colin-Ghiggini-Honda \cite{ColinGhigginiHonda:openbook}
and Kutluhan-Lee-Taubes \cites{KutluhanLeeTaubes1,KutluhanLeeTaubes2}.}
also to the Ozsv\'ath-Szab\'o contact invariant
in Heegaard Floer homology.

\begin{thm}
\label{thm:ECH}
If $(M,\xi)$ is a closed contact $3$-manifold with planar torsion of any
order, then its ECH contact invariant $c(\xi)$ vanishes.
\end{thm}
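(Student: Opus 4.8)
The plan is to derive the vanishing of $c(\xi)$ from the same holomorphic-curve technology that underlies Theorem~\ref{thm:non-fillable}, but now applied directly in the symplectization $\RR \times M$ rather than inside a hypothetical symplectic cap. First I would arrange an adapted contact form: since $(M,\xi)$ contains a planar $k$-torsion domain $(M^P,\xi)$ bounded by pre-Lagrangian interface tori, I would choose a contact form $\lambda$ whose Reeb dynamics near the planar piece is Morse--Bott, with the pages of the blown-up summed open book swept out by Reeb orbits in the usual way; the key geometric input is that the pages of $M^P$ have genus zero with $k+1$ boundary components, so they furnish a family of (multiply-)punctured holomorphic \emph{planar} curves. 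After a small nondegenerate perturbation of $\lambda$ (and a generic $J$), the existence and compactness results for $J$-holomorphic curves in blown up summed open books promised in the abstract produce a distinguished finite collection of embedded, rigid, index-$1$ (or index-$0$, once we account for the ECH index grading) holomorphic curves in $\RR \times M$ whose positive and negative asymptotic orbit sets are controlled: one of them should have the empty orbit set $\boldsymbol{\emptyset}$ as a positive end (i.e.\ it is a curve with only negative punctures, asymptotic to the ``binding-type'' orbits of the blown-up open book, and bounded above).

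The second step is to package this into an ECH differential computation. Using the uniqueness/compactness package, I would show that the ECH differential $\partial$ applied to the generator represented by the appropriate orbit set $\boldsymbol{\gamma}$ built from the interface/binding orbits of $M^P$ counts exactly the curve(s) above with the correct signs, yielding $\partial \boldsymbol{\gamma} = \boldsymbol{\emptyset}$ up to possibly other terms that cancel. Equivalently — and this is the cleaner route — I would exhibit $\boldsymbol{\emptyset}$ as an exact cycle: produce $\boldsymbol{\gamma}$ with $\langle \partial \boldsymbol{\gamma}, \boldsymbol{\emptyset}\rangle \ne 0$ and no other contributions to $\boldsymbol{\emptyset}$ in the relevant grading, so that $[\boldsymbol{\emptyset}] = 0$ in $\ECH_*(M,\lambda,J)$. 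Because ECH is invariant under the choice of $(\lambda, J)$ by Taubes's isomorphism with Seiberg--Witten Floer homology, this suffices to conclude $c(\xi) = 0$ in $\ECH_*(M,\xi)$; one must only check that the perturbation taking the Morse--Bott $\lambda$ to a nondegenerate one can be chosen compatibly with the ECH index computations, which is a by-now standard (if delicate) Morse--Bott-to-nondegenerate argument in the style of Bourgeois and Hutchings--Taubes.

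The main obstacle is controlling the ECH differential \emph{exactly}: unlike in contact homology (used in the overtwisted/Giroux-torsion cases, cf.\ Figures~\ref{fig:LutzTube}--\ref{fig:GirouxTorsionDomain}), ECH counts only \emph{embedded} curves of ECH index~$1$ with signs, and the count is not a homotopy invariant of the almost complex structure in the naive sense — so I cannot simply perturb to a convenient $J$ and count. I must verify that the planar curves coming from the torsion domain genuinely have ECH index~$1$ (this requires computing relative Conley--Zehnder indices, writhes, and the relative intersection pairing from the explicit model of the blown up summed open book, using the partition conditions of Hutchings), that they are the \emph{only} index-$1$ curves with $\boldsymbol{\emptyset}$ as an end in that homology class (a compactness argument ruling out buildings and multiple covers via the low genus and the behavior at the interface tori), and that the signed count is nonzero — for which the obvious strategy is to stack all relevant curves so that the contributions to $\boldsymbol{\emptyset}$ come from a single orbit set $\boldsymbol{\gamma}$ and are forced by the geometry to appear with a definite sign, e.g.\ because the moduli space is a single point or because an $S^1$- or reflection-symmetry pairs up any would-be cancelling contributions. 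Handling the interface tori, whose Morse--Bott orbits become degenerate families of Reeb orbits that must be perturbed carefully so as not to create spurious index-$1$ curves, is where I expect the bulk of the technical work to lie; this is exactly the content of the existence, uniqueness and compactness theorems advertised in the abstract, so the proof here will largely be a matter of quoting those with the ECH index bookkeeping spelled out.
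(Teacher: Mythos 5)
Your proposal takes essentially the same route as the paper: construct the special Morse--Bott contact form and almost complex structure supplied by the holomorphic open book theorem, lift the planar pages of $M_0^P$ to a family of embedded $J$-holomorphic curves with only positive punctures, perform the standard Bourgeois Morse--Bott-to-nondegenerate perturbation of the interface tori, and then read off $\partial\boldsymbol{\gamma}=\boldsymbol{\emptyset}$ from the resulting rigid curves, invoking the uniqueness/compactness results to see that no other curves contribute and that the gradient-flow cylinder terms cancel. That is precisely the structure of the paper's argument, and your ``cleaner route'' sentence --- exhibit $\boldsymbol{\gamma}$ with $\langle\partial\boldsymbol{\gamma},\boldsymbol{\emptyset}\rangle\neq 0$ --- is the correct formulation.

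Two points deserve correction or sharpening, though. First, the parenthetical in which you describe the relevant curve as having ``the empty orbit set $\boldsymbol{\emptyset}$ as a positive end'' and ``only negative punctures'' is backwards: the Giroux form has Reeb vector field positively transverse to the pages, so the holomorphic pages have only \emph{positive} punctures and the negative asymptotic orbit set is empty; a curve with only negative punctures would contribute to $\partial\boldsymbol{\emptyset}$, which is automatically zero and proves nothing. Second, and more substantively, you correctly observe that the ECH index must come out to $1$ but do not identify how: in the Morse--Bott picture the planar page curves $u_{\sigma,\tau}$ have Fredholm index $2$ and ECH index $2$, and after splitting each Morse--Bott torus $T_j$ into an elliptic orbit $\gamma_j^e$ (with $\muCZ=1$) and a hyperbolic orbit $\gamma_j^h$ (with $\muCZ=0$), the ECH index drops to $1$ exactly when the curve hits a hyperbolic orbit at precisely \emph{one} of the $T_j$'s. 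The paper therefore takes $\boldsymbol{\gamma}_0$ to consist of $\gamma_1^h$ together with elliptic orbits (with the appropriate multiplicities) at all the other tori and the simple binding orbits, and it is the rigidity and uniqueness of that single holomorphic page, plus the pairwise cancellation of the Bourgeois gradient cylinders $v_j^\pm$ for $j\geq 2$, that gives $\partial\boldsymbol{\gamma}_0=\boldsymbol{\emptyset}$ in the untwisted theory. Spelling this out is not just ``bookkeeping'': it is the reason a generator mapping to $\boldsymbol{\emptyset}$ exists at all, and without it your argument has no candidate $\boldsymbol{\gamma}$.
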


This calculation is in some sense a generalization of the well-known
fact that overtwisted contact manifolds have trivial contact homology
(cf.~Figure~\ref{fig:LutzTube}), and our proof of it has some commonalities
with the proof of the latter sketched by Eliashberg in the appendix
of \cite{Yau:overtwisted}.
The result implies another proof that planar torsion is a filling 
obstruction, albeit a very indirect one: under the isomorphism of Taubes 
\cite{Taubes:ECH=SWF5}, the ECH contact invariant corresponds to a
similar invariant in Seiberg-Witten theory, whose vanishing gives a
filling obstruction due to results of Kronheimer and Mrowka
\cite{KronheimerMrowka:contact}.  We will however give a proof of 
Theorem~\ref{thm:non-fillable} that uses only holomorphic curve methods,
requiring no assistance from Seiberg-Witten theory.

\begin{remark}
\label{remark:cobordisms}
Aside from the direct holomorphic curve proof of Theorem~\ref{thm:non-fillable}
that we will give in \S\ref{sec:non-fillable}, there are at least two 
alternative approaches/generalizations one can imagine:
\begin{enumerate}
\renewcommand{\labelenumi}{(\alph{enumi})}
\item \textsl{Algebraic:} find a contact invariant whose vanishing contradicts
symplectic filling, and which must always vanish in the presence of planar
torsion.
\item \textsl{Topological:} given $(M,\xi)$ with planar torsion, 
find a symplectic cobordism with negative boundary $(M,\xi)$ whose
positive boundary is already known to be not fillable.
\end{enumerate}
The first approach is pursued in the present article and in the related paper
\cite{LatschevWendl}, however the second approach also works.  Indeed,
after the first version of this paper was completed, the author defined
in \cite{Wendl:cobordisms} a generalized handle attaching construction
which yields symplectic cobordisms from any contact manifold with planar
torsion to another that is overtwisted.  The decomposition of contact
manifolds via blown up summed open books that we will explain in
\S\ref{subsec:summed} is a crucial ingredient in this construction, which
also yields alternative proofs of Theorem~\ref{thm:nonseparating}
and the weak filling obstructions of \cite{NiederkruegerWendl} mentioned
below.
\end{remark}

Under stronger geometric assumptions one also obtains stronger results
in terms of ECH with \emph{twisted coefficients}, which gives correspondingly
stricter obstructions to symplectic fillings.  As we will review
in \S\ref{sec:ECH}, a twisted version of the ECH chain complex can be
defined as a module over the group ring $\ZZ[H_2(M;\RR)]$, so that the
differential keeps track of the $2$-dimensional relative
homology classes of the 
holomorphic curves it counts.  We shall denote this twisted version of ECH
by $\widetilde{\ECH}_*(M,\xi)$.  It also contains a preferred
homology class $\tilde{c}(\xi) \in \widetilde{\ECH}_*(M,\xi)$ 
represented by the empty orbit set, called the
\defin{twisted ECH contact invariant}.

\begin{defn}
\label{defn:separating}
A contact $3$-manifold is said to have 
\defin{fully separating planar $k$-torsion}
if it contains a planar $k$-torsion domain with a planar
piece $M^P \subset M$ that has the following properties:
\begin{enumerate}
\item There are no interface tori in the interior of~$M^P$.
\item Every connected component of $\p M^P$ separates~$M$.
\end{enumerate}
\end{defn}
We will see that the fully separating condition is always satisfied if $k=0$,
and for the case of a Giroux torsion domain, it is satisfied if and only
if the domain separates~$M$.

\setcounter{thmp}{\value{thm}}
\begin{thmp}
\label{thm:twisted}
If $(M,\xi)$ is a closed contact $3$-manifold with fully separating
planar torsion, then its twisted ECH contact invariant
$\tilde{c}(\xi)$ vanishes.
\end{thmp}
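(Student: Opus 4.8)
The plan is to rerun the proof of Theorem~\ref{thm:ECH} over the group ring $\ZZ[H_2(M;\RR)]$, keeping track of the relative homology classes of all holomorphic curves counted by the twisted ECH differential $\tilde{\p}$; the fully separating hypothesis will be used exactly to force those classes to agree, so that the resulting contribution to $\tilde{\p}$ is a \emph{unit} multiple, rather than merely an integer multiple, of the empty orbit set. Concretely, fix a planar $k$-torsion domain $M^P \subset M$ satisfying conditions~(1) and~(2) of Definition~\ref{defn:separating}, and take from the proof of Theorem~\ref{thm:ECH} a nondegenerate contact form $\lambda$ for $\xi$ --- obtained by a small perturbation, supported near the interface $\iI = \p M^P$, of a Morse--Bott model adapted to the blown up summed open book structure --- and a generic compatible $J$, for which there is an orbit set $\boldsymbol{\gamma}$ supported on short Reeb orbits lying on $\iI$ such that $\p\boldsymbol{\gamma} = \pm\boldsymbol{\emptyset}$ over $\ZZ$, and such that \emph{every} holomorphic curve contributing to $\p\boldsymbol{\gamma}$ is a rigid holomorphic page of the planar piece capping $\boldsymbol{\gamma}$ off to $\boldsymbol{\emptyset}$. (This last fact is precisely what the existence, uniqueness and compactness theorems for curves in blown up summed open books provide, together with the intersection-theoretic blocking that prevents finite-energy curves asymptotic to $\boldsymbol{\gamma}$ from leaving $M^P$ through the interface tori.) Over $\ZZ[H_2(M;\RR)]$ this curve count becomes
\[
\tilde{\p}\boldsymbol{\gamma} = \left( \sum_{j} \epsilon_j\, e^{A_j} \right) \boldsymbol{\emptyset},
\]
where the $u_j$ range over the contributing pages, $\epsilon_j = \pm 1$ are their signs, and $A_j \in H_2(M;\RR)$ their relative homology classes; applying the augmentation $e^A \mapsto 1$ and invoking Theorem~\ref{thm:ECH} gives $\sum_j\epsilon_j = \pm 1$.

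It remains to prove that all the $A_j$ coincide. Any two contributing pages $u_0,u_1$ have image in the symplectization of $M^P$ and project to pages $P_0, P_1$ of the blown up open book on $M^P$; by condition~(1) of Definition~\ref{defn:separating} this open book carries no internal contact fiber sums, so it is an honest (un-summed) blown up open book, and $P_0,P_1$ are fibers of its page fibration which limit onto the same orbits of $\boldsymbol{\gamma}$ on $\iI$. Sweeping $P_0$ through the $S^1$-family of pages to $P_1$ exhibits the closed surface in $M$ representing $A_0 - A_1 \in H_2(M;\ZZ)$ as homologous to a $2$-cycle carried by $\p M^P = \bigsqcup_i T_i$ (together with some boundary-parallel tori around any non-blown-up binding circles, which bound solid tori and are therefore nullhomologous). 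By condition~(2) of Definition~\ref{defn:separating} each $T_i$ separates $M$, hence $[T_i] = 0$ in $H_2(M;\ZZ)$; since $H_2(\bigsqcup_i T_i;\ZZ) = \bigoplus_i \ZZ\,[T_i]$, every $2$-cycle carried by $\bigsqcup_i T_i$ is nullhomologous in $M$, so $A_0 = A_1$ in $H_2(M;\ZZ)$, a fortiori in $H_2(M;\RR)$. Call the common value $A$.

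Then $\tilde{\p}\boldsymbol{\gamma} = \pm e^{A}\boldsymbol{\emptyset}$, and since $e^A$ is a unit in $\ZZ[H_2(M;\RR)]$ we conclude that $\boldsymbol{\emptyset} = \tilde{\p}(\pm e^{-A}\boldsymbol{\gamma})$ is a boundary in the twisted complex, i.e.\ $\tilde{c}(\xi) = [\boldsymbol{\emptyset}] = 0$. The step I expect to be the main obstacle is the homology bookkeeping in the middle paragraph: one must first use the curve-uniqueness results to be certain that the curves contributing to $\p\boldsymbol{\gamma}$ really are pages lying inside $M^P$, and then verify carefully --- in the presence of the Morse--Bott perturbation, which displaces the asymptotic orbits slightly off the genuine interface tori --- that the difference of two such pages is honestly represented by a $2$-cycle supported on $\iI$. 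Everything else is a faithful transcription of the proof of Theorem~\ref{thm:ECH} into $\ZZ[H_2(M;\RR)]$-coefficients.
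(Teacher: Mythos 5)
Your plan has the right flavor---use the fully separating condition to promote the untwisted computation to $\ZZ[H_2(M;\RR)]$---but it rests on an incorrect model of what curves appear in the chain-level differential, and the fully separating hypothesis ends up being invoked to fix a problem that isn't there while the actual problem goes unaddressed.

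You assert that one can choose $\boldsymbol{\gamma}$ so that \emph{every} holomorphic curve contributing to $\p\boldsymbol{\gamma}$ is a rigid page of the planar piece capping off to $\boldsymbol{\emptyset}$, and then worry about whether the several pages $u_j$ might represent different classes $A_j$. Both halves of this are off. After the Bourgeois-style nondegenerate perturbation, which replaces each Morse--Bott torus $T_j$ by one elliptic orbit $\gamma_j^e$ and one hyperbolic orbit $\gamma_j^h$, the paper's generator $\boldsymbol{\gamma}_0$ (with $\gamma_1^h$ in one slot, $\gamma_j^e$ in the others, and the binding circles $\beta_i$) admits, up to $\RR$-translation, \emph{exactly one} index-$1$ page $u_0$ ending on $\boldsymbol{\emptyset}$---the $S^1$-worth of Morse--Bott pages collapses to a single rigid curve under the perturbation, so the question of whether different pages carry different relative classes never arises. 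What you are missing are the \emph{other} curves counted by $\p\boldsymbol{\gamma}_0$: the Morse--Bott gradient cylinders $v_j^\pm$ from $\gamma_j^e$ to $\gamma_j^h$ (disjoint union with trivial cylinders), which output orbit sets $\boldsymbol{\gamma}_j \neq \boldsymbol{\emptyset}$. These appear in pairs whose relative classes differ by $[T_j] \in H_2(M)$, so in the twisted complex they contribute a term $e^{A_0 + [v_j^-]}\bigl(e^{[T_j]} - 1\bigr)\boldsymbol{\gamma}_j$ (times $1$ or $2$). Over $\ZZ$ this vanishes automatically, which is why $\p\boldsymbol{\gamma}_0 = \boldsymbol{\emptyset}$ untwisted; over $\ZZ[H_2(M;\RR)]$ it vanishes exactly when $[T_j] = 0$, which is what the fully separating condition guarantees. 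That cancellation of the cylinder pairs is the whole point of the hypothesis.

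So the ``homology bookkeeping'' you flag as the main obstacle (identifying $A_0 - A_1$ with a cycle carried by $\iI$) is directed at a non-issue, and the real work---seeing the $v_j^\pm$, computing their twisted contribution, and using $[T_j]=0$ to kill it---is absent. Note also that the statement $\tilde{\p}\boldsymbol{\gamma} = \bigl(\sum_j \epsilon_j e^{A_j}\bigr)\boldsymbol{\emptyset}$ is not the correct form of the differential: there are additional summands proportional to $\boldsymbol{\gamma}_j$, and ``$\tilde{c}(\xi)=0$'' requires these to drop out, not merely that the $\boldsymbol{\emptyset}$-coefficient be a unit. A minor further slip: $\boldsymbol{\gamma}_0$ is not supported only on $\iI$; it also contains the binding orbits $\beta_1,\ldots,\beta_m$ of the blown up planar open book.
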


Appealing again to the isomorphism of \cite{Taubes:ECH=SWF5} together with
results from Seiberg-Witten theory \cite{KronheimerMrowka:contact}
on weak symplectic fillings, we
obtain the following consequence, which is also proved by a more direct 
holomorphic curve argument in joint work of the author with
Klaus Niederkr\"uger \cite{NiederkruegerWendl}.

\begin{cor}
\label{cor:weak}
If $(M,\xi)$ is a closed contact $3$-manifold with fully separating
planar torsion, then it is not weakly fillable.
\end{cor}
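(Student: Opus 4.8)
The plan is to derive Corollary~\ref{cor:weak} from Theorem~\ref{thm:twisted} by the same indirect route through Seiberg--Witten theory that was used above to recover Theorem~\ref{thm:non-fillable} from Theorem~\ref{thm:ECH}; a self-contained holomorphic-curve proof is given in \cite{NiederkruegerWendl}, so here I would only assemble the existing pieces. Assume for contradiction that $(M,\xi)$ admits a weak symplectic filling $(W,\omega)$, and set $\eta := [\omega|_{TM}] \in H^2(M;\RR)$. The refined non-vanishing theorem of Kronheimer and Mrowka \cite{KronheimerMrowka:contact} for weak fillings asserts that the Seiberg--Witten contact invariant of $(M,\xi)$ is nonzero once monopole Floer homology is computed with coefficients in the real local system determined by~$\eta$; when $\eta = 0$ one may first deform $(W,\omega)$ near $\p W$ to a strong filling and invoke the untwisted statement instead.

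Next I would invoke Taubes's isomorphism \cite{Taubes:ECH=SWF5} in its twisted form, identifying $\widetilde{\ECH}_*(M,\xi)$ as a $\ZZ[H_2(M;\RR)]$-module with the corresponding twisted Seiberg--Witten Floer homology and carrying the twisted ECH contact invariant $\tilde c(\xi)$ to the Seiberg--Witten contact invariant. Specializing along the ring map $\ZZ[H_2(M;\RR)] \to \RR$ (or to the appropriate Novikov completion) induced by pairing homology classes with~$\eta$, the image of $\tilde c(\xi)$ is precisely the $\eta$-twisted Seiberg--Witten contact invariant of the previous paragraph. But Theorem~\ref{thm:twisted} gives $\tilde c(\xi) = 0$ already over the full group ring $\ZZ[H_2(M;\RR)]$, so its image under any such specialization vanishes --- contradicting the Kronheimer--Mrowka non-vanishing. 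Hence $(M,\xi)$ admits no weak filling.

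I expect the only genuine work to be coefficient bookkeeping: one must check that the twisted ECH set up in \S\ref{sec:ECH}, Taubes's twisted isomorphism, and the Kronheimer--Mrowka weak-filling obstruction are all phrased over compatible coefficient systems (the group ring $\ZZ[H_2(M;\RR)]$, or a completion/quotient thereof carrying the class~$\eta$) and that the isomorphism is module-linear, so that ``vanishing over the full ring'' really does force ``vanishing after specialization''. This is standard --- the analogous statements in Heegaard Floer theory are due to Ozsv\'ath--Szab\'o --- and is in any case sidestepped by the direct argument of \cite{NiederkruegerWendl}, so in the paper I would keep this discussion brief and refer the reader there for the details.
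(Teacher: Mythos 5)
Your proposal is correct and follows essentially the same route the paper itself indicates: the paper derives Corollary~\ref{cor:weak} in exactly this indirect way, citing Taubes's twisted isomorphism \cite{Taubes:ECH=SWF5} together with the Kronheimer--Mrowka weak-filling obstruction \cite{KronheimerMrowka:contact} and the vanishing of $\tilde c(\xi)$ from Theorem~\ref{thm:twisted}, deferring a direct holomorphic-curve proof to \cite{NiederkruegerWendl}. Your extra care about specializing from $\ZZ[H_2(M;\RR)]$ to the $\eta$-twisted (or quotient $\ZZ[H_2(M;\RR)/\ker\Omega]$) coefficient system, and the $\eta=0$ reduction to a strong filling, are exactly the bookkeeping the paper leaves implicit and are consistent with its Remark~\ref{remark:OmegaSeparating}.
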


As we will show shortly, Theorem~\ref{thm:non-fillable}
and Corollary~\ref{cor:weak} yield many previously unknown examples
of non-fillable contact manifolds.
Observe that the fully separating condition in Corollary~\ref{cor:weak} cannot
be removed in general, as for instance, there are infinitely many tight
$3$-tori which have non-separating Giroux torsion (and hence
planar $1$-torsion by Theorem~\ref{thm:lowerOrder} below) but are
weakly fillable by a construction of Giroux \cite{Giroux:plusOuMoins}.
Further examples of this phenomenon are constructed in
\cite{NiederkruegerWendl} for planar $k$-torsion with any $k \ge 1$.

\begin{remark}
\label{remark:OmegaSeparating}
One can refine the above vanishing result with twisted coefficients
as follows: for a given closed $2$-form $\Omega$ on~$M$, define $(M,\xi)$ to 
have \emph{$\Omega$-separating planar torsion} if it contains a planar torsion
domain such that every interface torus~$T$ lying in
the planar piece satisfies
$\int_T \Omega = 0$ (cf.~Definition~\ref{defn:containsPartiallyPlanar}).  
Under this condition, our computation implies 
a similar vanishing
result for the ECH contact invariant with twisted coefficients in
$\ZZ[H_2(M;\RR) / \ker\Omega]$, with the consequence that $(M,\xi)$
admits no weak filling $(W,\omega)$ for which $\omega|_{TM}$
is cohomologous to~$\Omega$.  A direct proof of the latter is given in
\cite{NiederkruegerWendl}.
\end{remark}

We now consider examples of contact manifolds with planar torsion.  We will
show in \S\ref{subsec:torsion} that the previously known local filling
obstructions fit into the first two levels of the hierarchy, i.e.~$k=0$
and~$1$.

\begin{thm}
\label{thm:lowerOrder}
A closed contact $3$-manifold has planar $0$-torsion if and only
if it is overtwisted, and every closed contact manifold with Giroux
torsion also has planar $1$-torsion.
\end{thm}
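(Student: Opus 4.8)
The plan is to unpack both claims directly from the definition of planar $k$-torsion domain in \S\ref{sec:definitions}, exhibiting the requisite contact embeddings explicitly. For the first equivalence, one direction is essentially tautological: a planar $0$-torsion domain has a planar piece whose pages are genus-zero surfaces with $k+1=1$ boundary component, i.e.\ disks, so that after blowing down the single binding component the planar piece is a solid torus carrying the radially symmetric contact structure with a half-twist—precisely a Lutz tube in the sense of Definition~\ref{defn:LutzTube}. Hence any manifold with planar $0$-torsion contains a Lutz tube, and by Eliashberg's classification \cite{Eliashberg:overtwisted} (as recalled in the introduction, a closed contact $3$-manifold contains an overtwisted disk iff it contains a Lutz tube) it is overtwisted. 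Conversely, given an overtwisted $(M,\xi)$, I would take a standard neighborhood of the overtwisted disk, identify a Lutz tube inside it, and then observe that this Lutz tube, together with a collar of ``padding'' that is a trivial open book with disk pages (or whatever minimal padding the definition requires), assembles into an honest planar $0$-torsion domain. The only subtlety is bookkeeping: one must check that the interface torus $\iI = \partial M^P$ and the padding meet the formal requirements (genus, number of boundary components, the blown-up/summed open book structure) spelled out in Definition~\ref{defn:planarTorsionDomain}; this is routine once the model is set up, since for $k=0$ there is essentially no choice.

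For the Giroux torsion statement, I would start from a Giroux torsion domain $N = [0,1]\times T^2 \subset M$ with its $T^2$-invariant contact structure making one full twist. As noted in the introduction, $N$ already carries the structure of a blown-up open book: the planar piece $M^P = [r_1,r_2]\times T^2$ is foliated by cylindrical pages $[r_1,r_2]\times S^1\times\{*\}$, which are genus-zero surfaces with $k+1 = 2$ boundary components, and the two interface tori $\{r_1\}\times T^2$ and $\{r_2\}\times T^2$ arise by blowing up the two binding circles of this cylindrical open book. The padding consists of the two remaining sub-annuli $[0,r_1]\times T^2$ and $[r_2,1]\times T^2$, each of which I would present as a blown-up open book (again with annular or disk pages) so that the whole of $N$ becomes a planar $1$-torsion domain. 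Since $N \subset M$, this gives the desired contact embedding and hence planar $1$-torsion of $(M,\xi)$.

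The step I expect to be the main obstacle is the second: verifying that the $T^2$-invariant contact structure on the padding annuli $[0,r_1]\times T^2$ and $[r_2,1]\times T^2$ genuinely admits the structure of a blown-up/summed open book compatible with the definition, and that the characteristic foliations on the interface tori match up correctly so that the pieces glue into a legitimate partially planar domain. One must be careful about the direction and amount of twisting in the padding relative to the planar piece—the full twist of the Giroux torsion domain has to be distributed so that the planar piece carries exactly the ``right'' amount and the interface tori are pre-Lagrangian with the correct characteristic foliation. This is a matter of choosing the $T^2$-invariant contact form on $N$ in a normal form adapted to the open book decomposition (as will presumably be done in the examples of \S\ref{subsec:torsion}, e.g.\ Figure~\ref{fig:torsionDomains} with the $S^1$-invariant picture), and then reading off the blown-up open book structure directly. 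Once the normal form is fixed, both claims follow by inspection, so the real content is organizing the definitions rather than any hard analysis.
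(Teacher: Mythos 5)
The forward direction for planar $0$-torsion is where your proposal breaks down. You claim the planar piece of a planar $0$-torsion domain is a Lutz tube, so that overtwistedness is immediate. This is false: by Proposition~\ref{prop:GirouxUniqueness}, the planar piece $M_0^P$ carries (up to isotopy) the unique contact structure supported by the blown-up open book on $S^3$ with disk-like pages, and this contact solid torus is \emph{tight}. Writing a Giroux form as $f(\rho)\,d\theta + g(\rho)\,d\phi$ on $M_0^P \cong S^1 \times \DD$, the requirements of Definition~\ref{defn:GirouxForm} --- Reeb field transverse to the disk pages $\{\theta = \text{const}\}$ in the interior and tangent to their boundaries, characteristic foliation on $\p M_0^P$ in the meridian direction $m_T$ --- force the path $\rho \mapsto (f(\rho),g(\rho))$ to rotate only a quarter turn, from the positive $f$-axis to the positive $g$-axis, not the half turn that defines a Lutz tube. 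This is visible in the paper's own construction in the proof of Proposition~\ref{prop:allerleiTorsion}: the planar piece there is $S^1 \times \DD_{\rho_0}$ where $\rho_0$ is the \emph{first} radius at which the Reeb field points along $+\p_\phi$, reached well before the path crosses the negative $f$-axis at $\rho=1$. The planar piece therefore contains no overtwisted disk on its own; overtwistedness of a planar $0$-torsion domain has to be extracted from the interaction with the padding, and this is precisely the content of the implication, not a tautology.

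The paper's actual argument for ``planar $0$-torsion $\Rightarrow$ overtwisted'' uses the padding essentially. Since the padding is nonempty and its pages are not disks, there are two cases: if a padding page has positive genus and a single boundary component, gluing it to a planar page gives a convex surface whose dividing set bounds a disk, and Giroux's criterion produces an overtwisted disk nearby; otherwise the padding pages have several boundary components, and a Thurston--Winkelnkemper-style construction of a Giroux form on the padding (matched to $\xi$ via Proposition~\ref{prop:GirouxUniqueness}) produces a Legendrian near $\p M_0^P$ bounding an overtwisted disk. Your remaining two implications (overtwisted $\Rightarrow$ planar $0$-torsion, and Giroux torsion $\Rightarrow$ planar $1$-torsion) do follow the paper's route, via Eliashberg's classification and Proposition~\ref{prop:allerleiTorsion}; the bookkeeping point you flag is real and is resolved in the paper by first \emph{enlarging} the Lutz tube to $S^1\times\DD_{1+\epsilon}$ and the Giroux torsion domain to $[-\epsilon,1+\epsilon]\times T^2$, so that the outer padding becomes a blown-up open book with annular pages and nontrivial monodromy satisfying the Giroux-form boundary condition --- not the ``trivial open book with disk pages'' padding you propose, which would not satisfy Definition~\ref{defn:GirouxForm} on the outer torus.
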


For this reason, Theorems~\ref{thm:ECH} and~\ref{thm:twisted} imply 
ECH versions of the vanishing results of Ghiggini, Honda and Van Horn-Morris
\cites{GhigginiHondaVanhorn,GhigginiHonda:twisted} for the
Ozsv\'ath-Szab\'o contact invariant in the presence of Giroux torsion.
We'll see below that it is also easy to
construct examples of contact manifolds that have planar
torsion of any order greater than~$1$ but no Giroux torsion.  It is not clear
whether there exist contact manifolds with planar $1$-torsion but no
Giroux torsion.

To find examples for $k \ge 2$,
suppose $\Sigma$ is a closed oriented surface containing a non-empty multicurve
$\Gamma \subset \Sigma$ that divides it into two (possibly disconnected)
pieces $\Sigma_+$ and $\Sigma_-$.  We define the contact manifold
$(M_\Gamma,\xi_\Gamma)$, where
$$
M_\Gamma := S^1 \times \Sigma
$$
and $\xi_\Gamma$ is the (up to isotopy) 
unique $S^1$-invariant contact structure that
makes $\{\text{const}\} \times \Sigma$ into a convex surface with
dividing set~$\Gamma$.  The existence and uniqueness of such a contact
structure follows from a result of Lutz \cite{Lutz:77}.
We will see in Examples~\ref{ex:BUSOBD} and~\ref{ex:S1invariant}
that $(M_\Gamma,\xi_\Gamma)$
is a partially planar domain whenever any connected component~$\Sigma_0$ of
$\Sigma \setminus \Gamma$ has genus zero: indeed, the surfaces
$\{*\} \times \Sigma_0$ are then the pages of a blown up
planar open book.  Moreover, $(M_\Gamma,\xi_\Gamma)$ is then a planar torsion
domain unless $\Sigma \setminus \Gamma$ has exactly two connected components
and they are diffeomorphic, and it is fully separating
if every connected component of
$\p\overline{\Sigma}_0$ separates~$\Sigma$.

\begin{cor}
\label{cor:examples}
Suppose $\Sigma\setminus\Gamma$ has a connected component $\Sigma_0$ 
of genus zero, and either $\Sigma \setminus \Gamma$ has more than two
connected components or $\Sigma\setminus \overline{\Sigma}_0$
is not diffeomorphic to $\Sigma_0$.  Then $(M_\Gamma,\xi_\Gamma)$ has
vanishing (untwisted) ECH contact invariant and is not strongly fillable.
Moreover, if every connected component of
$\p\overline{\Sigma}_0$ separates~$\Sigma$, then
the invariant with twisted coefficients also vanishes and
$(M_\Gamma,\xi_\Gamma)$ is not weakly fillable.
\end{cor}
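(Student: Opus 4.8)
The plan is to read off the corollary from Theorems~\ref{thm:non-fillable}, \ref{thm:ECH} and~\ref{thm:twisted} together with Corollary~\ref{cor:weak}, once one knows that $(M_\Gamma,\xi_\Gamma)$ is a planar torsion domain and, under the last hypothesis, a fully separating one. Granting this, Theorem~\ref{thm:non-fillable} gives that $(M_\Gamma,\xi_\Gamma)$ admits no contact type embedding into a closed symplectic $4$-manifold, in particular is not strongly fillable, Theorem~\ref{thm:ECH} gives $c(\xi_\Gamma)=0$, and under the extra hypothesis Theorem~\ref{thm:twisted} gives $\tilde c(\xi_\Gamma)=0$ and Corollary~\ref{cor:weak} gives that $(M_\Gamma,\xi_\Gamma)$ is not weakly fillable. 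So the real content is the structural assertion stated just before the corollary, which I would prove in Examples~\ref{ex:BUSOBD} and~\ref{ex:S1invariant} and merely cite at this point; everything else is bookkeeping.

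To prove that structural assertion I would start from Lutz's $S^1$-invariant model for $\xi_\Gamma$. Viewing $\Sigma$ as a convex surface in $M_\Gamma = S^1\times\Sigma$ with dividing set $\Gamma$, the tori $S^1\times\Gamma$ are exactly the Legendrian divides, and on the closure of each connected component $\Sigma_i$ of $\Sigma\setminus\Gamma$ a standard Morse-Bott-type normalization turns the surfaces $\{*\}\times\Sigma_i$ into the pages of a blown up open book with blown up binding $S^1\times\p\overline{\Sigma}_i$. Assembling these pieces along $S^1\times\Gamma$ presents $(M_\Gamma,\xi_\Gamma)$ as a blown up summed open book in the sense of~\S\ref{subsec:summed}, with interface $\iI = S^1\times\Gamma$. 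I would then take $M^P := S^1\times\overline{\Sigma}_0$ as the planar piece: it is a connected blown up open book whose pages $\{*\}\times\Sigma_0$ have genus zero and $k+1$ boundary components, where $k+1$ is the number of components of $\p\overline{\Sigma}_0$, while the padding $M_\Gamma\setminus M^P = S^1\times\overline{(\Sigma\setminus\overline{\Sigma}_0)}$ is a disjoint union of blown up open books with pages of arbitrary genus. Since $M_\Gamma$ is closed, the requirements that $M^P$ avoid $\p M_\Gamma$ and that the padding contain $\p M_\Gamma$ are vacuous.

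The first hypothesis of the corollary enters only through the non-degeneracy built into Definition~\ref{defn:planarTorsionDomain}: the decomposition above is a genuine planar $k$-torsion domain, rather than merely a partially planar domain, exactly when one is not in the exceptional situation that $\Sigma\setminus\Gamma$ consists of precisely two components, both diffeomorphic to $\Sigma_0$. In that situation $M_\Gamma$ is $S^1$ times the double of $\Sigma_0$ and is, up to isotopy, a genuine planar open book --- the tight $S^1\times S^2$, arising from $\Sigma = S^2$ with $\Gamma$ a single circle, is the simplest case --- so the definition does not classify it as planar torsion. The stated hypothesis ``$\Sigma\setminus\Gamma$ has more than two components, or $\Sigma\setminus\overline{\Sigma}_0$ is not diffeomorphic to $\Sigma_0$'' is precisely the negation of this exceptional case, so $(M_\Gamma,\xi_\Gamma)$ is a planar $k$-torsion domain and Theorems~\ref{thm:non-fillable} and~\ref{thm:ECH} apply.

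Finally I would match the last hypothesis with the fully separating condition of Definition~\ref{defn:separating}. Its condition~(1) is automatic here, since the interface $\iI = S^1\times\Gamma$ meets $M^P = S^1\times\overline{\Sigma}_0$ only along $\p M^P = S^1\times\p\overline{\Sigma}_0$, so no interface torus lies in the interior of $M^P$; and for condition~(2), a component $S^1\times\gamma$ of $\p M^P$ separates $M_\Gamma = S^1\times\Sigma$ if and only if the circle $\gamma$ separates $\Sigma$, so the requirement that every component of $\p\overline{\Sigma}_0$ separate $\Sigma$ is exactly condition~(2). Hence under that hypothesis $(M_\Gamma,\xi_\Gamma)$ has fully separating planar torsion, and Theorem~\ref{thm:twisted} and Corollary~\ref{cor:weak} complete the proof. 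The one genuinely nontrivial ingredient is the identification of $(M_\Gamma,\xi_\Gamma)$ with a blown up summed open book having the claimed planar piece, together with the precise delimitation of the degenerate configurations --- which is exactly the work I would carry out in Examples~\ref{ex:BUSOBD} and~\ref{ex:S1invariant}; modulo that, the corollary is immediate.
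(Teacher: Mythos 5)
Your proposal is correct and matches the paper's own logic: the corollary is exactly the combination of Theorems~\ref{thm:non-fillable}, \ref{thm:ECH}, \ref{thm:twisted} and Corollary~\ref{cor:weak} with the structural identification of $(M_\Gamma,\xi_\Gamma)$ as a (fully separating, under the extra hypothesis) planar torsion domain, which the paper likewise delegates to Examples~\ref{ex:BUSOBD} and~\ref{ex:S1invariant}. Your treatment of the exceptional symmetric case and of the fully separating condition via $S^1\times\gamma$ separating $S^1\times\Sigma$ iff $\gamma$ separates $\Sigma$ is exactly the intended bookkeeping.
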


Note that $(M_\Gamma,\xi_\Gamma)$ is always universally tight whenever 
$\Gamma$ contains no contractible connected components.
This follows from \cite{Giroux:cercles}*{Prop.~4.1(b)}, and can also
be deduced (via \cite{Hofer:weinstein}) from the observation that
$(M_\Gamma,\xi_\Gamma)$ then admits contact forms with no contractible
Reeb orbits (e.g.~any Giroux form in the sense of
Definition~\ref{defn:GirouxForm} will have this property).
Whenever this is true, an argument due to Giroux 
(see \cite{Massot:vanishing}*{Theorem~3})
implies that $(M_\Gamma,\xi_\Gamma)$ also has no Giroux torsion
if no two connected components of~$\Gamma$ are isotopic.
We thus obtain infinitely many examples of contact manifolds 
that have planar torsion of any order greater than~$1$ but no Giroux torsion:

\begin{cor}
\label{cor:higherOrder}
For any integers $g \ge k \ge 1$, let $(V_g,\xi_k)$ denote the
$S^1$-invariant contact manifold $(M_\Gamma,\xi_\Gamma)$
described above for the case where $\Gamma \subset \Sigma$ has~$k$
connected components and divides~$\Sigma$ into two connected components,
one with genus zero and the other with genus $g - k + 1 > 0$.
Then $(V_g,\xi_k)$ has no Giroux torsion if $k \ge 3$, but for any $k \ge 1$
it has planar torsion of order~$k-1$.
In particular $(V_g,\xi_k)$ always has vanishing
ECH contact invariant and is not strongly fillable.
\end{cor}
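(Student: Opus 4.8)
The plan is to deduce Corollary~\ref{cor:higherOrder} from the earlier results by a careful bookkeeping argument about the combinatorics of the dividing set~$\Gamma$. First I would spell out the surface data: take $\Sigma$ closed oriented of genus $g-k+2$, and let $\Gamma\subset\Sigma$ consist of $k$ disjoint simple closed curves cutting $\Sigma$ into exactly two connected pieces $\Sigma_+$ and $\Sigma_-$, with $\Sigma_+$ of genus zero (hence a sphere with $k$ holes) and $\Sigma_-$ of genus $g-k+1>0$. One must check that such a configuration actually exists for all $g\ge k\ge 1$: the Euler characteristic count $\chi(\Sigma_+)+\chi(\Sigma_-)=\chi(\Sigma)$ together with the fact that gluing $k$ boundary circles in pairs across $\Gamma$ is consistent forces $g-k+2$ for the total genus, so there is nothing to obstruct the construction. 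Then $(V_g,\xi_k):=(M_\Gamma,\xi_\Gamma)$ is defined via Lutz's result as in the paragraph preceding Corollary~\ref{cor:examples}.

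Next I would invoke the structural claims already asserted in the text about $(M_\Gamma,\xi_\Gamma)$. Since $\Sigma_+$ has genus zero with $k$ boundary components, the surfaces $\{*\}\times\Sigma_+$ are the pages of a blown up planar open book with $k$ binding-type boundary circles, so in the notation of the hierarchy the planar piece has pages of genus zero with $k$ boundary components; comparing with the defining requirement that a planar $m$-torsion domain has pages with $m+1$ boundary components, this makes $(V_g,\xi_k)$ a planar torsion domain of order $m=k-1$ — provided it really is a \emph{torsion} domain and not the excluded degenerate case. The excluded case is when $\Sigma\setminus\Gamma$ has exactly two components that are diffeomorphic; here the two components have genus $0$ and $g-k+1$, and since we assumed $g-k+1>0$ they are never diffeomorphic, so $(V_g,\xi_k)$ is genuinely a planar $(k-1)$-torsion domain. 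Therefore $(V_g,\xi_k)$ has planar torsion of order $k-1$, and Theorem~\ref{thm:non-fillable} and Theorem~\ref{thm:ECH} immediately give that it is not strongly fillable and that $c(\xi_k)=0$.

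It remains to handle the Giroux torsion claim for $k\ge 3$. Here I would follow the route indicated in the text just before the corollary: because $\Gamma$ has no contractible components (each of the $k$ curves is essential, being part of a dividing set cutting $\Sigma$ into two genuine subsurfaces, one of positive genus), $(M_\Gamma,\xi_\Gamma)$ is universally tight and admits an $S^1$-invariant contact form with no contractible Reeb orbits. One then appeals to the argument attributed to Giroux in \cite{Massot:vanishing}: such a manifold has no Giroux torsion as soon as no two components of~$\Gamma$ are isotopic in~$\Sigma$. So the real point is to verify that when $k\ge 3$ the $k$ components of $\Gamma$ are pairwise non-isotopic. This is where I expect the only genuine subtlety to lie, and it is a surface-topology statement: if two of the curves were isotopic in $\Sigma$, they would cobound an annulus, and one analyzes how such an annulus must sit relative to the decomposition $\Sigma=\Sigma_+\cup\Sigma_-$. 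With $k\ge 3$ curves arranged so that $\Sigma_+$ is a $k$-holed sphere and $\Sigma_-$ is connected of positive genus, one can choose the configuration (or argue that the natural one has this property) so that distinct boundary components of the planar piece are non-isotopic; intuitively, a $k$-holed sphere with $k\ge 3$ has no two boundary circles isotopic \emph{within that piece}, and gluing to a positive-genus connected piece cannot create an isotopy in $\Sigma$ without producing a contractible or torus summand that is excluded by the genus hypothesis. Once this non-isotopy is in hand, Giroux's criterion yields the absence of Giroux torsion for $k\ge 3$, completing the proof; the final sentence of the corollary is then just a restatement of Theorem~\ref{thm:non-fillable} and Theorem~\ref{thm:ECH} applied to these examples.

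The main obstacle, then, is not the holomorphic-curve input — that is entirely black-boxed through Theorems~\ref{thm:non-fillable} and~\ref{thm:ECH} — but rather pinning down the elementary but fiddly surface combinatorics: exhibiting a $\Gamma$ with the prescribed component count and complementary genera \emph{and} with pairwise non-isotopic components, and confirming that the excluded ``two diffeomorphic pieces'' case genuinely does not occur. I would organize the write-up so that this combinatorial verification is done once and cleanly, after which the non-fillability and ECH-vanishing conclusions follow with no further work.
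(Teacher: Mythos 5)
Your approach is the one the paper intends: deduce everything from Corollary~\ref{cor:examples} and the paragraph immediately preceding it (universal tightness plus Giroux's criterion for no Giroux torsion), with the only new content being the combinatorial verification that the hypotheses hold. The paper does not spell out a proof of this corollary at all, so you are correctly reconstructing the implicit argument. That said, two points deserve attention.

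First, a small but real arithmetic slip: gluing a genus-$0$ surface with $k$ boundary circles to a genus-$(g-k+1)$ surface with $k$ boundary circles produces a closed surface of genus $0 + (g-k+1) + (k-1) = g$, not $g-k+2$. (Equivalently, $\chi(\Sigma) = (2-k) + \bigl(2 - 2(g-k+1) - k\bigr) = 2-2g$.) This is also what makes the notation $V_g$ sensible. The error is harmless because the total genus is never used, but it should be corrected.

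Second, the non-isotopy of the components of $\Gamma$ for $k \ge 3$ is not a matter of ``choosing the configuration'': it holds automatically for every $\Gamma$ with the stated complementary data, and the argument is short enough that you should give it rather than gesture at it. If $c_1, c_2 \subset \Gamma$ were isotopic in $\Sigma$ they would cobound an embedded annulus $A$ with $\p A = c_1 \cup c_2$. Each component of $A \cap \Gamma$ in the interior of $A$ must be core-parallel in $A$ (an inessential circle in $A$ would bound a disk in $\Sigma$, making some component of $\Gamma$ contractible, which is excluded for $k \ge 2$ since neither $\Sigma_+$ nor $\Sigma_-$ is a disk). Cutting $A$ along these interior components yields a sub-annulus $A_1$ adjacent to $c_1$ whose interior is disjoint from $\Gamma$, hence contained in $\Sigma_+$ or $\Sigma_-$; being open and closed in the connected surface it lies in, $A_1$ must equal $\Sigma_+$ or $\Sigma_-$. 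The former contradicts $k \ge 3$ (a $k$-holed sphere is not an annulus), the latter contradicts $\Sigma_-$ having positive genus. This replaces the ``intuitively\dots'' sentence with an actual proof, and also makes clear why the hypothesis is exactly $k \ge 3$: for $k = 2$ the two curves cobound the annulus $\Sigma_+$ itself and are genuinely isotopic, and for $k = 1$ the single curve is contractible, so the criterion fails in both cases just as the corollary's statement anticipates.
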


Some more examples of planar torsion without Giroux torsion are shown
in Figure~\ref{fig:noGirouxTorsion}.

\begin{remark}
\label{remark:Seifert}
In many cases, one can easily generalize the above results from products
$S^1 \times \Sigma$ to general Seifert fibrations over~$\Sigma$.  
In particular, whenever $\Sigma$ has
genus at least four, one can find dividing sets on $\Sigma$ such that
$(S^1 \times \Sigma,\xi_\Gamma)$ has no Giroux torsion but contains a
proper subset that is a planar torsion domain 
(see Figure~\ref{fig:noGirouxTorsion}).  Then modifications outside of the
torsion domain can change the trivial fibration into arbitrary nontrivial 
Seifert fibrations with planar torsion but no Giroux torsion.  This trick
reproduces many (though not all) of the Seifert fibered $3$-manifolds for
which \cite{Massot:vanishing} proves the vanishing of the Ozsv\'ath-Szab\'o
contact invariant.
\end{remark}

\begin{remark}
\label{remark:Heegaard}
There is a significant overlap between our
ECH vanishing results and the Heegaard vanishing results proved by
Massot in \cite{Massot:vanishing} 
(see also \cites{HondaKazezMatic,Mathews:sutured}),
but neither set of results contains the other.  
In particular, the examples $(V_g,\xi_k)$ in Corollary~\ref{cor:higherOrder}
with planar torsion of order greater than~$1$
seem thus far to be beyond the reach of Heegaard Floer homology.
\end{remark}

\begin{figure}
\begin{center}
\includegraphics{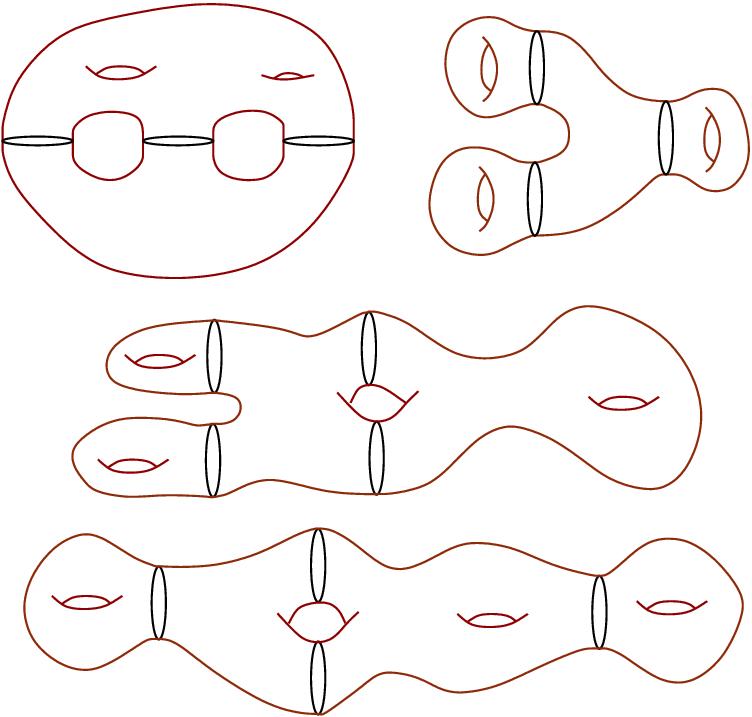}
\caption{\label{fig:noGirouxTorsion}
Some contact manifolds of the form $S^1 \times \Sigma$ that have no 
Giroux torsion but have planar torsion of orders~$2$, $2$, $3$
and~$2$ respectively.  In each case the contact structure
is $S^1$-invariant and induces the dividing set shown on $\Sigma$ in the
picture.  For the example at the upper right, Theorem~\ref{thm:twisted}
implies that the \emph{twisted} ECH contact invariant also vanishes,
so this one is not weakly fillable.  In the bottom example, the planar
torsion domain is a proper subset, thus one can make modifications outside
of this subset to produce arbitrary nontrivial Seifert fibrations
(see Remark~\ref{remark:Seifert}).}
\end{center}
\end{figure}

By a recent result of Etnyre and Vela-Vick \cite{EtnyreVelavick},
the complement of the binding of a supporting open book never contains a
Giroux torsion domain.  We will prove a natural generalization of this:

\begin{thm}
\label{thm:complement}
Suppose $(M,\xi)$ is a contact $3$-manifold supported by an open book
$\pi : M \setminus B \to S^1$.  Then any planar torsion domain in $(M,\xi)$
must intersect the binding~$B$.
\end{thm}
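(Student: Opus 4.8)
The plan is to argue by contradiction, playing the holomorphic curves that a planar torsion domain is forced to carry (via the Existence Theorem of this paper) against the finite-energy foliation by page curves that a supporting open book provides on the symplectization; this is a holomorphic-curve incarnation of the idea behind the theorem of Etnyre and Vela-Vick \cite{EtnyreVelavick}. So suppose $(M,\xi)$ is supported by $\pi : M \setminus B \to S^1$ and contains a planar $k$-torsion domain~$N$ whose image is disjoint from~$B$, i.e.\ $N \subset M \setminus B$, with planar piece $N^P$ and interface $\iI \subset N$. The first ingredient is the holomorphic open book: for a Giroux form adapted to~$\pi$ and a suitable tame almost complex structure $J$ on $\RR \times M$, the pages lift to a foliation of $\RR \times (M \setminus B)$ by embedded finite-energy $J$-holomorphic curves --- the $\RR$-translates $\{\sigma_c(F_\theta)\}_{c \in \RR,\, \theta \in S^1}$ of the page curves --- each asymptotic only to the (Morse--Bott) binding orbits lying over~$B$. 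The role of this foliation is as an intersection-theoretic barrier: by the intersection theory of punctured holomorphic curves (Siefring), any finite-energy $J$-holomorphic curve in $\RR \times M$ whose asymptotic orbits avoid the binding orbits has Siefring intersection number with each leaf equal to the corresponding relative homological intersection number --- there being no shared asymptotic orbits, no asymptotic correction terms arise --- and this number dominates the non-negative geometric count of intersections.

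The second ingredient is the planar torsion curves. Using the freedom in the choice of contact form and compatible almost complex structure, I would arrange $\lambda$ and~$J$ on $\RR \times M$ to be of the Morse--Bott form adapted to the planar torsion structure of~$N$ on a neighborhood of~$N$, as required by the Existence Theorem, while remaining adapted to~$\pi$ away from~$N$, so that the page curves $F_\theta$ --- at least those whose image meets $\RR \times N$ --- persist as $J$-holomorphic curves with the stated asymptotics. The Existence Theorem then produces a non-empty moduli space~$\mM$ of embedded genus-zero finite-energy $J$-holomorphic curves contained in $\RR \times \nN$ for a small neighborhood $\nN \subset M \setminus B$ of $\overline{N^P}$, each asymptotic only to Reeb orbits lying inside~$N$ and homologous, rel its asymptotics, to a page of the blown up open book $N^P$ --- a genus-zero surface contained in $M \setminus B$. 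In particular, no asymptotic orbit of any $u \in \mM$ lies over~$B$.

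Now the contradiction. Fix $u \in \mM$ and a point $(t_0, p_0)$ in its image; then $p_0 \in \nN \subset M \setminus B$, so the page-curve foliation has a leaf $L = \sigma_{c_0}(F_{\theta_0})$ through $(t_0, p_0)$, whence the geometric intersection count of~$u$ with~$L$ is at least~$1$ and therefore $u * L \ge 1$. On the other hand, $L$ is homologous rel its asymptotic $B$-orbits to a page $\pi^{-1}(\theta_0)$, while $u$ is homologous rel its asymptotic $\iI$-orbits to a page $P$ of~$N^P$ contained in $M \setminus B$; since $\partial P \subset \iI$ is null-homologous in $M \setminus B$, it maps to zero under $\pi_* : H_1(M \setminus B) \to H_1(S^1)$, so the relative homological intersection number of $[u]$ and $[L]$ vanishes --- equivalently, representatives $\{s_0\} \times P$ and $\{s_1\} \times \overline{\pi^{-1}(\theta_0)}$ chosen at distinct $\RR$-heights $s_0 \ne s_1$ can be taken disjoint. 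By the previous paragraph this relative homological intersection number equals $u * L$, so $u * L = 0$, contradicting $u * L \ge 1$. Hence no such~$N$ can exist.

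The main obstacle is the joint choice of data in the second step: one must produce a single almost complex structure on $\RR \times M$ that is Morse--Bott adapted to the planar torsion structure of~$N$ near~$N$ --- so that $\mM \ne \emptyset$ --- and simultaneously adapted to the supporting open book near~$B$ --- so that the page curves, and with them the foliation of $\RR\times(M\setminus B)$ used as a barrier, remain available over~$N^P$. Establishing this compatibility (for instance by selecting a Giroux form for~$\pi$ that already exhibits the required Morse--Bott behaviour near~$\iI$, perhaps after an isotopy of~$\pi$), and in particular checking that the page curves survive as $J$-holomorphic curves on all of $\RR \times N$, is where the technical machinery of the paper and the hypothesis $N \cap B = \emptyset$ really enter. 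If keeping the whole page foliation holomorphic over~$N$ proves awkward, an alternative is to stretch the neck along $\partial N$ and run the same intersection argument inside $\RR \times N$ between the planar torsion curves and the holomorphic buildings into which the page curves degenerate, using the Compactness Theorem of this paper to control the limits --- paralleling, at the level of the symplectization rather than a filling, the proof of Theorem~\ref{thm:non-fillable}.
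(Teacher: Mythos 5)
Your intersection-theoretic plan captures the correct intuition, and the homological observation that $u * L$ should vanish is also at the heart of the paper's proof (there it follows from the fact that the initial representatives of the two families can be chosen geometrically disjoint in the $\RR$-direction, since both have only positive ends and disjoint asymptotic orbit sets). But the obstacle you flag at the end is not merely awkward to arrange---it is insurmountable within a single symplectization. The foliation of $\RR \times (M \setminus B)$ by page lifts that you use as a barrier exists only for the non-generic almost complex structure $J_0$ compatible with the \emph{stable Hamiltonian} structure $\hH_0 = (\lambda_0,\omega)$ from Proposition~\ref{prop:holOBD}, where $\lambda_0$ is a confoliation that is integrable on the mapping torus, not a contact form. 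For any contact-compatible $J$ on $\RR\times M$, the lifted pages of an irreducible piece with genus $g_i > 0$ have Fredholm index $2 - 2g_i \le 0$ (Proposition~\ref{prop:index}) and do not persist; indeed item~4 of Proposition~\ref{thm:openbookSimple} asserts that for the data supplying your planar torsion curves there is \emph{no} holomorphic curve with positive ends at distinct simply covered binding orbits when $g > 0$. No isotopy of $\pi$ changes the page genus, and no choice of Giroux form helps, so one cannot arrange both families to be holomorphic for one $J$ on $\RR\times M$ unless $\pi$ happens to be planar.

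The paper's resolution is to abandon $\RR\times M$ in favour of a non-compact cylindrical symplectic \emph{cobordism}: Proposition~\ref{prop:cobordism} supplies $[0,1]\times M$ with a symplectic form matching $\omega + d(t\lambda_0)$ near the bottom, $d(e^t\lambda)$ near the top with $\ker\lambda$ isotopic to $\xi$, and of the standard form $d(\varphi(t)\lambda_0)$ on $[0,1]\times\uU$ for a neighborhood $\uU$ of $B$. After attaching cylindrical ends, one takes $J = J_0$ on $(-\infty,0]\times M$ and on $\RR\times\uU$ and equal to the Theorem~\ref{thm:openbook} data near $+\infty$. The hypothesis $N \cap B = \emptyset$ enters precisely here, allowing $\uU$ to be taken disjoint from the torsion domain so that both pieces of data coexist. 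The page lifts $S^{(i)}_{\sigma,\tau}$ with $\sigma$ very negative then give $J$-holomorphic curves whose bodies fill $(-\infty,T_0]\times M$, while the planar torsion curves emerge at the positive end. The contradiction is obtained by the filling argument of Theorem~\ref{thm:non-fillable} rather than by a one-shot intersection: $u_+ * L = 0$ and positivity of intersections keep $\mM_0(J)$ out of $(-\infty,T_0]\times M$, yet Proposition~\ref{prop:IFT} and Theorem~\ref{thm:compactness} force $\mM_0(J)$ to foliate the whole cobordism off a codimension-two set. Your proposed fallback of stretching the neck along $\partial N$ stays inside the symplectization where the page foliation is unavailable, so it does not address the difficulty; the interpolating cobordism in the $\RR$-direction is the essential new construction you are missing.
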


In order to explain our choice of terminology and the use of the word
``hierarchy,'' we now mention some related joint results
with Janko Latschev which are proved in \cite{LatschevWendl}.
These are most easily expressed by defining a contact invariant
$$
\PT(M,\xi) := \sup \left\{ k \ge 0 \ \big|\ 
\text{$(M,\xi)$ has no planar $\ell$-torsion for any $\ell < k$} \right\},
$$
which takes values in $\NN \cup \{0,\infty\}$ and is infinite if and only if
$(M,\xi)$ has no planar torsion.  Then the results stated above show that
$\PT(M,\xi) < \infty$ always implies $(M,\xi)$ is not
strongly fillable; moreover $\PT(M,\xi) \le 1$ whenever $(M,\xi)$ has 
Giroux torsion, $\PT(M,\xi) = 0$ if and only if $(M,\xi)$ is
overtwisted, and there exist contact manifolds without Giroux torsion
such that $\PT(M,\xi) < \infty$.  We claim now that contact
manifolds with larger
values of $\PT(M,\xi)$ not only exist but are, in some quantifiable sense, 
``closer'' to being fillable.  
This statement can be made precise by considering
the existence or non-existence of symplectic cobordisms between contact
manifolds with different values of $\PT(M,\xi)$, as in the following result.

\begin{thmu}[\cite{LatschevWendl}]
\label{thm:surgery}
For the contact manifold $(V_g,\xi_k)$ in
Corollary~\ref{cor:higherOrder}, $\PT(V_g,\xi_k) = k-1$.
Moreover, if $(M,\xi)$ is any contact manifold that appears as the
positive boundary of an exact symplectic cobordism whose negative boundary is 
$(V_g,\xi_k)$, then $\PT(M,\xi) \ge k-1$.
\end{thmu}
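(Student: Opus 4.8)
The plan is to prove the equality $\PT(V_g,\xi_k)=k-1$ by establishing the two inequalities separately, and then to deduce the cobordism statement from a monotonicity property of an auxiliary algebraic invariant under exact symplectic cobordisms. The upper bound $\PT(V_g,\xi_k)\le k-1$ is essentially immediate from the construction in Corollary~\ref{cor:higherOrder}: the multicurve $\Gamma\subset\Sigma$ has $k$ components and cuts off a genus-zero piece $\overline{\Sigma}_0$ with $k=(k-1)+1$ boundary components, so by Examples~\ref{ex:BUSOBD} and~\ref{ex:S1invariant} the surfaces $\{*\}\times\Sigma_0$ are the pages of a blown up planar open book whose pages have genus zero and $(k-1)+1$ boundary components, and since the complementary piece has positive genus the whole manifold is a planar $(k-1)$-torsion domain. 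Hence $(V_g,\xi_k)$ has planar $(k-1)$-torsion, and by the definition of $\PT$ this forces $\PT(V_g,\xi_k)\le k-1$.

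The lower bound is the real content, and I would obtain it by introducing a hierarchy of algebraic filling obstructions, call it $\AT(M,\xi)\in\NN\cup\{0,\infty\}$ (``algebraic torsion''), defined via a version of Symplectic Field Theory in which a formal parameter records the genus, or equivalently the Euler characteristic, of the curves being counted, so that $(M,\xi)$ has \emph{algebraic $\ell$-torsion}, i.e.\ $\AT(M,\xi)\le\ell$, precisely when the SFT differential kills the unit at ``genus $\le\ell$.'' The first step is to prove the implication: planar $\ell$-torsion $\Longrightarrow$ algebraic $\ell$-torsion. This is exactly the place where the existence, uniqueness and compactness theorems of the present paper for $J$-holomorphic curves in blown up summed open books are used — the genus-zero pages of the planar piece, regarded as curves in the symplectization with one positive puncture and $\ell+1$ negative punctures asymptotic to Reeb orbits over the interface, generate a relation killing the unit at the required genus, and the compactness statement rules out any unwanted breaking that would spoil it.

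The second step of the lower bound is to compute enough of this SFT for $(V_g,\xi_k)$ itself to see that the unit is \emph{not} killed at any order $\ell<k-1$, i.e.\ $\AT(V_g,\xi_k)\ge k-1$. Here I would use the $S^1$-invariant contact form adapted to $\Gamma$, perform a Morse--Bott analysis of its Reeb dynamics over the $k$ components of the multicurve, and exploit the topological constraints forced by the $k$ boundary components of the planar page together with the positive genus of $\Sigma\setminus\overline{\Sigma}_0$ to show that no holomorphic configuration of smaller ``complexity'' can contribute a lower-order relation. Combining this with the upper bound gives $\AT(V_g,\xi_k)=k-1$; since planar $\ell$-torsion would imply $\AT(V_g,\xi_k)\le\ell$, the manifold has no planar $\ell$-torsion for $\ell<k-1$, hence $\PT(V_g,\xi_k)\ge k-1$, and therefore $\PT(V_g,\xi_k)=k-1$.

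Finally, for the cobordism statement I would invoke the functoriality of this SFT invariant: an exact symplectic cobordism with negative boundary $(V_g,\xi_k)$ and positive boundary $(M,\xi)$ induces a chain map that is unital and respects the genus filtration, so that a unit-killing relation of order $\ell$ on the positive end pulls back to one of order at most $\ell$ on the negative end; equivalently $\AT$ is monotone under exact cobordisms in the sense that $\AT(M,\xi)\ge\AT(V_g,\xi_k)=k-1$. Applying once more the implication planar $\ell$-torsion $\Rightarrow\AT\le\ell$, we conclude that $(M,\xi)$ has no planar $\ell$-torsion for $\ell<k-1$, so $\PT(M,\xi)\ge k-1$, as claimed. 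I expect the main obstacle to be the second step of the lower bound, namely computing (or bounding) the relevant SFT of $(V_g,\xi_k)$ precisely enough to exclude lower-order torsion — this requires controlling all contributing holomorphic curves and coping with the usual transversality issues of SFT — together with setting up the invariant $\AT$ and its exact-cobordism functoriality in a rigorous framework; by contrast, the implication from geometric planar torsion to algebraic torsion follows rather directly from the holomorphic curve machinery already developed here.
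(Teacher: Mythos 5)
Your proposal is correct and takes essentially the same route as the paper: the claim is deduced from the three properties of the algebraic torsion invariant $\AT$ collected in the subsequent theorem from \cite{LatschevWendl}, namely $\AT(V_g,\xi_k)=k-1$, the universal inequality $\AT\le\PT$, and monotonicity of $\AT$ under exact symplectic cobordisms. You have reconstructed exactly this deduction, including the role of the $\hbar$-filtration on the SFT chain complex, and you correctly identify the genuinely hard step as the SFT computation showing $\AT(V_g,\xi_k)\ge k-1$, which is carried out in \cite{LatschevWendl} rather than in the present paper.
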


Since a contact $3$-manifold $(M,\xi)$ is tight if and only if
$\PT(M,\xi) \ge 1$, the above result can be regarded as demonstrating a
``higher order'' variant of the well-known conjecture that contact
$(-1)$-surgery on a Legendrian in a closed tight contact manifold always
produces something tight.  Indeed, since contact surgery gives rise to
a Stein cobordism, the above implies that contact surgery (or for that
matter, contact connected sums) on $(V_g,\xi_k)$ always
produces examples with $\PT(M,\xi) \ge k-1$.

\begin{remark}
It should be emphasized here that the scale defined by the invariant
$\PT(M,\xi)$ measures something completely different from the standard
quantitative measurement of Giroux torsion; the latter counts
the maximum number of
adjacent Giroux torsion domains that can be embedded in $(M,\xi)$, and can
take arbitrarily large values while $\PT(M,\xi) \le 1$.
Likewise, $(M,\xi)$ has Giroux torsion zero whenever $\PT(M,\xi) \ge 2$.
\end{remark}

The theorem above follows from some results proved in 
\cite{LatschevWendl} using notions from
Symplectic Field Theory, which also lie in the background of our
choice of terminology.  Recall that SFT is a generalization
of contact homology introduced by Eliashberg, Givental and Hofer \cite{SFT}
(see also \cite{CieliebakLatschev:propaganda} 
for the reformulation discussed here),
that defines contact invariants by counting $J$-ho\-lo\-mor\-phic 
curves with arbitrary genus and positive and negative ends 
in symplectizations of arbitrary dimension.  The chain complex of SFT
is a graded algebra of the form $\aA[[\hbar]]$, where $\hbar$ is an even
variable and $\aA$ is a graded unital algebra generated by symbols
$q_\gamma$ corresponding to closed Reeb orbits~$\gamma$.  There is then
a differential operator $\mathbf{D}_{\SFT} : \aA[[\hbar]] \to
\aA[[\hbar]]$ which counts holomorphic curves and vanishes by definition
on the ``constant'' elements $\RR[[\hbar]] \subset \aA[[\hbar]]$, hence
defining prefered homology classes in
$$
H_*^\SFT(M,\xi) := H_*(\aA[[\hbar]], \mathbf{D}_\SFT). 
$$
One then defines $(M,\xi)$ to have
\defin{algebraic $k$-torsion} if the homology satisfies the relation
$$
[\hbar^k] = 0 \in H_*^\SFT(M,\xi).
$$
For $k=0$, this means $[1] = 0$ and coincides with the notion of
\emph{algebraic overtwistedness} 
(cf.~\cite{BourgeoisNiederkrueger:algebraically}).
It follows easily from the formalism\footnote{For this informal discussion
we are taking it for granted that SFT is well defined, which was not proved
in \cite{SFT} and is quite far from obvious.  The rigorous definition of SFT, 
including the necessary abstract perturbations to achieve transversality,
is a large project in progress by Hofer-Wysocki-Zehnder, see for example
\cite{Hofer:CDM}.  The application stated above
however does not depend on this,
as it can also be proved using the ECH methods in Hutchings's appendix
to \cite{LatschevWendl}.}
of SFT that algebraic torsion of any order gives an obstruction 
to strong symplectic
filling, but in fact it is stronger, as it also implies obstructions to the
existence of exact symplectic cobordisms between certain contact manifolds.
To state this succinctly, one can define an algebraic cousin of the
invariant $\PT(M,\xi)$ by
$$
\AT(M,\xi) := \sup \left\{ k \ge 0 \ \big|\ 
\text{$(M,\xi)$ has no algebraic $\ell$-torsion for any $\ell < k$} \right\}.
$$
The above result is then a consequence of the following set of
results, which serve as our main motivation for keeping track of the
integer $k \ge 0$ in planar $k$-torsion.

\begin{thmu}[\cite{LatschevWendl}]
\label{thm:surgeryAlgebraic}
The invariant $\AT(M,\xi)$ has the following properties.
\begin{enumerate}
\item Any contact manifold $(M,\xi)$ with $\AT(M,\xi) < \infty$ is not
strongly fillable.
\item If there is an exact symplectic cobordism with positive boundary
$(M_+,\xi_+)$ and negative boundary $(M_-,\xi_-)$, then
$\AT(M_-,\xi_-) \le \AT(M_+,\xi_+)$.
\item Every contact $3$-manifold $(M,\xi)$ satisfies
$\AT(M,\xi) \le \PT(M,\xi)$.
\item For the examples $(V_g,\xi_k)$ in
Corollary~\ref{cor:higherOrder}, $\AT(V_g,\xi_k) = k-1$.
\end{enumerate}
\end{thmu}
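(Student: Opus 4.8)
The four assertions split into three formal consequences of the cobordism formalism --- (1), (2) and (3) --- and one genuine computation, (4). For (1) and (2) I would argue from the general behaviour of cobordism maps in SFT (or in its ECH surrogate). An exact symplectic cobordism $W$ with positive boundary $(M_+,\xi_+)$ and negative boundary $(M_-,\xi_-)$ induces, by counting the rigid genus-graded $J$-holomorphic curves in its completion, a chain map $\Phi_W : (\aA_+[[\hbar]],\mathbf{D}_\SFT) \to (\aA_-[[\hbar]],\mathbf{D}_\SFT)$ that is $\RR[[\hbar]]$-linear and sends the unit to the unit, hence on homology carries $[\hbar^k]\mapsto[\hbar^k]$ for every $k$. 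Thus $[\hbar^k]=0$ in $H_*^\SFT(M_+,\xi_+)$ forces $[\hbar^k]=0$ in $H_*^\SFT(M_-,\xi_-)$: algebraic $k$-torsion passes from the positive to the negative end. Since algebraic $k$-torsion implies algebraic $(k{+}1)$-torsion (multiply the relation witnessing $[\hbar^k]=0$ by $\hbar$), this is precisely the monotonicity $\AT(M_-,\xi_-)\le\AT(M_+,\xi_+)$ of (2). Part (1) is the case $M_-=\emptyset$: there $H_*^\SFT(\emptyset)=\RR[[\hbar]]$, in which $[\hbar^k]=\hbar^k\ne 0$, so a strong filling of $(M,\xi)$ would force $[\hbar^{\AT(M,\xi)}]=\hbar^{\AT(M,\xi)}\ne 0$, contradicting $\AT(M,\xi)<\infty$. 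Because SFT is not yet on rigorous footing, the honest argument replaces $\mathbf{D}_\SFT$ and $\Phi_W$ by the ECH cobordism maps of Hutchings--Taubes and the ECH-based notion of algebraic torsion of Hutchings's appendix to \cite{LatschevWendl}; the SFT version above is the blueprint.

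Part (3) is the SFT counterpart of the holomorphic-page computations behind Theorems~\ref{thm:ECH} and~\ref{thm:twisted}. Given a planar $k$-torsion domain in $(M,\xi)$, I would choose a contact form adapted to the underlying blown up summed open book so that the genus-zero, $(k{+}1)$-punctured pages of the planar piece lift to a finite-energy holomorphic foliation of a region of the symplectization, their boundary components asymptotic to Reeb orbits $e_1,\dots,e_{k+1}$ coming from the binding and interface of the planar piece. A page is then a connected genus-zero curve with $k+1$ positive punctures and no negative puncture; it enters the SFT potential $\mathbf{H}$ with genus weight $\hbar^{g-1}=\hbar^{-1}$ and with the variables $q_{e_1}\cdots q_{e_{k+1}}$, so the $k+1$ differentiations implicit in $\mathbf{D}_\SFT$ supply an extra $\hbar^{k+1}$. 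Invoking the uniqueness and compactness theorems to show the page is, up to exact terms, the only contribution, one gets $\mathbf{D}_\SFT(q_{e_1}\cdots q_{e_{k+1}})=\hbar^{-1}\hbar^{k+1}+(\text{exact})=\hbar^k+(\text{exact})$, hence $[\hbar^k]=0\in H_*^\SFT(M,\xi)$. Applied to the minimal order of planar torsion present this gives $\AT(M,\xi)\le\PT(M,\xi)$. For $k=0$ it specializes to: overtwisted (equivalently planar $0$-torsion) $\Rightarrow$ algebraically overtwisted, $[1]=0$; the general case is the natural lift, with the Euler characteristic of the $(k{+}1)$-punctured sphere accounting for the power $\hbar^k$.

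For (4), the inequality $\AT(V_g,\xi_k)\le k-1$ is immediate from (3) and Corollary~\ref{cor:higherOrder}, which supplies $(V_g,\xi_k)$ with planar $(k{-}1)$-torsion, hence $\PT(V_g,\xi_k)\le k-1$. The content is $\AT(V_g,\xi_k)\ge k-1$, i.e.\ $[\hbar^{k-2}]\ne 0$ in $H_*^\SFT(V_g,\xi_k)$ (which by the up-closedness of algebraic torsion forces $[\hbar^\ell]\ne 0$ for all $\ell\le k-2$). Here I would exploit the $S^1$-invariance of $(V_g,\xi_k)=(S^1\times\Sigma,\xi_\Gamma)$, where $\Gamma$ has $k$ components splitting $\Sigma$ into a genus-zero piece and a genus-$(g-k+1)$ piece: choose an $S^1$-invariant contact form whose Reeb flow reparametrizes the $S^1$-fibre direction, so that closed Reeb orbits form Morse--Bott families indexed by $\Gamma$ and by the complementary surfaces, and the finite-energy curves in the symplectization are constrained by the symmetry and the surface dynamics into an essentially combinatorial family. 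One then computes enough of $H_*^\SFT$ --- in particular the part carrying $\hbar^{k-2}$ --- to see that $\hbar^{k-2}$ is not in the image of $\mathbf{D}_\SFT$, the obstruction to exactness being exactly the combination of $k$ dividing curves with a positive-genus complementary region; as before, the rigorous execution is carried out in ECH terms in Hutchings's appendix.

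The genuinely hard step is the lower bound in (4): showing a class is \emph{non}-zero in $H_*^\SFT$ (or its ECH substitute) requires understanding the whole differential rather than exhibiting one convenient holomorphic curve, and it is precisely here that the explicit combinatorics of the $S^1$-invariant examples --- the number of dividing curves and the genus of the complementary pieces --- must be used in full. A pervasive secondary difficulty is foundational: since SFT is not rigorously defined, everything above must ultimately be rerouted through ECH cobordism maps and an ECH analogue of algebraic torsion, and one must check that these faithfully reproduce the SFT bookkeeping used in (1)--(3).
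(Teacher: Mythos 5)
This theorem is stated in the paper but \emph{not proved there}: it is quoted from \cite{LatschevWendl}, and the only comment the paper offers is that part (3) ``follows from a variation on our proof of Theorems~\ref{thm:ECH} and~\ref{thm:twisted}.'' So there is no in-paper proof to compare against; I can only evaluate your sketch on its own terms.

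Your route is the right one in outline, but there are two concrete gaps. First, you assert that (1) ``is the case $M_-=\emptyset$'' of (2), but (2) is stated for \emph{exact} cobordisms while (1) is about \emph{strong} fillings, which need not be exact (e.g.~the disk bundle filling of $T^3$). For a non-exact filling $W$, closed holomorphic curves contribute to the SFT cobordism map $\Phi_W$, so one cannot conclude $\Phi_W(1)=1$; one must instead argue that $\Phi_W(1)\in\RR[[\hbar]]$ is still a \emph{unit} (it has the form $1+O(\hbar)$ coming from the exponential of closed-curve counts), so that $\Phi_W(\hbar^k)=\hbar^k\Phi_W(1)\neq 0$. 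Without that observation, (1) is only ``not exactly fillable.'' Second, in (3) the curve you feed into $\mathbf{D}_\SFT$ --- the planar page with $k+1$ positive punctures at simply covered Morse--Bott orbits and no negative punctures --- has Fredholm index $2$ (Prop.~\ref{prop:index}), so it is not rigid and does not directly appear in the differential. The honest version, paralleling the ECH calculation in \S\ref{subsubsec:vanishing}, perturbs to a nondegenerate form so that each Morse--Bott torus produces an elliptic/hyperbolic pair, and the relation comes from the unique rigid index-$1$ page $u_0$ with exactly one hyperbolic positive end (the analogue of $\boldsymbol{\gamma}_0$). The $\hbar$-bookkeeping is unchanged --- genus $0$ with $k+1$ punctures still gives $\hbar^{-1}\cdot\hbar^{k+1}=\hbar^k$ after interpreting the $p$-variables of $\mathbf{H}$ as $\hbar$-weighted $q$-derivatives --- but the statement that the page ``is, up to exact terms, the only contribution'' needs to be made for this perturbed, rigid configuration. (Your labeling of the $\mathbf{H}$-variables as $q_{e_i}$ rather than $p_{e_i}$ is a convention slip that doesn't affect the count.) Finally, for (4) you correctly identify the lower bound $\AT(V_g,\xi_k)\ge k-1$ as the genuine content, but the paragraph offered is a pointer to the method ($S^1$-invariance plus dynamical constraints) rather than an argument; as written it would not survive scrutiny as a proof of non-vanishing, and this is precisely the part the paper defers entirely to \cite{LatschevWendl} and Hutchings's ECH appendix there.
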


In particular, the computation $\AT(M,\xi) \le \PT(M,\xi)$ follows
from a variation on our proof of 
Theorems~\ref{thm:ECH} and~\ref{thm:twisted}, and thus makes
essential use of the holomorphic curve results in the present article.

\subsection{Obstructions to non-separating embeddings and planarity}

We now discuss a parallel stream of results that apply to a wider class of
contact manifolds, some of which are fillable.  Observe that
in addition to ruling out symplectic fillings,
Theorem~\ref{thm:non-fillable} implies 
that contact manifolds with planar torsion can never 
appear as \emph{non-separating} contact type hypersurfaces 
in any closed symplectic $4$-manifold.  This is actually a consequence
of the following generalization of a result proved in
\cite{AlbersBramhamWendl}:

\begin{thm}
\label{thm:nonseparating}
Suppose $(M,\xi)$ is a closed contact $3$-manifold that contains a 
partially planar domain (see Definition~\ref{defn:partiallyPlanar}) and admits
a contact type embedding $\iota : (M,\xi) \hookrightarrow (W,\omega)$
into some closed symplectic $4$-manifold $(W,\omega)$.
Then $\iota$ separates~$W$.
\end{thm}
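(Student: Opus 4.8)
The plan is to exploit the existence of a special foliation of the partially planar domain by $J$-holomorphic curves of genus zero, which must then propagate through all of $W$ and obstruct the existence of closed cycles disjoint from $\iota(M)$. Suppose for contradiction that $\iota$ is non-separating. Then there is a closed, connected symplectic manifold obtained by ``cutting'' $W$ along $\iota(M)$ and regluing — more precisely, since $\iota(M)$ does not separate, one can find a loop in $W$ transverse to $\iota(M)$ meeting it in a single point algebraically, so $[\iota(M)] \ne 0 \in H_3(W;\ZZ)$, or equivalently there is a class $\eta \in H^1(W;\ZZ)$ with $\langle \eta, [\text{loop}]\rangle = 1$. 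First I would use this to build a non-compact symplectic manifold with a cylindrical end: cut $W$ open along $\iota(M)$ to get a compact symplectic cobordism $W'$ with two boundary components both contact-type copies of $(M,\xi)$, one convex and one concave, and then attach an infinite cylindrical end $[0,\infty) \times M$ to the concave boundary and $(-\infty,0] \times M$ to the convex boundary. Because the original embedding had contact type, the gluing of the almost complex structures is compatible, and one obtains a symplectic manifold with a symplectization-like end modeled on $\RR \times M$ at both ends (with matching contact forms up to the usual conformal factor).

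Next I would invoke the existence and compactness results for $J$-holomorphic curves in blown up summed open books advertised in the abstract (existence, uniqueness, compactness in ``blown up summed open books''), which the paper establishes as its technical core. The partially planar domain inside $(M,\xi)$ supports, after a suitable choice of contact form and compatible $J$, a foliation by embedded genus-zero holomorphic curves with cylindrical ends asymptotic to Reeb orbits in the interface and the binding; in the symplectization $\RR\times M$ these organize into a smooth moduli space of the expected dimension. I would take such a curve $u_0$ living in the end of our cut-open manifold and run it through the non-compact manifold built above, using SFT-type compactness: the moduli space of curves homologous to $u_0$ (with the same asymptotic data) is a smooth manifold whose compactification consists of nodal/broken configurations, and by the uniqueness statement for these low-genus curves in the partially planar piece, the only possible degenerations are breakings along the cylindrical ends into buildings whose levels are again such curves. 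Positivity of intersections (these curves are embedded and the foliation property forces disjointness of distinct fibers) then shows the moduli space, together with its broken ends, foliates an open subset of the cut-open manifold — and by a continuation/filling argument it must fill out \emph{all} of it, because the foliation cannot simply stop.

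The contradiction is then homological. The closed-up manifold (or rather, the count of these curves through a generic point) produces a closed $2$-cycle — a leaf of the foliation — whose intersection with $\iota(M)$ can be computed in two ways: on one hand, a generic fiber of the foliation lies in a single ``sheet'' and meets the cut locus $\iota(M)$ transversally in a controlled number of points determined purely by the asymptotic orbits (for a Lutz-tube-like or genuinely planar piece this count is essentially the number of binding/interface orbits hit, a nonzero even or odd number); on the other hand, pairing the homology class of the fiber against the class $\eta \in H^1(W;\ZZ)$ dual to $[\iota(M)]$ and using that the fiber, being part of a compact foliation of a compact manifold, is \emph{null-homologous} in $W$ after summing over the foliation — or more directly, using that the total moduli space is compact so the fibers sweep out a closed manifold disjoint in homology from the obstruction class — forces this intersection number to vanish. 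This is the contradiction.

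The main obstacle, and where I would spend the bulk of the technical effort, is the compactness step: controlling the degenerations of the moduli space in the non-compact cut-open manifold and showing the foliation genuinely extends across all of $W'$ rather than breaking in some uncontrolled way or escaping to infinity along the wrong end. This requires both the fine analysis of holomorphic curves in blown up summed open books (existence of the foliation, uniqueness ruling out unexpected extra curves, and a priori energy and area bounds preventing bubbling of spheres or escape of ends), and a careful argument — in the spirit of the non-separating argument in \cite{AlbersBramhamWendl} and the proof of Theorem~\ref{thm:non-fillable} in \S\ref{sec:non-fillable} — that the ``open and closed'' subset of the manifold swept out by the foliation is everything. The intersection-theoretic bookkeeping at the end is then comparatively routine, relying only on positivity of intersections and elementary algebraic topology of $H_3(W)$ versus $H^1(W)$.
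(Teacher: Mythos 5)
The core geometric idea is in the right neighborhood — cut $W$ open along $\iota(M)$ and run a family of holomorphic curves coming out of the planar piece through the resulting noncompact manifold, then obtain a contradiction from the fact that this family cannot escape to infinity — but two steps of your implementation do not survive scrutiny.

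First, the setup. You cut $W$ open once, obtaining a compact cobordism $W'$ with one convex and one concave boundary, and then attach cylindrical ends to \emph{both} boundaries (also note the signs are reversed: positive ends get glued to convex boundaries, not concave ones). The resulting completion has both a positive and a negative symplectization end. This is exactly the situation the paper is careful to avoid: the compactness result Theorem~\ref{thm:compactness} and the ``open-and-closed'' filling argument are formulated for manifolds with only a positive cylindrical end, and the presence of a negative end introduces entirely new degenerations — the limit building can have lower levels, and nothing a priori prevents the moduli space from simply draining into the negative end rather than filling $W'$. The paper's proof instead forms an \emph{infinite chain} of copies of $W'$ glued along $(M,\xi)$, producing a noncompact but geometrically bounded manifold $\wW$ with a single contact type boundary and no concave end at all; then the only possible escape is into the chain direction, which is ruled out by monotonicity (the curves have a priori bounded symplectic area, so they stay in a compact region). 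In a single-copy cobordism with a negative end, no such a priori confinement holds, and ``the foliation cannot simply stop'' is not available for free.

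Second, the contradiction mechanism. Your homological pairing is not well posed. The curves in $\mM_0(J)$ are punctured, with positive ends asymptotic to Reeb orbits in $B_0 \cup \iI_0 \cup \p M_0^P$, so they do not define classes in $H_2(W;\ZZ)$ and cannot be paired against $\eta \in H^1(W;\ZZ)$. Likewise ``null-homologous after summing over the foliation'' and ``the fibers sweep out a closed manifold'' do not parse: the swept-out region is a noncompact open set, not a closed cycle, and the intersection count of an individual leaf with the cut locus is not controlled by the asymptotic orbits at the end. The paper's contradiction is of a different nature entirely: combining Proposition~\ref{prop:IFT} (the moduli space foliates an open set) with Theorem~\ref{thm:compactness} (the region filled is closed up to a finite codimension-$2$ obstruction), the curves must fill \emph{all} of $\wW$; since they are confined to a compact piece by monotonicity, $\wW$ would have to be compact, which is false by construction. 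No intersection-theoretic pairing with $\eta$ appears anywhere, and none is needed once the infinite-chain trick is in place. If you want to pursue a homological variant you would need closed surfaces — e.g.\ by working with the Lefschetz-fibration-type structure that emerges when the curves do fill the filling — and that is a genuinely different argument from the one you sketch.
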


\begin{cor}
\label{cor:semifillings}
If $(M,\xi)$ is a closed contact $3$-manifold containing a partially planar
domain, then it does not admit any strong symplectic semifilling with
disconnected boundary.
\end{cor}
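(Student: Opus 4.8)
The plan is to deduce this directly from Theorem~\ref{thm:nonseparating} by the standard trick of capping off a strong semifilling to produce a closed symplectic $4$-manifold. Recall that a \emph{strong symplectic semifilling} of $(M,\xi)$ is a compact symplectic $4$-manifold $(W,\omega)$ whose boundary has $(M,\xi)$ as one (possibly proper) collection of boundary components, all of strong contact type, with $M$ carrying the boundary orientation. Suppose for contradiction that $(M,\xi)$ contains a partially planar domain and admits such a semifilling $(W,\omega)$ with $\p W$ disconnected, so there is at least one other boundary component besides the copy of $M$. First I would invoke the result of Etnyre and Honda \cite{EtnyreHonda:cobordisms} that every contact $3$-manifold is concave fillable: each boundary component of $(W,\omega)$ is itself a contact manifold and may be capped off by gluing on a concave filling along it. Performing this capping on every boundary component \emph{except} the distinguished copy of $(M,\xi)$ — and then also, separately, capping that last copy as well — yields a \emph{closed} symplectic $4$-manifold $(W',\omega')$ into which $(M,\xi)$ embeds as a contact type hypersurface.

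The key point is that this embedding is \emph{non-separating}. Indeed, before the final capping, $W$ together with the caps on the other components forms a compact symplectic manifold $W''$ whose only remaining boundary component is the distinguished $(M,\xi)$; since $\p W$ was disconnected, $W''$ is obtained from $W$ by attaching caps to a \emph{proper nonempty} subcollection of its boundary, so $W''$ is connected and has connected boundary $M$. Capping off $M$ then glues a concave filling $(W_{\mathrm{cap}}, \omega_{\mathrm{cap}})$ of $(M,\xi)$ onto $W''$ along $M$; the resulting closed manifold $W' = W'' \cup_M W_{\mathrm{cap}}$ contains the hypersurface $\iota(M)$ whose complement $W' \setminus \iota(M) = \mathrm{int}\,W'' \sqcup \mathrm{int}\,W_{\mathrm{cap}}$ is disconnected, so $\iota(M)$ does not separate $W'$. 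Here one must be slightly careful that the matching of co-orientations works out — $W''$ sits on the concave (negative) side and $W_{\mathrm{cap}}$ on the convex (positive) side, or vice versa — but in either case the resulting symplectic form on $W'$ is smooth across $\iota(M)$ and $\iota$ is a genuine contact type embedding. Since $(M,\xi)$ still contains the same partially planar domain, Theorem~\ref{thm:nonseparating} forces $\iota$ to separate $W'$, a contradiction.

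The main obstacle in writing this out carefully is purely bookkeeping: verifying that the Etnyre–Honda capping operations can be performed simultaneously and compatibly along several boundary components with the correct orientations, so that the glued-up object is honestly a closed symplectic manifold with $(M,\xi)$ as a non-separating contact type hypersurface. This is routine — it is exactly the construction already used to deduce the ``not strongly fillable'' clause of Theorem~\ref{thm:non-fillable} from the non-embedding statement — but it is the only step that requires any argument beyond quoting Theorem~\ref{thm:nonseparating}. One should also note that the conclusion is sharp in the sense that it says nothing when $\p W$ is connected (the genuine strong filling case), which is consistent with the fact that many partially planar domains — e.g.\ ordinary planar open books — do admit strong fillings.
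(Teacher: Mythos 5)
Your proposed proof contains a genuine error at the crucial step. You construct a closed symplectic manifold $W' = W'' \cup_M W_{\mathrm{cap}}$ by capping off the distinguished boundary component $(M,\xi)$ with a concave filling $W_{\mathrm{cap}}$, and then observe that $W' \setminus \iota(M) = \mathrm{int}\,W'' \sqcup \mathrm{int}\,W_{\mathrm{cap}}$ is disconnected; from this you conclude that $\iota(M)$ \emph{does not} separate $W'$. But a hypersurface separates precisely when its complement is disconnected, so your construction produces a \emph{separating} contact type hypersurface, and Theorem~\ref{thm:nonseparating} gives no contradiction at all. This is not merely a slip of terminology: the manifold you build genuinely has $\iota(M)$ separating it into the cap and everything else, because the two caps are disjoint and $\iota(M)$ is their common interface.

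The correct way to deduce the corollary from Theorem~\ref{thm:nonseparating} is the observation of Etnyre (cited in the paper after the corollary statement): given a strong filling of $(M,\xi)\sqcup(M',\xi')$ with $M'$ nonempty, one first attaches a symplectic $1$-handle joining $M$ to a component of $M'$, and only then caps off the resulting \emph{connected} convex boundary. The $1$-handle is essential: pushing $M$ slightly into the filling along the Liouville flow, the handle connects the thin collar between the pushed-in copy of $M$ and $\p W$ to the rest of $W$ through $M'$, so that the complement of $M$ in the closed capped-off manifold is connected, i.e.\ $M$ is non-separating. Your caps, attached independently, can never do this. The paper also gives a second, self-contained holomorphic-curve proof of the corollary (independent of Theorem~\ref{thm:nonseparating}): the moduli space $\mM_0(J)$ of curves emanating from the cylindrical end over $M$ must fill the completed filling outside a codimension-$2$ set, yet none of these curves can enter the cylindrical end over $M'$ because of convexity, which is already a contradiction.
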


Recall that a \defin{semifilling} of a contact manifold $(M,\xi)$ is defined
to be a filling of $(M,\xi) \sqcup (M',\xi')$ for any (perhaps empty)
closed contact manifold $(M',\xi')$.  The corollary follows from an
observation due to Etnyre (cf.~\cite{AlbersBramhamWendl}*{Example~1.3}),
that given a filling of $(M,\xi) \sqcup (M',\xi')$ with~$M'$ non-empty,
one can attach a symplectic $1$-handle to connect~$M$ and~$M'$ and then
cap off the resulting boundary in order to realize $(M,\xi)$ 
as a non-separating contact type hypersurface.
Corollary~\ref{cor:semifillings} also generalizes similar results
proved by McDuff for the tight
$3$-sphere \cite{McDuff:boundaries} and Etnyre for all planar contact 
manifolds \cite{Etnyre:planar}.

The algebraic counterpart to Corollary~\ref{cor:semifillings} involves
the so-called \emph{$U$-map} in Embedded Contact Homology.  This is a
natural endomorphism
$$
U : \ECH_*(M,\xi) \to \ECH_{*-2}(M,\xi)
$$
defined at the chain level by counting embedded index~$2$ holomorphic 
curves through a generic point in the symplectization.  
The same definition also gives a map on ECH with twisted coefficients,
$$
\widetilde{U} : \widetilde{\ECH}_*(M,\xi) \to \widetilde{\ECH}_{*-2}(M,\xi).
$$

\begin{thm}
\label{thm:Umap}
If $(M,\xi)$ is a closed contact $3$-manifold containing a partially
planar domain, then for all integers $d \ge 1$, the image of
$U^d : \ECH_*(M,\xi) \to \ECH_*(M,\xi)$ contains~$c(\xi)$.
\end{thm}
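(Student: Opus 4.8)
The plan is to reduce the statement to a moduli-theoretic assertion about $J$-holomorphic curves in the symplectization of $(M,\xi)$ adapted to a partially planar domain, exactly as in the proofs of Theorems~\ref{thm:ECH} and~\ref{thm:twisted}. The first step is to choose a contact form $\lambda$ and almost complex structure $J$ that are ``nice'' with respect to the partially planar domain $M^P \subset M$: one arranges a Morse--Bott (and then, after a small nondegenerate perturbation, nondegenerate) contact form so that the binding circles of the blown up summed open book in the planar piece become Reeb orbits of very small action, the pages become (after compactification) the projections of a foliation of the symplectization by embedded $J$-holomorphic curves with genus zero and one positive puncture asymptotic to a binding orbit (plus negative ends at the interface tori, with total homology controlled). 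This is precisely the family of curves constructed in the existence/uniqueness/compactness package advertised in the abstract, and I would invoke those theorems wholesale rather than reprove them. The key point is that among these curves there is a distinguished index~$2$ curve $u_0$ whose image passes through any prescribed generic point of the symplectization lying in the planar region, because the pages sweep out an open subset of $M$ and the $\RR$-translation sweeps out the symplectization direction.

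The core of the argument is then a count: the curve $u_0$ is one of the rigid index~$2$ curves counted by the $U$-map at the chain level (after fixing a generic base point $z$ in the symplectization over the planar piece), and I claim it is the \emph{only} such curve through $z$, with the correct sign, so that $U$ applied to a specific orbit set $\boldsymbol{\gamma}$ — the one whose orbits are the binding orbits of the page through $z$ — yields $\boldsymbol{\emptyset}$ plus terms that are boundaries. More precisely, the count of index~$2$ curves through $z$ from an orbit set $\boldsymbol{\gamma}$ to $\boldsymbol{\emptyset}$ is, up to the differential, $\pm 1$ when $\boldsymbol{\gamma}$ is that distinguished orbit set, which gives $U[\boldsymbol{\gamma}] = c(\xi)$ in homology; uniqueness of the holomorphic curve here is exactly the intersection-theoretic rigidity already established for curves in blown up summed open books (two such pages through a common point would have to coincide by positivity of intersections and the Siefring intersection formula). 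For the iterated statement $U^d$, I would iterate: applying $U$ to a longer orbit set, realized by a building of $d$ such pages stacked in the symplectization, peels off the base points one at a time, so that $U^d[\boldsymbol{\gamma}_d] = c(\xi)$ for the orbit set $\boldsymbol{\gamma}_d$ consisting of the relevant binding orbit with multiplicity $d$ (or $d$ parallel copies thereof, depending on the combinatorics of the page). Establishing that the compactification of the relevant index-$2d$ moduli space consists precisely of such $d$-fold buildings, with no other contributions to the count, is the substantive input and reuses the compactness theorem of the paper.

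The hard part will be the uniqueness/transversality bookkeeping that turns the existence of the page $u_0$ into an honest chain-level identity $U(\boldsymbol{\gamma}) = \boldsymbol{\emptyset} + \p(\text{something})$ — i.e.\ showing that the algebraic count of index~$2$ curves through the generic point $z$, from the distinguished orbit set, equals $\pm 1$, and that all other index~$2$ curves through $z$ from that orbit set either do not exist or cancel. This requires (i) automatic transversality for the low-genus, few-puncture curves in question, which holds by the usual Hofer--Lizan--Sikorav / Wendl criterion since these curves have index~$2$, genus~$0$ and meet the point constraint, and (ii) a compactness argument ruling out multiple-level buildings that could contribute to the same count, again using that any limit building must have a top level that is one of the pages and hence, being already rigid through $z$, leaves nothing for the lower levels to do. Once this is in place, the iteration for $U^d$ is formal. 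I would also need to check that the relevant orbit sets $\boldsymbol{\gamma}_d$ are genuine ECH generators (their orbits are elliptic or hyperbolic as dictated by the Morse--Bott perturbation, and the ECH index of the curves is computed via the adjunction/writhe formula to confirm it lands in the $U$-map's index class), but this is routine given the setup already used for Theorems~\ref{thm:ECH} and~\ref{thm:twisted}.
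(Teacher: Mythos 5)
Your proposal follows essentially the same route as the paper: fix the Morse--Bott data of Theorem~\ref{thm:openbook}, perturb to nondegenerate, pick a generic point $p$ over the planar piece, and use the uniqueness theorem to identify a single index-$2$ page $u_p$ as the unique curve counted by the $U$-map at the chain level, iterating $d$ times by raising the multiplicities of the source orbit set. The paper takes the source orbit set to be $\boldsymbol{\gamma}^{(k)}$, consisting of the \emph{elliptic} orbits $\gamma_j^e$ on the Morse--Bott tori of the planar piece (with multiplicity $k$ or $2k$) together with the binding orbits $\beta_i$ with multiplicity $k$, and verifies $U(\boldsymbol{\gamma}^{(k)}) = \boldsymbol{\gamma}^{(k-1)}$ and $U(\boldsymbol{\gamma}^{(1)}) = \boldsymbol{\emptyset}$ on the nose.

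Two points you gloss over deserve attention. First, writing ``$U[\boldsymbol{\gamma}] = c(\xi)$ in homology'' presupposes that $\boldsymbol{\gamma}^{(k)}$ is a cycle; this is not automatic and is established in the paper by showing that the only index-$1$ curves out of $\boldsymbol{\gamma}^{(k)}$ are the Morse--Bott cylinders $v_j^\pm$ coming from the perturbation, whose contributions to $\partial$ cancel in pairs (in the untwisted theory). Without this, you have a chain-level identity but no well-defined homology class to apply $U^d$ to. Second, the orbit set $\boldsymbol{\gamma}^{(d)}$ has orbits of covering multiplicity up to $d$ (or $2d$ on interior interface tori), so you need the uniqueness/classification statement of Theorem~\ref{thm:openbook} to hold for curves subordinate to the planar piece up to multiplicity $d$; the paper arranges this by choosing the contact form $\lambda$ depending on~$d$ (harmless, since ECH is independent of $\lambda$ by Taubes). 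Your appeal to automatic transversality is unnecessary here --- the construction supplies a generic Fredholm regular $J'$ --- and the image of ``$d$ pages stacked in the symplectization'' is a helpful heuristic for $U^d$, but the actual argument does not require analyzing a $2d$-dimensional moduli space or its compactification: one simply applies the single-step identity $U(\boldsymbol{\gamma}^{(k)}) = \boldsymbol{\gamma}^{(k-1)}$ iteratively.
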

This implies Corollary~\ref{cor:semifillings} due to some recent results
involving maps on ECH induced by cobordisms
(cf.~\cite{HutchingsTaubes:Arnold2}), though again, those results depend
on Seiberg-Witten theory, and our proof of Theorem~\ref{thm:nonseparating}
will not.

Theorem~\ref{thm:Umap} applies in particular to all planar contact
manifolds and can thus be viewed as an obstruction to planarity.  The
corresponding obstruction in Heegaard Floer homology is a known result of
Ozsv\'ath, Stipsicz and Szab\'o
\cite{OzsvathSzaboStipsicz:planar}.  Our version of the obstruction can easily 
be strengthened by observing that a planar open book is also a \emph{fully 
separating} partially planar domain, so analogously to Theorem~\ref{thm:twisted},
it yields a result with twisted coefficients---the Heegaard Floer theoretic
analogue of this result is apparently not known.

\setcounter{thmp}{\value{thm}}
\begin{thmp}
\label{thm:UmapTwisted}
If $(M,\xi)$ is a planar contact manifold,
then for all integers $d \ge 1$, 
the image of $\widetilde{U}^d : \widetilde{\ECH}_*(M,\xi) \to
\widetilde{\ECH}_*(M,\xi)$ contains $\tilde{c}(\xi)$.
\end{thmp}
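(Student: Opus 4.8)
The plan is to reduce Theorem~\ref{thm:UmapTwisted} to the (twisted) holomorphic curve picture that underlies Theorem~\ref{thm:twisted} and Theorem~\ref{thm:Umap}, exploiting the fact that a planar open book is simultaneously a partially planar domain and a fully separating one. First I would fix a Giroux-type contact form $\lambda$ adapted to the planar open book $\pi : M \setminus B \to S^1$, arranged so that the binding $B$ consists of Morse--Bott Reeb orbits of small action and the pages are projections of a nice Morse--Bott family of finite-energy planar holomorphic curves in the symplectization, each asymptotic to the binding orbits. After a small nondegenerate perturbation of $\lambda$ (as in the setup of \S\ref{sec:non-fillable} and \S\ref{sec:ECH}), one obtains a distinguished orbit set---the union of the binding orbits with appropriate multiplicities---call it $\boldsymbol{\gamma}_B$; the planar pages survive as a family of embedded index~$2$ holomorphic curves connecting $\boldsymbol{\gamma}_B$ to the empty orbit set $\boldsymbol{\emptyset}$.

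The core step is then exactly the computation already carried out for Theorem~\ref{thm:Umap}: the $U$-map at chain level counts embedded index~$2$ curves through a generic point, and the planar family provides, after accounting for the Morse--Bott $S^1$-symmetry, a single such curve through any generic point of the symplectization lying in the region over a page. This yields the chain-level identity $U \langle \boldsymbol{\gamma}_B \rangle = \langle \boldsymbol{\emptyset}\rangle + (\text{terms killed by the differential})$, and more generally $U^d$ applied to an orbit set built from $d$ nearby copies of the page family hits $\boldsymbol{\emptyset}$; this is what gives $c(\xi) \in \operatorname{im} U^d$ in Theorem~\ref{thm:Umap}. To upgrade this to the twisted statement, I would work in the chain complex over $\ZZ[H_2(M;\RR)]$ and observe that because the planar open book is \emph{fully separating}---each page has boundary on $B$ and the relevant relative classes of the counted curves are determined---the $2$-dimensional homology classes of all the curves appearing in the relevant pieces of the differential and of the $U$-map count are trivial (equivalently, they pair to zero against every closed $2$-form representing a filling form), exactly as in the passage from Theorem~\ref{thm:ECH} to Theorem~\ref{thm:twisted}. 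Hence the same cancellations survive with twisted coefficients, giving $\widetilde{U}^d \langle \widetilde{\boldsymbol{\gamma}}_B^{(d)}\rangle = \tilde c(\xi)$ in $\widetilde{\ECH}_*(M,\xi)$, so $\tilde c(\xi) \in \operatorname{im}\widetilde{U}^d$.

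The main obstacle, as in the untwisted case, is bookkeeping rather than conceptual: one must check that the holomorphic curves contributing to $U^d$ through $d$ generic points are \emph{precisely} the expected products/gluings of planar pages (no extra low-action curves, no multiply covered contributions), which requires the uniqueness and compactness results for holomorphic curves in blown up summed open books established elsewhere in the paper, together with an index and intersection-positivity argument controlling what can pass through the marked points. Once that structural statement is in hand, tracking the $H_2(M;\RR)$-decorations is routine precisely because full separation forces the decorating group-ring elements to be $1$; so the only genuinely new work beyond Theorem~\ref{thm:Umap} and the Theorem~\ref{thm:ECH}-to-Theorem~\ref{thm:twisted} argument is verifying that these two ingredients combine cleanly, which I expect to be straightforward.
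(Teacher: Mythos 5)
Your proposal follows essentially the same route as the paper's proof in \S\ref{subsubsec:Umap}: use Theorem~\ref{thm:openbook} to obtain the uniqueness of holomorphic pages up to multiplicity~$d$, form the orbit sets $\boldsymbol{\gamma}^{(k)}$ with multiplicity $k$ on each binding orbit of the planar open book, and show that $\widetilde{U}$ decreases this multiplicity at chain level while $e^{kA_p}\boldsymbol{\gamma}^{(k)}$ is a cycle. One small refinement: the fully separating condition for a planar open book is \emph{vacuous} (the interface $\iI$ is empty), so the cycle condition $\p\bigl(e^{kA_p}\boldsymbol{\gamma}^{(k)}\bigr)=0$ holds outright---there are no Morse--Bott interface cylinders $v_j^\pm$ and hence no $e^{[T_j]}-1$ terms to cancel---rather than surviving by the cancellation argument you invoke from the passage from Theorem~\ref{thm:ECH} to Theorem~\ref{thm:twisted}.
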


\begin{remark}
\label{remark:OmegaSeparating2}
Similarly to Remark~\ref{remark:OmegaSeparating}, one can generalize
the above by defining (cf.~Definition~\ref{defn:containsPartiallyPlanar})
the notion of an \emph{$\Omega$-separating} embedding
of a partially planar domain, where $\Omega$ is a closed $2$-form 
on~$M$.  Then such an embedding produces a version of 
Theorem~\ref{thm:UmapTwisted} for ECH with coefficients 
in $\ZZ[H_2(M;\RR) / \ker\Omega]$, and implies corresponding generalizations
of Corollary~\ref{cor:semifillings}.
\end{remark}

\begin{remark}
Note that by Theorem~\ref{thm:Umap} above, there are also many non-planar
examples for which $c(\xi)$ is in the image of
$U^d$, but the corresponding statement with twisted coefficients is not
true.  The most obvious example is the standard~$T^3$, which is a
partially planar domain (see Example~\ref{ex:symmetric}) but also
admits weak semifillings with disconnected boundary
(due to Giroux \cite{Giroux:plusOuMoins}).
\end{remark}

\subsection{Holomorphic curves and open book decompositions}

The technical work in the background of the above results is a set of
theorems that we will prove in \S\ref{sec:openbook}
relating holomorphic curves and a suitably generalized notion of 
open book decompositions.  For illustration purposes, 
we now state some simplified versions of these results.

Recall that if $M$ is a closed and oriented $3$-manifold, an 
\defin{open book decomposition} is a fibration 
$$
\pi : M \setminus B \to S^1,
$$ 
where 
$B \subset M$ is an oriented link called the \defin{binding}, and the closures
of the fibers are called \defin{pages}: these are compact, oriented and
embedded surfaces with oriented boundary equal to~$B$.  
An open book is called \defin{planar}
if the pages are connected and have genus zero, and it is said to \defin{support}
a contact structure $\xi$ if the latter can be written as $\ker\alpha$
for some contact form $\alpha$ (called a \defin{Giroux form}) whose induced
Reeb vector field $X_\alpha$ is positively transverse to the interiors
of the pages and positively tangent to the binding.  
The latter definition is due to Giroux \cite{Giroux:openbook}, who established
a groundbreaking one-to-one correspondence between isomorphism classes of 
contact manifolds and
their supporting open books up to right-handed stabilization.

We refer to \S\ref{subsec:definitions} for all the technical definitions
needed to understand the following statement.  A substantial generalization
will appear in \S\ref{subsec:bigTheorem} as Theorem~\ref{thm:openbook}.

\begin{prop}
\label{thm:openbookSimple}
Suppose $(M,\xi)$ is a closed connected contact $3$-manifold with a supporting
open book decomposition $\pi : M \setminus B \to S^1$ whose pages have
genus $g \ge 0$.  Then for any numbers $\tau_0 > 0$ and $m_0 \in \NN$,
$(M,\xi)$ admits a nondegenerate Giroux form~$\alpha$ and generic
compatible almost
complex structure~$J$ on its symplectization such that the following
conditions hold:
\begin{enumerate}
\item The Reeb orbits in~$B$ have minimal period less than $\tau_0$, and 
their covers up to multiplicity~$m_0$ all have Conley-Zehnder index~$1$
with respect to the framing determined by the open book.
All Reeb orbits in $M \setminus B$ have period at least~$1$.
\item If $g=0$, then after a small isotopy of~$\pi$ fixing the binding,
there is an $(\RR\times S^1)$-parametrized family of embedded
finite energy punctured $J$-holomorphic curves
$$
u_{(\sigma,\tau)} : \dot{\Sigma} \to \RR \times M, \qquad
(\sigma,\tau) \in \RR \times S^1
$$
which are Fredholm regular and have index~$2$ and have only positive ends,
such that for each $(\sigma,\tau) \in \RR \times S^1$,
the projection of $u_{(\sigma,\tau)}$ to~$M$ is an embedding that
parametrizes $\pi^{-1}(\tau)$.
\item
If $g=0$, then every somewhere injective finite energy punctured
$J$-holomorphic curve in $\RR\times M$ whose positive ends all approach
orbits in~$B$ of covering multiplicity up to~$m_0$ is part of the
$(\RR\times S^1)$-family described above.
\item
If $g > 0$, then there is no $J$-holomorphic curve in $\RR\times M$
whose positive ends all approach distinct simply covered orbits in~$B$.
\end{enumerate}
\end{prop}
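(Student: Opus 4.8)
The plan is to construct the Giroux form and almost complex structure explicitly adapted to the open book, then read off all four conclusions from the geometry of the resulting holomorphic curves, with the index/asymptotic analysis doing the heavy lifting. First I would recall the standard construction (following Giroux and the author's earlier work \cite{Wendl:fillable}) of a contact form $\alpha$ supported by $\pi$: on a neighborhood $B \times \DD$ of the binding it takes a rotationally symmetric model in which the Reeb orbits along $B$ lie in a Morse--Bott family, and away from $B$ the Reeb flow is positively transverse to the pages. By scaling the model near $B$ one arranges that the binding orbits have period less than $\tau_0$ while every Reeb orbit in $M \setminus B$ has period at least $1$; the local model also pins down the Conley--Zehnder index of the binding orbits and their multiple covers up to~$m_0$ to be~$1$ relative to the page framing, since a small eigenvalue of the linearized return map in the rotationally symmetric model contributes a half-integer winding that rounds to this value. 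A generic $C^\infty$-small perturbation then makes $\alpha$ nondegenerate without disturbing the above estimates, and one chooses $J$ on $\RR \times M$ compatible with $\alpha$ and generic among such, additionally tailored on the neighborhood of $\RR \times B$ so that the pages lift to $J$-holomorphic curves — this is exactly the input needed for (2).

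For conclusion (2), with $g = 0$ the pages $\pi^{-1}(\tau)$ are planar surfaces with boundary on $B$; after a small isotopy of $\pi$ fixing $B$ so that the pages are asymptotically cylindrical near the binding orbits, each page lifts to a finite-energy punctured $J$-holomorphic curve $u_{(0,\tau)}$ with only positive ends at the (multiply covered, index~$1$) binding orbits, and $\RR$-translation plus the $S^1$-family of pages gives the $(\RR \times S^1)$-parametrized family. The index computation is the standard one: for a genus-zero curve with $n$ positive punctures at orbits of Conley--Zehnder index~$1$ one gets $\ind = -\chi(\dot\Sigma) + \sum \muCZ = (n-2) + n = 2n-2$; one must check this equals~$2$, which forces $n = 2$ — but here the punctures are at \emph{covers} of binding components, so the relevant count is the number of positive ends and the arithmetic works out to index~$2$ for these particular pages (this is where the hypotheses on the framing in (1) are used). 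Fredholm regularity follows from an automatic-transversality argument in the sense of \cite{Wendl:fillable}: the curves are embedded, genus zero, and have enough punctures relative to their index that the normal Chern number is non-negative and automatic transversality applies.

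Conclusion (3) is the uniqueness statement and is the technical core. Given any somewhere-injective finite-energy $J$-holomorphic curve $u$ in $\RR \times M$ all of whose positive ends approach binding orbits of multiplicity at most~$m_0$, I would first apply intersection-positivity with the page family from (2): since the pages foliate (the lift of) a neighborhood of $\RR \times B$ and $u$ is asymptotic there, $u$ has controlled intersection number with the $u_{(\sigma,\tau)}$, and the adjunction/writhe inequalities of Hutchings and Siefring force $u$ to be everywhere tangent to the foliation, hence to coincide with one of its leaves. More carefully: one shows the algebraic intersection number $u \cdot u_{(\sigma,\tau)}$ is zero for all $(\sigma,\tau)$ by a homotopy argument (pushing $u_{(\sigma,\tau)}$ to $\sigma \to +\infty$ and using that the asymptotic linking numbers vanish, given the index~$1$ binding orbits and the page framing), and then positivity of intersections plus Siefring's relative asymptotic formula implies $u$ lies in a single leaf. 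The main obstacle is precisely this step: controlling the asymptotics of $u$ at the multiply covered binding orbits and ruling out curves that wrap around $B$ or have extra negative ends — this requires the careful period and index bookkeeping from (1) (so that no low-period orbits in $M \setminus B$ can appear as ends of a curve with small energy) together with Siefring's asymptotic analysis to get the intersection-theoretic rigidity.

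Finally, conclusion (4) for $g > 0$ is a non-existence statement proved by the same intersection-theoretic machinery run in reverse: if $u$ had all positive ends at distinct simply covered binding orbits, one computes its Fredholm index and normal Chern number from the genus-$g$ page data and finds — using that the pages of a genus $g>0$ open book, were they to lift, would carry an \emph{obstructed} normal deformation problem — a contradiction with the adjunction inequality, i.e.\ $\writhe$ and the asymptotic winding bounds are incompatible with $u$ being somewhere injective. Concretely, a genus-$g$ curve with $n$ positive ends at index-$1$ orbits would have $\ind = (2g - 2 + n) + n = 2g + 2n - 2$ while its expected normal Chern number becomes negative once $g > 0$, violating the inequality $c_N(u) \ge 0$ forced by positivity of self-intersections for somewhere-injective curves in dimension four; since every such $u$ would have to intersect the (non-existent but asymptotically well-defined) page leaves with negative local contribution, no such $u$ exists. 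The details here are a direct adaptation of the genus obstruction in \cite{Wendl:fillable}, so I expect no new difficulty beyond assembling the pieces — the real work is all in (3).
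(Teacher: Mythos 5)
Your overall plan for conditions (1) and (3) points in the right direction, and the Siefring intersection argument you sketch for (3) is essentially the paper's, but there are two genuine gaps that stop the proposal from assembling into a proof of the whole statement.

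\textbf{A missing structural idea.} You propose to choose a generic $J$ that is ``tailored on the neighborhood of $\RR\times B$ so that the pages lift to $J$-holomorphic curves.'' For $g>0$ this is impossible: a holomorphic lift of a page has Fredholm index $2-2g\le 0$ (see below), so a generic $J$ cannot admit such curves, yet the entire intersection-theoretic machinery needs them as reference objects. The paper resolves this by first working with a \emph{non-generic} almost complex structure $J_0$ compatible with a stable Hamiltonian structure $\hH_0=(\lambda_0,\omega)$ in which $\lambda_0$ is contact only near $B$ and equals $d\phi$ on the mapping torus, so $\ker\lambda_0$ is literally tangent to the pages. For this $J_0$ \emph{every} page of \emph{every} genus lifts to a $J_0$-holomorphic curve, giving the foliation $\fF_0$; one then perturbs $\lambda_0$ to a genuine contact form $\lambda_\epsilon$ and $J_0$ to a generic $J_\epsilon$. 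The planar leaves survive by the implicit function theorem; the higher-genus leaves do not, but $\fF_0$ persists as the target of the compactness argument. Without this two-stage structure, your framework has no way to even state (4) coherently.

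\textbf{The index computation is wrong, and the adjunction argument for (4) does not close.} You drop the relative first Chern number $2c_1^\Phi(u)$ from the index formula. With respect to the page framing $\Phi$ one has $c_1^\Phi(u^*\xi)=\chi(\dot\Sigma)$, so for a genus-$g$ page with $n$ punctures, all of Conley--Zehnder index $1$,
\begin{equation*}
\ind(u)=-\chi(\dot\Sigma)+2c_1^\Phi(u)+n=-(2-2g-n)+2(2-2g-n)+n=2-2g,
\end{equation*}
independent of $n$. (This is exactly Prop.~\ref{prop:index}, proved via the normal Cauchy--Riemann operator with $c_1^\Phi(N)=0$.) Your calculation $\ind=2n-2$ would force $n=2$, which is false, and your $g>0$ formula $\ind=2g+2n-2$ is both wrong and, were it correct, would make the curves appear for generic $J$, undermining (4). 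Moreover, for a hypothetical genus-$g$ page the constrained normal Chern number is $c_N(u)=c_1^\Phi(u)-\chi(\dot\Sigma)+\sum\alpha^\Phi_-=0$, not negative, and the adjunction formula then reads $u*u=2[\delta+\delta_\infty]+0+0$, which is satisfied with everything vanishing---no contradiction emerges from writhe or winding bounds. The actual argument for (4) is indirect: assume such a $J_\epsilon$-holomorphic curve exists along a sequence $\epsilon_k\to 0$; use the period gap and the currents version of Gromov compactness to bound genus and obtain SFT convergence to a $J_0$-holomorphic building; apply the Siefring intersection argument (Prop.~\ref{prop:uniqueness}) to identify the limit with a leaf of $\fF_0$, hence an index $2-2g\le 0$ curve; but a somewhere injective, non-trivial-cylinder $J_\epsilon$-curve for generic $J_\epsilon$ has index $\ge 1$, which is a contradiction since the index is a homotopy invariant. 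This compactness step is not cosmetic---it is where the ``ghost'' foliation $\fF_0$ earns its keep, and it is the ingredient your sketch of (4) omits.
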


The $(\RR\times S^1)$-parametrized family of $J$-holomorphic curves in
this theorem is called a \emph{holomorphic open book}; such objects
have appeared previously in the work of Hofer-Wysocki-Zehnder
\cites{HWZ:tight3sphere,HWZ:convex} and Abbas \cite{Abbas:openbook}.
Their existence for the case $g=0$ was already established in 
\cite{Wendl:openbook} and generalized in \cite{Abbas:openbook}, and lies
in the background of various contact topological results on planar contact
manifolds, such as the proof of the Weinstein conjecture by 
Abbas-Cieliebak-Hofer \cite{ACH} and the author's proof that strong
and Stein fillability are equivalent \cite{Wendl:fillable}.
Given existence, the uniqueness statement for the $g=0$ case follows from
a straightforward but surprisingly powerful intersection
theoretic argument, using the homotopy invariant intersection number for
punctured holomorphic curves developed by Siefring 
\cite{Siefring:intersection}.  The non-existence result for $g > 0$
relies on this same argument but is much subtler, because for
analytical reasons, the existence part of the above theorem fails in
the case $g > 0$.\footnote{Holomorphic open books with pages of 
positive genus cannot be
expected to exist in general because the necessary moduli spaces of
holomorphic curves have negative virtual dimension.  Hofer \cite{Hofer:real}
suggested that this problem might be solved by introducing a 
``cohomological perturbation''
into the nonlinear Cauchy-Riemann equation in order to raise the Fredholm
index.  This program has recently been carried out by
Casim Abbas \cite{Abbas:openbook} (see also \cite{vonBergmann:embedded}),
though applications to problems such as the Weinstein conjecture are as
yet elusive, as the compactness theory for the modified nonlinear
Cauchy-Riemann equation is quite difficult.}
The situation is saved by the observation, explained in \cite{Wendl:openbook},
that one can find a highly \emph{non-generic} choice of data for which
higher genus holomorphic open books exist, and this data is compatible
with an exact stable Hamiltonian structure,
which admits a well behaved perturbation to a suitable contact form.

In \S\ref{subsec:bigTheorem}, we will state and prove a generalization
of Proposition~\ref{thm:openbookSimple} in the context of blown up and summed 
open books,
which gives us existence and uniqueness for certain holomorphic curves
in partially planar domains that have only positive ends.  Such results
make it easy to find orbit sets in the ECH chain complex that satisfy
$\p\boldsymbol{\gamma} = \boldsymbol{\emptyset}$ or
$U^d\boldsymbol{\gamma} = \boldsymbol{\emptyset}$, thus proving
Theorems~\ref{thm:ECH}, \ref{thm:twisted}, \ref{thm:Umap} 
and~\ref{thm:UmapTwisted}.

As already mentioned, our main results on fillability and embeddability
(Theorems~\ref{thm:non-fillable}, \ref{thm:complement} and
\ref{thm:nonseparating}) can also
be proved without recourse to ECH and Seiberg-Witten theory, and we
shall do this in \S\ref{sec:non-fillable}.  The main idea behind such
arguments appeared already in \cite{Wendl:fillable}: given a strong filling
whose boundary contains a planar torsion domain, we can attach a
cylindrical end and use the above correspondence between open books and
holomorphic curves to find a region near infinity that is
foliated by a stable $2$-dimensional family of holomorphic curves.
This family can then be expanded into the filling and, due to 
the analytical properties of the holomorphic curves in question,
must foliate it.  But the latter produces a
contradiction, as one can then follow the family back into a different
region of the cylindrical end where our uniqueness statement in fact
\emph{excludes} the existence of such holomorphic curves.

To make this type of argument work, we need compactness and deformation
results for families of curves in a symplectic filling that
arise from the pages of a holomorphic open book.  An example of such a
result is the following.
Suppose $(M,\xi)$ is supported by a planar open book $\pi : M \setminus B
\to S^1$, and $\alpha$ and~$J_+$ are the contact form and almost complex
structure respectively provided by Proposition~\ref{thm:openbookSimple}.
Assume also that $(M,\xi)$ is the contact type boundary of a compact
symplectic manifold $(W,\omega)$ such that near~$\p W$, 
$\omega = d\lambda$ for a $1$-form $\lambda$ that matches~$\alpha$ 
at $M = \p W$.  We can then complete $(W,\omega)$ to a noncompact symplectic
manifold by attaching a cylindrical end
$$
(W^\infty,\omega) := (W,\omega) \cup_M \left( [0,\infty) \times M, d(e^t\alpha) \right).
$$
Let $u_+ : \dot{\Sigma} \to \RR \times M$ denote one of the holomorphic
planar pages provided by Proposition~\ref{thm:openbookSimple}; applying a
suitable $\RR$-translation to $u_+$, we may assume without loss of
generality that it lies in $[0,\infty) \times M \subset W^\infty$.  
Now choose an open neighborhood $\nN(B) \subset M$ of the binding~$B$ and an
open subset $\uU \subset M$ such that
$$
u_+(\dot{\Sigma}) \subset [0,\infty) \times \uU.
$$
Finally, choose any set of data $\alpha'$, $\omega'$, $J_+'$ and~$J'$ 
with the following properties:
\begin{itemize}
\item $\alpha'$ is a nondegenerate contact form on~$M$ that matches
$\alpha$ in $\uU \cup \nN(B)$ and has only Reeb orbits of period at
least~$1$ outside of $\nN(B)$
\item $\omega'$ is a symplectic form on~$W^\infty$ that matches
$d(e^t\alpha')$ on $[0,\infty) \times M$
\item $J'_+$ is a generic almost complex structure on 
$\RR\times M$ compatible with~$\alpha'$ that matches $J_+$ on
$\RR \times (\uU \cup \nN(B))$
\item $J'$ is an $\omega'$-compatible almost complex structure on $W^\infty$
which is generic in~$W$ and matches $J'_+$ in $[0,\infty) \times M$
\end{itemize}
We then denote by $\mM(J')$ the moduli space of all unparametrized finite
energy $J'$-holomorphic curves in $W^\infty$, and let $\mM_0(J')$ denote
the connected component of this space containing~$u_+$.
A standard application of the implicit function theorem 
(see e.g.~\cite{AlbersBramhamWendl}*{Theorem~4.7}) shows that
$\mM_0(J')$ is a smooth $2$-dimensional manifold whose elements are all
embedded and do not intersect each other; in particular they foliate an
open subset of~$W^\infty$.  The key to the proofs in \S\ref{sec:non-fillable}
as well as various other applications in \cites{NiederkruegerWendl,
LisiVanhornWendl} is to show that the curves in $\mM_0(J')$ also fill a
\emph{closed} subset outside of some harmless subvariety of codimension two.
That is the main point of the following result, which is a simplified
version of Theorem~\ref{thm:compactness} proved in \S\ref{sec:compactness}.

\begin{prop}
\label{thm:compactnessSimple}
$\mM_0(J')$ is compact except for convergence in the sense of
\cite{SFTcompactness} to holomorphic buildings
of the following types:
\begin{enumerate}
\item Buildings with empty main level and a single non-empty upper level
curve in $\RR\times M$ whose projection to~$M$ is embedded,
\item Finitely many nodal curves in $W^\infty$ consisting of two embedded
index~$0$ components that intersect each other transversely.
\end{enumerate}
\end{prop}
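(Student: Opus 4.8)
The plan is to identify $\mM_0(J')$ with a suitable piece of the Siefring intersection-theoretic framework for punctured holomorphic curves and run an SFT compactness argument, using the uniqueness statement of Proposition~\ref{thm:openbookSimple} (more precisely its generalization) to rule out all degenerations except the two listed. First I would recall that each curve $u \in \mM_0(J')$ has index $2$, genus $0$, and the same number of positive punctures as $u_+$ (namely the number of binding components), all asymptotic to the nondegenerate index-$1$ Reeb orbits in $B$ provided by Proposition~\ref{thm:openbookSimple}. Since $\alpha'$ agrees with $\alpha$ on $\nN(B)$ and the orbits in $B$ have period $<\tau_0$ while everything in $M\setminus\nN(B)$ has period $\ge 1$, an a priori energy/action bound shows that any sequence $u_k\in\mM_0(J')$ has uniformly bounded Hofer energy, so by \cite{SFTcompactness} a subsequence converges to a holomorphic building. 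The task is to constrain the limit building.

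The main steps are then: (1) Compute the total intersection pairing $u_+ * u_+$ in Siefring's sense and observe it is nonnegative (in fact equals the self-intersection number of the page inside the open book, which is $0$ for a planar page up to the contribution at the ends); combined with the adjunction-type formula for punctured curves this pins down the possible breakings. (2) Use positivity of intersections between the limit building and a \emph{fixed} curve $u_+$ (lying in $[0,\infty)\times\uU$) together with the fact that $u_+$'s projection foliates $\pi^{-1}(\tau)$, to show that no component of the limit can have a positive end on an orbit \emph{outside} $B$: such a component would have to intersect the foliation by holomorphic pages in a way forbidden by Siefring's relative intersection bound and the index count. (3) Having forced all positive ends of the limit building onto orbits in $B$ (with covering multiplicities bounded by $m_0$, which one arranges in advance), invoke the uniqueness clause — every somewhere-injective such curve in $\RR\times M$ is a page of the holomorphic open book — to conclude that every \emph{nonconstant} upper-level component is (a cover of) a page. (4) A dimension count on the building, using additivity of the Fredholm index over levels and the index-$1$ condition on the binding orbits, shows the building has at most one nonconstant symplectization level, whose projection to $M$ is therefore an embedded page; this is case~(1). (5) If instead the building stays in the main level $W^\infty$, the only way to split off while keeping total index $2$ and genus $0$ is a nodal curve with two components of index $0$ meeting at one transverse node; automatic transversality (the curves are embedded, genus $0$, with the expected number of ends) forces Fredholm regularity, and since the index-$0$ moduli space is discrete there are only finitely many such nodal configurations — this is case~(2). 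Finiteness in case~(2) follows because each index-$0$ component lies in a compact moduli space (by the same energy bound) which is a $0$-manifold, hence finite.

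The hard part will be step~(2): ruling out limit components with positive ends on orbits outside the binding. Such orbits have period $\ge 1$, so they carry nontrivial action, and \emph{a priori} a degenerate limit could break off a curve in $\RR\times M$ with a positive end on one of them and a negative end pairing against the binding orbits. The resolution is intersection-theoretic: the fixed holomorphic pages $\{u_{(\sigma,\tau)}\}$ from Proposition~\ref{thm:openbookSimple} foliate $[0,\infty)\times\uU$, and any curve in the limit building must have nonnegative Siefring intersection with each of them, with the total being bounded by $u_+ * u_+$. But a component with an ``extra'' positive end outside $B$ would, by the asymptotic analysis of Siefring, be forced to have strictly positive intersection with infinitely many pages, or to be entirely contained in a single page — and a page, being embedded with all ends on $B$, cannot have an end on an orbit outside $B$. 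This dichotomy, together with the bounded total intersection number, is what excludes the unwanted degenerations; carrying it out rigorously requires the relative asymptotic formulas of \cite{Siefring:intersection} and careful bookkeeping of the relative homology classes, which is exactly where Theorem~\ref{thm:compactness} in \S\ref{sec:compactness} does the real work.
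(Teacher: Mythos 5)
Your proposal has the right skeleton---energy bound, SFT compactness, intersection theory plus the uniqueness statement to constrain the limit building, an index count for the nodal main-level case---but the crucial step~(2) is aimed at the wrong target, and that is where the actual argument lives.

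You spend step~(2) trying to rule out ``limit components with positive ends on orbits outside $B$.'' But SFT compactness already hands you that for free: the positive ends of the top level of the limit building converge to the positive ends of the $u_k$, which lie in $B$, and nothing escapes to different orbits under the convergence of \cite{SFTcompactness}. There is no scenario of this kind to exclude. What genuinely has to be excluded is different: a nontrivial upper-level component $v$ in $\RR\times M$ whose positive \emph{and} negative ends all lie at binding orbits (which is forced anyway by the period dichotomy---orbits in $B$ have period $<\tau_0$ while everything else has period $\ge 1$, so negative ends at long orbits would give $v$ negative energy). The mechanism that kills such a $v$ is not a raw count against the foliation; it is the \emph{asymptotic winding} argument from \cite{AlbersBramhamWendl}: at each orbit in $B$ the extremal winding numbers $\alpha^\Phi_\pm$ satisfy $\alpha^\Phi_+-\alpha^\Phi_-=1$, so a positive end and a negative end of $v$ at the same orbit approach with windings on opposite sides of a gap, which forces the projection of $v$ to $M$ to intersect the pages, contradicting $u_+*v=0$. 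Your step~(4) ``dimension count'' is also not quite how the conclusion is reached---once the uniqueness statement shows that any nontrivial upper-level component is a page, the point is topological: pages have no negative ends, hence cannot be glued to anything below them, so the building must consist of a single page plus trivial cylinders, i.e.\ an empty main level. Finally, in the version actually proved in \S\ref{sec:compactness} there are extra cases (double covers, connectors over Morse-Bott tori) that do not arise here precisely because the positive ends of each $u_k$ sit at \emph{distinct, simply covered} binding orbits; your outline silently relies on this simplification, which is legitimate for this proposition but worth noting since the paper treats the general case and deduces the simple one.
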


It is instructive perhaps to compare this with the results of
McDuff \cite{McDuff:rationalRuled}: in particular, the role of McDuff's 
symplectic sphere
with nonnegative self-intersection is played by our holomorphic curve~$u_+$,
which generates a smooth $2$-dimensional family of curves that, due to
the above compactness result and the aforementioned implicit function
theorem, must fill the entirety of~$W^\infty$.  In the form stated above, this
result follows from \cite{AlbersBramhamWendl}*{Theorem~4.8}.  The version
we will prove in \S\ref{sec:compactness} for a general partially planar
domain is more complicated because one cannot generally avoid holomorphic
buildings with multiply covered components, nonetheless one can still show
that only finitely many such buildings can appear.

\subsection{Open questions and recent progress}
\label{sec:discussion}

Let us now discuss a few questions that arise from the above results,
some of which have been partially answered since the first version of this
paper appeared.
In light of the equivalence between the ECH and Ozsv\'ath-Szab\'o contact 
invariants, recently established in independent work of
Colin-Ghiggini-Honda \cite{ColinGhigginiHonda:HF=ECH3} and
Kutluhan-Lee-Taubes \cite{KutluhanLeeTaubes5}, our vanishing results for
the ECH contact invariants imply corresponding results in Heegaard Floer
homology.  Some of these were already known from the work of various
authors \cites{GhigginiHondaVanhorn,GhigginiHonda:twisted,HondaKazezMatic,
Massot:vanishing,Mathews:sutured}, but their results
appear thus far to recognize planar torsion only up to order~$1$.

\begin{question}
Can one prove within the context of Heegaard Floer homology (i.e.~without
using ECH) that the
contact invariant vanishes in the presence of planar $k$-torsion for $k \ge 2$?
\end{question}

As we sketched in the above discussion of related results in 
\cite{LatschevWendl}, the hierarchical structure encoded by 
the order $k \ge 0$ of planar $k$-torsion can be detected 
algebraically via Symplectic Field Theory, and it also can be detected by
a refinement of the ECH contact invariant explained in Hutchings's
appendix to \cite{LatschevWendl}.  The latter raises the question of
what structure in Heegaard Floer homology might also be able to see this
hierarchy, but apparently nothing is yet known about this.

\begin{question}
Can Heegaard Floer homology distinguish between two contact manifolds with
vanishing Ozsv\'ath-Szab\'o invariant but differing minimal 
orders of planar torsion?
Does this imply obstructions to the existence of exact or Stein 
cobordisms?
\end{question}

It should be mentioned that in presenting this introduction to planar
torsion, we neither claim nor believe it to be the most general source of
vanishing results for the various invariants under discussion.
For the Ozsv\'ath-Szab\'o invariant, \cite{Massot:vanishing} produces
vanishing results on some Seifert fibered $3$-manifolds that fall under
the umbrella of our Corollary~\ref{cor:examples} and 
Remark~\ref{remark:Seifert}, but also some that do not since
there is no condition requiring the existence of a planar
piece.  This phenomenon appears to be related to a generalization
of planar torsion that has recently emerged from joint work of the author
with Lisi and Van Horn-Morris: the idea is to replace the contact fiber sum
with a more general
``plumbing'' construction that produces a notion of
``higher genus binding.''  Among its applications, this allows a
substantial generalization of Corollary~\ref{cor:examples} that encompasses
all of the examples in \cite{Massot:vanishing} and many more; details of
this will appear in the forthcoming paper \cite{LisiVanhornWendl}.

And now the obvious question: what can be done in higher dimensions?
There has been significant activity in this area in the last few years.
Atsuhide Mori \cite{Mori:Lutz} showed that certain blown up open books
in dimension~$5$ produce a filling obstruction that strongly resembles the
Lutz tube and is related to Niederkr\"uger's speculative notion of 
higher-dimensional overtwistedness \cite{Plastikstufe}.
After the preprint version of the present article first appeared, Mori's
construction was generalized to all dimensions in a joint paper of the author
with Massot and Niederkr\"uger \cite{MassotNiederkruegerWendl} which also
defined a higher-dimensional notion of
Giroux torsion, giving the first examples of non-fillable
contact manifolds in all dimensions that cannot be called ``overtwisted'' in
any reasonable sense.  The constructions in \cite{MassotNiederkruegerWendl}
also give some hints as to how one might define something analogous to
higher-order planar
torsion that could be detected algebraically via SFT in all dimensions.
This subject is still in its infancy, but it now at least seems safe to
state the following conjecture:

\begin{conju}
For all $n \ge 1$ and $k \ge 0$, there exist $(2n+1)$-dimensional contact
manifolds $(M,\xi)$ with $\AT(M,\xi) = k$.  In particular, there exists in every
dimension greater than one a sequence of non-fillable contact manifolds
$\{ (M_k,\xi_k) \}_{k \ge 0}$ such that $(M_k,\xi_k)$ admits exact symplectic
cobordisms to $(M_\ell,\xi_\ell)$ if and only if $k \le \ell$.
\end{conju}

\section{The definition of planar torsion}
\label{sec:definitions}

\subsection{Blown up summed open books}
\label{subsec:summed}

We now explain the decomposition of a contact manifold into ``binding sums'' 
of supporting open books, which underlies the notion of a planar torsion domain.

Assume $M$ is an oriented smooth manifold containing two disjoint 
oriented submanifolds
$N_1,N_2 \subset M$ of real codimension~$2$, which admit an
orientation preserving diffeomorphism 
$\varphi : N_1 \to N_2$
covered by an orientation reversing isomorphism $\Phi : \nu N_1 \to \nu N_2$
of their normal bundles.  Then we can define a new smooth manifold $M_\Phi$, 
the \defin{normal sum} of $M$ along $\Phi$, by removing neighborhoods
$\nN(N_1)$ and $\nN(N_2)$ of $N_1$ and $N_2$ respectively, then gluing 
together the resulting manifolds with boundary along an orientation
reversing diffeomorphism
$$
\p\nN(N_1) \to \p\nN(N_2)
$$
determined by~$\Phi$.  This operation determines $M_\Phi$ up
to diffeomorphism, and is also well defined
in the contact cateogory: if $(M,\xi)$ is a contact manifold and $N_1,N_2$
are contact submanifolds with $\varphi : N_1 \to N_2$ a contactomorphism,
then $M_\Phi$ admits a contact structure 
$\xi_\Phi$, which agrees with $\xi$ away from $N_1$ and $N_2$ 
(cf.~\cite{Geiges:book}*{\S 7.4}).
Although the issue of uniqueness is not discussed in 
\cite{Geiges:book}*{\S 7.4}, one can show that the construction of
$\xi_\Phi$ explained there is canonical up to isotopy; in the specific
setting that we will be concerned with below, this is an obvious consequence
of the uniqueness of ``supported'' contact structures
(cf.~Definition~\ref{defn:GirouxForm} and the ensuing discussion).

We will consider the special case of the contact fiber sum where $N_1$ and
$N_2$ are disjoint components\footnote{We use the word \emph{component}
throughout to mean any open and closed subset, i.e.~a disjoint union of
connected components.}
of the binding of an open book decomposition
$$
\pi : M \setminus B \to S^1
$$
that supports~$\xi$.
Then $N_1$ and $N_2$ are automatically contact submanifolds, whose normal 
bundles come with distinguished trivializations determined by the open book.
In the following, we shall always assume that $M$ is oriented and 
the pages and binding are
assigned the natural orientations determined by the open book, so in 
particular the binding is the oriented boundary of the pages.

\begin{defn}
Assume $\pi : M \setminus B \to S^1$ is an open book decomposition on $M$.
By a \defin{binding sum} of the open book, we mean 
any normal sum $M_\Phi$ along an orientation reversing
bundle isomorphism $\Phi : \nu N_1 \to \nu N_2$ covering a diffeomorphism
$\varphi : N_1 \to N_2$, where $N_1,N_2 \subset B$ are disjoint components 
of the binding and $\Phi$ is constant with respect to the distinguished 
trivialization determined by~$\pi$.  The resulting smooth manifold will
be denoted by
$$
M_{(\pi,\varphi)} := M_\Phi,
$$
and we denote by $\iI_{(\pi,\varphi)} \subset M_{(\pi,\varphi)}$ the
closed hypersurface obtained by the identification of $\p\nN(N_1)$ with
$\p\nN(N_2)$, which we'll also call the \defin{interface}.
We will then refer to the data $(\pi,\varphi)$ as a \defin{summed open book
decomposition} of $M_{(\pi,\varphi)}$, whose
\defin{binding} is the (possibly empty) codimension~$2$ submanifold
$$
B_\varphi := B \setminus (N_1 \cup N_2) \subset M_{(\pi,\varphi)}.
$$
The \defin{pages} of $(\pi,\varphi)$ are the connected components of the
fibers of the naturally induced fibration
$$
\pi_\varphi : M_{(\pi,\varphi)} \setminus (B_\varphi \cup \iI_{(\pi,\varphi)})
\to S^1;
$$
if $\dim M = 3$, then these are
naturally oriented open surfaces whose closures are generally
immersed (distinct boundary components may sometimes coincide).

If $\xi$ is a contact structure
on $M$ supported by $\pi$, we will denote the induced contact structure
on $M_{(\pi,\varphi)}$ by
$$
\xi_{(\pi,\varphi)} := \xi_\Phi
$$
and say that $\xi_{(\pi,\varphi)}$ is \defin{supported by} the summed
open book $(\pi,\varphi)$.
\end{defn}

It follows from the corresponding fact for
ordinary open books that every summed open book decomposition supports a
contact structure, which is unique up to isotopy: in fact it depends only 
on the isotopy class of the open book $\pi : M \setminus B \to S^1$, the
choice of binding components $N_1,N_2 \subset B$ and isotopy class of 
diffeomorphism $\varphi : N_1 \to N_2$.

Throughout this discussion, $M$, $N_1$, $N_2$ and the pages of 
$\pi$ are all allowed to be disconnected (note that $\pi : M \setminus B
\to S^1$ will have disconnected pages if~$M$ itself is disconnected).
In this way, we can incorporate
the notion of a binding sum of \emph{multiple}, separate (perhaps summed)
open books, e.g.~given
$(M_i,\xi_i)$ supported by $\pi_i : M_i \setminus B_i \to S^1$ with
components $N_i \subset B_i$ for $i=1,2$, 
and a diffeomorphism $\varphi : N_1 \to N_2$,
a binding sum of $(M_1,\xi_1)$ with $(M_2,\xi_2)$ can be defined by
applying the above construction to the disjoint union $M_1 \sqcup M_2$.
We will generally use the shorthand notation
$$
M_1 \boxplus M_2
$$
to indicate manifolds constructed by binding sums of this type,
where it is understood that~$M_1$ and~$M_2$ both come with contact structures
and supporting summed open books, which combine to determine
a summed open book and supported contact structure on $M_1 \boxplus M_2$.

\begin{example}
\label{ex:T3}
Consider the tight contact structure on $M := S^1 \times S^2$ with its 
supporting open book decomposition
$$
\pi : M \setminus (\gamma_0 \cup \gamma_\infty) \to S^1 :
(t,z) \mapsto z / |z|,
$$
where $S^2 = \CC \cup \{\infty\}$, $\gamma_0 := S^1 \times \{0\}$,
$\gamma_\infty := S^1 \times \{\infty\}$ and $S^1$ is identified with the
unit circle in~$\CC$.  This open book has cylindrical
pages and trivial monodromy.  Now let $M'$ denote a second copy of the
same manifold and 
$$
\pi' : M' \setminus (\gamma_0' \cup \gamma_\infty') \to S^1
$$
the same open book.  Defining the binding sum $M \boxplus M'$ by
pairing $\gamma_0$ with $\gamma_0'$ and $\gamma_\infty$ with $\gamma_\infty'$, 
we obtain the standard contact $T^3$.  In fact, each of the tight contact 
tori $(T^3,\xi_n)$, where
$$
\xi_n = \ker\left[ \cos(2\pi n \theta)\ dx + \sin(2\pi n\theta)\ dy \right]
$$
in coordinates $(x,y,\theta) \in S^1 \times S^1\times S^1$, can be obtained
as a binding sum of $2n$ copies of the tight $S^1 \times S^2$; 
see Figure~\ref{fig:T3}.
\end{example}

\begin{figure}[hbt]
\begin{center}
\includegraphics{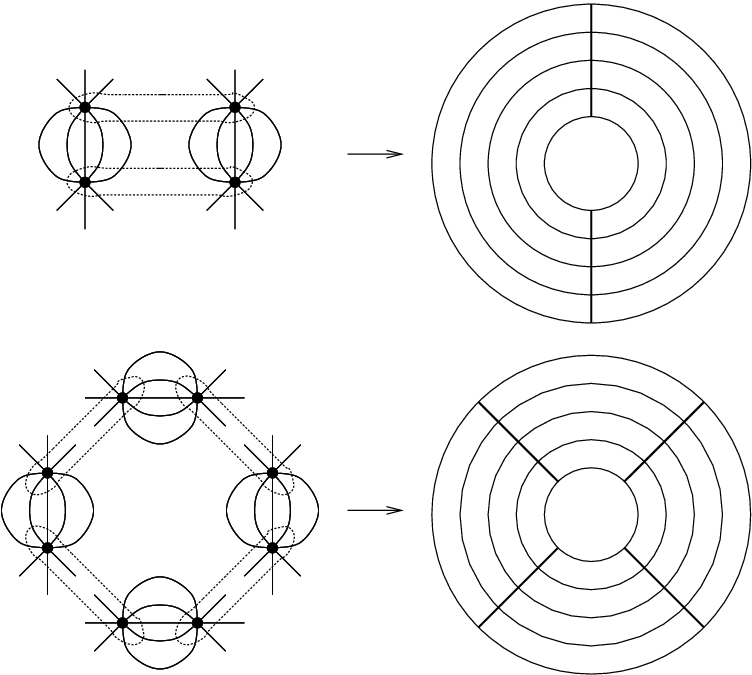}
\caption{\label{fig:T3} Two ways of producing tight contact tori from
$2n$ copies of the tight $S^1 \times S^2$.  At left, copies of $S^1 \times S^2$
are represented by open books with two binding components (depicted here
through the page)
and cylindrical pages.  For each dotted oval surrounding two binding
components, we construct the binding sum to produce the manifold
at right, containing $2n$ special pre-Lagrangian tori (the black line segments)
that separate regions foliated by cylinders.  
The results are $(T^3,\xi_n)$ for $n=1,2$.}
\end{center}
\end{figure}

\begin{example}
\label{ex:selfSum}
Using the same open book decomposition on the tight $S^1 \times S^2$
as in Example~\ref{ex:T3}, one can take only a single copy and perform
a binding sum along the two binding components~$\gamma_0$ and~$\gamma_\infty$.
The contact manifold produced by this operation 
is the quotient of $(T^3,\xi_1)$ by
the contact involution $(x,y,\theta) \mapsto (-x,-y,\theta + 1/2)$, and
is thus the torus bundle over $S^1$ with monodromy~$-1$.  The resulting
summed open book on~$T^3 / \ZZ_2$ has connected cylindrical pages, 
empty binding and a single interface
torus of the form $\iI_{(\pi,\varphi)} = \{ 2\theta = 0\}$, inducing a fibration
$$
\pi_\varphi : (T^3 / \ZZ_2) \setminus \iI_{(\pi,\varphi)} 
\to S^1 : [(x,y,\theta)] \mapsto 
\begin{cases}
y & \text{ if $\theta \in (0,1/2)$,}\\
-y & \text{ if $\theta \in (1/2,1)$.}
\end{cases}
$$
\end{example}

The following two special cases of summed open books are of crucial importance.

\begin{example}
An ordinary open book can also be regarded as a summed open book: we simply
take $N_1$ and $N_2$ to be empty.
\end{example}

\begin{example}
\label{ex:symmetric}
Suppose $(M_i,\xi_i)$ for $i=1,2$ are closed connected contact $3$-manifolds 
with supporting open books $\pi_i$ whose pages are diffeomorphic.
Then we can set $N_1=B_1$ and $N_2=B_2$, choose a diffeomorphism
$B_1 \to B_2$ and define $M = M_1 \boxplus M_2$ accordingly.  The
resulting summed open book is called \defin{symmetric}; observe that it
has empty binding, since every binding component of $\pi_1$ and
$\pi_2$ has been summed.
A simple example of this construction is $(T^3,\xi_1)$ as explained in
Example~\ref{ex:T3}, and for an even simpler example, summing two open
books with disk-like pages produces the tight $S^1 \times S^2$.
\end{example}

\begin{remark}
\label{remark:quasi}
There is a close relationship between summed open books and the notion of
open books with quasi-compatible contact structures, introduced by
Etnyre and Van Horn-Morris \cite{EtnyreVanhorn}.  A contact structure~$\xi$ 
is said to be \emph{quasi-compatible} with an open book if it admits a contact
vector field that is positively transverse to the pages and positively
tangent to the binding; if the contact vector field is also positively
transverse to~$\xi$, then
this is precisely the supporting condition, but quasi-compatibility is
quite a bit more general, and can allow e.g.~open books with empty binding.
A summed open book on a $3$-manifold 
gives rise to an open book with quasi-compatible contact
structure whenever a certain orientation condition is satisfied: this is
the result in particular whenever we construct binding sums of separate
open books that are labeled with signs in such a way that
every interface torus separates a positive piece from a negative piece.
Thus the tight $3$-tori in Figure~\ref{fig:T3} are examples, in this
case producing an open book with empty binding (i.e.~a fibration over~$S^1$)
that is quasi-compatible with all of the contact structures~$\xi_n$.  
However, it is easy to
construct binding sums for which this is not possible, 
e.g.~Example~\ref{ex:selfSum}.
\end{remark}

We now generalize the discussion to include manifolds with boundary.
Suppose $M_{(\pi,\varphi)}$ is a closed $3$-manifold with
summed open book $(\pi,\varphi)$, which has
binding~$B_\varphi$ and interface~$\iI_{(\pi,\varphi)}$, and 
$N \subset B_\varphi$ is a component of
its binding.  For each connected component $\gamma \subset N$,
identify a tubular neighborhood
$\nN(\gamma)$ of $\gamma$ with a solid torus $S^1 \times \DD$, defining
coordinates $(\theta,\rho,\phi) \in S^1 \times \DD$, where
$(\rho,\phi)$ denote polar coordinates\footnote{Throughout this paper,
we use polar coordinates $(\rho,\phi)$ on subdomains of~$\CC$ 
with the angular coordinate~$\phi$
normalized to take values in $S^1 = \RR / \ZZ$, i.e.~the actual \emph{angle}
is $2\pi\phi$.} on the disk $\DD$ and $\gamma$ is
the subset $S^1 \times \{0\} = \{ \rho = 0\}$.  Assume also that these
coordinates are adapted to the summed open book, in the sense that the 
orientation of $\gamma$ as a binding component agrees with the
natural orientation of $S^1 \times \{0\}$, and the intersections of the pages
with $\nN(\gamma)$ are of the form $\{ \phi = \text{const} \}$.
This condition determines the coordinates up to isotopy.  Then we define
the \emph{blown up} manifold $M_{(\pi,\varphi,\gamma)}$ from $M_{(\pi,\varphi)}$ by replacing
$\nN(\gamma) = S^1 \times \DD$ with $S^1 \times [0,1] \times S^1$, using
the same coordinates $(\theta,\rho,\phi)$ on the latter, i.e.~the binding
circle $\gamma$ is replaced by a $2$-torus, which now forms the boundary
of $M_{(\pi,\varphi,\gamma)}$.  If $\xi_{(\pi,\varphi)}$ is a contact structure
on $M_{(\pi,\varphi)}$ supported by $(\pi,\varphi)$, then we can define
an appropriate contact structure $\xi_{(\pi,\varphi,\gamma)}$ on
$M_{(\pi,\varphi,\gamma)}$ as follows.
Since $\gamma$ is a
positively transverse knot, the contact neighborhood theorem
allows us to choose the coordinates $(\theta,\rho,\phi)$ so that
$$
\xi_{(\pi,\varphi)} = \ker \left( d\theta + \rho^2 d\phi \right)
$$
in a neighborhood of~$\gamma$.
This formula also gives a well defined distribution on $M_{(\pi,\varphi,\gamma)}$, 
but the contact condition fails at the boundary $\{ \rho = 0 \}$.  We fix this
by making a $C^0$-small change in $\xi_{(\pi,\varphi)}$ to define a contact 
structure of the form
$$
\xi_{(\pi,\varphi,\gamma)} = \ker \left[ d\theta + g(\rho)\ d\phi \right],
$$
where $g(\rho) = \rho^2$ for $\rho$ outside a neighborhood of zero,
$g'(\rho) > 0$ everywhere and $g(0) = 0$.

Performing the above operation at all connected components $\gamma \subset N
\subset B_\varphi$ yields a compact manifold 
$M_{(\pi,\varphi,N)}$, generally with boundary, carrying a still
more general decomposition determined by the data $(\pi,\varphi,N)$, which
we'll call a \defin{blown up summed open book}.
We define its \defin{interface} to be the original interface
$\iI_{(\pi,\varphi)}$, and its \defin{binding} is
$$
B_{(\varphi,N)} = B_\varphi \setminus N.
$$
There is a natural diffeomorphism
$$
M_{(\pi,\varphi)} \setminus B_\varphi = M_{(\pi,\varphi,N)} 
\setminus \left(B_{(\varphi,N)} \cup \p M_{(\pi,\varphi,N)}\right),
$$
so the fibration
$\pi_\varphi : M_{(\pi,\varphi)} \setminus 
\left(B_\varphi \cup \iI_{(\pi,\varphi)}\right) \to S^1$ carries over to
$M_{(\pi,\varphi,N)} \setminus (B_{(\varphi,N)} \cup \iI_{(\pi,\varphi)} 
\cup \p M_{(\pi,\varphi,N)})$, and can then be
extended smoothly to the boundary to define a fibration
$$
\pi_{(\varphi,N)} :  M_{(\pi,\varphi,N)} \setminus \left(B_{(\varphi,N)} 
\cup \iI_{(\pi,\varphi)}\right) \to S^1.
$$
We will again refer to the connected components of the fibers
of~$\pi_{(\varphi,N)}$ as the \defin{pages} of $(\pi,\varphi,N)$, and
orient them in accordance with the co-orientations induced by the fibration.
Their closures are immersed surfaces which occasionally may have pairs of
boundary components that coincide as oriented $1$-manifolds, e.g.~this can
happen whenever two binding circles within the same connected open book
are summed to each other.

Note that the fibration $\pi_{(\varphi,N)} :  
M_{(\pi,\varphi,N)} \setminus \left(B_{(\varphi,N)} 
\cup \iI_{(\pi,\varphi)}\right) \to S^1$ is not enough information to fully
determine the blown up open book $(\pi,\varphi,N)$, as it does 
not uniquely determine the ``blown down'' manifold $M_{(\pi,\varphi)}$.
Indeed, $M_{(\pi,\varphi)}$ determines on each boundary torus
$T \subset \p M_{(\pi,\varphi,N)}$ a distinguished basis
$$
\{m_T,\ell_T\} \subset H_1(T),
$$
where $\ell_T$ is a boundary component of a page and 
$m_T$ is determined by the
meridian on a small torus around the binding circle to be blown up.
Two different manifolds $M_{(\pi,\varphi)}$ may sometimes produce diffeomorphic
blown up manifolds $M_{(\pi,\varphi,N)}$, which will however have different
meridians~$m_T$ on their boundaries.  Similarly, each interface torus
$T \subset \iI_{(\pi,\varphi)}$ inherits a distinguished basis
$$
\{\pm m_T,\ell_T \} \subset H_1(T)
$$
from the binding sum operation, with the difference that the meridian
$m_T$ is only well defined up to a sign.

The binding sum of an open book $\pi : M\setminus B \to S^1$
along components $N_1 \cup N_2 \subset B$
can now also be understood as a
two step operation, where the first step is to blow up $N_1$ and $N_2$,
and the second is to attach the resulting boundary tori to each other via a
diffeomorphism determined by $\Phi : \nu N_1 \to \nu N_2$.  One can choose
a supported contact structure on the blown up open book which fits
together smoothly under this attachment to reproduce the construction of
$\xi_{(\pi,\varphi,N)}$ described above.

\begin{defn}
A blown up summed open book $(\pi,\varphi,N)$ is called \defin{irreducible}
if the fibers of the induced fibration $\pi_{(\varphi,N)}$ are connected.
\end{defn}

In the irreducible case, the pages can be parametrized in a single
$S^1$-family, e.g.~an ordinary connected open book is irreducible, but a
symmetric summed open book is not.  Any blown up summed open book can however
be decomposed uniquely into \defin{irreducible subdomains}
$$
M_{(\pi,\varphi,N)} = M_{(\pi,\varphi,N)}^1 \cup \ldots \cup
M_{(\pi,\varphi,N)}^\ell,
$$
where each piece $M_{(\pi,\varphi,N)}^i$ for $i=1,\ldots,\ell$ is a compact manifold,
possibly with boundary, defined as the closure in
$M_{(\pi,\varphi,N)}$ of the region filled by some smooth $S^1$-family of pages.
Thus $M_{(\pi,\varphi,N)}^i$ carries a natural blown up summed open book
of its own, whose binding and interface are subsets of $B_\varphi$ and
$\iI_{(\pi,\varphi)}$ respectively, and
$\p M_{(\pi,\varphi,N)}^i \subset \iI_{(\pi,\varphi)} \cup \p M_{(\pi,\varphi,N)}$.
One can also write
$$
M_{(\pi,\varphi,N)} = \check{M}_{(\pi,\varphi,N)}^1 \boxplus \ldots \boxplus
\check{M}_{(\pi,\varphi,N)}^\ell,
$$
where the manifolds $\check{M}_{(\pi,\varphi,N)}^i$ also naturally carry
blown up summed open books and can be obtained from $M_{(\pi,\varphi,N)}^i$ by
blowing down $\p M_{(\pi,\varphi,N)}^i \cap \iI_{(\pi,\varphi)}.$

\begin{defn}
\label{defn:GirouxForm}
Given a blown up summed open book $(\pi,\varphi,N)$ on a manifold 
$M_{(\pi,\varphi,N)}$ with boundary, a \defin{Giroux form} for
$(\pi,\varphi,N)$ is a contact form $\lambda$ on $M_{(\pi,\varphi,N)}$
with Reeb vector field $X_\lambda$ satisfying the following conditions:
\begin{enumerate}
\item $X_\lambda$ is positively transverse to the interiors of the pages,
\item $X_\lambda$ is positively tangent to the boundaries of the closures
of the pages,
\item $\ker \lambda$ on each interface or boundary torus 
$T \subset \iI_{(\pi,\varphi)} \cup \p M_{(\pi,\varphi,N)}$ 
induces a characteristic foliation with closed leaves homologous to the 
meridian~$m_T$.
\end{enumerate}
\end{defn}

We will say that a contact structure on $M_{(\pi,\varphi,N)}$
is \defin{supported} by $(\pi,\varphi,N)$ whenever it is the kernel of a
Giroux form.
By the procedure described above, one can easily take a Giroux form for
the underlying open book $\pi : M\setminus B \to S^1$ and modify it near~$B$
to produce a Giroux form for the blown up summed open book on
$M_{(\pi,\varphi,N)}$.  Moreover, the same argument that proves uniqueness
of contact structures supported by open books
(cf.~\cite{Etnyre:lectures}*{Prop.~3.18}) shows that
any two Giroux forms are homotopic to each other through a family of Giroux 
forms.  We thus obtain the following uniqueness result for supported contact
structures.

\begin{prop}
\label{prop:GirouxUniqueness}
Suppose $M_{(\pi,\varphi,N)}$ is a compact $3$-manifold with boundary, with
a contact structure $\xi_{(\pi,\varphi,N)}$ supported by the
blown up summed open book $(\pi,\varphi,N)$, and
$(M_{(\pi,\varphi,N)},\xi_{(\pi,\varphi,N)})$ admits a contact embedding into
some closed contact $3$-manifold $(M',\xi')$.  If $\lambda$ is a
contact form on $M'$ such that
\begin{enumerate}
\item $\lambda$ defines a Giroux form on $M_{(\pi,\varphi,N)} \subset M'$, and
\item $\ker\lambda = \xi'$ on $M' \setminus M_{(\pi,\varphi,N)}$,
\end{enumerate}
then $\ker\lambda$ is isotopic to~$\xi'$.
\end{prop}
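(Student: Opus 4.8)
The plan is to reduce everything to a relative version of the standard uniqueness statement for contact structures supported by open books. Recall the situation: we have $\xi' $ on $M'$, a contact form $\lambda$ on $M'$ whose kernel defines a Giroux form on the submanifold $M_{(\pi,\varphi,N)}$, and such that $\ker\lambda$ already agrees with $\xi'$ on the complement $M' \setminus M_{(\pi,\varphi,N)}$. We want to find an isotopy of $M'$ taking $\ker\lambda$ to $\xi'$. The point is that both distributions are contact structures that agree outside $M_{(\pi,\varphi,N)}$, so it suffices to deform them to agree on the interior as well, keeping a contact structure at every stage, and then apply Gray stability.

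First I would observe that $\xi'$ restricted to $M_{(\pi,\varphi,N)}$ is, by hypothesis, a contact structure supported by $(\pi,\varphi,N)$ (this is part of the definition of $\xi_{(\pi,\varphi,N)}$ having a contact embedding into $(M',\xi')$), and so is $\ker\lambda|_{M_{(\pi,\varphi,N)}}$. Next I would invoke the uniqueness statement for Giroux forms already established in the excerpt just before the proposition: any two Giroux forms on a blown up summed open book are homotopic through a family of Giroux forms (this follows as in \cite{Etnyre:lectures}*{Prop.~3.18} together with the blow-up/fiber-sum modifications described above). Concretely, choose Giroux forms $\lambda_0$ for $\xi'|_{M_{(\pi,\varphi,N)}}$ and $\lambda_1 = \lambda|_{M_{(\pi,\varphi,N)}}$; we get a path $\{\lambda_t\}_{t\in[0,1]}$ of Giroux forms connecting them. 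The key subtlety — and I expect this to be the main obstacle — is controlling the \emph{boundary behavior} of this homotopy: the characteristic foliation on each interface or boundary torus $T \subset \iI_{(\pi,\varphi)} \cup \partial M_{(\pi,\varphi,N)}$ must stay of the prescribed meridional type throughout, so that the family of contact structures on $M_{(\pi,\varphi,N)}$ can be glued to the fixed contact structure $\xi'$ on the complement without breaking the contact condition at the interface. This is exactly why the definition of Giroux form includes condition (3) on the characteristic foliation; one has to check that the standard uniqueness argument for open books can be run relative to this boundary condition, i.e.\ keeping the torus foliations meridional. I would handle this by working in the model neighborhoods $S^1 \times [0,1] \times S^1$ (resp.\ the model near an interface torus) where the supported contact structures all take the normal form $\ker(d\theta + g(\rho)\,d\phi)$ with $g' > 0$, $g(0)=0$, and noting that the space of such $g$ is convex, hence the homotopy can be arranged to be standard near the boundary and interface tori.

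Having arranged the homotopy $\{\lambda_t\}$ to be of Giroux type and to agree with a fixed model near $\iI_{(\pi,\varphi)} \cup \partial M_{(\pi,\varphi,N)}$, I would extend it by $\lambda$ (equivalently, by a contact form for $\xi'$) on all of $M' \setminus M_{(\pi,\varphi,N)}$, obtaining a smooth family of contact forms $\{\tilde\lambda_t\}$ on the closed manifold $M'$ with $\ker\tilde\lambda_0 = \xi'$ and $\ker\tilde\lambda_1 = \ker\lambda$. The smoothness across the gluing hypersurfaces is guaranteed by the normal-form matching. Then Gray's stability theorem produces an isotopy $\psi_t$ of $M'$ with $(\psi_t)_*\ker\tilde\lambda_0 = \ker\tilde\lambda_t$, and $\psi_1$ is the desired isotopy carrying $\xi'$ to $\ker\lambda$. (Gray stability applies since $M'$ is closed; no compactly-supported refinement is needed because we are not asserting the isotopy is the identity off $M_{(\pi,\varphi,N)}$, only that it exists.)

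In summary, the argument has three steps: (i) recognize both $\xi'|_{M_{(\pi,\varphi,N)}}$ and $\ker\lambda|_{M_{(\pi,\varphi,N)}}$ as supported by $(\pi,\varphi,N)$; (ii) connect their Giroux forms by a path of Giroux forms that is standard near the interface and boundary tori, then glue in the fixed $\xi'$ on the complement to get a path of global contact forms on $M'$; (iii) apply Gray stability. The only nontrivial point is (ii), and specifically the relative-boundary control in the Giroux-uniqueness argument — but this is precisely the content of the remark preceding the proposition (that any two Giroux forms are homotopic through Giroux forms), combined with the explicit convex normal forms near the tori, so no genuinely new input is required.
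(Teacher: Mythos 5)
Your overall strategy --- find Giroux forms $\lambda_0$ (for $\xi'|_{M_{(\pi,\varphi,N)}}$) and $\lambda_1 = \lambda|_{M_{(\pi,\varphi,N)}}$, connect them by a path of Giroux forms via the convexity argument of \cite{Etnyre:lectures}*{Prop.~3.18}, extend the path to a family of contact forms on $M'$, then apply Gray stability --- is exactly the route the paper intends, since the paper derives the proposition directly from the remark that any two Giroux forms are homotopic through Giroux forms.

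There is, however, a genuine gap in your gluing step. You propose to arrange the homotopy $\lambda_t$ to ``agree with a fixed model near $\iI_{(\pi,\varphi)} \cup \partial M_{(\pi,\varphi,N)}$'' so that it can be ``extended by $\lambda$'' across the boundary. For the glued family $\tilde\lambda_t$ to be a smooth family of $1$-forms on $M'$, you would need $\lambda_t$ to agree with $\lambda$ (to all orders) in a collar just inside $\partial M_{(\pi,\varphi,N)}$ for \emph{every} $t$, in particular for $t=0$. But $\ker\lambda_0 = \xi'$ throughout $M_{(\pi,\varphi,N)}$, whereas the hypothesis only gives $\ker\lambda = \xi'$ on $M' \setminus M_{(\pi,\varphi,N)}$ and hence (by continuity) on $\partial M_{(\pi,\varphi,N)}$ itself --- not in an interior collar. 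So $\lambda_0$ cannot in general coincide with $\lambda$ near the boundary, and your convexity-of-$g$ observation, while true in the model, does not by itself produce a homotopy constant in $t$ there. As written, the extension would only be continuous, not smooth, across $\partial M_{(\pi,\varphi,N)}$.

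The fix is simple and actually removes the need for any boundary matching. Since $\ker\lambda_0 = \xi'$ on $M_{(\pi,\varphi,N)}$, extend $\lambda_0$ to a global contact form $\mu$ for $\xi'$ on all of $M'$; on $M'\setminus M_{(\pi,\varphi,N)}$ one then has $\mu = h\lambda$ for some positive function $h$, since both have kernel $\xi'$ there. Now define $\tilde\lambda_t := (1-t)\mu + t\lambda$ globally on $M'$. On $M_{(\pi,\varphi,N)}$ this is precisely the linear interpolation $(1-t)\lambda_0 + t\lambda_1$ of Giroux forms, hence contact by the convexity argument; on $M'\setminus M_{(\pi,\varphi,N)}$ it equals $\bigl((1-t)h + t\bigr)\lambda$, a positive multiple of $\lambda$, hence contact; and it is visibly smooth on $M'$ with $\ker\tilde\lambda_0 = \xi'$ and $\ker\tilde\lambda_1 = \ker\lambda$. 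Gray stability then finishes the proof exactly as you say. So the idea is right and matches the paper, but the extension-by-$\lambda$ step as you stated it would fail; the global linear interpolation is the clean way through.
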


\begin{example}
\label{ex:BUSOBD}
Suppose $\Sigma$ is a compact, oriented and connected surface, 
possibly with boundary, containing a non-empty multicurve
$\Gamma \subset \Sigma$ such that $\p\Sigma \subset \Gamma$ and
$\Gamma$ divides $\Sigma$ into two (possibly disconnected) pieces 
$$
\Sigma = \Sigma_+ \cup_\Gamma \Sigma_-.
$$
By Lutz \cite{Lutz:77}, $S^1 \times \Sigma$ admits an $S^1$-invariant
contact structure $\xi_\Gamma$ which is determined uniquely up to isotopy
by the condition that the loops $S^1 \times \{z\}$ be positively/negatively
transverse to~$\xi_\Gamma$ for $z \in \interior{\Sigma_\pm}$ and Legendrian for 
$z \in \Gamma$.  Then $(S^1 \times \Sigma,\xi_\Gamma)$ is supported by
a blown up summed open book with empty binding, interface
$\iI = S^1 \times (\Gamma \setminus \p\Sigma)$ and fibration
$$
\pi : (S^1 \times \Sigma) \setminus \iI \to S^1 : (\phi,z) \mapsto
\begin{cases}
\phi & \text{ for $z \in \Sigma_+$},\\
-\phi & \text{ for $z \in \Sigma_-$}.
\end{cases}
$$
Indeed, one can write $\xi_\Gamma$ as the kernel of a contact form whose
Reeb vector field is positively/negatively transverse to the interior of
$\{*\} \times \Sigma_\pm$ and admits closed orbits of the form
$\{*\} \times \gamma$ for each dividing curve $\gamma \subset \Gamma$.
(An explicit construction of such a contact form may be found
e.g.~in \cite{LatschevWendl}.)
The distinguished meridians at $\iI$ and $\p(S^1 \times \Sigma)$ are
generated by the Legendrians $S^1 \times \{*\}$.
\end{example}

\subsection{Partially planar domains and planar torsion}
\label{subsec:torsion}

We are now ready to state the most important definitions in this paper.

\begin{defn}
\label{defn:partiallyPlanar}
A blown up summed open book on a compact manifold~$M$ is called 
\defin{partially planar} if $M \setminus \p M$ contains a planar page.
A \defin{partially planar domain} is then any contact $3$-manifold $(M,\xi)$
with a supporting blown up summed open book that is partially planar.
An irreducible subdomain
$$
M^P \subset M
$$
that contains planar pages and doesn't touch $\p M$ is called a
\defin{planar piece}, and we will refer to the complementary subdomain
$\overline{M \setminus M^P}$ as the \defin{padding}.
\end{defn}

By this definition, every planar contact manifold is a partially
planar domain (with empty padding), 
as is the symmetric summed open book obtained by summing
together two planar open books with the same number of binding components
(here one can call either side the planar piece, and the other side 
the padding).  As we'll soon see,
one can also use partially planar domains to characterize the solid torus that
appears in a Lutz twist, or the thickened torus in the
definition of Giroux torsion, as well as many more general objects.

\begin{defn}
\label{defn:containsPartiallyPlanar}
We say that a contact $3$-manifold $(M,\xi)$ with a closed $2$-form $\Omega$
contains an \defin{$\Omega$-separating} partially planar domain if there
exists a partially planar domain $(M_0,\xi_0)$ with planar piece $M_0^P
\subset M_0$ and a contact embedding
$\iota : (M_0,\xi_0) \hookrightarrow (M,\xi)$ such that for every interface
torus~$T$ of~$M_0$ lying in $M_0^P$, $\int_T \iota^*\Omega = 0$.  We say that the
domain is \defin{fully separating} if this is true for all choices of~$\Omega$.
\end{defn}

Note that in general, a $2$-torus $T$ embedded in a closed oriented 
$3$-manifold $M$ satisfies $\int_T \Omega = 0$
for all closed $2$-forms $\Omega$ on~$M$ if and only if~$T$ separates~$M$.
In a partially planar domain, any interface torus in the interior of the
planar piece is necessarily non-separating, thus the fully separating
condition implies that there are no such interface tori, and each component 
of the boundary of the planar piece also separates 
(cf.~Definition~\ref{defn:separating}).

We now come to the definition of a new symplectic filling obstruction.

\begin{defn}
\label{defn:planarTorsionDomain}
For any integer $k \ge 0$, a contact manifold $(M,\xi)$, possibly
with boundary, is called a
\defin{planar torsion domain of order~$k$} (or briefly a
\defin{planar $k$-torsion domain}) if it is supported by a
partially planar blown up summed open book $(\pi,\varphi,N)$
with a planar piece $M^P \subset M$ satisfying the following conditions:
\begin{enumerate}
\item The pages in~$M^P$ have $k+1$ boundary components.
\item The padding $\overline{M \setminus M^P}$ is not empty.
\item $(\pi,\varphi,N)$ is not a symmetric summed open book
(cf.~Example~\ref{ex:symmetric}).
\end{enumerate}

We say that a contact $3$-manifold $(M,\xi)$ has (perhaps $\Omega$-separating
or fully separating) \defin{planar $k$-torsion}
if it admits a (perhaps $\Omega$-separating or fully separating)
contact embedding of a planar $k$-torsion domain.
\end{defn}

\begin{remark}
\label{remark:itsPlanar}
The planar piece of a planar $0$-torsion domain has no interior interface
tori and only one boundary component, thus planar $0$-torsion is \emph{always}
fully separating.  It is easy to see from examples 
(cf.~Example~\ref{ex:S1invariant}) that this is not true for $k \ge 1$.
Observe also that whenever $(M,\xi)$ is closed and connected and contains
a fully separating partially planar domain $M_0 \subset M$, 
one of the following must be true:
\begin{enumerate}
\renewcommand{\labelenumi}{(\roman{enumi})}
\item $(M_0,\xi)$ is a planar torsion domain,
\item $M_0 = M$ and the interface is empty, i.e.~$(M,\xi)$ is supported
by an ordinary planar open book,
\item $M_0 = M$ and it carries a symmetric summed open book with
disk-like pages.
\end{enumerate}
In the last case, $(M,\xi)$ is contactomorphic to the tight $S^1 \times S^2$
(see Example~\ref{ex:symmetric}), which is planar.  
We thus conclude that under these assumptions, $(M,\xi)$ always either has
planar torsion or is planar.
\end{remark}

\begin{figure}
\begin{center}
\includegraphics{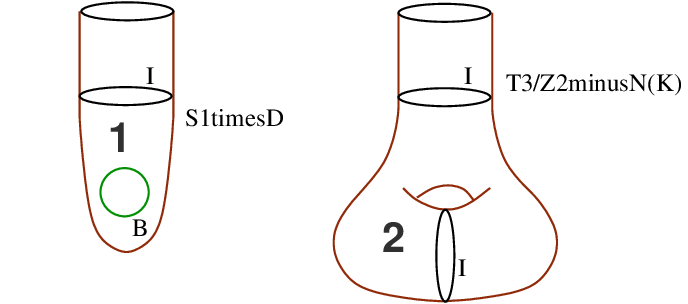}
\caption{\label{fig:moreTorsion} Schematic representations of two planar
torsion domains as described in Example~\ref{ex:schematic}.}
\end{center}
\end{figure}

\begin{example}
\label{ex:S1invariant}
The $S^1$-invariant contact manifold $(S^1 \times \Sigma,\xi_\Gamma)$
from Example~\ref{ex:BUSOBD} is a partially planar domain whenever
$\Sigma \setminus \Gamma$ has a connected component~$\Sigma_0$ of genus zero 
with $\overline{\Sigma}_0 \cap \p\Sigma = \emptyset$.  In this case
$S^1 \times \overline{\Sigma}_0$ is the planar piece, and $S^1 \times \Sigma$
is also a planar torsion domain unless the blown up summed open book
from Example~\ref{ex:BUSOBD} is symmetric, which would mean
$\p\Sigma = \emptyset$ and $\Sigma\setminus\Gamma$ has exactly two connected
components, which are diffeomorphic to each other.
Some special cases are shown in 
Figures~\ref{fig:torsionDomains} and~\ref{fig:noGirouxTorsion}.
\end{example}

\begin{example}
\label{ex:schematic}
More generally than the $S^1$-invariant examples described above, 
blown up summed open books can always be represented by schematic
pictures as in Figure~\ref{fig:moreTorsion}, which shows two examples of
planar torsion domains, each with the order labeled within the planar piece.
Here each picture shows a surface $\Sigma$ containing
a multicurve~$\Gamma$: each connected component $\Sigma_0 \subset
\Sigma \setminus \Gamma$ then represents an irreducible subdomain with
pages diffeomorphic to~$\Sigma_0$, and the components of~$\Gamma$
represent interface tori (labeled in the picture by~$\iI$).  Each irreducible
subdomain may additionally have binding circles, shown in the picture as
circles with the label~$B$.  The information in these pictures, together
with a specified monodromy map for each component of $\Sigma\setminus \Gamma$,
determine a blown up summed open book and supported contact structure
uniquely up to contactomorphism.
If we take these particular pictures with the assumption that all monodromy
maps are trivial, then the first shows a solid torus $S^1 \times \DD$
with an overtwisted contact structure that 
makes one full twist along a ray from the center 
(the binding~$B$) to the boundary.  The other picture shows the
complement of a solid torus in the torus bundle $T^3 / \ZZ_2$ from
Example~\ref{ex:selfSum}.  More precisely, one can construct it by taking
a loop $K \subset T^3 / \ZZ_2$ transverse to the pages in that example,
modifying the contact structure~$\xi$ near~$K$ by a full Lutz twist, 
and then removing
a smaller neighborhood $\nN(K)$ of~$K$ on which~$\xi$ makes a quarter twist.
Note that the appearance of genus in this picture is a bit misleading; due to
the interface torus in the interior of the bottom piece, it has planar pages
with three boundary components.
\end{example}

We can now proceed toward the proof of Theorem~\ref{thm:lowerOrder}.

\begin{defn}
\label{defn:LutzTube}
A \defin{Lutz tube} is the solid torus $S^1 \times \DD$ with
coordinates $(\theta,\rho,\phi)$, where $(\rho,\phi)$ are polar coordinates
on the closed unit disk $\DD \subset \CC$, together with the contact
structure~$\xi$ defined as the hyperplane field
\begin{equation}
\label{eqn:twisting}
\xi = \ker\left[ f(\rho)\ d\theta + g(\rho)\ d\phi \right]
\end{equation}
for some pair of smooth functions $f,g$ such that the path
$$
[0,1] \to \RR^2 \setminus\{0\} : \rho \mapsto (f(\rho),g(\rho))
$$
makes exactly one half-turn (counterclockwise) about the origin, moving from
the positive to the negative $x$-axis.  (See Figure~\ref{fig:LutzTube}.)
\end{defn}

\begin{defn}
\label{defn:GirouxTorsion}
A \defin{Giroux torsion domain} is the thickened torus $[0,1] \times T^2$
with coordinates $(\rho,\phi,\theta) \in [0,1] \times S^1 \times S^1$,
together with the contact structure $\xi$ defined via these coordinates 
as in \eqref{eqn:twisting}, where
the path $\rho \mapsto (f(\rho),g(\rho))$ makes one full (counterclockwise)
turn about the origin, beginning and ending on the positive $x$-axis.
(See Figure~\ref{fig:GirouxTorsionDomain}.)
\end{defn}

\begin{prop}
\label{prop:allerleiTorsion}
If $L \subset M$ is a Lutz tube in a closed contact $3$-manifold $(M,\xi)$,
then any open neighborhood of $L$ contains a planar $0$-torsion domain.
Similarly if $L$ is a Giroux torsion domain, then any open neighborhood of $L$
contains a planar $1$-torsion domain.
\end{prop}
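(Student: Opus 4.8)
The plan is to exhibit, inside $L$ itself (hence inside any open neighborhood of $L$), an explicit compact subdomain carrying a blown up summed open book that manifestly satisfies Definition~\ref{defn:planarTorsionDomain}. The only nontrivial input will be the uniqueness of contact structures supported by a blown up summed open book (Proposition~\ref{prop:GirouxUniqueness}) together with the uniqueness of radially symmetric ($S^1$-invariant) contact structures with prescribed dividing set used in Example~\ref{ex:BUSOBD}; everything else is a direct computation with the radially symmetric model.

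For the Lutz tube $L = S^1 \times \DD$ with $\xi = \ker[f(\rho)\,d\theta + g(\rho)\,d\phi]$, let $\rho = a \in (0,1)$ be the unique radius with $f(a) = 0$ (so that the contact plane there contains $\partial_\theta$), and choose $R \in (a,1)$ such that $\{\rho = R\}$ is pre-Lagrangian, i.e.\ its characteristic foliation has closed leaves. Put $D := S^1 \times \DD_R$ and split it along $\iI := \{\rho = a\}$ into a \emph{planar piece} $M^P := S^1 \times \DD_a$, foliated by the disks $\{\theta = \mathrm{const}\} \times \DD_a$ (one boundary component each, so $k = 0$), which I realize as the blow up along its binding of the disk-page open book on $S^3$; and its \emph{padding} $\overline{D \setminus M^P} = S^1 \times \{a \le \rho \le R\}$, a thickened torus foliated by cylinders and realized as a blow up of an open book with annular pages, with $\{\rho = R\}$ serving as a blow-up boundary torus. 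For the Giroux torsion domain $L = [0,1] \times T^2$ the full turn of $(f,g)$ produces \emph{two} radii $a_1 < a_2$ with $f = 0$; I take $D := [\rho_-,\rho_+] \times T^2$ with $0 < \rho_- < a_1 < a_2 < \rho_+ < 1$ chosen so that $\{\rho = \rho_\pm\}$ are pre-Lagrangian, let $M^P := [a_1,a_2] \times T^2$ (foliated by cylinders, two boundary components, so $k = 1$) be the planar piece --- a blow up of the cylindrical open book on $S^1 \times S^2$ of Example~\ref{ex:T3} at both its binding components --- and let the two remaining thickened tori $[\rho_-,a_1] \times T^2$ and $[a_2,\rho_+] \times T^2$ be the padding, with interface $\iI = \{\rho = a_1\} \cup \{\rho = a_2\}$.

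The verification has two parts. First, after a small isotopy of the defining path $(f,g)$ within its homotopy class --- which changes $\xi$ only up to isotopy --- I normalize it to turn counterclockwise with $fg' - f'g > 0$ throughout, to be standard near $\rho = 0$, to cross the axis $\{f = 0\}$ at the radii $a$ (resp.\ $a_1, a_2$) with horizontal velocity (so $g'$ vanishes there too), and to have $g$ strictly monotone across the planar piece. A direct computation with the contact form $\lambda = f\,d\theta + g\,d\phi$, whose Reeb field is $X_\lambda = \tfrac{1}{fg'-f'g}(g'\,\partial_\theta - f'\,\partial_\phi)$ (smooth at the core $\rho = 0$ of the Lutz tube, where it equals $\partial_\theta$), then shows that $\lambda$ restricts to a Giroux form on each irreducible piece in the sense of Definition~\ref{defn:GirouxForm}: $X_\lambda$ is positively transverse to the page interiors, positively tangent to the page boundaries at the radii $a$ (resp.\ $a_i$), where $X_\lambda \propto \partial_\phi$, and on each interface and boundary torus the characteristic foliation of $\ker\lambda$ is by closed leaves in the relevant meridian class. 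Second, one reads off the remaining conditions of Definition~\ref{defn:planarTorsionDomain}: the padding is nonempty (a thickened torus, resp.\ two of them), and the decomposition is not a symmetric summed open book in the sense of Example~\ref{ex:symmetric} --- for the Lutz tube because the two irreducible pieces carry non-diffeomorphic pages (a disk versus an annulus), and for the Giroux torsion domain because there are three irreducible pieces rather than two. Hence $D$ is a planar $0$-torsion (resp.\ $1$-torsion) domain contained in the given neighborhood of $L$.

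I expect the fussiest step to be the homological bookkeeping at the interface and blow-up boundary tori in the check of Definition~\ref{defn:GirouxForm}(3): one must verify that the linear characteristic foliation cut out by $(f(\rho),g(\rho))$ at each such radius really is the meridian direction of the corresponding blown-down binding circle --- equivalently, identify the blown-down pieces and the gluing isomorphisms --- and confirm that the page-boundary directions match across the interface and at $\{\rho = R\}$ (resp.\ $\{\rho = \rho_\pm\}$), which typically forces the fibration on the padding to be a slightly tilted version of the naive projection. One must also make sure the normalization of $(f,g)$ stays inside the space of counterclockwise-turning paths, so that the contact condition --- and hence the isotopy class of $\xi$ --- is preserved; here the global shape (exactly one half-turn, resp.\ one full turn) is what makes this possible.
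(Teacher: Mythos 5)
Your approach follows the paper's outline --- exhibit an explicit blown up summed open book in the radially symmetric model and verify the conditions of Definition~\ref{defn:planarTorsionDomain} --- but differs in one consequential detail: you place the whole torsion domain inside $L$, taking the blow-up boundary torus to be $\{\rho = R\}$ with $R < 1$ (resp.\ $\{\rho = \rho_\pm\}$ with $0 < \rho_- < \rho_+ < 1$), whereas the paper expands $L$ to a slightly larger $L_\epsilon$ and prescribes the profile $(f,g)$ freely on the extension.  This hides a genuine gap.  Condition~(2) of Definition~\ref{defn:GirouxForm} requires the Reeb field to be tangent to the page boundary \emph{at the outer boundary torus as well}, not only at the interface.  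Since the Reeb direction at radius $\rho$ is proportional to $(g'(\rho),\,-f'(\rho))$, and since the page-boundary class on the padding is pinned to $[\p_\phi]$ --- a properly embedded annulus in $T^2 \times [a,R]$ has freely homotopic boundary circles, and by your own normalization $g'(a)=0$ the Reeb direction at the interface is $\p_\phi$ --- this forces $g'(R) = 0$.  Your requirement that $\{\rho = R\}$ be pre-Lagrangian is only about rationality of $f(R)/g(R)$ and is logically independent of $g'(R)=0$; and the remedy you anticipate (``a slightly tilted version of the naive projection'' on the padding) is unavailable precisely because of the homotopy-invariance just noted: the tilt at $\{\rho = R\}$ cannot differ from the tilt at the interface, which your normalization already fixes at zero.  (A second, smaller point: for the characteristic foliation class on $\{\rho=R\}$ to form a $\ZZ$-basis of $H_1(T)$ together with $\ell_T = [\p_\phi]$ you need $f(R)/g(R) \in \ZZ$, not merely $\in \QQ$; compare the paper's condition $f(1+\epsilon)/g(1+\epsilon)\in\ZZ$.)

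Both gaps are fixable: one can isotope $(f,g)$ rel endpoints so that $g$ acquires a further critical point at some $R \in (a,1)$ at which in addition $f(R)/g(R)\in\ZZ$, all without violating $fg'-f'g>0$ or the prescribed winding, and then your argument would close.  But these extra conditions on $(f,g)$ and $R$ are not stated in the proposal, and the ``tilted fibration'' escape route is a dead end.  The paper's choice to enlarge $L$ to $L_\epsilon$ is precisely what makes the analogous conditions trivial to impose: on the fresh collar $\{1 < \rho \le 1+\epsilon\}$ (resp.\ on both sides of $[0,1]$) the profile $(f,g)$ can be chosen from scratch with $g'=0$ and integral characteristic slope at the endpoint, and this is why the proposition is phrased in terms of an open neighborhood of $L$ rather than $L$ itself.
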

\begin{proof}
Suppose $L \subset M$ is a Lutz tube.  Then for some 
$\epsilon > 0$, an open neighborhood of $L$ contains a region identified with
$$
L_\epsilon := S^1 \times \DD_{1 + \epsilon},
$$
where $\DD_r$ denotes the closed disk of
radius~$r$ and $\xi = \ker\lambda_\epsilon$ for a contact form
$$
\lambda_\epsilon = f(\rho)\ d\theta + g(\rho)\ d\phi
$$
with the following properties (see Figure~\ref{fig:fg}, left):
\begin{enumerate}
\item $f(0) > 0$ and $g(0) = 0$,
\item $f(1) < 0$ and $g(1) = 0$,
\item $f(\rho) g'(\rho) - f'(\rho) g(\rho) > 0$ for all $\rho > 0$,
\item $g'(1 + \epsilon) = 0$,
\item $f(1 + \epsilon) / g(1 + \epsilon) \in \ZZ$.
\end{enumerate}
\begin{figure}
\begin{center}
\includegraphics{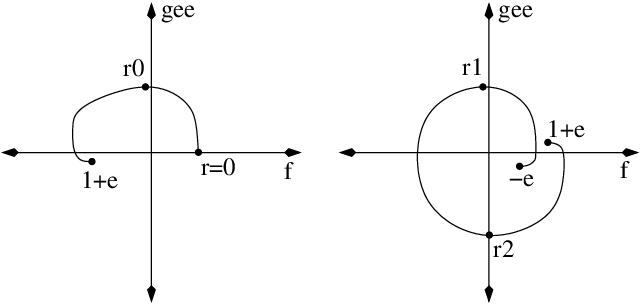}
\caption{\label{fig:fg}
The path $\rho \mapsto (f(\rho),g(\rho))$ used to define the contact form
on $L_\epsilon$ (for the Lutz tube at the left and Giroux torsion domain
at the right) in the proof of Prop.~\ref{prop:allerleiTorsion}.}
\end{center}
\end{figure}
Setting $D(\rho) := f(\rho)g'(\rho) - f'(\rho)g(\rho)$, the Reeb vector field
defined by $\lambda_\epsilon$ in the region $\rho > 0$ is
$$
X_\epsilon = \frac{1}{D(\rho)} \left[ g'(\rho) \p_\theta - f'(\rho) \p_\phi \right],
$$
and at $\rho = 0$, $X_\epsilon = \frac{1}{f(0)} \p_\theta$.  Thus $X_\epsilon$ in these 
coordinates depends only on $\rho$ and its direction is always determined by 
the slope of the path $\rho\mapsto (f(\rho),g(\rho))$ in $\RR^2$; in particular,
$X_\epsilon$ points in the $-\p_\phi$-direction at $\rho=1+\epsilon$, and in the
$+\p_\phi$-direction at some other radius $\rho_0 \in (0,1)$.  We can choose
$f$ and $g$ without loss of generality so that these are the only radii
at which $X_\epsilon$ is parallel to $\pm\p_\phi$.

We claim now that $L_\epsilon$ is a planar $0$-torsion domain with planar 
piece $L_\epsilon^P := S^1 \times \DD_{\rho_0}$.  Indeed, $L_\epsilon^P$
can be obtained from the open book on the tight $3$-sphere with disk-like
pages by blowing up the binding: the pages in the interior of $L_\epsilon^P$
are defined by $\{\theta = \text{const}\}$.  Similarly, the $\theta$-level
sets in the closure of $L_\epsilon \setminus L_\epsilon^P$ form the pages of
a blown up open book, obtained from an open book
with cylindrical pages.  The condition $f(1+\epsilon) / g(1 + \epsilon) \in \ZZ$
implies that the characteristic foliation on $T := \p L_\epsilon$
has closed leaves homologous to a primitive class $m_T \in H_1(T)$, which
together with the homology class of the Reeb orbits on~$T$ forms a basis
of $H_1(T)$.  Thus our chosen contact form
$\lambda_\epsilon$ is a Giroux form for some blown up
summed open book.  (Note that the monodromy of the blown up open book in 
$L_\epsilon \setminus L_\epsilon^P$ is not trivial
since the distinguished meridians on
$\p L_\epsilon$ and $\p L_\epsilon^P$ are not homologous.)

The argument for Giroux torsion is quite similar, so we'll only sketch it:
given $L = [0,1]\times T^2 \subset M$, we can expand $L$ slightly on 
\emph{both} sides to create a domain
$$
L_\epsilon = [-\epsilon,1+\epsilon] \times T^2,
$$
with a contact form $\lambda_\epsilon$ that induces a suitable characteristic
foliation on $\p L_\epsilon$ and
whose Reeb vector field points in the
$\pm\p_\phi$-direction at $\rho = -\epsilon$, $\rho = 1+\epsilon$ and 
exactly two other radii $0 < \rho_1 < \rho_2 < 1$ 
(see Figure~\ref{fig:fg}, right).  This splits $L_\epsilon$
into three pieces, of which 
$L_\epsilon^P := \{ \rho \in [\rho_1,\rho_2] \}$
is the planar piece of a planar $1$-torsion domain, as it can be obtained
from an open book with cylindrical pages and trivial monodromy by blowing up
both binding components.  The padding now consists of two separate 
blown up open books with cylindrical pages and nontrivial monodromy.
\end{proof}

\begin{proof}[Proof of Theorem~\ref{thm:lowerOrder}]
The only claim in the theorem that doesn't follow immediately from
Prop.~\ref{prop:allerleiTorsion} is that
$(M,\xi)$ must be overtwisted if it contains a planar $0$-torsion
domain~$M_0$.  One can see this as follows: note first that if we write
$$
M_0 = M_0^P \cup M_0',
$$
where $M_0^P$ is the planar piece and $M_0' = \overline{M_0 \setminus M_0^P}$
is the padding,
then $M_0'$ carries a blown up summed open book with pages that are not
disks (which means $(M_0,\xi)$ is not the tight $S^1 \times S^2$).
If the pages in $M_0'$ are surfaces with positive genus and one boundary
component, then one can glue one of these together with a page in $M_0^P$
to form a convex surface $\Sigma \subset M_0$ whose dividing set is
$\p M_0^P \cap \Sigma$.  The latter is the boundary of a disk in~$\Sigma$,
so Giroux's criterion (see \cite{Giroux:cercles}*{Th\'{e}or\`{e}me~4.5(a)}
or \cite{Geiges:book}*{Prop.~4.8.13})
implies the existence of an overtwisted disk near~$\Sigma$.

In all other cases the pages $\Sigma$ in $M_0'$ have multiple boundary
components
$$
\p\Sigma = C^P \cup C',
$$
where we denote by $C^P$ the connected component situated near the
interface $\p M_0^P$, and $C' = \p\Sigma \setminus C^P$.
We can then find overtwisted disks by constructing a particular Giroux form
using a small variation on the Thurston-Winkelnkemper construction as
described e.g.~in \cite{Etnyre:lectures}*{Theorem~3.13}.
Namely, choose coordinates $(s,t) \in (1/2,1] \times S^1$ on a collar
neighborhood of each component of $\p\Sigma$ and define
a $1$-form $\lambda_1$ on $\Sigma$ with the following properties:
\begin{enumerate}
\item $d\lambda_1 > 0$
\item $\lambda_1 = (1 + s)\ dt$ near each component of~$C'$
\item $\lambda_1 = (-1 + s)\ dt$ near~$C^P$
\end{enumerate}
Observe that all three conditions cannot be true unless $C'$ is non-empty,
due to Stokes's theorem.  Now following the construction described in 
\cite{Etnyre:lectures}, one can produce a Giroux form $\lambda$ on $M_0'$
which annihilates some boundary parallel curve $\ell$ near
$\p M_0^P$ in a page, and
fits together smoothly with some Giroux form in $M_0^P$, so that
$\ker\lambda$ is a supported contact structure and is isotopic to~$\xi$
by Prop.~\ref{prop:GirouxUniqueness}.  Then $\ell$ is the boundary of
an overtwisted disk.
\end{proof}

\section{Holomorphic summed open books}
\label{sec:openbook}

\subsection{Technical background}
\label{subsec:definitions}

We begin by collecting some definitions and background results on
punctured holomorphic curves that will be important for understanding
the remainder of the paper.

A \defin{stable Hamiltonian structure} on an oriented $3$-manifold $M$ is a
pair $\hH = (\lambda,\omega)$ consisting of a $1$-form $\lambda$ and $2$-form
$\omega$ such that $d\omega = 0$, $\lambda \wedge \omega > 0$ and
$\ker \omega \subset \ker d\lambda$.  Given this data, we define the 
co-oriented $2$-plane distribution $\xi = \ker\lambda$ and nowhere vanishing
vector field~$X$,
called the \defin{Reeb vector field}, which is determined by the conditions
$$
\omega(X,\cdot) \equiv 0, \qquad \lambda(X) \equiv 1.
$$
The conditions on $\lambda$ and $\omega$ imply that $\omega|_{\xi}$ gives
$\xi$ the structure of a symplectic vector bundle over~$M$, and this
distribution
with its symplectic structure is preserved by the flow of~$X$.
As an important special case, if $\lambda$ is a contact form, then one
can define a stable Hamiltonian structure in the form 
$\hH = (\lambda, h \ d\lambda)$ for any smooth function 
$h : M \to (0,\infty)$ such that $dh \wedge d\lambda \equiv 0$.  Then
$\xi$ is a positive and co-oriented contact structure, and~$X$ is the
usual contact geometric notion of the Reeb vector field: we will often
denote it in this case by~$X_\lambda$, since it is uniquely determined
by~$\lambda$.

For the rest of this section, assume $\hH = (\lambda,\omega)$ is a stable Hamiltonian
structure with the usual attached data $\xi$ and~$X$.  We say that an
almost complex structure $J$ on $\RR \times M$ is \defin{compatible with $\hH$}
if it satisfies the following conditions:
\begin{enumerate}
\item The natural $\RR$-action on $\RR\times M$ preserves~$J$.
\item $J \p_t \equiv X$, where $\p_t$ denotes the unit vector in the 
$\RR$-direction.
\item $J(\xi) = \xi$ and $\omega(\cdot,J\cdot)$ defines a symmetric,
positive definite bundle metric on~$\xi$.
\end{enumerate}
Denote by $\jJ(\hH)$ the (non-empty and contractible) space of almost 
complex structures compatible with~$\hH$.  Note that if $\lambda$ is
contact then $\jJ(\hH)$ depends only on~$\lambda$; we will in this
case say that $J$ is \defin{compatible with $\lambda$}.
 
A periodic orbit
$\gamma$ of~$X$ is determined by the data $(x,T)$, where $x : \RR \to M$
satisfies $\dot{x} = X(x)$ and $x(T) = x(0)$ for some $T > 0$.  We sometimes
abuse notation and identify $\gamma$ with the submanifold $x(\RR) \subset M$,
though technically the period is also part of the data defining~$\gamma$.
If $\tau > 0$ is the smallest positive number for which $x(\tau) = x(0)$, we 
call it the \defin{minimal period} of this orbit, and say that 
$\gamma = (x,\tau)$ is a \defin{simple}, or \defin{simply covered} orbit.  
The \defin{covering multiplicity} of an orbit $(x,T)$ is the
unique integer~$k \ge 1$ such that $T = k\tau$ for a simple orbit $(x,\tau)$.

If $\gamma = (x,T)$ is a periodic orbit and $\varphi_X^t$ 
denotes the flow of~$X$
for time~$t \in \RR$, then the restriction of the linearized flow to
$\xi_{x(0)}$ defines a symplectic isomorphism
$$
(\varphi_X^T)_* : (\xi_{x(0)},\omega) \to (\xi_{x(0)},\omega).
$$
We call $\gamma$ \defin{nondegenerate} if $1$ is not in the spectrum of this map.
More generally, a \defin{Morse-Bott submanifold} of $T$-periodic orbits is a
closed submanifold 
$N \subset M$ fixed by $\varphi_X^T$ such that for any $p \in N$,
$$
\ker \left( (\varphi_X^T)_* - \1 \right) = T_p N.
$$
We will call a single orbit $\gamma = (x,T)$ \defin{Morse-Bott} if it lies on
a Morse-Bott submanifold of $T$-periodic orbits.  Nondegenerate
orbits are clearly also Morse-Bott, with $N \cong S^1$.  We say that the
vector field $X$ is \defin{Morse-Bott} (or \defin{nondegenerate}) if all of its
periodic orbits are Morse-Bott (or nondegenerate respectively).
Since $X$ never vanishes, 
every Morse-Bott submanifold $N \subset M$ of dimension~$2$ is either a
torus or a Klein bottle.  One can show (cf.~\cite{Wendl:automatic}*{Prop.~4.1}) 
that in the former case, if $X$ is Morse-Bott,
then every orbit contained in~$N$ has the same minimal period.

To every orbit $\gamma = (x,T)$, one can associate an \defin{asymptotic
operator}, which is morally the Hessian of a certain 
functional whose critical points
are the periodic orbits.  To write it down, choose $J \in \jJ(\hH)$,
let $\mathbf{x} : S^1 \to M : t \mapsto x(Tt)$,
choose a symmetric connection~$\nabla$ on~$M$ and define
$$
\mathbf{A}_\gamma : \Gamma(\mathbf{x}^*\xi) \to \Gamma(\mathbf{x}^*\xi) :
\eta \mapsto -J (\nabla_t \eta - T \nabla_\eta X).
$$
One can show that this operator is well defined independently of the choice
of connection, and it extends to an unbounded self-adjoint operator on the
complexification of $L^2(\mathbf{x}^*\xi)$, with domain 
$H^1(\mathbf{x}^*\xi)$.  Its spectrum
$\sigma(\mathbf{A}_\gamma)$ consists of real eigenvalues
with multiplicity at most~$2$, which accumulate only at~$\pm\infty$.  
It is straightforward to show that solutions of the equation 
$\mathbf{A}_\gamma\eta = 0$ are given by $\eta(t) = (\varphi_X^{Tt})_*\eta(0)$,
thus $\gamma$ is nondegenerate if and
only if $0 \not\in \sigma(\mathbf{A}_\gamma)$, and in general if $\gamma$ belongs
to a Morse-Bott submanifold $N \subset M$, then
$$
\dim\ker\mathbf{A}_\gamma = \dim N - 1.
$$

Choosing a unitary trivialization $\Phi$ of $(\xi,J,\omega)$ along the 
parametrization
$\mathbf{x} : S^1 \to M$ identifies $\mathbf{A}_\gamma$ with a first-order
differential operator of the form
\begin{equation}
\label{eqn:localAsymptotic}
H^1(S^1,\RR^2) \to L^2(S^1,\RR^2) : \eta \mapsto -J_0 \dot{\eta} - S \eta,
\end{equation}
where $J_0$ denotes the standard complex structure on $\RR^2 = \CC$ and
$S : S^1 \to \End_\RR(\RR^2)$ is a smooth loop of symmetric real $2$-by-$2$
matrices.  Seen in this trivialization, $\mathbf{A}_\gamma \eta = 0$ defines
a linear Hamiltonian equation $\dot{\eta} = J_0 S\eta$ corresponding to the
linearized flow of~$X$ along $\gamma$, thus its flow
defines a smooth family of symplectic matrices
$$
\Psi : [0,1] \to \Spp(2)
$$
for which $1 \not\in \sigma(\Psi(1))$ if and only if~$\gamma$ is nondegenerate.
In this case, the homotopy class of the path $\Psi$ is described by its
\defin{Conley-Zehnder index} $\muCZ(\Psi) \in \ZZ$, which we use to define the
Conley-Zehnder index of the orbit $\gamma$ and of the asymptotic operator
$\mathbf{A}_\gamma$ with respect to the trivialization~$\Phi$,
$$
\muCZ^\Phi(\gamma) := \muCZ^\Phi(\mathbf{A}_\gamma) := \muCZ(\Psi).
$$
Note that in this way, $\muCZ^\Phi(\mathbf{A})$ can be defined for \emph{any}
injective operator $\mathbf{A} : \Gamma(\mathbf{x}^*\xi) \to 
\Gamma(\mathbf{x}^*\xi)$ that takes the form \eqref{eqn:localAsymptotic} in
a local trivialization.  In particular then, even if $\gamma$ is degenerate,
we can pick any $\epsilon \in \RR\setminus \sigma(\mathbf{A}_\gamma)$ and
define the ``perturbed'' Conley-Zehnder index
$$
\muCZ^\Phi(\gamma - \epsilon) := \muCZ^\Phi(\mathbf{A}_\gamma - \epsilon) :=
\muCZ(\Psi_\epsilon),
$$
where $\Psi_\epsilon : [0,1] \to \Spp(2)$ is the path of symplectic matrices
determined by the equation $(\mathbf{A}_\gamma - \epsilon) \eta = 0$ in the
trivialization~$\Phi$.  It is especially convenient to define Conley-Zehnder 
indices in this way for orbits that are degenerate but Morse-Bott: then
the discreteness of the spectrum implies that for sufficiently small
$\epsilon > 0$, the integer $\muCZ^\Phi(\gamma \pm \epsilon)$ depends only
on~$\gamma$, $\Phi$ and the choice of sign.

The eigenfunctions of $\mathbf{A}_\gamma$ are nowhere vanishing sections
$e \in \Gamma(\mathbf{x}^*\xi)$ and thus have well defined winding numbers
$\wind^\Phi(e)$ with respect to any trivialization~$\Phi$.  As shown in
\cite{HWZ:props2}, all sections in the same eigenspace have the same
winding, thus defining a function
$$
\sigma(\mathbf{A}_\gamma) \to \ZZ : \mu \mapsto \wind^\Phi(\mu),
$$
where we set $\wind^\Phi(\mu) := \wind^\Phi(e)$ for any nontrivial
$e \in \ker(\mathbf{A}_\gamma - \mu)$.  In fact, \cite{HWZ:props2} shows
that this function is nondecreasing and surjective: counting 
with multiplicity there are 
exactly two eigenvalues $\mu \in \sigma(\mathbf{A}_\gamma)$ such that
$\wind^\Phi(\mu)$ equals any given integer.  It is thus sensible to
define the integers,
\begin{equation*}
\begin{split}
\alpha^\Phi_-(\gamma - \epsilon) &= \max \{ \wind^\Phi(\mu) \ |\ 
\text{$\mu \in \sigma(\mathbf{A}_\gamma - \epsilon)$, $\mu < 0$} \}, \\
\alpha^\Phi_+(\gamma - \epsilon)  &= \min \{ \wind^\Phi(\mu) \ |\ 
\text{$\mu \in \sigma(\mathbf{A}_\gamma - \epsilon)$, $\mu > 0$} \}, \\
p(\gamma - \epsilon)      &= \alpha^\Phi_+(\gamma - \epsilon) 
- \alpha^\Phi_-(\gamma - \epsilon).
\end{split}
\end{equation*}
Note that the \defin{parity} $p(\gamma - \epsilon)$ does not depend on~$\Phi$,
and it always equals either~$0$ or~$1$ if
$\epsilon \not\in\sigma(\mathbf{A}_\gamma)$.  In this case, the Conley-Zehnder
index can be computed as
\begin{equation}
\label{eqn:CZwinding}
\muCZ^\Phi(\gamma - \epsilon) = 2\alpha^\Phi_-(\gamma - \epsilon) + p(\gamma - \epsilon) =
2\alpha^\Phi_+(\gamma - \epsilon) - p(\gamma - \epsilon).
\end{equation}

Given $\hH = (\lambda,\omega)$ and $J \in \jJ(\hH)$, fix $c_0 > 0$
sufficiently small so that $(\omega + c\ d\lambda)|_\xi > 0$ for all
$c \in [-c_0,c_0]$, and define
$$
\tT = \{ \varphi \in C^\infty(\RR,(-c,c)) \ |\ \varphi' > 0 \}.
$$
For $\varphi \in \tT$, we can define a symplectic form on $\RR \times M$ by
\begin{equation}
\label{eqn:omegaphi}
\omega_\varphi = \omega + d(\varphi\lambda),
\end{equation}
where $\omega$ and $\lambda$ are pulled back through the projection
$\RR\times M \to M$ to define differential forms on $\RR\times M$, and
$\varphi : \RR \to (-c,c)$ is extended in the natural way to a function
on $\RR\times M$.  Then any $J \in \jJ(\hH)$ is compatible with 
$\omega_\varphi$ in the
sense that $\omega_\varphi(\cdot,J\cdot)$ defines a Riemannian metric
on $\RR\times M$.  We therefore consider punctured pseudoholomorphic curves
$$
u : (\dot{\Sigma},j) \to (\RR \times M,J)
$$
where $(\Sigma,j)$ is a closed Riemann surface with a finite subset of
punctures $\Gamma \subset \Sigma$, $\dot{\Sigma} := \Sigma\setminus\Gamma$,
and $u$ is required to satisfy the \emph{finite energy} condition
\begin{equation}
\label{eqn:energy}
E(u) := \sup_{\varphi \in \tT} \int_{\dot{\Sigma}} u^*\omega_\varphi < \infty.
\end{equation}
An important example is the following: for any closed Reeb orbit
$\gamma = (x,T)$, the map
$$
u_\gamma : \RR\times S^1 \to \RR\times M : (s,t) \mapsto (Ts,x(Tt))
$$
is a finite energy $J$-holomorphic cylinder (or equivalently punctured plane), 
which we call the \defin{trivial cylinder} over~$\gamma$.  More generally,
we are most interested in punctured $J$-holomorphic curves $u : \dot{\Sigma} \to
\RR\times M$ that are asymptotically cylindrical, in the following sense.
Define the standard half cylinders
$$
Z_+ = [0,\infty) \times S^1 \quad \text{ and } \quad Z_- = (-\infty,0] \times S^1.
$$
We say that a smooth map $u : \dot{\Sigma} \to \RR\times M$ is
\defin{asymptotically cylindrical} if the punctures can be partitioned into positive
and negative subsets
$$
\Gamma = \Gamma^+ \cup \Gamma^-
$$
such that for each $z \in \Gamma^\pm$, there is a Reeb orbit 
$\gamma_z = (x,T)$, a 
closed neighborhood $\uU_z \subset \Sigma$ of~$z$ and a diffeomorphism
$\varphi_z : Z_\pm \to \uU_z \setminus \{z\}$ 
such that for sufficiently large $|s|$,
\begin{equation}
\label{eqn:asympCyl}
u \circ \varphi_z(s,t) = \exp_{(Ts,x(Tt))} h_z(s,t),
\end{equation}
where $h_z$ is a section of $\xi$ along $u_{\gamma_z}$ with
$h_z(s,t) \to 0$ for $s \to \pm\infty$, and the exponential map is
defined with respect to any choice of $\RR$-invariant connection on 
$\RR\times M$.  We often refer to the punctured neighborhoods 
$\uU_z \setminus \{z\}$ or their images
in $\RR\times M$ as the positive and negative \emph{ends} of~$u$, and
we call $\gamma_z$ the \defin{asymptotic orbit} of~$u$ at~$z$.

\begin{defn}
\label{defn:totalMultiplicity}
Suppose $N\subset M$ is a submanifold which is the union of a family of
Reeb orbits that all have the same minimal period.  Consider an asymptotically
cylindrical map $u : \dot{\Sigma} \to \RR\times M$ with punctures
$\Gamma^+ \cup \Gamma^- \subset \Sigma$ and corresponding asymptotic orbits
$\gamma_z$ with covering multiplicities~$k_z \ge 1$ for each 
$z \in \Gamma^\pm$.  Then if $k_N^\pm \ge 0$ denotes the sum of the
multiplicities $k_z$ for all punctures $z \in \Gamma^\pm$ at which
$\gamma_z$ lies in~$N$, we shall say that~$u$ approaches~$N$ with
\defin{total multiplicity}~$k_N^\pm$ at its positive or negative ends
respectively.
\end{defn}

Every asymptotically cylindrical map defines a \defin{relative homology
class} in the following sense.  Suppose $\boldsymbol{\gamma} =
\{ (\gamma_1,m_1),\ldots,(\gamma_N,m_N) \}$ is an \defin{orbit set},
i.e.~a finite collection of distinct simply covered Reeb orbits
$\gamma_i$ paired with positive integers~$m_i$.  This defines a
$1$-dimensional submanifold of~$M$,
$$
\bar{\boldsymbol{\gamma}} = \gamma_1 \cup \ldots \cup \gamma_N,
$$
together with homology classes
$$
[\boldsymbol{\gamma}] = m_1 [\gamma_1] + \ldots + m_N [\gamma_N]
$$
in both $H_1(M)$ and $H_1(\bar{\boldsymbol{\gamma}})$.  Given two
orbit sets $\boldsymbol{\gamma}^+$ and $\boldsymbol{\gamma}^-$ with
$[\boldsymbol{\gamma}^+] = [\boldsymbol{\gamma}^-] \in H_1(M)$,
denote by $H_2(M,\boldsymbol{\gamma}^+ - \boldsymbol{\gamma}^-)$ the
affine space over $H_2(M)$ consisting of equivalence classes of
$2$-chains $C$ in~$M$ with boundary $\p C$ in 
$\bar{\boldsymbol{\gamma}}^+ \cup \bar{\boldsymbol{\gamma}}^-$
representing the homology class $[\boldsymbol{\gamma}^+] - 
[\boldsymbol{\gamma}^-] \in
H_1(\bar{\boldsymbol{\gamma}}^+ \cup \bar{\boldsymbol{\gamma}}^-)$,
where $C \sim C'$ whenever $C - C'$ is the boundary of a $3$-chain in~$M$.
Now, the projection of any asymptotically cylindrical map
$u : \dot{\Sigma} \to \RR\times M$ to~$M$ can be extended as a continuous
map from a compact surface with boundary (the circle compactification of
$\dot{\Sigma}$) to~$M$, which then represents a relative homology class
$$
[u] \in H_2(M , \boldsymbol{\gamma}^+ - \boldsymbol{\gamma}^-)
$$
for some unique choice of orbit sets $\boldsymbol{\gamma}^+$ 
and~$\boldsymbol{\gamma}^-$.

As is well known 
(cf.~\cites{Hofer:weinstein,HWZ:props1,HWZ:props4}), 
every finite energy $J$-holomorphic curve
with nonremovable punctures is asymptotically cylindrical if the
contact form is Morse-Bott.  Moreover in this case, 
the section $h_z$ in \eqref{eqn:asympCyl},
which controls the asymptotic approach of $u$ to~$\gamma_z$ at 
$z \in \Gamma^\pm$, either is identically zero or 
satisfies a formula of the form\footnote{The asymptotic formula
\eqref{eqn:asympFormula} is a stronger version of a somewhat more complicated
formula originally proved in \cites{HWZ:props1,HWZ:props4}.  The stronger
version is proved in \cite{Mora}, and another exposition is given
in \cite{Siefring:asymptotics}.}
\begin{equation}
\label{eqn:asympFormula}
h_z(s,t) = e^{\mu s} (e_\mu(t) + r(s,t)),
\end{equation}
where $\mu \in \sigma(\mathbf{A}_\gamma)$ with $\pm\mu < 0$, 
$e_\mu$ is a nontrivial eigenfunction in the $\mu$-eigenspace, and the remainder term
$r(s,t) \in \xi_{x(Tt)}$ decays to zero as $s \to \pm\infty$.  It follows that unless
$h_z \equiv 0$, which is true only if $u$ is a cover of a trivial cylinder,
$u$ has a well defined \defin{asymptotic winding} about $\gamma_z$,
$$
\wind_z^\Phi(u) := \wind^\Phi(e_\mu),
$$
which is necessarily either bounded from above by $\alpha^\Phi_-(\gamma_z)$ or
from below by $\alpha^\Phi_+(\gamma_z)$, depending on the sign $z \in \Gamma^\pm$.
We say that this winding is \defin{extremal} whenever the bound is not strict.

Denote by $\mM(J)$ the moduli space of unparametrized finite energy punctured
$J$-holomorphic curves in $\RR \times M$: this consists of equivalence
classes of tuples $(\Sigma,j,\Gamma,u)$, where $\dot{\Sigma} = \Sigma\setminus
\Gamma$ is the domain of a pseudoholomorphic curve $u : (\dot{\Sigma},j) \to
(\RR\times M, J)$, and we define $(\Sigma,j,\Gamma,u) 
\sim (\Sigma',j',\Gamma',u')$
if there is a biholomorphic map $\varphi : (\dot{\Sigma},j) \to 
(\dot{\Sigma}',j')$ such that 
$u = u' \circ \varphi$.  We assign to $\mM(J)$ the natural topology defined
by $C^\infty_{\text{loc}}$-convergence on $\dot{\Sigma}$ and $C^0$-convergence
up to the ends.  It is often convenient to 
abuse notation by writing equivalence classes
$[(\Sigma,j,\Gamma,u)] \in \mM(J)$ simply as~$u$ when 
there is no danger of confusion.

If $u \in \mM(J)$ has asymptotic orbits $\{ \gamma_z \}_{z \in \Gamma}$
that are all Morse-Bott,
then a neighborhood of~$u$ in $\mM(J)$ can be described as the zero set of
a Fredholm section of a Banach space bundle (see e.g.~\cite{Wendl:automatic}).  
We say that $u$ is \defin{Fredholm regular} if this section has a surjective
linearization at~$u$, in which case a neighborhood of~$u$ in $\mM(J)$ is
a smooth finite dimensional orbifold.  Its dimension is then equal to its
\defin{virtual dimension}, which is given by the \defin{index} of~$u$,
\begin{equation}
\label{eqn:index}
\ind(u) := -\chi(\dot{\Sigma}) + 2 c_1^\Phi(u) + \sum_{z \in \Gamma^+}
\muCZ^\Phi(\gamma_z - \epsilon) - \sum_{z \in \Gamma^-}
\muCZ^\Phi(\gamma_z + \epsilon),
\end{equation}
where $\epsilon > 0$ is any small positive number, $\Phi$ is an arbitrary
choice of unitary trivialization of $\xi$ along all the asymptotic orbits
$\gamma_z$, and we abbreviate
$$
c_1^\Phi(u) := c_1^\Phi(u^*T(\RR\times M)),
$$
where the latter denotes the relative first Chern number with respect
to~$\Phi$ of the complex vector bundle $u^*T(\RR\times M) \to \dot{\Sigma}$.  
Since $T(\RR\times M)$ splits into the direct sum of~$\xi$ with a trivial 
complex line bundle, this Chern number is the same as $c_1^\Phi(u^*\xi)$, 
which can be computed by
counting the zeroes of a generic section of $u^*\xi$ that is nonzero and
constant at infinity with respect to~$\Phi$.

We say that an almost
complex structure $J \in \jJ(\hH)$ is \defin{Fredholm regular} if all
somewhere injective curves in $\mM(J)$ are Fredholm regular.
As shown in \cites{Dragnev} or the appendix of \cite{Bourgeois:homotopy}, 
the set of
Fredholm regular almost complex structures is of second category in $\jJ(\hH)$;
one therefore often refers to them as \emph{generic} almost complex structures.

It is sometimes convenient to have an alternative formula for $\ind(u)$ in
the case where $u$ is immersed.  Indeed, the linearization of the Fredholm
operator that describes $\mM(J)$ near~$u$ acts on the space of sections
of $u^*T(\RR\times M)$, which then splits naturally as $T\dot{\Sigma} \oplus
N_u$, where $N_u \to \dot{\Sigma}$ is the normal bundle, defined so that it
matches~$\xi$ at the asymptotic ends of~$u$.  As explained e.g.~in
\cite{Wendl:automatic}, the restriction of the 
linearization to $N_u$ defines a linear Cauchy-Riemann type operator
$$
\mathbf{D}_u^N : \Gamma(N_u) \to \Gamma(\overline{\Hom}_\CC(T\dot{\Sigma},N_u)),
$$
called the \defin{normal Cauchy-Riemann operator} at~$u$, and
the Fredholm index of this operator is precisely
$\ind(u)$.  Thus whenever $u$ is immersed, we can compute $\ind(u)$ directly
from the punctured version of the Riemann-Roch formula proved in
\cite{Schwarz}:
\begin{equation}
\label{eqn:normalCR}
\ind(\mathbf{D}_u^N) = \chi(\dot{\Sigma}) + 2 c_1^\Phi(N_u) + 
\sum_{z \in \Gamma^+} \muCZ^\Phi(\gamma_z - \epsilon) -
\sum_{z \in \Gamma^-} \muCZ^\Phi(\gamma_z + \epsilon).
\end{equation}

Finally, let us briefly summarize the intersection theory of punctured
$J$-holomorphic curves introduced by R.~Siefring \cite{Siefring:intersection}.
Given any asymptotically cylindrical smooth 
maps $u : \dot{\Sigma} \to \RR \times M$
and $v : \dot{\Sigma}' \to \RR\times M$, there is a symmetric pairing
$$
u * v \in \ZZ
$$
with the following properties:
\begin{enumerate}
\item $u * v$ depends only on the asymptotic orbits of $u$ and~$v$ and the
relative homology classes $[u]$ and $[v]$.
\item If $u$ and $v$ represent curves in $\mM(J)$ with non-identical 
images, then their algebraic count of intersections $u \inter v$ satisfies
$0 \le u \inter v \le u * v$.  In particular, $u * v = 0$ implies that $u$ and
$v$ never intersect.
\end{enumerate}
The first property amounts to homotopy invariance: it implies that
$u_0 * v = u_1 * v$ whenever $u_0$ and $u_1$ are connected to each other
by a continuous family of curves $u_\tau \in \mM(J)$ with fixed asymptotic
orbits.
The second property gives a \emph{sufficient} condition for two curves to
have disjoint images, but this condition is not in general \emph{necessary}:
sometimes one may have $0 = u \inter v < u * v$ if $u$ and $v$ have
an asymptotic orbit in common, and one must then expect intersections to
emerge from infinity under generic perturbations.  The number $u * v$
can also be defined when $u$ and~$v$ are holomorphic \emph{buildings} in
the sense of \cite{SFTcompactness}, so that it satisfies a similar
continuity property under convergence of curves to buildings.  The
computation of $u*v$ is then a sum of the intersection numbers between
corresponding levels, plus some additional nonnegative terms that count
``hidden'' intersections at the breaking orbits.

\begin{remark}
\label{remark:MorseBott}
The version of homotopy invariance described above assumes that~$u$
and~$v$ vary as asymptotically cylindrical maps with \emph{fixed}
asymptotic orbits, but if any of the orbits belong to Morse-Bott families,
one can define an alternative version of $u*v$ that permits the orbits to
move continuously.  This more general theory is sketched in the last section of 
\cite{Wendl:automatic}.  In general, the intersection number defined in this
way is greater than or equal to $u * v$, because it counts additional
nonnegative contributions for intersections that may emerge from infinity
as the asymptotic orbits move.  It's useful to observe however that in the
situation we will consider,
both versions agree: in particular, if $u$ and~$v$ are disjoint curves with
$u * v = 0$ and a common positive asymptotic orbit that is (for both
curves) simply covered and belongs to a Morse-Bott torus that doesn't 
intersect the images of~$u$ and~$v$, then no new intersections can appear
under a perturbation that moves the orbit (independently for both curves).
This follows from an easy computation of asymptotic winding numbers using
the definitions given in \cite{Wendl:automatic}.
\end{remark}

Similarly, if $u \in \mM(J)$ is somewhere injective, one can define the
integer $\delta(u) \ge 0$, which algebraically counts the self-intersections
of~$u$ after perturbing away its critical points, but in the punctured case
this need not be homotopy invariant.  One fixes this by introducing the
\defin{asymptotic contribution} $\delta_\infty(u) \in \ZZ$, which is also
nonnegative and counts ``hidden'' self-intersections that may emerge
from infinity under generic perturbations.  We then have
$$
0 \le \delta(u) \le \delta(u) + \delta_\infty(u),
$$
and the punctured version of the adjunction formula takes the form
\begin{equation}
\label{eqn:adjunction}
u * u = 2\left[\delta(u) + \delta_\infty(u)\right] + c_N(u) +
\left[ \bar{\sigma}(u) - \#\Gamma \right],
\end{equation}
where $\bar{\sigma}(u)$ is an integer that depends only on the
asymptotic orbits and satisfies $\bar{\sigma}(u) \ge \#\Gamma$, 
and $c_N(u)$ is the \defin{constrained normal Chern number}, 
which can be defined as\footnote{The version of $c_N(u)$ defined in
\eqref{eqn:cN} is adapted to the condition that homotopies in $\mM(J)$ are
required to fix asymptotic orbits.  A more general definition 
is given in \cite{Wendl:automatic} (see also Remark~\ref{remark:MorseBott}).}
\begin{equation}
\label{eqn:cN}
c_N(u) = c_1^\Phi(u) - \chi(\dot{\Sigma}) +
\sum_{z \in \Gamma^+} \alpha^\Phi_-(\gamma_z + \epsilon) -
\sum_{z \in \Gamma^-} \alpha^\Phi_+(\gamma_z - \epsilon).
\end{equation}
Observe that $c_N(u)$ also depends only on the asymptotic orbits
$\{\gamma_z\}_{z \in \Gamma}$ and the relative homology class~$[u]$.

\subsection{An existence and uniqueness theorem}
\label{subsec:bigTheorem}

We now prove a theorem on holomorphic open books which lies in the
background of all the results that were stated in~\S\ref{sec:intro}.
The setup is as follows.  
Assume $(M',\xi)$ is a closed $3$-manifold with a positive, co-oriented
contact structure, and it contains a compact $3$-dimensional submanifold
$M \subset M'$, possibly with boundary, on which $\xi$ is supported by a 
partially planar blown up summed open book
$$
\check{\boldsymbol{\pi}} = (\check{\pi},\check{\varphi},\check{N}).
$$
We will denote its binding and interface by~$B$ and~$\iI$ respectively,
and denote the induced fibration by
$$
\pi : M \setminus (B \cup \iI) \to S^1.
$$
Denote the irreducible subdomains by $M_i$ for $i=0,\ldots,N$, so
$$
M = M_0 \cup M_1 \cup \ldots \cup M_N
$$
for some $N \ge 0$.  If $B_i$ and $\iI_i$ denote the intersections of
$B$ and $\iI$ respectively with the interior of~$M_i$, then the restriction
of~$\pi$ to the interior of $M_i \setminus (B_i\cup \iI_i)$ extends smoothly
to its boundary as a fibration
$$
\pi_i : M_i \setminus (B_i \cup \iI_i) \to S^1.
$$
Denote by $g_i \ge 0$ the genus of the fibers of~$\pi_i$, and assume
without loss of generality that $M_0$ is a planar piece, thus
$g_0 = 0$ and $M_0 \cap \p M = \emptyset$; in particular $\p M_0 \subset \iI$.

\begin{defn}
\label{defn:subordinate}
Given the above setup, an integer $m \in \NN$ 
and an almost complex structure~$J$ compatible with
some contact form on $(M',\xi)$, we shall say that a finite energy
$J$-holomorphic curve $u : \dot{\Sigma} \to \RR\times M'$ is 
\defin{subordinate to~$\pi_0$ up to multiplicity~$m$} if the following 
conditions hold:
\begin{itemize}
\item $u$ is not a cover of a trivial cylinder,
\item All positive ends of~$u$ approach Reeb orbits
in $B_0 \cup \iI_0 \cup \p M_0$,
\item Each positive asymptotic orbit of~$u$ in~$B_0$ has covering
multiplicity at most~$m$.
\end{itemize}
Moreover, $u$ is \defin{strongly subordinate to~$\pi_0$} if the following
also holds:
\begin{itemize}
\item At its positive ends, $u$ approaches each connected component
of $B_0 \cup \p M_0$ with total multiplicity at most~$1$, and each
connected component of~$\iI_0$ with total multiplicity at most~$2$.
\end{itemize}
\end{defn}

See Definition~\ref{defn:totalMultiplicity} for an explanation of the
term \emph{total multiplicity}.  Note that the above condition allows the 
total multiplicity at any given component of $B_0 \cup \iI_0 \cup \p M_0$ to be~$0$,
which would mean that the curve has no asymptotic orbits in that component.

\begin{thm}
\label{thm:openbook}
For any numbers $\tau_0 > 0$ and $m_0 \in \NN$,
the contact manifold $(M',\xi)$ with subdomain $M \subset M'$ carrying the
blown up summed open book~$\check{\boldsymbol{\pi}}$ described above
admits a Morse-Bott contact
form $\lambda$ and compatible Fredholm regular almost complex structure $J$
with the following properties.
\begin{enumerate}
\item The contact structure $\ker\lambda$ is isotopic to~$\xi$.
\item On $M$, $\lambda$ is a Giroux form for~$\check{\boldsymbol{\pi}}$.
\item The components of $\iI \cup \p M$ are all Morse-Bott submanifolds, while
the Reeb orbits in $B$ are nondegenerate and elliptic, and their covers
for all multiplicities up to~$m_0$ have Conley-Zehnder index~$1$ with
respect to the natural trivialization determined by the pages.
\item All Reeb orbits in $B_0 \cup \iI_0 \cup \p M_0$ have minimal period at
most~$\tau_0$, while every other closed orbit of $X_\lambda$
in~$M'$ has minimal period at least~$1$.
\item For each component $M_i$ with $g_i = 0$, the fibration
$\pi_i : M_i \setminus (B_i \cup \iI_i) \to S^1$ 
admits a $C^\infty$-small perturbation
$\hat{\pi}_i : M_i \setminus (B_i \cup \iI_i) \to S^1$ such that the 
interior of each
fiber $\hat{\pi}_i^{-1}(\tau)$ for $\tau \in S^1$ lifts uniquely to an
$\RR$-invariant family of properly embedded surfaces
$$
S^{(i)}_{\sigma,\tau} \subset \RR \times M_i,
\qquad (\sigma,\tau) \in \RR\times S^1,
$$
which are the images of embedded finite energy $J$-holomorphic curves
$$
u^{(i)}_{\sigma,\tau} = (a^{(i)}_\tau + \sigma, F^{(i)}_\tau)
: \dot{\Sigma}_i \to \RR \times M_i,
$$
all of them Fredholm regular with index~$2$, and with only positive ends.
\item A finite energy $J$-holomorphic curve~$u$ in $\RR\times M'$
parametrizes one of the planar 
surfaces $S^{(i)}_{\sigma,\tau}$ described above whenever either of the
following holds:
\begin{itemize}
\item $u$ is strongly subordinate to~$\pi_0$,
\item $u$ is somewhere injective, subordinate to~$\pi_0$ up to 
multiplicity~$m_0$ and intersects the interior of~$M_0$.
\end{itemize}
\end{enumerate}
\end{thm}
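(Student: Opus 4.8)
The plan is to prove Theorem~\ref{thm:openbook} in three stages: (A) construct the Morse--Bott contact form $\lambda$ and compatible almost complex structure $J$ with properties (1)--(4); (B) produce the holomorphic pages $S^{(i)}_{\sigma,\tau}$ in each genus-zero irreducible subdomain, establishing (5); and (C) prove the rigidity statement (6) by an intersection-theoretic argument. Stages (A) and (B) extend to blown up summed open books the constructions of \cite{Wendl:openbook} for ordinary open books (see also \cite{Abbas:openbook}), so the genuinely new content --- and the main obstacle --- lies in stage (C).

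\textbf{Stage (A).} Begin with a Giroux form for $\check{\boldsymbol{\pi}}$ on $M$ in the sense of Definition~\ref{defn:GirouxForm}, extended to a contact form for $\xi$ on all of $M'$; this gives (2), and by Proposition~\ref{prop:GirouxUniqueness} it gives (1). Near each binding circle, use the normal form $\ker(d\theta + g(\rho)\,d\phi)$ and choose $g$ so that the binding orbit is nondegenerate, elliptic, of minimal period $< \tau_0$, and such that all of its covers up to multiplicity $m_0$ have $\muCZ = 1$ relative to the page framing (a condition on the twisting rate of $g$). Near each interface or boundary torus, arrange $\ker\lambda$ to be $T^2$-invariant with a Morse--Bott circle of closed Reeb orbits of small period in the meridian direction $m_T$, compatibly with Definition~\ref{defn:GirouxForm}; this makes those tori Morse--Bott submanifolds. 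Rescaling $\lambda$ away from small neighborhoods of $B_0 \cup \iI_0 \cup \partial M_0$ forces every remaining closed orbit to have minimal period $\ge 1$, completing (3)--(4). For subdomains $M_i$ with $g_i > 0$, this rescaling is routed through an intermediate exact stable Hamiltonian structure as in \cite{Wendl:openbook}, which is harmless here since we make no existence claim in positive-genus pieces. Finally take $J \in \jJ(\hH)$ equal to the model integrable structures of \cite{Wendl:openbook} on $\RR \times (B \cup \iI \cup \partial M)$ and on $\RR \times M_i$ for each $g_i = 0$, and generic elsewhere; Fredholm regularity of $J$ then follows because the relevant curves over $M$ satisfy the automatic transversality criterion of \cite{Wendl:automatic}.

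\textbf{Stage (B).} Fix a genus-zero $M_i$. After a $C^\infty$-small perturbation $\hat\pi_i$ of $\pi_i$, chosen so that its fibers can be lifted holomorphically near the binding and interface orbits, the analysis of \cite{Wendl:openbook} (cf.\ \cite{Abbas:openbook}) shows that the interior of each fiber $\hat\pi_i^{-1}(\tau)$ is the projection of an embedded finite-energy $J$-holomorphic curve $u^{(i)}_{\sigma,\tau}$ with only positive ends: one writes down the lift explicitly from the model data near $B_i \cup \iI_i \cup \partial M_i$ and extends it over the page by a linear problem. The index formula \eqref{eqn:index}, using $\muCZ = 1$ at binding ends and the Morse--Bott indices at interface and boundary ends, gives $\ind = 2$; \cite{Wendl:automatic} then gives Fredholm regularity together with the embedded, mutually disjoint, $\RR$-invariant structure asserted in (5). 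An application of SFT compactness \cite{SFTcompactness}, controlled by the period and index constraints of stage (A), shows the family closes up to all of $\tau \in S^1$.

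\textbf{Stage (C) --- the main obstacle.} Let $u$ be as in (6). The heart of the proof is the claim that $u * u^{(i)}_{\sigma,\tau} = 0$ in the sense of \cite{Siefring:intersection} for every genus-zero $M_i$ and all $(\sigma,\tau)$. By homotopy invariance this splits into: (i) the relative-homology contribution vanishes, because the positive ends of $u$ lie on $B_0 \cup \iI_0 \cup \partial M_0$ with the bounded total multiplicities imposed in Definition~\ref{defn:subordinate} and $u$ has no negative ends on those orbits, so $[u]$ pairs trivially with the page classes; and (ii) the asymptotic contributions at shared positive orbits vanish, which follows from the extremal asymptotic winding of the pages, the bound on $\wind^\Phi$ of $u$ coming from its bounded covering multiplicities, and \eqref{eqn:adjunction} together with \eqref{eqn:cN}. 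The interface orbits sit in Morse--Bott families, but by Remark~\ref{remark:MorseBott} (cf.\ \cite{SiefringWendl}) the required winding estimate persists, since the pages stay disjoint from the interface tori; it is precisely here that the total-multiplicity hypotheses are used at the exact strength needed. Given the claim: under the second hypothesis $u$ already meets $\RR \times \interior M_0$, and under ``strongly subordinate'' the asymptotic formula \eqref{eqn:asympFormula} at a positive puncture, together with the local model near the corresponding binding or interface orbit, shows $u$ enters the genus-zero side (a curve entering a positive-genus neighbor would, by the same winding comparison, be forced to intersect a page, contradicting (i)--(ii)). Since the pages foliate $\RR \times M_i$ outside a codimension-two set, positivity of intersections and $u * u^{(i)}_{\sigma,\tau} = 0$ force $u$ to have the image of some $u^{(i)}_{\sigma,\tau}$; embeddedness of the pages, plus a short multiplicity count excluding nontrivial covers, then shows $u$ parametrizes $S^{(i)}_{\sigma,\tau}$. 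The hardest part is the bookkeeping in (i)--(ii): verifying the vanishing of the Siefring pairing when the asymptotic orbits are shared and, at the interface, degenerate, requires combining Siefring's theory with its Morse--Bott refinement and tracking the relative homology class of $u$ with care.
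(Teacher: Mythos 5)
Your Stage (C) has a genuine gap. The intersection-theoretic argument you describe — computing $u * u^{(i)}_{\sigma,\tau} = 0$ and then invoking positivity of intersections — does control $u$ once you know it enters the interior of $\RR \times M_0$ (and this part of your argument is essentially the paper's Proposition~\ref{prop:uniqueness2}). But a curve that is strongly subordinate to $\pi_0$ need only have its positive ends on $B_0 \cup \iI_0 \cup \p M_0$; when all of those ends land on $\p M_0$, nothing in the asymptotic data forces $u$ into $M_0$, and $u$ could in principle live entirely in the padding. Your claim that "a curve entering a positive-genus neighbor would, by the same winding comparison, be forced to intersect a page" fails here: for the final generic $J$ there are no $J$-holomorphic pages over a positive-genus irreducible subdomain (the index count of Proposition~\ref{prop:index} gives $\ind = 2-2g_i \le 0$, so they disappear under perturbation), so there is nothing for $u$ to intersect. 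Moreover, the asymptotic winding bound $\wind^\Phi \le \alpha^\Phi_-(\gamma) = 0$ at a positive end on a Morse--Bott torus $T \subset \p M_0$ is symmetric with respect to the two sides of $T$, so winding alone does not pin the curve to the planar side.

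The paper's route around this is qualitatively different and is the heart of the theorem. It considers a sequence $\epsilon_k \to 0$ of contact forms $\lambda_{\epsilon_k}$ degenerating to the confoliation $\lambda_0$ of the adapted stable Hamiltonian structure $\hH_0$, for which a special $J_0$ admits holomorphic lifts of \emph{all} pages of $\check{\boldsymbol{\pi}}$, including the higher-genus ones. The delicate Lemma~\ref{lemma:compactness} (an SFT compactness argument controlled by the small periods of the $B_0 \cup \iI_0 \cup \p M_0$ orbits, together with the adjunction-based genus bound) shows a hypothetical sequence of rogue $J_k$-curves converges to a leaf of the $J_0$-foliation $\fF_0$; Proposition~\ref{prop:uniqueness} for $J_0$ identifies that leaf as a page, and if its genus is positive the index $2-2g$ is $\le 0$, contradicting Fredholm regularity of the generic $J_k$. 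This is precisely why the construction via the degenerate data $(\lambda_0,\omega)$ and $J_0$ is not merely a convenience that is "harmless since we make no existence claim in positive-genus pieces": the non-generic higher-genus holomorphic pages are the indispensable tool for the uniqueness part of the statement. Your proposal discards them and thereby loses the argument that rules out curves escaping into the padding.
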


In addition to the applications treated in
\S\ref{sec:proofs}, Theorem~\ref{thm:openbook} implies a wide range of 
existence results
for finite energy foliations, e.g.~it could be used to reduce the construction
in \cite{Wendl:OTfol} to a few lines,
after observing that every overtwisted contact structure is supported by
a variety of summed open books with only planar pages.
The proof of the theorem will occupy
the remainder of~\S\ref{subsec:bigTheorem}.

\subsubsection{A family of stable Hamiltonian structures}

The first step in the proof is to construct a specific almost
complex structure on $\RR\times M$ for which all pages 
of~$\check{\boldsymbol{\pi}}$ admit holomorphic lifts.  
We will follow the approach in
\cite{Wendl:openbook} and refer to the latter for details in a few places
where no new arguments are required.  The idea is to present each subdomain
$M_i$ as an abstract open book that supports a stable Hamiltonian
structure which is contact near $B \cup \iI \cup \p M$ 
and integrable elsewhere.

We must choose suitable coordinate systems near each component of
the binding, interface and boundary.  Choose $r > 0$ and let
$\DD_r \subset \RR^2$ denote the closed disk of radius~$r$.
For each binding circle $\gamma \subset B$, choose a small
tubular neighborhood $\nN(\gamma)$ and identify it with the solid torus
$S^1 \times \DD_r$ with coordinates $(\theta,\rho,\phi)$, where
$(\rho,\phi)$ denote polar coordinates on $\DD_r$.
If $r$ is sufficiently small then
we can arrange these coordinates so that the following conditions
are satisfied:
\begin{itemize}
\item $\gamma = S^1 \times \{0\}$, with the natural orientation of $S^1$
matching the co-orientation of $\xi$ along~$\gamma$
\item $\pi(\theta,\rho,\phi) = \phi$ on $\nN(\gamma) \setminus \gamma$
\item $\xi = \ker ( d\theta + \rho^2 \ d\phi)$
\end{itemize}

Similarly, for each connected component $T \subset \p M$, let
$\widehat{\nN(T)} \subset M'$ denote a neighborhood that is split into
two connected components by~$T$, and denote 
$\nN(T) = \widehat{\nN(T)} \cap M$.  Identify $\widehat{\nN(T)}$ with
$S^1 \times [-r,r] \times S^1$ with coordinates $(\theta,\rho,\phi)$
such that:
\begin{itemize}
\item $\nN(T) = S^1 \times [0,r] \times S^1$
\item For each $\phi_0 \in S^1 $ the oriented loop
$S^1 \times \{0,\phi_0\}$ in~$T$ is positively transverse to~$\xi$
\item $\pi(\theta,\rho,\phi) = \phi$ on $\nN(T)$
\item $\xi = \ker (d\theta + \rho\ d\phi)$
\end{itemize}

Finally, we choose two coordinate systems for neighborhoods $\nN(T)$
of each interface torus $T \subset \iI$, assuming that $T$ divides $\nN(T)$
into two connected components 
$$
\nN(T) \setminus T = \nN_+(T) \cup \nN_-(T).
$$
Choose an identification of $\nN(T)$ with $S^1 \times [-r,r] \times S^1$
and denote the resulting coordinates by 
$(\theta_+,\rho_+,\phi_+)$, which we arrange to have the following
properties:
\begin{itemize}
\item $T = S^1 \times \{0\} \times S^1$, $\nN_+(T) =
S^1 \times (0,r] \times S^1$ and $\nN_-(T) = S^1 \times [-r,0) \times S^1$
\item For each $\phi_0 \in S^1$ the oriented loop
$S^1 \times \{0,\phi_0\}$ in~$T$ is positively transverse to~$\xi$
\item $\pi(\theta_+,\rho_+,\phi_+) = \phi_+$ on $\nN_+(T)$ and
$\pi(\theta_+,\rho_+,\phi_+) = -\phi_+ + c$ on $\nN_-(T)$ for some
constant $c \in S^1$
\item $\xi = \ker (d\theta_+ + \rho_+\ d\phi_+)$
\end{itemize}
Given these coordinates, it is natural to define a second coordinate
system $(\theta_-,\rho_-,\phi_-)$ by
\begin{equation}
\label{eqn:plusMinus}
(\theta_-,\rho_-,\phi_-) = (\theta_+,-\rho_+,-\phi_+ + c).
\end{equation}
Then the coordinates $(\theta_-,\rho_-\phi_-)$ satisfy minor variations
on the properties listed above: in particular
$\xi = \ker ( d\theta_- + \rho_- \ d\phi_-)$ and
$\pi(\theta_-,\rho_-,\phi_-) = \phi_-$ on $\nN_-(T)$.  In the following,
we will use separate coordinates on the two components of $\nN(T) \setminus T$,
denoting both by $(\theta,\rho,\phi)$:
$$
(\theta,\rho,\phi) :=
\begin{cases}
(\theta_+,\rho_+,\phi_+) & \text{ on $\nN_+(T)$},\\
(\theta_-,\rho_-,\phi_-) & \text{ on $\nN_-(T)$}.
\end{cases}
$$
Then $\pi(\theta,\rho,\phi) = \phi$ and
$\xi = \ker(d\theta + \rho\ d\phi)$ everywhere on $\nN(T) \setminus T$.
Observe that these coordinates on $\nN_+(T)$ or $\nN_-(T)$ separately can
be extended smoothly to the closures
$\overline{\nN_+(T)}$ and $\overline{\nN_-(T)}$, though in particular
the two $\phi$-coordinates are different where they overlap at~$T$.

\begin{notation}
For any open and closed subset $N \subset B \cup \iI \cup \p M$, we shall
in the following denote by $\nN(N)$ the union of all the neighborhoods
$\nN(\gamma)$ and $\nN(T)$ constructed above for the connected components
$\gamma,T \subset N$.  Thus for example,
$$
\nN(B \cup \iI \cup \p M)
$$
denotes the union of all of them.
\end{notation}

The complement $M \setminus \nN(B \cup \iI \cup \p M)$ 
is diffeomorphic to a mapping
torus.  Indeed, let $P$ denote the closure of 
$\pi^{-1}(0) \cap (M \setminus \nN(B \cup \iI \cup \p M))$, a compact
surface whose boundary components are in one to one correspondence with
the connected components of $\nN(B \cup \iI \cup \p P) \setminus \iI$.
The monodromy map of the fibration $\pi$ defines a diffeomorphism
$\psi : P \to P$, which preserves connected components and without loss of
generality has support away from~$\p P$, so we define the mapping torus
$$
P_\psi = (\RR \times P) / \sim,
$$
where $(t + 1,p) \sim (t,\psi(p))$.  This comes with a natural fibration
$\phi : P_\psi \to S^1$ which is trivial near the boundary, so for a
sufficiently small collar neighborhood $\uU \subset P$ of $\p P$, a
neighborhood of $\p P_\psi$ can be identified with $S^1 \times \uU$.  
Choose positively oriented coordinates on each connected component
of~$\uU$
$$
(\theta,\rho) : \uU \to [r - \delta,r + \delta) \times S^1
$$
for some small $\delta > 0$.  This defines coordinates $(\phi,\theta,\rho)$
on a collar neighborhood of $\p P_\psi = S^1 \times \p P$,
so identifying these for $\rho \in (r - \delta,r]$ with the
$(\theta,\rho,\phi)$ coordinates chosen above on the corresponding
components of $\nN(B \cup \iI \cup \p M) \setminus \iI$ 
defines an attaching map, such that the union
$$
P_\psi \cup \nN(B \cup \iI \cup \p M)
$$
is diffeomorphic to~$M$, and the $\phi$-coordinate, which is globally
defined outside of $B \cup \iI$, corresponds to the fibration
$\pi : M \setminus (B \cup \iI) \to S^1$.

Choose a number $\delta' > \delta$ with $r - \delta' > 0$, and for each
of the coordinate neighborhoods in $\nN(B \cup \iI \cup \p M) \setminus \iI$, 
define a $1$-form of the form
$$
\lambda_0 = f(\rho)\ d\theta + g(\rho) \ d\phi,
$$
with smooth functions $f , g : [0,r] \to \RR$ chosen so that
\begin{enumerate}
\item $\ker\lambda_0 = \xi$ on a smaller neighborhood of $B \cup \iI\cup \p M$.
\item For $\nN(\iI)\setminus \iI$, $f(\rho)$ and $g(\rho)$ extend smoothly to
$[-r,r]$ as even and odd functions respectively.
\item The path $[0,r] \to \RR^2 : \rho \mapsto (f(\rho),g(\rho))$ 
moves through the first quadrant from the positive real axis to $(0,1)$
and is constant for $\rho \in [r-\delta,r]$.
\item The function
$$
D(\rho) := f(\rho) g'(\rho) - f'(\rho) g(\rho)
$$
is positive and $f'(\rho)$ is negative for all $\rho \in (0,r-\delta)$.
\item $g(\rho) = 1$ for all $\rho \in [r - \delta',r]$.
\end{enumerate}
Some possible pictures of the path $\rho\mapsto (f(\rho),g(\rho)) \in \RR^2$
(with extra conditions that will be useful in the proof of
Lemma~\ref{lemma:dynamics}) are shown in
Figure~\ref{fig:smallPeriods}.
Note that the functions $f$ and $g$ must generally be chosen individually
for each connected component of $\nN(B \cup \iI \cup \p M)$.
Extend $\lambda_0$ over $M' \setminus M$ so that $\ker\lambda_0 = \xi$
on this region, and extend it over $P_\psi$ as $\lambda_0 = d\phi$.
The kernel $\xi_0 := \ker\lambda_0$ is then a confoliation on $M'$: it
is contact outside of $M$ and near $B \cup \iI \cup \p M$, while integrable
and tangent to the fibers on~$P_\psi$.  In particular $\lambda_0$ is
contact in the region $\{ \rho < r - \delta \}$ near $B \cup \iI \cup
\p M$, and its Reeb vector field here is
\begin{equation}
\label{eqn:Reeb}
X_0 = \frac{g'(\rho)}{D(\rho)} \p_\theta - \frac{f'(\rho)}{D(\rho)} \p_\phi,
\end{equation}
which is positively transverse to the pages $\{ \phi = \text{const} \}$
and reduces to $\p_\phi$ for $\rho \in [r - \delta',r]$, which contains
the region where $P_\psi$ and $\nN(B \cup \iI \cup \p M)$ overlap.

Proceeding as in \cite{Wendl:openbook}, choose next a $1$-form $\alpha$
on~$P_\psi$ such that $d\alpha$ is positive on the fibers and, in the
chosen coordinates $(\phi,\theta,\rho)$ near $\p P_\psi$, 
$\alpha$ takes the form
$$
\alpha = (1 - \rho) \ d\theta,
$$
where we assume $r > 0$ is small enough so that $1 - \rho > 0$ when
$r \in [r-\delta,r+\delta)$.  Then if $\epsilon > 0$ is sufficiently
small, the $1$-form
$$
\lambda_\epsilon := d\phi + \epsilon \alpha
$$
is contact on $P_\psi$.  We extend it to the rest of $M'$ by setting
$\lambda_\epsilon = \lambda_0$ on $M' \setminus M$, and on
$\nN(B \cup \iI \cup \p M)$,
$$
\lambda_\epsilon = f_\epsilon(\rho)\ d\theta + g_\epsilon(\rho)\ d\phi,
$$
where the functions $f_\epsilon, g_\epsilon : [0,r] \to \RR$ satisfy
\begin{enumerate}
\item $(f_\epsilon(\rho)),g_\epsilon(\rho)) = (f(\rho),g(\rho))$ for
$\rho \le r - \delta'$,
\item $g_\epsilon(\rho) = 1$ and $f_\epsilon'(\rho) < 0$ for
$\rho \in [r - \delta',r - \delta]$,
\item $(f_\epsilon(\rho),g_\epsilon(\rho)) = (\epsilon(1 - \rho),1)$ for
$\rho \in [r - \delta,r]$,
\item $f_\epsilon \to f$ and $g_\epsilon \to g$ in $C^\infty$ as $\epsilon \to 0$.
\end{enumerate}
Now $\lambda_\epsilon$ is a contact form everywhere on~$M'$, and
$\lambda_\epsilon \to \lambda_0$ in $C^\infty$ as $\epsilon \to 0$.  
Denote the corresponding contact structure by
$$
\xi_\epsilon = \ker\lambda_\epsilon.
$$
The Reeb vector field $X_\epsilon$ of $\lambda_\epsilon$ is defined by the
obvious analogue of \eqref{eqn:Reeb} near $B \cup \iI \cup \p M$, is 
independent of $\epsilon$ on $M' \setminus M$, and on $P_\psi$ is
determined uniquely by the conditions
$$
d\alpha(X_\epsilon,\cdot) \equiv 0, \qquad d\phi(X_\epsilon) +
\epsilon\alpha(X_\epsilon) \equiv 1.
$$
It follows that as $\epsilon \to 0$, $X_\epsilon$ converges to a smooth
vector field $X_0$ that matches \eqref{eqn:Reeb} near
$B \cup \iI \cup \p M$ and on $P_\psi$ is determined by
\begin{equation}
\label{eqn:X0}
d\alpha(X_0,\cdot) \equiv 0 \quad\text{ and } \quad d\phi(X_0) \equiv 1.
\end{equation}
Observing that $X_\epsilon$ is always positively transverse to the pages
$\{ \phi = \text{const} \}$, and applying
Proposition~\ref{prop:GirouxUniqueness}, we have:

\begin{lemma}
For $\epsilon > 0$ sufficiently small, $\xi_\epsilon$ is a contact structure
on $M'$ isotopic to~$\xi$, 
and $\lambda_\epsilon$ is a Giroux form for~$\check{\boldsymbol{\pi}}$.
\end{lemma}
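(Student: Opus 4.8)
The plan is to show directly that $\lambda_\epsilon$ is a contact form on $M'$ whose restriction to $M$ satisfies the three conditions of Definition~\ref{defn:GirouxForm}, and then to read off the isotopy statement from the uniqueness result Proposition~\ref{prop:GirouxUniqueness}. So the genuine content of the lemma is the verification that the perturbation of the confoliation $\lambda_0$ to $\lambda_\epsilon$ stays inside the contact category and inside the Giroux-form conditions; the conceptual heavy lifting is done by Proposition~\ref{prop:GirouxUniqueness}, which is already available.

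First I would check the contact condition region by region. On $P_\psi$ we have $\lambda_\epsilon = d\phi + \epsilon\alpha$ with $d\alpha$ positive on the fibers, so $\lambda_\epsilon \wedge d\lambda_\epsilon = \epsilon\, d\phi \wedge d\alpha + \epsilon^2\, \alpha \wedge d\alpha$ is a positive volume form once $\epsilon > 0$ is small, using compactness of $P_\psi$ to dominate the $\epsilon^2$ term. In each coordinate neighborhood of $\nN(B \cup \iI \cup \p M)$ one computes $\lambda_\epsilon \wedge d\lambda_\epsilon = -D_\epsilon(\rho)\, d\theta \wedge d\rho \wedge d\phi$ with $D_\epsilon = f_\epsilon g_\epsilon' - f_\epsilon' g_\epsilon$, and the conditions on $(f_\epsilon,g_\epsilon)$ together with condition (4) on $(f,g)$ make $D_\epsilon > 0$ throughout: for $\rho \le r - \delta'$ it is $D > 0$, on $[r-\delta',r-\delta]$ it is $-f_\epsilon' > 0$, and on $[r-\delta,r]$ it is $\epsilon > 0$; near the binding the check is the polar-coordinate one already made for $\lambda_0$, since $\lambda_\epsilon = \lambda_0$ there. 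On $M' \setminus M$ we have $\lambda_\epsilon = \lambda_0$, whose kernel is the contact structure $\xi$. Finally the normalizations built into $f_\epsilon,g_\epsilon$ and the choice $\alpha = (1-\rho)\,d\theta$ are exactly what makes $\lambda_\epsilon$ match smoothly across the overlap of $P_\psi$ with $\nN(B \cup \iI \cup \p M)$ and with $M' \setminus M$, so $\lambda_\epsilon$ is a globally smooth contact form on $M'$.

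Next I would verify Definition~\ref{defn:GirouxForm} for $\lambda_\epsilon|_M$. Transversality of $X_\epsilon$ to the page interiors is $d\phi(X_\epsilon) > 0$: near $B \cup \iI \cup \p M$ this equals $-f_\epsilon'/D_\epsilon > 0$ by the analogue of \eqref{eqn:Reeb} since $f_\epsilon' < 0$ and $D_\epsilon > 0$, and on $P_\psi$ it equals $1 - \epsilon\alpha(X_\epsilon) > 0$ for $\epsilon$ small. Positive tangency of $X_\epsilon$ to the page boundaries holds along the binding since $X_\epsilon = f(0)^{-1}\p_\theta$ there, and along the longitudinal circles on the interface and boundary tori because those circles are precisely the Reeb orbits foliating the Morse--Bott tori $\iI \cup \p M$ (here one uses the freedom in $\lambda_0$ near $\rho = 0$ to kill the otherwise ambiguous derivative term). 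For the characteristic-foliation condition, $\ker\lambda_\epsilon = \ker\lambda_0 = \xi$ near each torus $T \subset \iI \cup \p M$, and since the defining path $(f,g)$ starts on the positive real axis (for a boundary torus or the binding) and extends with $f$ even and $g$ odd at an interface torus, we get $g(0) = 0$, $f(0) \neq 0$, hence $\lambda_\epsilon|_T = f(0)\, d\theta$ and the characteristic leaves are the $\p_\phi$-circles, which are the meridian $m_T$ (up to sign along $\iI$). Thus $\lambda_\epsilon$ is a Giroux form for $\check{\boldsymbol{\pi}}$.

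With these two facts, the isotopy claim follows at once: $\lambda_\epsilon$ is a contact form on $M'$ restricting to a Giroux form for $\check{\boldsymbol{\pi}}$ on $M$ with $\ker\lambda_\epsilon = \xi$ on $M' \setminus M$, so Proposition~\ref{prop:GirouxUniqueness} (or, if $\p M = \emptyset$, the corresponding uniqueness statement for supported contact structures on closed manifolds) gives that $\xi_\epsilon = \ker\lambda_\epsilon$ is isotopic to $\xi$. The step I expect to require the most care is the last one of the Giroux-form check, namely pinning down the Reeb and characteristic foliations on the interface and boundary tori precisely enough to match Definition~\ref{defn:GirouxForm}(2)--(3); this is where the evenness/oddness of $(f,g)$ at the interface, the behavior of the path near $\rho = 0$, and the compatibility of the blow-up coordinates with the meridians $m_T$ all have to be brought together. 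Everything else is routine bookkeeping of the confoliation-to-contact perturbation.
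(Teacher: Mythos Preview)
Your proposal is correct and follows the same approach as the paper: verify the contact condition region by region, check that $X_\epsilon$ is positively transverse to the pages (plus the remaining Giroux-form conditions along $B \cup \iI \cup \p M$), and then invoke Proposition~\ref{prop:GirouxUniqueness} for the isotopy. The paper states this lemma as an immediate consequence of the construction and that proposition, leaving the routine verifications implicit; you have simply written them out in full.
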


In order to turn $\lambda_\epsilon$ into a stable Hamiltonian structure,
we define an exact taming form as follows.
For each coordinate neighborhood in $\nN(B \cup \iI \cup \p M) \setminus \iI$,
fix a smooth function $h : [r - \delta',r - \delta] \to \RR$ such that
$h' < 0$, $h(\rho) = f(\rho) + c$ for $\rho$ near $r - \delta'$ and some
constant $c \ge 0$, and $h(\rho) = 1 - \rho$ for $\rho$ near 
$r - \delta$.  
For each interface torus $T \subset \iI$ the function $f(\rho)$ is the same 
on $\nN_+(T)$ as on $\nN_-(T)$, thus we may assume the same is true
of~$h(\rho)$ and~$c$.  Then
$$
F(\rho) :=
\begin{cases}
1 - \rho & \text{ for $\rho \in [r - \delta,r]$},\\
h(\rho)  & \text{ for $\rho \in [r - \delta',r - \delta]$},\\
f(\rho) + c & \text{ for $\rho \in [0, r - \delta']$}
\end{cases}
$$
defines a smooth function on $[0,r)$ which, for components of $\nN(\iI)$,
has a smooth even extension to $[-r,r]$.  By choosing $f(\rho)$ appropriately
on the components of $\nN(\p M)$, one can also arrange $c=0$; it will be
convenient (e.g.~for Lemma~\ref{lemma:dynamics} below) to assume this for 
$\nN(\p M)$ but leave the choice of $c \ge 0$ and thus $f(\rho)$
arbitrary everywhere else.  Now there is a smooth $1$-form
$\hat{\alpha}$ on~$M'$ such that
$$
\hat{\alpha} = \begin{cases}
\alpha + d\phi & \text{ on $P_\psi$},\\
F(\rho)\ d\theta + g(\rho)\ d\phi & \text{ on $\nN(B \cup \iI \cup \p M)$},\\
\lambda_0 & \text{ on $M' \setminus M$},
\end{cases}
$$
and we use this to define an exact $2$-form
$$
\omega = d\hat{\alpha}.
$$
We claim that $(\lambda_0,\omega)$ defines a stable Hamiltonian structure
on~$M'$.  Indeed, outside $M$ and in a sufficiently small neighborhood of
$B \cup \iI \cup \p M$ this is clear since $\lambda_0$ is contact
and $\omega = d\lambda_0$.  On the subsets described in coordinates
by $r - \delta' \le \rho < r - \delta$, $\lambda_0$ is still contact
and $\omega = -h'(\rho)\ d\theta \wedge d\rho = \frac{h'(\rho)}{f'(\rho)}
d\lambda_0$, thus $\omega$ has maximal rank and its kernel is spanned
by~$X_0$.  On $P_\psi$, $d\lambda_0 = 0$ and $\omega = d\alpha$ annihilates
$X_0$ by \eqref{eqn:X0}, so the claim is proved.  In fact, for
$\epsilon > 0$ sufficiently small, we still have
$\omega|_{\xi_\epsilon} > 0$ and the kernel of $\omega$ is still spanned
by $X_\epsilon$, thus we've proved:

\begin{prop}
For sufficiently small $\epsilon \ge 0$, 
$$
\hH_\epsilon := (\lambda_\epsilon,\omega)
$$
defines a stable Hamiltonian structure on~$M'$.
\end{prop}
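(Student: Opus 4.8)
The plan is to check, directly, the three defining conditions of a stable Hamiltonian structure from \S\ref{subsec:definitions}: that $d\omega = 0$, that $\lambda_\epsilon\wedge\omega > 0$, and that $\ker\omega\subseteq\ker d\lambda_\epsilon$, reusing as much as possible the region-by-region analysis already carried out for $(\lambda_0,\omega)$. Closedness is immediate, since $\omega = d\hat\alpha$ is exact and does not depend on $\epsilon$. For the other two conditions the key observation is that, as established above, $\lambda_\epsilon$ is a genuine contact form on all of $M'$ once $\epsilon>0$ is small, with Reeb vector field $X_\epsilon$, so that automatically $\iota_{X_\epsilon}d\lambda_\epsilon \equiv 0$ and $\lambda_\epsilon(X_\epsilon)\equiv 1$. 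Hence it suffices to verify, in each piece of $M'$, that $\omega$ is nowhere zero (so $\ker\omega$ is a line field), that $\iota_{X_\epsilon}\omega\equiv 0$ (so that line field is spanned by $X_\epsilon$, whence $\ker\omega = \RR X_\epsilon = \ker d\lambda_\epsilon$), and that $\lambda_\epsilon\wedge\omega > 0$.

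These verifications proceed region by region, over exactly the decomposition used to define $\hat\alpha$ and $\lambda_\epsilon$. On $M'\setminus M$ we have $\lambda_\epsilon = \lambda_0$ and $\omega = d\lambda_0 = d\lambda_\epsilon$, so everything follows from the contact condition for $\lambda_0$. On the part of $\nN(B\cup\iI\cup\p M)$ where $\rho\le r-\delta'$ one has $\hat\alpha - \lambda_\epsilon = \hat\alpha - \lambda_0 = c\,d\theta$, which is closed, so again $\omega = d\lambda_\epsilon$; near the binding circles one must interpret $d\rho$, $d\rho\wedge d\theta$ and $\rho\,d\rho\wedge d\phi$ correctly in polar coordinates to see that $\omega$ remains nowhere vanishing there. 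On the transition collar $r-\delta'\le\rho\le r-\delta$ one uses $g\equiv g_\epsilon\equiv 1$ to get $\omega = h'(\rho)\,d\rho\wedge d\theta$ and $d\lambda_\epsilon = f'_\epsilon(\rho)\,d\rho\wedge d\theta$, which have the same kernel (the $\partial_\phi$ direction, which is that of $X_\epsilon$ here by the analogue of \eqref{eqn:Reeb}), with $h'/f'_\epsilon > 0$ since both $h'$ and $f'_\epsilon$ are negative, so $\lambda_\epsilon\wedge\omega = (h'/f'_\epsilon)\,\lambda_\epsilon\wedge d\lambda_\epsilon > 0$. Finally, on $P_\psi$ (which contains the region $r-\delta\le\rho\le r$) we have $\omega = d\alpha$, which is positive on the pages $\{\phi = \mathrm{const}\}$ and therefore has rank $2$ with kernel transverse to the pages; since $X_\epsilon$ is characterized there by $\iota_{X_\epsilon}d\alpha = 0$ together with $d\phi(X_\epsilon)+\epsilon\alpha(X_\epsilon) = 1$ (reducing to \eqref{eqn:X0} when $\epsilon = 0$), it spans $\ker\omega$, and $\lambda_\epsilon\wedge\omega = d\phi\wedge d\alpha + \epsilon\,\alpha\wedge d\alpha = \frac{1}{\epsilon}\,\lambda_\epsilon\wedge d\lambda_\epsilon > 0$, using precisely the smallness of $\epsilon$ already imposed to make $\lambda_\epsilon$ contact on $P_\psi$. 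At the interface tori one checks in addition that the two coordinate systems $(\theta_\pm,\rho_\pm,\phi_\pm)$ related by \eqref{eqn:plusMinus} yield consistent formulas for $\omega$, $\lambda_\epsilon$ and $X_\epsilon$; this is built into the even/odd parity conditions imposed on $f,g,F,h$, so the local descriptions patch to global ones.

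Note that $\iota_{X_\epsilon}\omega\equiv 0$ and $\omega\neq 0$ in fact hold with no smallness assumption, so the only place the hypothesis "$\epsilon$ sufficiently small" is genuinely used is the positivity of $\lambda_\epsilon\wedge\omega$ on $P_\psi$; alternatively one can argue abstractly there, since $M'$ is compact, $\lambda_0\wedge\omega > 0$ everywhere by the analysis already done, and $\lambda_\epsilon\to\lambda_0$ in $C^\infty$, so positivity of $\lambda_\epsilon\wedge\omega$ is an open condition that persists. The main obstacle is thus organizational rather than conceptual: one must keep careful track of the piecewise definitions of $F$, $h$, $f_\epsilon$ and $g_\epsilon$ across the two collars $[r-\delta',r-\delta]$ and $[r-\delta,r]$, of the parity matching at the interface tori, and of the polar-coordinate subtleties near the binding that guarantee $\omega$ is genuinely nonvanishing there. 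There are no new analytic difficulties beyond those already handled in the verification for $(\lambda_0,\omega)$.
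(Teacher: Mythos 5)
Your proof is correct and uses essentially the same region-by-region verification as the paper, with identical key computations (the identity $\omega = \frac{h'}{f'_\epsilon}\,d\lambda_\epsilon$ on the transition collar, the observation that $\omega = d\lambda_\epsilon$ wherever $\hat\alpha - \lambda_\epsilon$ is closed, and the treatment of $P_\psi$ via $d\alpha$). The one organizational difference is that the paper first verifies the stable Hamiltonian conditions for $(\lambda_0,\omega)$ and then obtains the case $\epsilon>0$ small by the openness/limiting argument you mention as an alternative, whereas you frame the verification directly for $\lambda_\epsilon$ with $\epsilon>0$ using its Reeb field. That framing is fine, but note it sidesteps the $\epsilon=0$ case claimed in the statement: on $P_\psi$ the form $\lambda_0 = d\phi$ is not contact, so there is no Reeb vector field of $\lambda_0$ there, and your identity $\lambda_\epsilon\wedge\omega = \frac1\epsilon\,\lambda_\epsilon\wedge d\lambda_\epsilon$ degenerates at $\epsilon=0$. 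The fix is already implicit in your own first expression: $\lambda_0\wedge\omega = d\phi\wedge d\alpha>0$ directly, and $\ker\omega\subset\ker d\lambda_0$ is automatic on $P_\psi$ since $d\lambda_0\equiv 0$ there; making that explicit would align your argument with the paper's.
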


\begin{defn}
\label{defn:goodSHS}
Any smooth family $\hH_\epsilon = (\lambda_\epsilon,\omega)$ of 
stable Hamiltonian structures on $M'$ defined for small $\epsilon \ge 0$ 
by the procedure above will be said to be
\defin{adapted} to~$\check{\boldsymbol{\pi}}$.
\end{defn}

\begin{lemma}
\label{lemma:dynamics}
There exists a number $\tau_1 > 0$ so that for any $\tau_0 > 0$
and $m_0 \in \NN$, a family
of stable Hamiltonian structures $\hH_\epsilon = (\lambda_\epsilon,\omega)$
on $M'$ adapted to~$\check{\boldsymbol{\pi}}$
can be constructed so as to satisfy the following additional conditions
on the Reeb vector fields $X_\epsilon$:
\begin{enumerate}
\item The interface and boundary tori are Morse-Bott submanifolds, and all
closed orbits in a neighborhood of $\iI \cup \p M$ are also Morse-Bott.
\item Each connected component $\gamma \subset B$ and all its multiple
covers are nondegenerate elliptic orbits, and their covers up to 
multiplicity $m_0$ all have
Conley-Zehnder index~$1$ with respect to
the natural trivialization of $\xi$ along $\gamma$ 
determined by the coordinates.
\item All orbits in $B_0 \cup \iI_0 \cup \p M_0$ have minimal period at most
$\tau_0$, while all other orbits have period at least~$\tau_1$.
\end{enumerate}
Moreover for each $\epsilon > 0$ sufficiently small, the contact form
$\lambda_\epsilon$ admits a $C^\infty$-small perturbation to a
globally Morse-Bott contact form whose Reeb vector field still 
satisfies the above conditions.
\end{lemma}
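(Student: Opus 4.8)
The plan is to carry out a region-by-region analysis of the Reeb dynamics of $X_\epsilon$, using the explicit local models built above, and to observe that the listed properties are forced by appropriate choices of the transition functions $(f,g)$ on $\nN(B\cup\iI\cup\p M)$ together with a final $C^\infty$-small perturbation; this parallels the corresponding computation in \cite{Wendl:openbook}, to which I refer for the routine verifications.

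First I would fix $\tau_1$. On $P_\psi$, $X_0$ satisfies \eqref{eqn:X0}, so it is the suspension vector field of the monodromy $\psi$ normalized by $d\phi(X_0)\equiv 1$; hence along any closed orbit $\phi$ increases by a positive integer, and the period is $\ge 1$. The same bound holds on $M'\setminus M$ and on the collar $\{r-\delta'\le\rho\le r-\delta\}$, where $X_0=\p_\phi$. For $\epsilon>0$ small these bounds survive (with constant, say, $1/2$), so $\tau_1$ may be taken to be the minimum of $1/2$ and a constant coming from the binding/interface estimates below, which will be seen to be independent of $\tau_0$ and $m_0$.

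The main work is on $\nN(\gamma)$ for $\gamma\subset B$ and on $\nN(T)$ for $T\subset\iI\cup\p M$, where $\lambda_\epsilon=f_\epsilon(\rho)\,d\theta+g_\epsilon(\rho)\,d\phi$ and, where this is contact, the Reeb field is \eqref{eqn:Reeb}, directed along $g'(\rho)\,\p_\theta-f'(\rho)\,\p_\phi$ and equal to $\tfrac1{f(0)}\p_\theta$ at $\rho=0$. For $\gamma\subset B$ I would take $(f,g)=c(\rho)(1,\rho^2)$ near $\rho=0$ with $c'(0)=0$; then $\gamma$ is a nondegenerate elliptic orbit of period $f(0)=c(0)$ whose linearized return map, in the page trivialization of $\xi$ (the one with $\p_\rho,\p_\phi$ as real and imaginary parts), is the rotation by $\vartheta_\gamma=|c''(0)|/2c(0)$. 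By the winding--index relation \eqref{eqn:CZwinding} (cf.~\cite{HWZ:props2}), all iterates of $\gamma$ up to multiplicity $m_0$ are then nondegenerate with $\muCZ=2\lfloor k\vartheta_\gamma\rfloor+1=1$ as soon as $\vartheta_\gamma$ is irrational and $\vartheta_\gamma\in(0,1/m_0)$; one also sets $c(0)<\tau_0$ when $\gamma\subset B_0$ and $c(0)\ge\tau_1$ otherwise. For $T\subset\iI\cup\p M$ I would take $(f,g)=c(\rho)(1,\rho)$ near $\rho=0$ with $c$ even, $c'(0)=0$ and, crucially, $c''(0)\ne0$; then $T=\{\rho=0\}$ is a torus of $f(0)$-periodic orbits whose transverse linearized return map fixes $\p_\phi$ and sends $\p_\rho\mapsto\p_\rho-\tfrac{c''(0)}{c(0)}\p_\phi$, a nontrivial shear, so $\ker\big((\varphi_X^{f(0)})_*-\1\big)=T_pT$ and $T$ is a Morse--Bott submanifold; nearby closed orbits at $\rho\ne0$ lie on Morse--Bott tori because the transverse rotation rate is strictly monotone in $\rho$ ($c''(0)\ne0$). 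Again $c(0)<\tau_0$ for $T\subset\iI_0\cup\p M_0$ and $c(0)\ge\tau_1$ otherwise. It remains to rule out further short orbits: a closed orbit on $\{\rho=\rho_0\}$, $\rho_0\ne0$, exists only when $g'(\rho_0)/|f'(\rho_0)|=a/b$ is rational in lowest terms, and then every such orbit has period a positive multiple of $aD(\rho_0)/g'(\rho_0)=bD(\rho_0)/|f'(\rho_0)|\ge\max\{f(\rho_0),g(\rho_0)\}$, using $D=fg'+|f'|g$. So it suffices to shape each path so that the slope is irrational near $\rho=0$ (constant $=\vartheta_\gamma^{-1}$, resp.~$+\infty$, on the contact model), and so that on the rest of $\nN(\cdot)$ the slope only attains a rational value of small denominator where $g$ has already grown past $\tau_1$; since condition (4) of the construction forces $f$ to decrease by a total amount $<\tau_0$ while $g$ must increase to $1$, this quantitative control can be achieved, and the resulting lower bound is $\tau_0$- and $m_0$-independent. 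This completes the construction of the adapted family $\hH_\epsilon$.

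Finally, fix $\epsilon>0$ small, so that $\lambda_\epsilon$ is contact and its Reeb flow is already Morse--Bott on a neighborhood of $B\cup\iI\cup\p M$ and on $M'\setminus M$, but possibly degenerate on $P_\psi$ and the collars. By a standard Baire-category argument (cf.~\cite{Wendl:openbook}), the contact forms agreeing with $\lambda_\epsilon$ on a fixed neighborhood of $B\cup\iI\cup\p M$ and having all remaining closed orbits nondegenerate are $C^\infty$-residual; since closed orbits below a fixed period depend continuously on the form, a sufficiently $C^\infty$-small such perturbation creates no new short orbit, leaves the short orbits in $B_0\cup\iI_0\cup\p M_0$ untouched, and yields a globally Morse--Bott contact form still satisfying (1)--(3) (and still a Giroux form for $\check{\boldsymbol{\pi}}$, hence with kernel isotopic to $\xi$ by Proposition~\ref{prop:GirouxUniqueness}). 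I expect the main obstacle to be the simultaneous, compatible realization in the third paragraph --- the prescribed small periods at $\rho=0$, the irrational rotation numbers in $(0,1/m_0)$ along $B$, the shear condition along $\iI\cup\p M$, and the absence of any other orbit of period $<\tau_1$ --- from a single family of paths $(f,g)$ constrained further by matching the contact model near $\rho=0$ and being constant near $\rho=r$; this is the quantitative core of \cite{Wendl:openbook}, transplanted to the blown up summed setting.
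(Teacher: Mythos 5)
Your overall strategy coincides with the paper's: impose explicit shape conditions on the paths $\rho\mapsto(f(\rho),g(\rho))$ near each component of $B\cup\iI\cup\p M$ to force the CZ/Morse--Bott conditions, arrange that closed orbits off $\{\rho=0\}$ are long, and finish with a generic perturbation away from a fixed neighborhood of $B\cup\iI\cup\p M$. Your observation that a closed orbit on $\{\rho=\rho_0\}$ with rotation number $g'(\rho_0)/|f'(\rho_0)|=a/b$ (in lowest terms) has minimal period exactly $af(\rho_0)+bg(\rho_0)$ is a genuinely clean reformulation of the paper's lower bound $|D/f'|$ (which is $af+bg$ with $b=1$): it makes the mechanism behind \eqref{eqn:lowerBound} transparent and is a nice simplification.

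However, the step ``it suffices to shape each path so that the slope is irrational near $\rho=0$'' is not correct for the interface and boundary tori $T\subset\iI_0\cup\p M_0$, and the weaker consequence ``period $\ge\max\{f,g\}$'' does not save you there. Since $f$ is even and $g$ is odd, $f'(0)=0$, so the slope $g'/|f'|$ tends to $+\infty$ as $\rho\to 0$ and is a nonconstant continuous function of $\rho$ near $0$; it therefore passes through infinitely many rational values, and closed orbits exist at tori arbitrarily close to $T$. At those radii $f(\rho)\approx f(0)=\tau\le\tau_0$ and $g(\rho)\approx 0$, so $\max\{f,g\}$ is small and your stated bound is useless exactly where it matters. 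The correct deduction must use your formula together with the slope lower bound: under the paper's conditions $g'/|f'|\ge 1/\tau+\epsilon_0$ and $f\ge\tau(1-\epsilon_0)$ near $\rho=0$, one has $a\ge b(1/\tau+\epsilon_0)$ and hence
\[
af+bg\;\ge\;af\;\ge\;b\left(\tfrac1\tau+\epsilon_0\right)\tau(1-\epsilon_0)\;\ge\;(1-\epsilon_0),
\]
which is the $\tau_0$- and $m_0$-independent lower bound you need, and is exactly what \eqref{eqn:lowerBound} produces. So the ingredient you deferred to \cite{Wendl:openbook} is in fact not there (that paper has no interface tori at all) and is the one place your argument as written has a hole; it needs the displayed combination of the two bounds rather than ``irrational slope near $\rho=0$''. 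The rest of your write-up --- the ansatz controlling the rotation number along binding circles (up to a possible convention on the factor of $2$ in $\vartheta_\gamma$), the Morse--Bott shear along $\iI\cup\p M$ from $c''(0)\ne 0$, and the final Baire-category nondegeneracy perturbation supported away from a fixed neighborhood of $B\cup\iI\cup\p M$ --- matches the paper.
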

\begin{proof}
We first prove that the stated conditions can be established for~$X_0$.

If $\gamma \subset B$ is a binding circle, then $\gamma$ and all its
multiple covers can be made nondegenerate and elliptic by choosing
the functions $f$ and $g$ so that
$$
f'(\rho) / g'(\rho) \in \RR \setminus \QQ \quad
\text{for all $\rho > 0$ sufficiently small}.
$$
This implies that the slope of the curve $\rho \mapsto (f(\rho),g(\rho)) \in
\RR^2$ is constant for $\rho$ near~$0$, and this slope determines the
Conley-Zehnder index of~$\gamma$; in particular, the stated
condition is satisfied whenever $f''(0) / g''(0)$ is a negative number
sufficiently close to~$0$.  Assume this from now on.

Similarly, we make every orbit in a neighborhood of $\iI \cup \p M$
Morse-Bott by assuming that in such a neighborhood,
$\lambda_0 = f(\rho)\ d\theta + g(\rho)\ d\phi$ where $f$ and $g$
satisfy
$$
f'(\rho) g''(\rho) - f''(\rho) g'(\rho) > 0.
$$
This means that the path $\rho \mapsto (f(\rho),g(\rho)) \in \RR^2$
has nonzero inward angular acceleration as it winds (counterclockwise)
about the origin; clearly for $\nN(\iI)$ we can also still safely 
assume that $f$ and $g$ are restrictions of even and odd functions 
respectively on $[-r,r]$.

We now show that the periods of the orbits in $B_0 \cup \iI_0 \cup \p M_0$ 
can be made arbitrarily small compared to all other periods.  Observe that by
\eqref{eqn:Reeb}, the Reeb flow as we've constructed it preserves the
concentric tori $\{ \rho = \text{const} \}$ in the neighborhood
$\nN(B_0 \cup \iI_0 \cup \p M_0)$, thus it also preserves $M' 
\setminus \nN(B_0 \cup \iI_0 \cup \p M_0)$.
Since the latter has compact closure, there is a positive lower bound for the
periods of all closed orbits in $M' \setminus \nN(B_0 \cup \iI_0 \cup \p M_0)$, 
so it 
will suffice to leave $\lambda_0$ fixed in this region and reduce the periods
in $B_0 \cup \iI_0 \cup \p M_0$ while preserving a lower bound for all other orbits
in $\nN(B_0 \cup \iI_0 \cup \p M_0)$.

Consider a binding orbit $\gamma \subset B_0$: writing $\lambda_0$ as
$f(\rho)\ d\theta + g(\rho)\ d\phi$ near~$\gamma$, the period of $\gamma$
is $f(0) > 0$.  Choosing sufficiently small constants $\tau > 0$ and 
$\epsilon_0 > 0$, we impose the following additional 
conditions on $f$ and~$g$ (see Figure~\ref{fig:smallPeriods}, left):
\begin{itemize}
\item $(f(0),g(0)) = (\tau,0)$,
\item For all $\rho \in (0,r]$,
$$
\frac{g'(\rho)}{-f'(\rho)} \le \frac{1}{\tau} + \epsilon_0 \in
\RR\setminus \QQ,
$$
with equality for $\rho \le 2r / 3$.
\item For $\rho \in [2r/3, r]$, $g(\rho) \ge 2/3$ and $f(\rho) \le \tau/3$.
\end{itemize}
Since $f'(\rho) / g'(\rho)$ is irrational for $\rho \le 2r/3$, all closed
orbits in $\nN(\gamma) \setminus \gamma$ are outside this region.  
For any $\rho_0 \in [2r/3,r]$, \eqref{eqn:Reeb} implies that a
Reeb orbit in $\{ \rho = \rho_0 \}$ has its $\phi$-coordinate increasing
at the constant rate of $-f'(\rho_0) / D(\rho_0)$.  Its period is thus at least
\begin{equation}
\label{eqn:lowerBound}
\begin{split}
\left| \frac{D(\rho_0)}{f'(\rho_0)} \right| &=
\left| \frac{f(\rho_0) g'(\rho_0) - f'(\rho_0) g(\rho_0)}{f'(\rho_0)} \right| \ge
| g(\rho_0) | - \left| f(\rho_0) \frac{g'(\rho_0)}{f'(\rho_0)} \right| \\ 
&\ge \frac{2}{3} - \left| \frac{\tau}{3} \left( \frac{1}{\tau} + \epsilon_0 \right) \right|
= \frac{2}{3} - \frac{1}{3}( 1 + \tau \epsilon_0) > 0.
\end{split}
\end{equation}
We can therefore keep these periods bounded away from zero while shrinking
$f(0) = \tau$ to make both the period at $\gamma$ and the ratio 
$- f'(\rho) / g'(\rho)$ near $\gamma$ arbitrarily small.

\begin{figure}
\begin{center}
\includegraphics{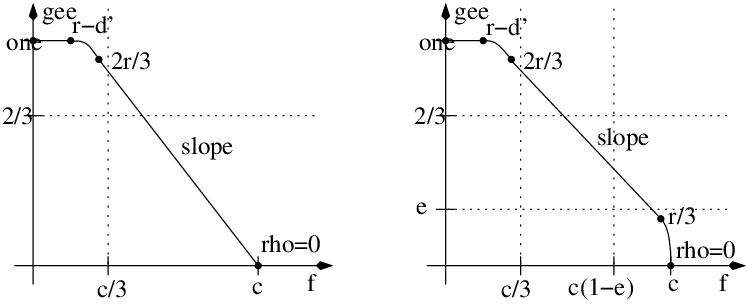}
\caption{\label{fig:smallPeriods}
The path $\rho \mapsto (f(\rho),g(\rho)) \in \RR^2$ with the extra 
conditions imposed in the proof of Lemma~\ref{lemma:dynamics} for the
nondegenerate case (left) and Morse-Bott case (right).}
\end{center}
\end{figure}

The above requires only a small modification for the neighborhood of a
torus $T \subset \iI_0 \cup \p M_0$: here we need $f$ and $g$ to extend over
$\rho \in [-r,r]$ as even and odd functions respectively, so it is no longer
possible to fix the slope $f'(\rho) / g'(\rho)$ throughout $\rho \in [0,2r/3]$.
In fact $f'(0)$ must vanish, so we amend the above conditions by allowing
them to hold for $\rho \in [r/3,r]$, but requiring the following for
$\rho \in [0,r/3]$,
\begin{itemize}
\item $- g'(\rho) / f'(\rho) \ge 1 / \tau + \epsilon_0$,
\item $f(\rho) \ge \tau (1 - \epsilon_0)$,
\item $g(\rho) \le \epsilon_0$.
\end{itemize}
This modification is shown at the right of Figure~\ref{fig:smallPeriods}.
Now for $\rho \le r/3$, the lower bound calculated in \eqref{eqn:lowerBound} 
becomes
\begin{equation*}
\begin{split}
\left| \frac{D(\rho_0)}{f'(\rho_0)} \right| &\ge
\left| f(\rho_0) \frac{g'(\rho_0)}{f'(\rho_0)} \right| - |g(\rho_0)| \ge
\tau (1 - \epsilon_0) \left( \frac{1}{\tau} + \epsilon_0 \right) - \epsilon_0 \\
&= 1 + \epsilon_0 \left( \tau - 2 - \tau \epsilon_0^2 \right) > 0.
\end{split}
\end{equation*}
Thus we can freely shrink $f(0) = \tau$, the minimal period of the
Morse-Bott family at~$T$, while bounding all other periods away from zero.

Since $X_\epsilon$ is a small perturbation of $X_0$ outside a neighborhood
of $B \cup \iI \cup \p M$, the same results immediately hold for
$X_\epsilon$: in particular, for any sequence $\epsilon_k \to 0$,
$M' \setminus \nN(B_0 \cup \iI_0 \cup \p M_0)$ cannot contain a sequence of orbits
of $X_{\epsilon_k}$ with periods below a certain threshold, as a subsequence 
of these would converge (by Arzel\`{a}-Ascoli) to an orbit of~$X_0$.
Similarly, this constraint on the periods will be satisfied by any 
sufficiently small perturbation of~$X_\epsilon$.  We can now choose such
a perturbation to a globally Morse-Bott contact form as follows:
let $\uU \subset M'$ denote a union of coordinate neighborhoods of the form
$\{ |\rho| < r_0 \}$ near each component of $B \cup \iI \cup \p M$,
where $r_0 > 0$ is chosen such that all periodic orbits inside $\uU$ are 
Morse-Bott and none exist near $\p\overline{\uU}$ (because
$f' / g'$ is irrational).
After a generic perturbation of $\lambda_\epsilon$ in 
$M' \setminus \overline{\uU}$, every Reeb orbit not fully contained in
$\overline{\uU}$ becomes nondegenerate (cf.~the appendix of
\cite{AlbersBramhamWendl}), which means all orbits outside $\overline{\uU}$
are nondegenerate, while all the others (which are inside $\uU$) are
Morse-Bott by construction.
\end{proof}

\begin{remark}
To satisfy the conditions stated in Theorem~\ref{thm:openbook}, we need
a version of Lemma~\ref{lemma:dynamics} with $\tau_1 = 1$.  This can always
be achieved by rescaling $\lambda_\epsilon$ by a constant, and thus replacing
$\hH_\epsilon = (\lambda_\epsilon,\omega)$ by $(c \lambda_\epsilon,\omega)$
for some $c > 0$.
\end{remark}

\subsubsection{A symplectic cobordism}

As a quick detour away from the proof of Theorem~\ref{thm:openbook}, we
now explain a construction that will be useful for proving
Theorem~\ref{thm:complement}.  Namely, we will need to know that the
stable Hamiltonian structures $\hH_0$ and $\hH_\epsilon$ for some
$\epsilon > 0$ can be related
to each other by a cylindrical 
symplectic cobordism that looks standard near the binding.

To simplify the statement of the following result, let us restrict to
the special case where $M = M'$ and $\pi : M \setminus B \to S^1$ is an
ordinary (not summed or blown up) open book; this will suffice for the
application we have in mind.

\begin{prop}
\label{prop:cobordism}
There exists a family of stable Hamiltonian structures 
$\hH_\epsilon = (\lambda_\epsilon,\omega)$ on~$M$ adapted to the open
book $\pi : M \setminus B \to S^1$ such that $[0,1] \times M$ admits
a symplectic form~$\Omega$ with the following properties:
\begin{itemize}
\item $\Omega = \omega + d(t\lambda_0)$ near $\{0\} \times M$.
\item $\Omega = d(e^t\lambda)$ near $\{1\} \times M$ for some contact form
$\lambda$ with $\ker\lambda = \xi_\epsilon$ and some $\epsilon > 0$.
\item $\Omega = d(\varphi(t)\lambda_0)$ on $[0,1] \times \uU$ for some
neighborhood $\uU \subset M$ of $B$ on which $\lambda_\epsilon = \lambda_0$,
and some smooth function $\varphi : [0,1] \to (0,\infty)$ with
$\varphi' > 0$.
\end{itemize}
\end{prop}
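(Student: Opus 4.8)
The plan is to produce $\Omega$ as an exact form $d\beta$ with $\beta=\beta_t$ a path of $1$-forms on $M$ (with no $dt$-component), interpolating between the primitive of the stable Hamiltonian symplectization at the bottom and a contact symplectization at the top. What makes this tractable is that $\omega=d\hat\alpha$ for the exact taming form $\hat\alpha$ constructed in \S\ref{subsec:bigTheorem}, so $\omega+d(t\lambda_0)=d(\hat\alpha+t\lambda_0)$ is already exact; and on the $3$-manifold $M$, a form $\Omega=d\beta_t$ on $[0,1]\times M$ is symplectic if and only if $\dot\beta_t\wedge d\beta_t>0$ as a volume form on $M$ for every $t$ (writing $\dot\beta_t=\p_t\beta_t$ and $d$ for the differential on $M$), since every other term of $\Omega\wedge\Omega$ is a $4$-form on $M$. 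Thus the whole problem reduces to choosing $\beta_t$ with the prescribed germs at $t=0$, at $t=1$ and near $B$, keeping $\dot\beta_t\wedge d\beta_t$ positive. For the behavior near $B$ I would first make the harmless choice $c=0$ in the construction of $\hat\alpha$ near the binding (here $M=M'$ and the open book is ordinary, so $\nN(B\cup\iI\cup\p M)=\nN(B)$); then on a neighborhood $\uU$ of $B$ one has $\hat\alpha=\lambda_0=\lambda_\epsilon$ equal to a fixed contact form, every admissible $\beta_t$ there is of the form $\varphi(t)\lambda_0$, and $\dot\beta_t\wedge d\beta_t=\varphi'\varphi\,\lambda_0\wedge d\lambda_0$, which is positive exactly when $\varphi,\varphi'>0$ — and $d\beta=d(\varphi(t)\lambda_0)$, as required.

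Away from $B$ I would interpolate in three stages. Stage $0$ (near $t=0$): $\beta_t=\hat\alpha+f(t)\lambda_0$ with $f(t)=t$ near $0$ and $f'>0$, giving $\dot\beta_t\wedge d\beta_t=f'\,\lambda_0\wedge\omega+ff'\,\lambda_0\wedge d\lambda_0>0$, using only $\lambda_0\wedge\omega>0$ (the stable Hamiltonian condition for $\hH_0$) and $\lambda_0\wedge d\lambda_0\ge0$. Stage $1$: $\beta_t=\hat\alpha+f(t)\lambda_{\chi(t)}$, where $\chi$ increases slowly from $0$ to a small fixed $\epsilon>0$; the extra terms, involving $\chi'\,(\p_\epsilon\lambda_\epsilon)$, are bounded, hence dominated by $f'\,\lambda_{\chi}\wedge\omega$ once $\chi'$ is small enough, provided $\epsilon$ is small enough that $\lambda_\chi\wedge\omega>0$ and $\lambda_\chi\wedge d\lambda_\chi\ge0$ for all $\chi\in[0,\epsilon]$ (both hold for small $\epsilon$ by compactness, since at the zeros of $\lambda_0\wedge d\lambda_0$, namely on $P_\psi$ and where $\rho$ is large near $B$, one checks $\lambda_\chi\wedge d\lambda_\chi>0$ for $\chi>0$). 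Stage $2$: $\beta_t=\ell(t)\hat\alpha+m(t)\lambda_\epsilon$ with $\ell$ decreasing from $1$ to $0$ and identically $0$ near $t=1$, $m$ increasing with $m(t)=c\,e^t$ near $t=1$; then with $\lambda:=c\lambda_\epsilon$ one gets $\Omega=d(e^t\lambda)$ near $\{1\}\times M$ with $\ker\lambda=\xi_\epsilon$. Near $B$ the three stages collapse to $\beta_t=\varphi(t)\lambda_0$ with $\varphi=1+f$, then $\varphi=\ell+m$, which glue to a single $\varphi:[0,1]\to(0,\infty)$.

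I expect the main obstacle to be the positivity in Stage $2$: there
$$
\dot\beta_t\wedge d\beta_t=\dot\ell\,\ell\,\hat\alpha\wedge\omega+\dot\ell\,m\,\hat\alpha\wedge d\lambda_\epsilon+\dot m\,\ell\,\lambda_\epsilon\wedge\omega+\dot m\,m\,\lambda_\epsilon\wedge d\lambda_\epsilon,
$$
and since $\dot\ell<0$ the first two terms have indefinite sign, so a naive interpolation fails. The remedy uses two freedoms built into the statement: the top contact form may be rescaled (only $\ker\lambda=\xi_\epsilon$ is demanded, so $c$ may be taken as large as desired), and the value $m(s_1)=f_0$ at the start of Stage $2$ may be taken large. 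For $f_0$ large the relevant domination constant, controlled by $\sup|\hat\alpha\wedge\omega|$, $\sup|\hat\alpha\wedge d\lambda_\epsilon|$ and a uniform positive lower bound for $\lambda_\epsilon\wedge d\lambda_\epsilon$, stays bounded; one then lets $m$ grow, over the region where $\ell$ is non-constant, by more than a fixed multiple of the total variation of $\ell$ there, so that $\dot m\,m\,\lambda_\epsilon\wedge d\lambda_\epsilon+\dot m\,\ell\,\lambda_\epsilon\wedge\omega$ dominates the two bounded terms; the same largeness forces $\varphi'=\ell'+m'>0$ near $B$, and taking $c$ large lets $m$ reach the germ $c\,e^t$. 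It remains only to check that the stages glue smoothly — arrange $\chi$ and all its derivatives to vanish at the $0$/$1$ junction, and match $\ell,m$ (with $\ell\equiv1$, $m=f$, $f'>0$) smoothly to $\hat\alpha+f(t)\lambda_\epsilon$ at the $1$/$2$ junction — and that near $t=0$ and $t=1$ one recovers exactly $\omega+d(t\lambda_0)$ and $d(e^t\lambda)$; these are routine once the differential inequalities above are arranged.
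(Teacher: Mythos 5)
Your proposal is correct, and it takes a genuinely different route from the paper on the key step. You construct $\Omega=d\beta_t$ directly on $[0,1]\times M$ by a three-stage interpolation of the primitive, verifying the pointwise condition $\dot\beta_t\wedge d_M\beta_t>0$ at every stage; in particular you handle the final transition to $d(e^t\lambda)$ by the explicit interpolation $\beta_t=\ell(t)\hat\alpha+m(t)\lambda_\epsilon$ and an absorption estimate that relies on taking $m$ large while $\dot m$ dominates $|\dot\ell|$ pointwise. The paper instead builds $\Omega=\omega+d(t\hat\lambda)$ on the half-infinite cylinder $[0,\infty)\times M$, notes that this is symplectic because it is $C^\infty$-close to $\omega+d(t\lambda_0)$ for $t\in[0,\epsilon_1]$ and equal to $\omega+d(t\lambda_\epsilon)$ for $t\ge\epsilon_1$, and then produces the top contact collar not by interpolation but by observing that the $\Omega$-dual of the global primitive $\hat\alpha+t\hat\lambda$ is a Liouville vector field $Y$ that is outward-pointing along $\{T\}\times M$ for $T$ large; flowing by $Y$ near that slice gives the normal form $d(e^t\lambda)$, with $Y=(1+t)\p_t$ near the binding guaranteeing the product form there. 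So the paper's argument trades your domination estimates for a Moser/Liouville-flow normalization and is consequently shorter and avoids tracking signs of $\hat\alpha\wedge d\hat\alpha$ and $\hat\alpha\wedge d\lambda_\epsilon$; your version is more elementary and yields the slightly stronger conclusion that $\Omega$ is of the form $d\beta_t$ with $\beta_t$ a path of $1$-forms on $M$. One phrasing to tighten: the pointwise inequality $\dot\beta_t\wedge d\beta_t>0$ requires pointwise, not merely integral (``total variation'') control, i.e.\ $\dot m(t)\ge C|\dot\ell(t)|$ for all $t$ plus a small margin; your construction easily arranges this, so it is only the wording that needs adjusting. You should also note explicitly that the factor $m$ appears in both the dangerous term $\dot\ell\,m\,\hat\alpha\wedge d\lambda_\epsilon$ and the dominating term $\dot m\,m\,\lambda_\epsilon\wedge d\lambda_\epsilon$, so they scale together and the comparison reduces to $\dot m$ versus $|\dot\ell|$ uniformly over $M$, with the remaining term $\dot\ell\,\ell\,\hat\alpha\wedge\omega$ suppressed by $1/m$.
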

\begin{remark}
We are not claiming that $\hH_\epsilon$ in this result can be chosen to
make the periods of binding orbits small as in Lemma~\ref{lemma:dynamics}
and Theorem~\ref{thm:openbook}.  For our application we will not need this.
\end{remark}
\begin{proof}[Proof of Prop.~\ref{prop:cobordism}]
In $(\theta,\rho,\phi)$-coordinates on $\nN(B)$, we can write
$\lambda_0 = f(\rho)\ d\theta + g(\rho)\ d\phi$ with $f$ and~$g$ chosen
such that $f(\rho) = 1 - \rho$ for $\rho$ near $r - \delta'$.  Then setting
$$
F(\rho) = \begin{cases}
1 - \rho & \text{ for $\rho \in [r - \delta',r]$},\\
f(\rho)  & \text{ for $\rho \in [0,r - \delta']$}
\end{cases}
$$
and defining $\hat{\alpha}$ and $\omega$ as before, we have
$\omega \equiv d\hat{\alpha}$ where $\hat{\alpha} = \lambda_0$ on a neighborhood
$\uU := \{ \rho < r - \delta' \}$ of~$B$.

With this stipulation in place, construct the family $\lambda_\epsilon$ 
as before.  Next choose small numbers $\epsilon,\epsilon_1 > 0$ and a 
smooth function $\beta : [0,\infty) \to [0,\epsilon]$ such that
\begin{itemize}
\item $\beta(t) = 0$ for $t$ near~$0$,
\item $\beta(t) = \epsilon$ for $t \ge \epsilon_1$.
\end{itemize}
Define a $1$-form $\hat{\lambda}$ on $[0,\infty) \times M$ by
$$
\hat{\lambda}|_{(t,p)} = \lambda_{\beta(t)}|_p
$$
for all $(t,p) \in [0,\infty) \times M$, and then define
$$
\Omega = \omega + d(t \hat{\lambda})
$$
on $[0,\infty) \times M$.  Note that $\omega + d(t\lambda_0)$ is symplectic
on $[0,\epsilon_1] \times M$ if $\epsilon_1 > 0$ is sufficiently small, and
$\Omega$ is $C^\infty$-close to this if $\epsilon > 0$ is also small,
implying that $\Omega$ is also symplectic on $[0,\epsilon_1] \times M$.
It is also obviously symplectic on $[\epsilon_1,\infty) \times M$ since it then
equals
$$
\omega + d(t \lambda_\epsilon)
$$
for some $\epsilon > 0$, where $\lambda_\epsilon$ is contact and
$\omega$ is $d\lambda_\epsilon$ multiplied by a smooth positive function.
This construction thus gives a symplectic form on $[0,\infty) \times M$
which has the desired form already near $\{0\} \times M$ and 
on $[0,\infty) \times \uU$.  To define a suitable top boundary for the
cobordism, observe that $\Omega = d(\hat{\alpha} + t\hat{\lambda})$,
thus the $\Omega$-dual vector field to $\hat{\alpha} + t\hat{\lambda}$
is a Liouville vector field~$Y$:
$$
\iota_Y \Omega := \hat{\alpha} + t\hat{\lambda}.
$$
We claim that on the hypersurface $\{T\} \times M$ for $T > 0$ sufficiently
large, $dt(Y) > 0$.  Indeed, this is equivalent to the statement that
$\hat{\alpha} + t\hat{\lambda}$ defines a positive contact form on
$\{T\} \times M$, which is true if $T$ is large enough since its kernel
is then a small perturbation of $\ker\lambda_\epsilon$.
Thus fixing $T$ sufficiently large,
$\{T\} \times M$ is a convex boundary component of $[0,T] \times M$.
Moreover since the primitive of $\Omega$ is just $(1 + t)\lambda_0$
in $[\epsilon_1,\infty) \times \uU$, the vector field $Y$ takes the simple form
$(1 + t) \p_t$ in this region.  Using the flow of~$Y$ near $\{T\} \times M$,
we can now identify a neighborhood of this hypersurface in $[0,T] \times M$
symplectically with a domain of the form
$$
((1 - \epsilon_1,1] \times M, d(e^t \lambda)),
$$
where $\lambda$ is a constant multiple of the contact $1$-form
$\hat{\alpha} + T\lambda_\epsilon$, which defines a contact structure
isotopic to~$\xi_\epsilon$ due to Gray's theorem.  There is thus a
diffeomorphism of $[0,T] \times M$ to $[0,1] \times M$ that transforms
$\Omega$ into the desired form.
\end{proof}

\subsubsection{Non-generic holomorphic curves and perturbation}

Returning to the proof of Theorem~\ref{thm:openbook}, assume
$\hH_\epsilon = (\lambda_\epsilon,\omega)$ is a family of stable Hamiltonian
structures adapted to the blown up summed open 
book~$\check{\boldsymbol{\pi}}$ on $M \subset M'$ and satisfying
Lemma~\ref{lemma:dynamics}.
Choose any compatible almost complex structure
$J_0 \in \jJ(\hH_0)$ which has the following properties in
the coordinate neighborhoods $\nN(B \cup \iI \cup \p M)$:
\begin{itemize}
\item $J_0$ is invariant under the $T^2$-action defined by translating the
coordinates $(\theta,\phi)$.
\item $d\rho(J_0 \p_\rho) \equiv 0$.
\end{itemize}
Observe that $\p_\rho \in \xi_0$ always, so the second condition says
that $J_0$ maps $\p_\rho$ into the characteristic foliation defined by
$\xi_0$ on the torus $\{ \rho = \text{const} \}$.  Note
also that since $\xi_0$ is tangent to the fibers of~$P_\psi$, these fibers
naturally embed into $\RR\times M'$ as $J_0$-holomorphic curves.
The construction in \cite{Wendl:openbook}*{\S 3} now carries over
directly to the present setting and gives the following result.

\begin{prop}
\label{prop:holOBD}
For each $i=0,\ldots,N$, the interior of
$\RR \times (M_i \setminus (B_i \cup \iI_i))$ is foliated by
an $\RR$-invariant family of properly embedded surfaces
$$
\{ S^{(i)}_{\sigma,\tau} \}_{(\sigma,\tau) \in \RR \times S^1}
$$
with $J_0$-invariant tangent spaces, where
$$
S^{(i)}_{\sigma,\tau} \cap (\RR \times P_\psi) =
\{\sigma\} \times \left( \pi_i^{-1}(\tau) \cap P_\psi \right),
$$
and its intersection with each connected component of 
$\RR \times \nN(B \cup \iI \cup \p M)$ can be parametrized in
$(\theta,\rho,\phi)$-coordinates by a map of the form
$$
[0,\infty) \times S^1 \to \RR \times S^1 \times (0,r] \times S^1 :
(s,t) \mapsto (a_i(s) + \sigma,t,\rho_i(s),\tau).
$$
Here $a_i : [0,\infty) \to [0,\infty)$ is a fixed map with
$a_i(0) = 0$ and $\lim_{s \to \infty} a_i(s) = +\infty$, and 
$\rho_i : [0,\infty) \to (0,r]$ is a fixed orientation reversing
diffeomorphism.
\end{prop}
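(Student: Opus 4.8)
The plan is to follow the construction of \cite{Wendl:openbook}*{\S 3} verbatim, supplementing it only where the interface and boundary tori (absent for an ordinary open book) require new bookkeeping. Over the integrable part $\RR \times P_\psi$ there will be nothing to do: since $\lambda_0 = d\phi$ there, $\xi_0 = \ker\lambda_0$ is exactly the tangent distribution of the pages, so for each $(\sigma,\tau)$ the surface $\{\sigma\} \times \pi_i^{-1}(\tau)$ has tangent spaces $\{0\} \oplus \xi_0$, which are $J_0$-invariant because $J_0\xi_0 = \xi_0$; as $(\sigma,\tau)$ ranges over $\RR \times S^1$ these foliate $\RR \times (P_\psi \cap M_i)$. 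The work is to extend each such surface across $\RR \times \nN(B\cup\iI\cup\p M)$ as a properly embedded $J_0$-holomorphic surface and to check that the resulting family foliates $\RR \times (M_i \setminus (B_i \cup \iI_i))$.

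In one of the coordinate neighborhoods I would write $\lambda_0 = f(\rho)\,d\theta + g(\rho)\,d\phi$, so that $\xi_0 = \Span\{\p_\rho,\ g(\rho)\p_\theta - f(\rho)\p_\phi\}$. The $T^2$-invariance of $J_0$ and the normalization $d\rho(J_0\p_\rho) \equiv 0$ force $J_0\p_\rho = b(\rho)(g\p_\theta - f\p_\phi)$ for a single smooth function $b$, which is everywhere negative by $\hH_0$-compatibility and which is forced to behave like $\mathrm{const}/\rho$ as $\rho \to 0$ near a binding circle, by smoothness of $J_0$ across $\{\rho = 0\}$ there. For the rotationally symmetric ansatz $u(s,t) = (a(s)+\sigma,\ t,\ \rho(s),\ \tau)$ in $(a,\theta,\rho,\phi)$-coordinates, the equation $J_0\,\p_s u = \p_t u$ splits, on separating the $\p_\theta$- and $\p_\phi$-components and using that $J_0\p_a$ is the Reeb field \eqref{eqn:Reeb}, into the autonomous system $a'(s) = f(\rho(s))$ together with a separable equation $\rho'(s) = -f'(\rho(s))/\big(b(\rho(s))\,D(\rho(s))\big)$, whose right-hand side has a fixed sign. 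Integrating from $\rho(0) = r$, $a(0) = 0$ will give a fixed orientation-reversing diffeomorphism $\rho_i : [0,\infty) \to (0,r]$ and a fixed $a_i$ with $a_i(0) = 0$ and $a_i(s) \to +\infty$, the approach $\rho_i(s) \to 0$ occurring exponentially as $s \to \infty$; this last point is where I would invoke the explicit normal forms for $f,g$ near $\rho = 0$ supplied by Lemma~\ref{lemma:dynamics} (constant irrational slope at binding circles, even/odd parity of $f,g$ at interface and boundary tori) together with the $\mathrm{const}/\rho$-behavior of $b$. The resulting $u$ is then an embedded, properly embedded half-cylinder asymptotic to the elliptic (resp.\ Morse--Bott) Reeb orbit through $\{\rho = 0\}$. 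On the overlap region $\{\rho \in [r-\delta, r]\}$, where $\lambda_0 = d\phi$ and hence $f \equiv 0$, the system forces $a' \equiv 0$, so the surface there is exactly $\{a = \sigma\} \times \{\phi = \tau\}$ and glues smoothly onto the corresponding piece of $\{\sigma\} \times \pi_i^{-1}(\tau) \subset \RR \times P_\psi$.

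I would then define $S^{(i)}_{\sigma,\tau}$ as the union of the $P_\psi$-piece with these half-cylinders over all coordinate neighborhoods meeting $M_i$. It is $J_0$-holomorphic and $\RR$-invariant in $\sigma$ by construction, and properly embedded: the half-cylinder map is injective since $\rho_i$ is injective and $\theta = t$ wraps $S^1$ exactly once, an immersion since $\rho_i' \ne 0$, and proper since $a_i(s) \to +\infty$. The family will foliate $\RR \times (M_i \setminus (B_i \cup \iI_i))$, because over $\RR \times P_\psi$ the surfaces $\{\sigma\} \times \pi_i^{-1}(\tau)$ already do, and a point of $\RR \times \nN(B\cup\iI\cup\p M)$ with $\rho \in (0,r]$ lies on the unique member for which $\tau$ is its $\phi$-coordinate, $s$ is determined by $\rho_i(s) = \rho$, and $\sigma$ is determined by $a_i(s) + \sigma$ equal to its $a$-coordinate. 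The only genuine departures from \cite{Wendl:openbook} are the interface and boundary tori, which I would treat exactly as binding circles except that there $f,g$ are restrictions of even and odd functions of $\rho$; the corresponding half-cylinders then end at Morse--Bott orbits on those tori, and when an interface torus $T$ lies in the interior of $M_i$ the page $\pi_i^{-1}(\tau)$ meets $\nN(T)$ in the two pieces $\nN_\pm(T)$ related by \eqref{eqn:plusMinus}, so $S^{(i)}_{\sigma,\tau} \cap (\RR \times \nN(T))$ will consist of two half-cylinders approaching the same orbit on $T$ from opposite sides.

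The hard part will be the asymptotic analysis of the $\rho$-equation near $\rho = 0$: I have to exclude $\rho$ reaching $0$ in finite parameter time and instead establish exponential decay, since this is precisely what makes the half-cylinders properly embedded in $\RR \times (M_i \setminus (B_i \cup \iI_i))$ and the family foliate out to---but not across---the binding, interface and boundary. This reduces entirely to the precise normal forms for $f,g$ near those loci given by Lemma~\ref{lemma:dynamics} and to the constraints that $\hH_0$-compatibility and smoothness place on $b$; granted these, the remainder is a direct transcription of \cite{Wendl:openbook}*{\S 3}.
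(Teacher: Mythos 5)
Your proposal is correct and takes essentially the same approach as the paper, which for this proposition simply cites the explicit construction of \cite{Wendl:openbook}*{\S 3}: set up the rotationally symmetric ansatz in each coordinate neighborhood, reduce the $J_0$-holomorphic equation to an autonomous ODE for $(a(s),\rho(s))$ using the $T^2$-invariance and the normalization $d\rho(J_0\p_\rho)\equiv 0$, and match this to the trivially $J_0$-holomorphic pieces over $P_\psi$; the interface and boundary tori are handled exactly like binding circles except that $f,g$ extend over $[-r,r]$ with even/odd parity rather than degenerating at $\rho=0$. One small point worth keeping straight: your separable formula $\rho'(s)=-f'(\rho)/\bigl(b(\rho)D(\rho)\bigr)$ is derived by dividing through by $f(\rho)$ and degenerates to $0/0$ in the overlap region $\{\rho\in[r-\delta,r]\}$ where $f\equiv 0$ and hence $D\equiv 0$; there one should instead read off $\rho'=c_2(\rho)/b(\rho)$ directly, where $X_0=c_1\p_\theta+c_2\p_\phi$ (which gives $\rho'=1/b$ in the overlap since $X_0=\p_\phi$). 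You flag this region yourself and the conclusion $a'\equiv 0$ there is correct, so this is only a matter of stating the $\rho'$-formula in its general form rather than a gap in the argument.
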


Denote by $\fF^{(i)}_0$ for $i=0,\ldots,N$ the resulting foliation on the
interior of $\RR\times (M_i \setminus (B_i \cup \iI_i))$, whose leaves can 
each be parametrized by an embedded finite energy $J_0$-holomorphic curve
$$
u^{(i)}_{\sigma,\tau} : \dot{\Sigma}_i \to \RR \times M'.
$$
The collection of all
these curves together with the trivial cylinders over their asymptotic orbits
(which include all orbits in $B \cup \iI \cup \p M$) defines a 
$J_0$-holomorphic finite energy foliation $\fF_0$ of~$M$, as defined in 
\cites{HWZ:foliations,Wendl:OTfol}.  It's
important however to be aware that this foliation is not generally 
\emph{stable}, due to the following index calculation.  From now on we
assume that $\hH_\epsilon$ has the properties specified in
Lemma~\ref{lemma:dynamics}.

\begin{prop}
\label{prop:index}
$\ind\big(u^{(i)}_{\sigma,\tau}\big) = 2 - 2 g_i$.
\end{prop}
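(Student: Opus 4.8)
The plan is to read off $\ind(u^{(i)}_{\sigma,\tau})$ directly from the index formula \eqref{eqn:index}, using the explicit description of these curves in Proposition~\ref{prop:holOBD} together with the dynamical normalizations in Lemma~\ref{lemma:dynamics}. Write $\Sigma_i$ for the page of the irreducible subdomain $M_i$: a compact surface of genus $g_i$, say with $n_i$ boundary components, so the domain $\dot\Sigma_i$ of $u^{(i)}_{\sigma,\tau}$ is a genus-$g_i$ surface with $n_i$ punctures, $\chi(\dot\Sigma_i) = 2 - 2g_i - n_i$. By construction each of the $n_i$ punctures is positive, and it is asymptotic either to a nondegenerate binding orbit in $B_i$ or to a simply covered Morse--Bott Reeb orbit on a torus component of $\iI_i \cup \p M_i$; which one occurs is dictated by the type of the corresponding boundary component of $\Sigma_i$, via the coordinate models near $B \cup \iI \cup \p M$.

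First I would fix once and for all the reference trivialization: along each asymptotic orbit take the natural trivialization $\Phi$ of $\xi$ coming from the coordinate frame in the chosen $(\theta,\rho,\phi)$-coordinates near $B \cup \iI \cup \p M$ — this is the ``page framing'' in Lemma~\ref{lemma:dynamics}(2). With respect to $\Phi$, each binding orbit has $\muCZ^\Phi(\gamma_z - \epsilon) = 1$ directly by Lemma~\ref{lemma:dynamics}(2). For a Morse--Bott orbit $\gamma_z$ on an interface or boundary torus one gets the same value by a short computation with the asymptotic operator $\mathbf{A}_{\gamma_z}$: in the coordinates near the torus the linearized Reeb flow is explicit, $\ker\mathbf{A}_{\gamma_z}$ (spanned by the Morse--Bott deformation tangent to the torus) has winding $0$ with respect to $\Phi$, and the sign convention imposed in the proof of Lemma~\ref{lemma:dynamics} — that $\rho \mapsto (f_0(\rho),g_0(\rho))$ turns counterclockwise with nonzero inward angular acceleration — pins down the perturbed index via \eqref{eqn:CZwinding} to be $\muCZ^\Phi(\gamma_z - \epsilon) = 1$. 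Hence $\sum_{z\in\Gamma^+}\muCZ^\Phi(\gamma_z - \epsilon) = n_i$.

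Next comes the relative first Chern number $c_1^\Phi(u^{(i)}_{\sigma,\tau}) = c_1^\Phi\big((u^{(i)}_{\sigma,\tau})^*\xi\big)$, which I claim equals $\chi(\dot\Sigma_i)$. Over the interior of the page, projection along the Reeb field $X_\lambda$ identifies $(u^{(i)}_{\sigma,\tau})^*\xi$ with $T\Sigma_i$, whose relative Chern number against the boundary-tangent framing is $\chi(\Sigma_i) = \chi(\dot\Sigma_i)$; one then checks, using the explicit form of the curves near the ends from Proposition~\ref{prop:holOBD} (the leaf runs up each end along fixed $(\theta,\phi) = (t,\tau)$ while the page there is $\{\phi = \text{const}\}$), that at each puncture this framing is homotopic as a trivialization of $\xi$ along $\gamma_z$ to $\Phi$, so no extra winding is picked up. Equivalently: since the leaves $S^{(i)}_{\sigma,\tau}$ foliate $\RR \times (M_i \setminus (B_i \cup \iI_i))$ in the $(\sigma,\tau) \in \RR \times S^1$ family, the normal bundle $N_u$ is trivialized by the infinitesimal leaf deformations $\p_\sigma,\p_\tau$, this trivialization is asymptotic to $\Phi$, so $c_1^\Phi(N_u) = 0$ and $c_1^\Phi(u) = \chi(\dot\Sigma_i) + c_1^\Phi(N_u) = \chi(\dot\Sigma_i)$. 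Plugging into \eqref{eqn:index},
$$
\ind\big(u^{(i)}_{\sigma,\tau}\big) = -\chi(\dot\Sigma_i) + 2\chi(\dot\Sigma_i) + n_i = \chi(\dot\Sigma_i) + n_i = 2 - 2g_i .
$$

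The one delicate point — the main obstacle — is the framing bookkeeping in the last two paragraphs: one must verify that a single trivialization $\Phi$ simultaneously makes $\muCZ^\Phi = 1$ at every asymptotic orbit and yields $c_1^\Phi = \chi(\dot\Sigma_i)$, i.e. that the coordinate framings near the binding and near the interface/boundary tori and the boundary-tangent framing of $T\Sigma_i$ are mutually consistent with the sign conventions of \eqref{eqn:index} and \eqref{eqn:CZwinding}. The computation near the binding is essentially the one already carried out in \cite{Wendl:openbook}, so the genuinely new input is only the interface/boundary-torus contribution; note that $g_i$ enters the final answer solely through $\chi(\dot\Sigma_i)$, which is why no separate argument is needed for the cases $g_i > 0$.
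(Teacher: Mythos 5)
Your argument is correct and takes essentially the same route as the paper's: fix the coordinate framing $\Phi$, establish $\muCZ^\Phi(\gamma - \epsilon) = 1$ at every asymptotic orbit (directly for the nondegenerate binding orbits, and via the zero-winding of $\ker\mathbf{A}_\gamma$ plus the inward angular acceleration for the Morse--Bott torus orbits), and use that the foliation trivializes the normal bundle asymptotically compatibly with $\Phi$, so $c_1^\Phi(N_u) = 0$. The only cosmetic difference is that the paper plugs directly into the punctured Riemann--Roch formula \eqref{eqn:normalCR} for the normal operator of an immersed curve, whereas you go through the general index formula \eqref{eqn:index} together with the splitting $u^*T(\RR\times M) = T\dot{\Sigma}_i \oplus N_u$ to obtain $c_1^\Phi(u) = \chi(\dot{\Sigma}_i)$; these are the same computation.
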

\begin{proof}
Let $\Phi$ denote the natural trivialization of $\xi_0$ determined by the
$(\theta,\rho,\phi)$-coordinates along each of the asymptotic orbits
of~$u^{(i)}_{\sigma,\tau}$.  These orbits are in general a mix of
nondegenerate binding circles $\gamma \subset B_i$ with $\muCZ^\Phi(\gamma) = 1$
and Morse-Bott orbits that belong to $S^1$-families foliating $\iI \cup \p M$.
If $\gamma$ is one of the latter, then we observe that since 
$u^{(i)}_{\sigma,\tau}$ doesn't intersect $\RR \times (\iI \cup \p M)$,
the asymptotic winding of $u^{(i)}_{\sigma,\tau}$ as it approaches~$\gamma$
matches the winding of any nontrivial section in $\ker\mathbf{A}_\gamma$,
which is zero in the chosen coordinates.  Thus for sufficiently small
$\epsilon > 0$, the two largest negative eigenvalues of
$\mathbf{A}_\gamma - \epsilon$ both have zero winding, implying
$\alpha^\Phi_-(\gamma - \epsilon) = 0$ and
$p(\gamma - \epsilon) = 1$, hence by \eqref{eqn:CZwinding},
\begin{equation}
\label{eqn:CZMB}
\muCZ^\Phi(\gamma - \epsilon) = 2\alpha^\Phi_-(\gamma - \epsilon) +
p(\gamma - \epsilon) = 1.
\end{equation}

Since $u^{(i)}_{\sigma,\tau}$ projects to an embedding in~$M'$, it is 
everywhere transverse to the complex subspace in $T(\RR\times M')$ spanned
by $\p_t$ and $X_0$, though asymptotically $u^{(i)}_{\sigma,\tau}$ becomes
tangent to this space.  We can thus define a sensible normal bundle 
$N \to \dot{\Sigma}_i$ for $u^{(i)}_{\sigma,\tau}$ as follows: let
$X$ denote the smooth vector field on $M' \setminus (B \cup \iI \cup \p M)$
that equals $\p_\phi$ in every $(\theta,\rho,\phi)$-coordinate neighborhood
(except at $\{ \rho = 0 \}$, where this is not well defined), and $X_0$
everywhere outside of this.  Then the $J_0$-complex span of this vector field
defines a bundle that extends smoothly over $B \cup \iI \cup \p M$, and we
define the normal bundle $N \to \dot{\Sigma}_i$ to be the restriction of
this bundle to the image of~$u^{(i)}_{\sigma,\tau}$.  From this construction
it is clear that $c_1^\Phi(N) = 0$.  Now since $u^{(i)}_{\sigma,\tau}$
is embedded, its index is the index of the normal Cauchy-Riemann operator
on the bundle $N \to \dot{\Sigma}_i$, so by \eqref{eqn:normalCR},
$$
\ind\left(u^{(i)}_{\sigma,\tau}\right) = \chi(\dot{\Sigma}_i) + 2 c_1^\Phi(N) +
\sum_{\gamma} \muCZ^\Phi(\gamma - \epsilon) = \chi(\Sigma_i) = 2 - 2 g_i,
$$
where the summation is over all the asymptotic orbits of 
$u^{(i)}_{\sigma,\tau}$, whose Conley-Zehnder indices thus cancel out the
terms in $\chi(\dot{\Sigma}_i)$ resulting from the punctures.
\end{proof}

From this calculation it follows that the higher genus curves in
$\fF_0$ will vanish under generic perturbations of the data.  In contrast,
the genus zero curves have exactly the right properties to apply the
following useful perturbation result
(cf.~\cite{Wendl:thesis}*{Theorem~4.5.44}):

\begin{IFT}
Assume $M$ is any closed $3$-manifold with stable Hamiltonian structure
$\hH = (\lambda,\omega)$, $J \in \jJ(\hH)$, and
$$
u = \left(u^\RR,u^M\right) : \dot{\Sigma} \setminus \Gamma \to\RR\times M
$$
is a finite energy $J$-holomorphic curve with positive/negative punctures
$\Gamma^\pm \subset \Sigma$ and the following properties:
\begin{enumerate}
\item $u$ is embedded and asymptotic to simply covered periodic
orbits at each puncture, and satisfies $\delta_\infty(u) = 0$.
\item $\dot{\Sigma}$ has genus zero.
\item All asymptotic orbits $\gamma_z$ of $u$ for $z \in \Gamma^\pm$ 
are either nondegenerate or belong to $S^1$-parametrized Morse-Bott families
foliating tori, and
$$
p(\gamma_z \mp \epsilon) = 1
$$
for all $z \in \Gamma^\pm$ and sufficiently small $\epsilon > 0$.
\item $\ind(u) = 2$.
\end{enumerate}
Then $u$ is Fredholm regular and belongs to a
smooth $2$-parameter family of embedded curves
$$
u_{(\sigma,\tau)} = \left(u_\tau^\RR + \sigma,u_\tau^M\right) : 
\dot{\Sigma} \to \RR\times M,
\qquad (\sigma,\tau) \in \RR\times (-1,1)
$$
with $u_{(0,0)} = u$,
whose images foliate an open neighborhood of $u(\dot{\Sigma})$
in $\RR \times M$.  Moreover, the maps $u_\tau^M : \dot{\Sigma} \to M$
are all embedded and foliate an open neighborhood of $u^M(\dot{\Sigma})$
in~$M$, and if $\gamma_z^\tau$ denotes a degenerate Morse-Bott 
asymptotic orbit of $u_{(\sigma,\tau)}$ for some fixed puncture $z \in \Gamma$,
then the map $\tau \mapsto \gamma_z^\tau$ parametrizes a neighborhood of
$\gamma_z^0$ in its $S^1$-family of orbits.
\end{IFT}

Using this and a simple topological argument in \cite{Wendl:openbook},
it follows that whenever $g_i=0$, the family
$u^{(i)}_{\sigma,\tau}$ perturbs smoothly along with any sufficiently small
perturbation of~$J_0$.  In particular, picking $\epsilon > 0$ small and 
$J_\epsilon \in \jJ(\hH_\epsilon)$ close to~$J_0$, 
there is a corresponding family
of $J_\epsilon$-holomorphic curves in $\RR \times M_i$ that project to
a blown up summed open book on $M_i$ that is $C^\infty$-close to the original one.
Perturbing $\lambda_\epsilon$ a little bit further outside a suitable 
neighborhood of $B \cup \iI \cup \p M$, we can then also turn
$\lambda_\epsilon$ into a globally Morse-Bott contact form, and a
corresponding perturbation of~$J_\epsilon$ makes the latter Fredholm regular.
This proves the existence part of Theorem~\ref{thm:openbook}.  We will
continue to denote the $J_\epsilon$-holomorphic pages constructed in
this way by
$$
u^{(i)}_{\sigma,\tau} : \dot{\Sigma}_i \to \RR \times M_i,
$$
for all $i=0,\ldots,N$ with $g_i=0$.

\subsubsection{Uniqueness}

Despite their obvious instability, the higher genus curves in the
foliation $\fF_0$ are useful due to the following uniqueness result based
on intersection theory.  Here $m_0 \in \NN$ denotes the multiplicity bound
from Lemma~\ref{lemma:dynamics}, which we can assume to be arbitrarily large.

\begin{prop}
\label{prop:uniqueness}
Suppose $v : \dot{\Sigma} \to \RR \times M'$ is a somewhere injective 
finite energy $J_0$-holomorphic curve that intersects the interior of
$\RR \times M_i$ and has all its
positive ends asymptotic to orbits in $B \cup \iI \cup \p M$, where the
orbits in~$B_i$ each have covering multiplicity at most~$m_0$.
Then~$v$ parametrizes one of the surfaces~$S^{(i)}_{\sigma,\tau}$.
\end{prop}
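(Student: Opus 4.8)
The plan is to adapt the intersection-theoretic uniqueness argument of \cite{Wendl:openbook}, now taking the pages $S^{(i)}_{\sigma,\tau}$ of the (possibly non-planar) irreducible piece $M_i$ as the reference family and playing off positivity of intersections against the homotopy-invariance of Siefring's pairing $v * u^{(i)}_{\sigma,\tau}$.

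First I would make the elementary reductions. A cover of a trivial cylinder over an orbit in $B\cup\iI\cup\p M$ has image contained in $\RR\times(B\cup\iI\cup\p M)$, which is itself a union of trivial cylinders; such curves are already understood, so we may assume $v$ is not of this type. Being then somewhere injective and holomorphic, $v$ cannot have its image contained in $\RR\times(B_i\cup\iI_i\cup\p M_i)$ (a union of trivial cylinders), so the hypothesis that $v$ meets the interior of $\RR\times M_i$ forces the image of $v$ to enter the open set $\RR\times(M_i\setminus(B_i\cup\iI_i))$, which by Proposition~\ref{prop:holOBD} is foliated by the embedded pages $u^{(i)}_{\sigma,\tau}$. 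Pick a point $p$ of $v$ in this region and let $u := u^{(i)}_{\sigma_1,\tau_1}$ be the leaf through it. If $v$ and $u$ have the same image, then since $u$ is embedded and $v$ somewhere injective, $v$ is a biholomorphic reparametrization of $u$; moreover a page of a subdomain $M_j$ with $j\neq i$ does not meet the interior of $\RR\times M_i$, so this leaf is in fact a page of $M_i$, as claimed. It therefore remains to exclude the case that $v$ and $u$ have distinct images; in that case positivity of intersections gives $v\inter u\geq 1$ (the intersection at $p$ counts positively), hence $v * u\geq v\inter u\geq 1$.

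The heart of the matter is to contradict this by showing $v * u = 0$. Since $v * u$ depends only on the relative homology classes and asymptotic orbits, and these are unaffected by $\RR$-translation of $u$, the number $v * u^{(i)}_{\sigma,\tau_1}$ is independent of $\sigma$; I would compute it in the limit $\sigma\to+\infty$, in which the translated page lies above every fixed compact part of $v$ and meets $v$ only near orbits that are simultaneously a positive asymptotic orbit of a page and an asymptotic orbit of $v$. The resulting expression is then purely a sum of asymptotic contributions, which I claim all vanish, by the winding estimates of \cite{Wendl:openbook}: at a binding orbit $\gamma\subset B_i$ of covering multiplicity $\leq m_0$, Lemma~\ref{lemma:dynamics} makes $\gamma$ nondegenerate elliptic with $\muCZ^\Phi(\gamma)=1$ in the page framing, so $\alpha^\Phi_-(\gamma)=0$ and the asymptotic winding of $v$ there is no larger than the extremal winding ($=0$ by the proof of Proposition~\ref{prop:index}) realized by the embedded page $u$; at the Morse--Bott tori in $\iI\cup\p M$ both $v$ and $u$ avoid the torus and wind trivially relative to the page framing; and an orbit at which $u$ has a positive end while $v$ has a negative end contributes nothing, since $u$ approaches it through $t\to+\infty$ and $v$ through $t\to-\infty$. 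Feeding these into Siefring's formula, together with the embeddedness and vanishing asymptotic defect of the pages, yields $v * u^{(i)}_{\sigma,\tau_1} = 0$ and hence the contradiction. I expect this last step — the precise bookkeeping of the asymptotic contributions at ends shared between $v$ and the pages, and in particular verifying that the winding inequalities leave no positive term — to be the main technical obstacle; the rest is soft topology and positivity of intersections.
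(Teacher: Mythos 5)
Your overall strategy---positivity of intersections plus homotopy invariance of Siefring's pairing plus extremal-winding control---is indeed the one the paper uses, but the implementation of the key step $u*v=0$ has gaps that the paper's argument is specifically designed to avoid. The paper does not merely translate $u$ upward; it also homotopes $v$ through (not necessarily holomorphic) asymptotically cylindrical maps so that $v\cap\big([0,\infty)\times M'\big)$ becomes a union of trivial half-cylinders over $v$'s positive asymptotic orbits. This is precisely what reduces $u*v$ to a sum $\sum_\gamma u*(\RR\times\gamma)$ of terms each computed against a \emph{trivial cylinder}, where the only relevant input is the page's extremal winding. Your version instead sends $\sigma\to+\infty$ and claims ``the resulting expression is purely a sum of asymptotic contributions,'' but that does not follow: the positive ends of $v$ also live near $+\infty$, so the actual intersections $u_\sigma\cdot v$ need not vanish in the limit. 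Moreover, computing the asymptotic contribution directly against $v$ rather than against trivial cylinders exposes you to tangential approach: two positive ends with the same leading eigenfunction \emph{can} yield a positive asymptotic contribution even when both windings are extremal, whereas a positive end with extremal winding paired against $\RR\times\gamma$ never can.

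The second concrete gap is the assertion that $v$ ``winds trivially relative to the page framing'' at the Morse--Bott tori. There is no such a priori control: for a positive end of $v$ at $\gamma\subset T$ the only bound is $\wind_\gamma^\Phi(v)\le\alpha_-^\Phi(\gamma-\epsilon)=0$, which can be strictly negative, and even equality does not preclude a positive contribution by itself. The paper never needs this claim; it perturbs $\tau$ slightly (using the continuity of the family of pages together with Remark~\ref{remark:MorseBott}) so that $u_\tau$'s asymptotic orbits in $\iI\cup\p M$ are disjoint from $v$'s positive asymptotic orbits, which kills those terms outright. The only shared orbits are then in $B_i$, all nondegenerate with $\muCZ^\Phi=1$, and the trivial-cylinder computation is elementary there. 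Your heuristic about negative ends of $v$ (``contributes nothing, since $u$ approaches through $t\to+\infty$ and $v$ through $t\to-\infty$'') is likewise not a proof: positive--negative pairings do carry nonzero asymptotic contributions in Siefring's theory in general, and here they are handled implicitly by the same homotopy that pushes all of $v$ except its positive trivial-cylinder ends far below the translated page.
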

\begin{proof}
We use the homotopy invariant intersection number $u * v \in \ZZ$ 
defined by Siefring \cite{Siefring:intersection} for asymptotically
cylindrical maps $u$ and~$v$.  If $v$ does not parametrize any
leaf of $\fF^{(i)}_0$, then its
intersection with $\RR \times M_i$ implies that it has at least one isolated
positive intersection with
some leaf $S^{(i)}_{\sigma,\tau}$ with $J_0$-holomorphic
parametrization $u^{(i)}_{\sigma,\tau}$, hence
$$
u^{(i)}_{\sigma,\tau} * v > 0.
$$
By changing $\tau$ slightly, we may assume without loss of generality
that any ends of $u^{(i)}_{\sigma,\tau}$ approaching Morse-Bott orbits in
$\iI \cup \p M$ are disjoint from the positive asymptotic orbits of~$v$.
By homotopy invariance, we can also take advantage of the lack of negative
ends for $u^{(i)}_{\sigma,\tau}$ and $\RR$-translate it until
its image lies entirely in
$[0,\infty) \times M'$.  We can likewise change $v$ by a homotopy through
asymptotically cylindrical maps so that its intersection with
$[0,\infty) \times M'$ lies entirely in the trivial cylinders over its
positive asymptotic orbits, i.e.~in $[0,\infty) \times (B \cup \iI
\cup \p M)$.  An example of this kind of homotopy is shown in
Figure~\ref{fig:OBDuniqueness}.
The intersection number above is then a sum of the form
$$
u^{(i)}_{\sigma,\tau} * v = \sum_{\gamma} u^{(i)}_{\sigma,\tau} * (\RR \times \gamma),
$$
where the summation is over some collection of orbits 
$\gamma$ in $B \cup \iI \cup \p M$, and we use $\RR\times \gamma$ as
shorthand for a $J_0$-holomorphic curve that parametrizes the trivial
cylinder over~$\gamma$.  Note that $u^{(i)}_{\sigma,\tau}$ never has an
actual intersection with $\RR \times \gamma$, so the intersections
counted by $u^{(i)}_{\sigma,\tau} * (\RR \times \gamma)$ are \emph{asymptotic},
i.e.~they are hidden intersections that could potentially emerge from
infinity under small perturbations of the data.  Since we've arranged for
$u^{(i)}_{\sigma,\tau}$ and~$v$ to have no Morse-Bott orbits in common,
the asymptotic intersections vanish except possibly for orbits 
$\gamma \subset B_i$ of covering multiplicity $m \le m_0$.
As explained in \cite{Siefring:intersection}*{\S 3.2}, each such asymptotic
intersection can be expressed in terms of the difference in the asymptotic winding of
the $m$-fold cover of the end of $u^{(i)}_{\sigma,\tau}$ about $\gamma$
from its maximum possible value, which (by standard results in
\cites{HWZ:props1,HWZ:props2}) is the winding number of the
asymptotic eigenfunction with largest negative eigenvalue.
In the natural trivialization
$\Phi$ determined by the $(\theta,\rho,\phi)$-coordinates, each
of the relevant orbits~$\gamma$ has
$\muCZ^\Phi(\gamma) = 1 = 2\alpha^\Phi_-(\gamma) + 1$, hence
$\alpha^\Phi_-(\gamma)=0$ using~\eqref{eqn:CZwinding}.  
By construction, the asymptotic winding of
$u^{(i)}_{\sigma,\tau}$ as it approaches $\gamma$ is also zero, hence this
winding is extremal, and this implies
$$
u^{(i)}_{\sigma,\tau} * (\RR \times \gamma) = 0.
$$
This is a contradiction.
\end{proof}

The above proof also works for a $J_\epsilon$-holomorphic curve
if it passes through a region that is foliated by $J_\epsilon$-holomorphic
pages.  In particular, since we've already shown this to be true in the
planar piece~$M_0$ for sufficiently small $\epsilon > 0$,
we deduce the following parallel result:

\begin{prop}
\label{prop:uniqueness2}
For all sufficiently small $\epsilon > 0$, the following holds:
if $v : \dot{\Sigma} \to \RR \times M'$ is a somewhere injective
finite energy $J_\epsilon$-holomorphic curve that intersects the interior of
$\RR \times M_0$ and has all its
positive ends asymptotic to orbits in $B \cup \iI \cup \p M$, where the
orbits in~$B_0$ have covering multiplicity at most~$m_0$, 
then~$v$ is a reparametrization of one of the 
$J_\epsilon$-holomorphic pages~$u^{(0)}_{\sigma,\tau}$.
\end{prop}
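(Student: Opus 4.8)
The plan is to repeat the intersection-theoretic argument of Proposition~\ref{prop:uniqueness} almost verbatim, replacing the $J_0$-holomorphic foliation of $\RR \times M_0$ by its perturbation to a $J_\epsilon$-holomorphic one. For $\epsilon > 0$ sufficiently small, the embedded index~$2$ $J_\epsilon$-holomorphic pages $u^{(0)}_{\sigma,\tau}$ produced above via the Implicit Function Theorem project to a blown up summed open book on $M_0$ that is $C^\infty$-close to the original one; hence, by the topological argument of \cite{Wendl:openbook} already invoked, their images $S^{(0)}_{\sigma,\tau}$ foliate the interior of $\RR \times (M_0 \setminus (B_0 \cup \iI_0))$, and each $u^{(0)}_{\sigma,\tau}$ has only positive ends, asymptotic to nondegenerate orbits in $B_0$ and to Morse-Bott orbits foliating the tori of $\iI_0 \cup \p M_0$.

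First I would assume $v$ is not a reparametrization of any $u^{(0)}_{\sigma,\tau}$ and seek a contradiction. Since $v$ meets the interior of $\RR \times M_0$, it has an isolated positive intersection with some leaf $S^{(0)}_{\sigma,\tau}$, so the nonnegativity properties of Siefring's pairing give $u^{(0)}_{\sigma,\tau} * v > 0$. By changing $\tau$ slightly I would arrange that the ends of $u^{(0)}_{\sigma,\tau}$ at Morse-Bott orbits in $\iI_0 \cup \p M_0$ avoid the positive asymptotic orbits of $v$; then, exploiting that $u^{(0)}_{\sigma,\tau}$ has no negative ends, I would $\RR$-translate it so that its image lies in $[0,\infty) \times M'$, and homotope $v$ through asymptotically cylindrical maps so that $v \cap ([0,\infty) \times M')$ lies in the union of trivial cylinders over the positive asymptotic orbits of $v$, all of which lie in $B \cup \iI \cup \p M$. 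Homotopy invariance then yields $u^{(0)}_{\sigma,\tau} * v = \sum_\gamma u^{(0)}_{\sigma,\tau} * (\RR \times \gamma)$, a sum of purely asymptotic contributions in which only the terms with $\gamma$ an asymptotic orbit of $u^{(0)}_{\sigma,\tau}$ can survive; after the generic choice of $\tau$ these are binding orbits $\gamma \subset B_0$, of covering multiplicity at most $m_0$ by hypothesis.

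For each such $\gamma$, Lemma~\ref{lemma:dynamics} still gives $\muCZ^\Phi(\gamma) = 1$ in the natural trivialization $\Phi$ determined by the coordinates, hence $\alpha^\Phi_-(\gamma) = 0$ by~\eqref{eqn:CZwinding}; and since the $J_\epsilon$-pages are $C^\infty$-close to the $J_0$-pages while the binding orbits are nondegenerate, the asymptotic winding of $u^{(0)}_{\sigma,\tau}$ about $\gamma$ is still zero, hence extremal, so that $u^{(0)}_{\sigma,\tau} * (\RR \times \gamma) = 0$ exactly as in Siefring's formula. This forces $u^{(0)}_{\sigma,\tau} * v = 0$, contradicting $u^{(0)}_{\sigma,\tau} * v > 0$. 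Therefore $v$ parametrizes a leaf $S^{(0)}_{\sigma,\tau}$, and being somewhere injective with the same image as the embedded curve $u^{(0)}_{\sigma,\tau}$, it is a reparametrization of $u^{(0)}_{\sigma,\tau}$.

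The only genuinely new input, and the point requiring care, is that the $C^\infty$-small perturbation from $J_0$ to $J_\epsilon$ preserves all the data used above: that the perturbed pages foliate all of $\RR \times M_0$ rather than merely a neighborhood of a single page, and that their asymptotic winding at binding orbits stays extremal. Both are consequences of stability under $C^\infty$-small perturbations — the binding orbits being nondegenerate and the interface and boundary orbits Morse-Bott, no eigenvalue of the relevant asymptotic operators can cross zero, so the integer winding numbers, and with them the existence of the foliation, persist.
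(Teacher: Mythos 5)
Your proposal is correct and takes essentially the same approach as the paper: the paper dispenses with Proposition~\ref{prop:uniqueness2} by noting that the proof of Proposition~\ref{prop:uniqueness} carries over verbatim once the planar piece is known to be foliated by $J_\epsilon$-holomorphic pages, which is precisely the argument you reproduce with the $\RR$-translation, homotopy to asymptotic trivial cylinders, and extremal-winding computation. Your closing remark on stability is sound but can be sharpened: by construction $\lambda_\epsilon = \lambda_0$ in a neighborhood of $B \cup \iI \cup \p M$, so the asymptotic operators at the relevant orbits are literally unchanged, and the vanishing of the asymptotic winding of the $J_\epsilon$-pages follows already from the Implicit Function Theorem's conclusion that their projections to $M$ are embedded.
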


\begin{figure}
\begin{center}
\includegraphics{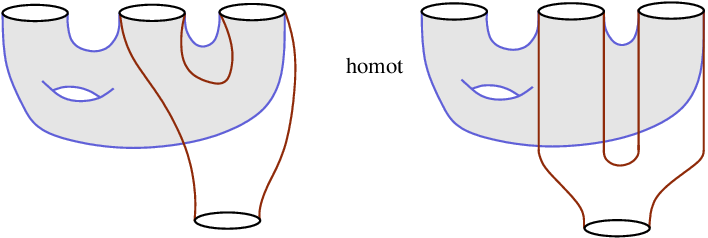}
\caption{\label{fig:OBDuniqueness}
A homotopy of two asymptotically cylindrical maps, reducing the computation
of the intersection
number to the intersection of one holomorphic
curve with the asymptotic trivial cylinders of the other.}
\end{center}
\end{figure}

We now prove the remainder of the 
uniqueness statement in Theorem~\ref{thm:openbook}.
Choose a sequence $\epsilon_k > 0$ converging to zero, denote
$\lambda_k := \lambda_{\epsilon_k}$ and $\xi_k := \ker\lambda_k$, 
and choose generic almost complex
structures $J_k \in \jJ(\hH_{\epsilon_k})$ with $J_k \to J_0$ in~$C^\infty$.
By small perturbations we can assume the forms $\lambda_k$ are all Morse-Bott
and have the properties listed in Lemma~\ref{lemma:dynamics}: in particular
the minimal periods of the orbits in $B_0 \cup \iI_0 \cup \p M_0$ are bounded by
an arbitrarily small number~$\tau > 0$, while all others are at least~$1$,
and the orbits in~$B_0$ have Conley-Zehnder index~$1$.  
We can also assume that for sufficiently large~$k$, planar 
$J_k$-holomorphic pages $u^{(i)}_{\sigma,\tau}$ in $\RR\times M_i$ 
exist whenever $g_i=0$, and hence Prop.~\ref{prop:uniqueness2} holds.
Now arguing by contradiction, suppose
that for every~$k$, there exists a finite energy $J_k$-holomorphic curve
$$
v_k : (\dot{\Sigma}_k,j_k) \to (\RR \times M',J_k)
$$
which is strongly subordinate to~$\pi_0$ and is (for large~$k$) not equivalent 
to any of the planar curves $u^{(i)}_{\sigma,\tau}$.  If $v_k$ has any
positive end asymptotic to an orbit in~$B_0$ or~$\iI_0$, then it must intersect 
the interior of $\RR\times M_0$ and Proposition~\ref{prop:uniqueness2}
already gives a contradiction.  We can therefore assume that
the positive ends of~$v_k$ approach simply covered orbits in distinct
connected components of~$\p M_0$.  This implies that they are all
somewhere injective.

\begin{lemma}
\label{lemma:compactness}
A subsequence of $v_k$ converges to one of the $J_0$-holomorphic leaves
of the foliation~$\fF_0$.
\end{lemma}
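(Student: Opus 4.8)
The plan is to obtain a uniform energy bound for the curves $v_k$, feed this into the SFT compactness theorem, and then use positivity of intersections with the leaves of the foliation $\fF_0$ to eliminate every nontrivial degeneration of the limit. First I would bound $E(v_k)$ independently of $k$. By the reduction made just before the lemma, each $v_k$ has at most as many positive punctures as $\p M_0$ has connected components, all asymptotic to \emph{simply covered} orbits of period at most $\tau_0$, and Lemma~\ref{lemma:dynamics} lets us take $\tau_0$ as small as we like. Next, $v_k$ has \emph{no} negative end: a negative end asymptotic to an orbit in $B_0 \cup \iI_0 \cup \p M_0$ would force $v_k$ into the interior of $\RR\times M_0$, so Proposition~\ref{prop:uniqueness2} would make $v_k$ one of the planar pages $u^{(0)}_{\sigma,\tau}$, contrary to assumption; hence any negative end would be asymptotic to an orbit of period at least $\tau_1 = 1$, but $\int_{\dot{\Sigma}_k} v_k^* d\lambda_k \ge 0$ together with Stokes' theorem bounds the total negative period by the total positive period, which is below $1$ once $\tau_0$ is small. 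Consequently $E(v_k)$ is controlled by a fixed multiple of the total positive period, uniformly in $k$ and in the genus of $\dot{\Sigma}_k$.

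With this bound the compactness theorem of \cite{SFTcompactness}, applied to $v_k$ with $J_k \to J_0$ in $C^\infty$, yields a subsequence converging to a $J_0$-holomorphic building $\mathbf{v}_\infty$ with no negative ends and with positive ends the simply covered orbits, in distinct components of $\p M_0$, inherited from the $v_k$; the energy bound also bounds the arithmetic genus and the number of components of $\mathbf{v}_\infty$. Moreover, since the $d\lambda$-area of each level is nonnegative, every breaking orbit of $\mathbf{v}_\infty$ has period below $1$ and hence lies in $B_0 \cup \iI_0 \cup \p M_0 \subset M$; using this period gap from Lemma~\ref{lemma:dynamics} together with the barrier provided near $\p M$ by the trivial cylinders of $\fF_0$, one confines the image of $\mathbf{v}_\infty$ to the region $\RR\times M$ that $\fF_0$ foliates.

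The main step, and the one I expect to require the most care, is to show $\mathbf{v}_\infty$ is a single smooth leaf of $\fF_0$. Here I would rerun the intersection-theoretic computation from the proof of Proposition~\ref{prop:uniqueness}: using the leaves $S^{(i)}_{\sigma,\tau}$ of $\fF_0$ as test curves for Siefring's homotopy-invariant intersection number \cite{Siefring:intersection}, the asymptotic winding of each relevant orbit is extremal — the binding orbits satisfy $\alpha^\Phi_-(\gamma)=0$ in the coordinate trivialization, matched by the vanishing winding of the leaves, and the interface and boundary orbits can be made disjoint from a given leaf after a small change of $\tau$ — so the Siefring count of each leaf with $\mathbf{v}_\infty$ vanishes, by continuity of $*$ under convergence to buildings and the fact that the corresponding counts for the $v_k$ vanish for the same reason. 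Positivity of intersections then forces every component of every level of $\mathbf{v}_\infty$ that meets the foliated region to coincide with a leaf $S^{(i)}_{\sigma,\tau}$ or a trivial cylinder. Finally, matching up the positive asymptotic orbits — simply covered, in distinct components of $\p M_0$, with no negative ends anywhere — rules out multiply covered components, ghost components, and breaking into several levels, since the only connected arrangement of leaves and trivial cylinders with exactly these ends is a single leaf of $\fF_0$.

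The delicate bookkeeping is concentrated in this last elimination of broken configurations glued along interface or binding orbits: one must be sure that no intersection contribution is lost when the Morse-Bott interface orbits are allowed to move, which is where I would invoke the Morse-Bott refinement of the intersection number discussed in Remark~\ref{remark:MorseBott}, and one must check that a lower level sharing a breaking orbit with an upper level cannot occur without meeting $\RR\times M_0$ (whence Proposition~\ref{prop:uniqueness2} would again apply). Granting these points, the limit building has a single nonconstant component, it is somewhere injective because the $v_k$ are, and it is therefore one of the $J_0$-holomorphic leaves of $\fF_0$, as claimed.
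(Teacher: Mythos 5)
Your plan (energy bound, then SFT compactness, then intersection theory to identify the limit) matches the paper's strategy, and your Step~3 is essentially the same intersection-theoretic argument via Prop.~\ref{prop:uniqueness}. However, there is one genuine and serious gap: you have omitted any control on the \emph{genus} of the domains $\dot{\Sigma}_k$.

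SFT compactness as stated in \cite{SFTcompactness} applies to sequences of curves whose domains have a \emph{fixed} topological type. Since $v_k$ is an arbitrary strongly subordinate curve, the number of positive punctures is bounded (by the number of components of $\p M_0$), but the genus of $\dot{\Sigma}_k$ is not a priori bounded, and nothing you have said rules out $g(\dot\Sigma_k) \to \infty$. Your claim that ``the energy bound also bounds the arithmetic genus and the number of components of $\mathbf{v}_\infty$'' is circular: you are reading off topological data from the limit building, but the limit building does not exist until you have a compactness theorem, and the compactness theorem does not apply until the topology of the $\dot{\Sigma}_k$ is pinned down. The paper's proof fills this gap by a separate step that you cannot skip: apply the \emph{currents} version of Gromov compactness (Taubes/Hutchings) to extract a subsequence for which the relative homology classes $[v_k]$ converge, and then plug the resulting uniform bound on $v_k * v_k$ into the punctured adjunction formula \eqref{eqn:adjunction}, using the monotonicity of $c_N(v_k)$ in $[v_k]$, to obtain a uniform upper bound on $-\chi(\dot{\Sigma}_k)$. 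Without some argument of this kind, the passage to a holomorphic building is unjustified.

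A secondary point worth flagging: your argument that $v_k$ has no negative ends relies on the claim that a negative end asymptotic to an orbit in $B_0 \cup \iI_0 \cup \p M_0$ forces $v_k$ into the interior of $\RR\times M_0$, so that Prop.~\ref{prop:uniqueness2} applies. This is immediate for $B_0$ and $\iI_0$ because those lie in the interior of $M_0$, but for an orbit on a boundary torus $T \subset \p M_0$ the curve could approach $T$ from the padding side, in which case it need not meet $\operatorname{int}(\RR\times M_0)$ at all, and Prop.~\ref{prop:uniqueness2} says nothing. Your Stokes estimate does correctly rule out negative ends at orbits of period $\ge 1$, so the loose end is precisely negative ends at $\p M_0$ approached from outside $M_0$; this needs an additional argument (intersection-theoretic, or via the barrier you gesture at) before one can assert there are no negative ends.
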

\begin{proof}
We proceed in three steps.

\textsl{Step~1: Energy bounds.}  We use the stable Hamiltonian structure
$\hH_{\epsilon_k} = (\lambda_k,\omega)$ to define the energy of~$v_k$.
To be precise, choose $c_0 > 0$ small enough so that $\omega + d(t \lambda_0)$
is symplectic on $[-c_0,c_0] \times M'$; the same is then true for all
$\omega + d(t \lambda_k)$ with $k$ sufficiently large, so following
\eqref{eqn:omegaphi} and \eqref{eqn:energy}, define
$$
E_k(v_k) = \int_{\dot{\Sigma}_k} v_k^*\omega + 
\sup_{\varphi \in \tT} \int_{\dot{\Sigma}_k} v_k^*d(\varphi\lambda_k),
$$
where $\tT = \{ \varphi \in C^\infty(\RR, (-c_0,c_0)) \ |\ \varphi' > 0 \}$.
Since $\omega$ is exact, $E_k(v_k)$ depends only
on the asymptotic behavior of~$v_k$.  Now since the positive ends all
approach simple orbits in distinct connected components of~$\p M_0$, the 
number of ends and sum of their periods are uniformly bounded,
implying a uniform bound on $E_k(v_k)$.

\textsl{Step~2: Genus bounds.}
After taking a subsequence we may assume that all the curves $v_k$ have 
the same number of positive and negative punctures.
It is still possible however that the surfaces $\dot{\Sigma}_k$ could have 
unbounded topology, i.e.~their genus could blow up as $k \to \infty$.  To 
preclude this, we apply the currents version of Gromov compactness,
see \cite{Taubes:currents}*{Prop.~3.3} or 
\cite{Hutchings:index}*{Lemma~9.9}.
The key fact is that since $E_k(v_k)$ is uniformly bounded,
$\hH_k \to \hH_0$ and $J_k \to J_0$, $v_k$ as a sequence of currents 
has a convergent subsequence, and this implies in particular that the
relative homology classes $[v_k]$ for this subsequence converge.  
We now plug this into the adjunction formula \eqref{eqn:adjunction} 
for punctured holomorphic curves, which implies
$$
v_k * v_k \ge 2\left[\delta(v_k) + \delta_\infty(v_k)\right] + c_N(v_k)
\ge c_N(v_k).
$$
Both the right and left hand
sides of this expression depend only on $[v_k]$ and on certain integer
valued winding numbers of eigenfunctions at the asymptotic orbits 
of~$v_k$.  As orbits vary in a Morse-Bott family that all have the same
minimal period, these winding numbers remain constant, thus by the
convergence of $[v_k]$, the sequence $v_k * v_k$ converges to a fixed integer,
implying an upper bound on $c_N(v_k)$ for large~$k$.
The latter can be written as $c_1^\Phi(v_k) - \chi(\dot{\Sigma}_k)$
plus more winding numbers of eigenfunctions, thus every term
other than $\chi(\dot{\Sigma}_k)$ converges, and we obtain a uniform
upper bound on $-\chi(\dot{\Sigma}_k)$, or equivalently, an upper
bound on the genus of $\dot{\Sigma}_k$.

\textsl{Step~3: SFT compactness.}
We can now assume the domains $\dot{\Sigma}_k$ are a fixed surface
$\dot{\Sigma}$, so the sequence $v_k$ with uniform energy bound
$E_k(v_k) < C$ satisfies the compactness theorem of Symplectic Field Theory
\cite{SFTcompactness}.  There is one subtle point to be careful of here:
since $X_0$ is not a Morse-Bott vector field, it is not clear at first
whether the SFT compactness theory can be applied as
$\hH_{\epsilon_k} \to \hH_0$.  What saves us is the fact that $v_k$ is
asymptotic at~$+\infty$ to orbits with arbitrarily
small period: then for energy reasons, we may assume
the only orbits that can appear
under breaking or bubbling are other orbits in $B_0 \cup \iI_0 \cup \p M_0$, 
all of which are Morse-Bott.  With this observation, the proof of SFT
compactness in \cite{SFTcompactness} goes through unchanged.  We can
thus assume that $v_k$ converges to a $J_0$-holomorphic building~$v_\infty$.
The positive asymptotic orbits of $v_\infty$ are all simply
covered and lie in distinct connected components of $\p M_0$,
thus the top level of $v_\infty$ contains at least one somewhere injective
curve $v_+$ that is strongly subordinate to~$\pi_0$.  
Then Prop.~\ref{prop:uniqueness}
implies that~$v_+$ parametrizes a leaf of the foliation $\fF_0$, so it
has no negative ends.  The same is true for every other top level
component of~$v_\infty$ unless it is a trivial cylinder, and nontrivial
curves must all be distinct since they approach distinct orbits at their
positive ends.  It follows that they do not intersect each other, so there 
is no possibility of nodes connecting them, and the building must be
disconnected unless it consists of only a single component, namely~$v_+$.
\end{proof}

We are now just about done with the proof of Theorem~\ref{thm:openbook}: 
the implicit function theorem implies that
if the limit $v_\infty = \lim v_k$ has genus zero, then $v_k$ is
always one of the $J_k$-holomorphic pages $u^{(i)}_{\sigma,\tau}$
for sufficiently large~$k$.  If on the other hand $v_\infty$ 
has genus $g > 0$, then
$\ind(v_k) = \ind(v_\infty) = 2 - 2g \le 0$ by Prop.~\ref{prop:index}, 
yet $v_k$ must be Fredholm regular since $J_k$ was chosen generically,
and this gives a contradiction.

\subsection{Deformation and compactness}
\label{sec:compactness}

We now prove a compactness result for families of holomorphic curves in
symplectic manifolds that emerge from the holomorphic pages provided
by Theorem~\ref{thm:openbook}.

We recall first that every strong
symplectic filling can be \emph{completed} by attaching a cylindrical end.
To be precise, assume $(M',\xi)$ is a closed, connected 
contact $3$-manifold with 
positive contact form $\alpha$, and for any two smooth functions 
$f, g : M' \to [-\infty,\infty]$ with $g > f$, define a subdomain of the
symplectization $(\RR \times M', d(e^t \alpha))$ by
\begin{equation}
\label{eqn:cylStein}
\sS_f^g = \{ (t,m) \in \RR \times M' \ |\ f(m) \le t \le g(m)\ \}.
\end{equation}
Here we include the cases $f \equiv -\infty$ and $g \equiv +\infty$
so that $\sS_f^g$ may be unbounded.  Now suppose $M' = \p W$, where
$(W,\omega)$ is a (not necessarily compact) symplectic manifold with
contact type boundary, and $\lambda$ is a primitive of $\omega$ defined
near $\p W$ such that
$\lambda|_{TM'} = e^f \alpha$ for some smooth function $f : M' \to \RR$.
Then using the flow of the Liouville vector field $Y$ defined by
$\iota_Y\omega = \lambda$, one can identify a neighborhood of~$M'$ in
$(W,\omega)$ symplectically with a neighborhood of $\p\sS_{-\infty}^f$
in $(\sS_{-\infty}^f,d(e^t\alpha))$.  As a consequence, one can 
symplectically glue the cylindrical end $(\sS_f^\infty,d(e^t\alpha))$ to
$(W,\omega)$ along~$M'$, giving a noncompact symplectic
manifold
$$
(W^\infty,\omega) := (W,\omega) \cup_{M'} (\sS_f^\infty,d(e^t\alpha)),
$$
which necessarily contains the half-symplectization $([T,\infty) \times M',
d(e^t\alpha))$ whenever $T \in \RR$ is sufficiently large.

Adopting the notation from the setup for Theorem~\ref{thm:openbook}, 
assume now that in addition to the above,
$(M',\xi)$ contains a partially planar domain $M \subset M'$ with irreducible
subdomains $M = M_0 \cup \ldots \cup M_N$ for $N \ge 0$, 
of which $M_0$ is a planar piece lying in the interior of~$M$.
By Theorem~\ref{thm:openbook}, we can then find
a Morse-Bott contact form $\alpha$ on~$M'$ and generic compatible almost 
complex structure~$J_+$ such that the planar pages in $M_0$ lift to an
$\RR$-invariant foliation 
by properly embedded $J_+$-holomorphic curves in $\RR\times M'$,
whose asymptotic orbits are simply covered and have
minimal period less than an arbitrarily small number
$\tau_0 > 0$, while all other closed orbits of $X_{\alpha}$ in~$M'$ have
period at least~$1$.  
Assume that $\alpha$ is the contact
form chosen for defining the symplectic cylindrical end in $(W^\infty,\omega)$.

Choose an almost complex structure $J$ on
$W^\infty$ which is compatible with~$\omega$, generic on~$W \subset W^\infty$ 
and matches $J_+$ on $\sS_f^\infty \subset W^\infty$.  
Then every leaf of the $J_+$-holomorphic foliation in $\RR\times M_0$
has an $\RR$-translation
that can be regarded as a properly embedded surface in $\sS_f^\infty \subset
W^\infty$ parametrized by a finite energy $J$-holomorphic curve.
The main idea used for the proofs in \S\ref{sec:non-fillable} is to show that 
these curves generate a moduli space of $J$-holomorphic curves that
must fill the entirety of~$W^\infty$, and leads to a contradiction in
any of the situations considered by Theorems~\ref{thm:non-fillable},
\ref{thm:complement} and~\ref{thm:nonseparating}.  
To prove this, we need a deformation result
and a corresponding compactness result to show that the region filled by 
these curves
is open and closed respectively.  We shall prove somewhat more general
versions of these results than are immediately needed, as they are
also useful for other applications (e.g.~in 
\cites{NiederkruegerWendl,LisiVanhornWendl}).

We now generalize the above setup as follows: let $u_+ :
\dot{\Sigma} \to W^\infty$
denote one of the $J$-holomorphic planar pages living in the cylindrical
end of $(W^\infty,\omega)$, and pick any open neighborhood
$\uU \subset M'$ and $T > 0$ such that
$$
u_+(\dot{\Sigma}) \subset [T,\infty) \times \uU.
$$
Choose any data $(\alpha',\omega',J')$ with the following properties:
\begin{itemize} 
\item $\alpha'$ is a Morse-Bott contact form on~$M'$ that matches
$\alpha$ on $\uU \cup \nN(B_0 \cup \iI_0 \cup \p M_0)$ and has only
Reeb orbits of period at least~$1$ outside of $\nN(B_0 \cup \iI_0 \cup \p M_0)$
\item $\omega'$ is a sympectic form on~$W^\infty$ that matches
$d(e^t \alpha')$ on $\sS_f^\infty$
\item $J'$ is an $\omega'$-compatible almost complex structure on $W^\infty$ 
that has an $\RR$-invariant restriction 
$$
J_+' := J'|_{\sS_f^\infty}
$$
that is generic and compatible with~$\alpha'$ and matches $J_+$ on 
$\RR \times (\uU \cup \nN(B_0 \cup \iI_0 \cup \p M_0))$, and
$J'$ is generic on~$W$.
\end{itemize}

The advantage of this generalization is that fairly arbitrary changes
to the data can be accommodated outside a neighborhood of a single page,
which is useful for instance in the adaptation of these 
arguments for weak fillings (cf.~\cite{NiederkruegerWendl}).
Let $\mM^*(J')$ denote the moduli space of all unparametrized
somewhere injective finite energy $J'$-holomorphic
curves in $W^\infty$, which is non-empty by construction since it 
contains~$u_+$, and define
$$
\mM_0^*(J') \subset \mM^*(J')
$$
to be the connected component of this space containing~$u_+$.  The curves
$u \in \mM_0^*(J')$ share all homotopy invariant properties of the
planar $J_+$-holomorphic pages in $\RR\times M'$, in particular:
\begin{enumerate}
\item $\ind(u) = 2$,
\item $u * u = \delta(u) + \delta_\infty(u) = 0$.
\end{enumerate}
It follows that all curves in $\mM_0^*(J')$ are embedded. 
This situation is a slight variation on the setup that was considered
in \cite{AlbersBramhamWendl}*{\S 4}, only with the added complication
that curves in $\mM_0^*(J')$ may have two ends approaching the same
Morse-Bott Reeb orbit, which presents the danger of degeneration to
holomorphic buildings with multiply covered components.  
The required deformation result is however
exactly the same: it depends on the fact that a neighborhood
of each embedded curve $u \in \mM_0^*(J')$ can be described by 
sections of its normal bundle which are nowhere vanishing, because they
satisfy a Cauchy-Riemann type equation and have vanishing first Chern 
number with respect to certain special trivializations at the ends.  

\begin{prop}[\cite{AlbersBramhamWendl}*{Theorem~4.7}]
\label{prop:IFT}
The moduli space $\mM_0^*(J')$ is a smooth $2$-dimensional manifold containing
only proper embeddings that never intersect each other:
in particular they foliate an open subset of~$W^\infty$.
\end{prop}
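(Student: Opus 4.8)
The plan is to reproduce the argument of \cite{AlbersBramhamWendl}*{Theorem~4.7}, which has three ingredients: (i) every $u\in\mM_0^*(J')$ is Fredholm regular, so that $\mM_0^*(J')$ is a smooth manifold of dimension $\ind(u)=2$; (ii) distinct curves in $\mM_0^*(J')$ have disjoint images; and (iii) the normal deformations describing $\mM_0^*(J')$ near any curve are nowhere vanishing, so that these curves foliate an open subset of $W^\infty$. The only feature not already present in \cite{AlbersBramhamWendl} is that a curve in $\mM_0^*(J')$ may have two positive ends asymptotic to the \emph{same} Morse-Bott orbit on an interface or boundary torus; this forces the use of the Morse-Bott refinement of Siefring's intersection theory (\cite{SiefringWendl}, cf.~Remark~\ref{remark:MorseBott}), together with a little care about trivializations at such ends, but it is otherwise harmless since the orbits in question are simply covered, have parity $p=1$, and lie on tori disjoint from the curves. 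Steps (i) and (iii) are essentially mechanical given the machinery assembled in \S\ref{subsec:definitions}; the interaction with coinciding Morse-Bott ends in step (ii) is the only genuinely delicate point.

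\emph{Regularity.} Fix $u\in\mM_0^*(J')$. It is embedded, of genus zero, has only positive ends, and is asymptotic at each puncture to a simply covered orbit that is either nondegenerate or lies on a Morse-Bott torus disjoint from the image of $u$, with parity $p=1$. From the adjunction formula \eqref{eqn:adjunction}, together with the relations $u*u=0$ and $\delta(u)=\delta_\infty(u)=0$ recorded above, we get $c_N(u)=\#\Gamma-\bar{\sigma}(u)\le 0$. Since then $\ind(u)=2>0\ge c_N(u)$, the automatic transversality criterion of \cite{Wendl:automatic} applies to the normal Cauchy--Riemann operator $\mathbf{D}_u^N$ and shows that it is surjective, so a neighborhood of $u$ in $\mM_0^*(J')$ is a smooth $2$-manifold. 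This holds uniformly over the connected component, because $\ind(u)$, $u*u$, $c_N(u)$ and the relevant asymptotic winding numbers are homotopy invariants---in the Morse-Bott sense of Remark~\ref{remark:MorseBott}, the common asymptotic orbits being simply covered---and agree with the corresponding quantities for the model planar pages produced by Theorem~\ref{thm:openbook}.

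\emph{Disjointness.} Let $u,v\in\mM_0^*(J')$ be distinct; since both are somewhere injective their images are not identical, so $0\le u\inter v\le u*v$. Now that $\mM_0^*(J')$ is known to be a connected manifold, choose a path in it from $v$ to $u$ through asymptotically cylindrical curves, along which $u*(\cdot)$ is invariant; here we invoke the version of Siefring's pairing \cite{Siefring:intersection} that allows the Morse-Bott asymptotic orbits to move in their tori, which is permissible by Remark~\ref{remark:MorseBott} since every common asymptotic orbit is simply covered and sits on a Morse-Bott torus disjoint from both curves, so no intersections escape to infinity. Evaluating at the endpoint of the path gives $u*v=u*u=0$, whence $u\inter v=0$ and $u$, $v$ are disjoint.

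\emph{Foliation.} Every $u\in\mM_0^*(J')$ is asymptotically cylindrical, hence a proper embedding of $\dot{\Sigma}$ into $W^\infty$: near each puncture the image escapes to $+\infty$ along the cylindrical end, so preimages of compact sets are compact. Near a fixed $u$, the elements of $\mM_0^*(J')$ are graphs $\exp_u(\eta)$ over $u$, with $\eta$ ranging over the $2$-dimensional space $\ker\mathbf{D}_u^N$; using the trivializations of the normal bundle that realize the extremal asymptotic winding at the ends, one checks exactly as in the construction of finite energy foliations (\cites{HWZ:foliations,Wendl:OTfol}) that the nonnegative algebraic count of zeros of any nonzero such $\eta$ is bounded above by $c_N(u)\le 0$ (plus a nonpositive asymptotic defect), hence is zero, so $\eta$ is nowhere vanishing. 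Together with the disjointness established above, this shows that the curves near $u$ foliate a neighborhood of $u(\dot{\Sigma})$ in $W^\infty$; letting $u$ vary over $\mM_0^*(J')$ completes the proof of Proposition~\ref{prop:IFT}.
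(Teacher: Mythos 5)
Your proposal is correct and follows the same route as the paper, which simply invokes \cite{AlbersBramhamWendl}*{Theorem~4.7}: automatic transversality from $c_N(u)\le 0<\ind(u)$ for regularity, Siefring's pairing (in its Morse--Bott refinement, cf.\ Remark~\ref{remark:MorseBott}) for disjointness, and nowhere-vanishing sections of $\ker\mathbf{D}_u^N$ for the foliation property. You have accurately filled in the details that the paper leaves to the cited reference, and correctly identified that the only new feature relative to \cite{AlbersBramhamWendl}---two ends at a common Morse--Bott orbit---is harmless for the deformation statement and matters only for the compactness theorem that follows.
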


The compactness result we need is a variation on
\cite{AlbersBramhamWendl}*{Theorem~4.8}, but somewhat more complicated
due to the appearance of multiple covers.  For the statement of the result,
recall that the compactification in \cite{SFTcompactness} for the space
of finite energy holomorphic curves in an almost complex manifold with
cylindrical ends consists of so-called \emph{stable holomorphic buildings}, 
which have one \emph{main level} and
potentially multiple \emph{upper} and \emph{lower levels}, 
each of which is a (perhaps disconnected) nodal holomorphic curve.  We will
be considering sequences of curves in $W^\infty$ that stay within a
bounded distance of the positive end, so there will be no lower levels
in the limit.  We shall use the
term ``smooth holomorphic curve'' to mean a holomorphic building with only
one level and no nodes.  The following variation on
Definition~\ref{defn:subordinate} will be convenient.

\begin{defn}
\label{defn:subordinate2}
A $J'$-holomorphic curve $u : \dot{\Sigma} \to W^\infty$ will be called
\defin{subordinate to~$\pi_0$} if it has only positive ends, all of
which approach Reeb orbits in $B_0 \cup \iI_0 \cup \p M_0$, with total
multiplicity at most~$1$ for each connected component of
$B_0 \cup \p M_0$ and at most~$2$ for each connected component of~$\iI_0$.
\end{defn}

Observe that all the curves in $\mM_0^*(J')$ are subordinate to~$\pi_0$.
The intersection argument in the proof of Prop.~\ref{prop:uniqueness}
now implies:
\begin{lemma}
\label{lemma:intersection2}
If $u \in \mM^*(J)$ is subordinate to~$\pi_0$, then $u * u_+ = 0$.
\end{lemma}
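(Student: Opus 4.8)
The plan is to compute the homotopy-invariant intersection number $u * u_+$ by exploiting homotopy invariance to deform both curves into a configuration where the intersections can be counted explicitly, in exactly the spirit of the argument in the proof of Proposition~\ref{prop:uniqueness}. First I would observe that $u_+$ is one of the planar $J_+$-holomorphic pages $u^{(0)}_{\sigma,\tau}$ living in the cylindrical end $\sS_f^\infty$; since it has only positive ends (all approaching simply covered orbits in $B_0 \cup \iI_0 \cup \p M_0$), an $\RR$-translation pushes its image as far up the positive end as we like, and in particular we may arrange $u_+$ to lie entirely in $[T',\infty) \times M'$ for any $T'$. Because the intersection number $u * u_+$ depends only on the relative homology classes and the asymptotic orbits, and both $u$ and $u_+$ are subordinate to $\pi_0$ (so their positive asymptotic orbits all lie in $B_0 \cup \iI_0 \cup \p M_0$, with the stated multiplicity bounds), I can freely perform such homotopies through asymptotically cylindrical maps without changing $u * u_+$.

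Next, following the homotopy picture of Figure~\ref{fig:OBDuniqueness}, I would deform $u$ (resp. its image in $W^\infty$) through asymptotically cylindrical maps so that its intersection with the far positive end $[T'',\infty) \times M'$ is contained in the trivial cylinders over its positive asymptotic orbits, i.e.\ in $[T'',\infty) \times (B_0 \cup \iI_0 \cup \p M_0)$. After a small perturbation of $\tau$ we may also assume that any end of $u_+$ approaching a Morse-Bott orbit in $\iI_0 \cup \p M_0$ is disjoint from every asymptotic orbit of $u$. Then
$$
u * u_+ = \sum_\gamma u_+ * (\RR \times \gamma),
$$
the sum running over the positive asymptotic orbits $\gamma$ of $u$, and since $u_+$ never actually meets these trivial cylinders, each term is a purely asymptotic contribution. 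By Siefring's theory these asymptotic contributions vanish whenever the two curves share no Morse-Bott orbit, so the only possible surviving terms come from orbits $\gamma \subset B_0$; but as in the proof of Proposition~\ref{prop:uniqueness}, each such binding orbit has $\muCZ^\Phi(\gamma) = 1 = 2\alpha^\Phi_-(\gamma)+1$ in the natural trivialization $\Phi$ determined by the $(\theta,\rho,\phi)$-coordinates, hence $\alpha^\Phi_-(\gamma) = 0$, while $u_+$ has asymptotic winding zero about $\gamma$ by construction of the pages. Thus the winding is extremal, $u_+ * (\RR \times \gamma) = 0$, and every term vanishes.

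The main obstacle I anticipate is the bookkeeping of the asymptotic contributions in the Morse-Bott setting: one must make sure that the relevant version of the intersection pairing is the homotopy-invariant one that allows orbits to move in their Morse-Bott families (cf.\ Remark~\ref{remark:MorseBott}), and verify that, in the present situation, that version agrees with $u * u_+$ — which it does precisely because $u$ and $u_+$ are disjoint, share common positive orbits only if those orbits are simply covered, and those orbits belong to Morse-Bott tori that $u$ and $u_+$ do not intersect, so no new intersections emerge from infinity as the orbits are perturbed independently. Everything else is a direct transcription of the intersection-theoretic computation already carried out in Proposition~\ref{prop:uniqueness}, with $u_+$ playing the role of a holomorphic page $u^{(0)}_{\sigma,\tau}$ and $u$ the role of the curve $v$ there; the only real change is that here $u$ itself is subordinate to $\pi_0$, which is exactly the hypothesis that makes all its positive orbits lie in $B_0 \cup \iI_0 \cup \p M_0$ with the multiplicity bounds needed to invoke the extremal-winding argument.
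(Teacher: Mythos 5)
Your proposal is correct and takes essentially the same approach as the paper, which states the lemma as a direct consequence of ``the intersection argument in the proof of Prop.~\ref{prop:uniqueness}''; you fill in exactly those details (translate $u_+$ far into the cylindrical end, deform $u$ so that its far-positive portion lies in the trivial cylinders over its asymptotic orbits, and reduce $u * u_+$ to a sum of asymptotic contributions that vanish by the extremal winding computation). The one imprecision is in your closing appeal to Remark~\ref{remark:MorseBott} to justify the $\tau$-perturbation, which as phrased is circular since that remark already presupposes $u*v=0$ and disjointness; the underlying point that makes the step legitimate is simply that the fixed-orbit intersection number is locally constant as asymptotic orbits vary within Morse-Bott tori disjoint from the images of both curves.
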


\begin{thm}
\label{thm:compactness}
Choose an open subset $W_0 \subset W$ that contains $\p W$ and has compact
closure, and let $W_0^\infty = W_0 \cup_{M'} \sS_f^\infty$.  Then there is
a finite set of index~$0$ curves 
$\Theta(W_0) \subset \mM^*(J')$ subordinate to~$\pi_0$ and 
with images in $\overline{W}_0^\infty$ such that the following holds.
Any sequence of
curves $u_k \in \mM_0^*(J')$ with images in $\overline{W}_0^\infty$
has a subsequence convergent (in the sense of \cite{SFTcompactness}) 
to one of the following:
\begin{enumerate}
\item A curve in $\mM_0^*(J')$
\item A holomorphic building with empty main level and one nontrivial
upper level consisting of a single connected curve that can be identified
(up to $\RR$-translation) with a curve in $\mM_0^*(J')$ with image
in~$\sS_f^\infty$
\item A $J'$-holomorphic building whose upper levels contain only 
covers of trivial cylinders, and 
main level consists of a connected double cover of a curve
in~$\Theta(W_0)$
\item A $J'$-holomorphic building whose upper levels contain only 
covers of trivial cylinders, and 
main level contains at most two connected components, which are 
curves in~$\Theta(W_0)$.
\end{enumerate}
\end{thm}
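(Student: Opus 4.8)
The plan is to follow the strategy of \cite{AlbersBramhamWendl}*{Theorem~4.8}, adjusting it to accommodate the new phenomenon that curves in $\mM_0^*(J')$ may have two positive ends approaching orbits in the same Morse-Bott torus of $\iI_0$, so that SFT convergence can produce limit buildings with multiply covered components. Given a sequence $u_k \in \mM_0^*(J')$ with images in $\overline{W}_0^\infty$, the first task is to extract a convergent subsequence. Since every curve in $\mM_0^*(J')$ is subordinate to $\pi_0$ in the sense of Definition~\ref{defn:subordinate2}, it has only positive ends, their number is bounded (at most one per connected component of $B_0 \cup \iI_0 \cup \p M_0$, with the stated multiplicity bounds), and by Theorem~\ref{thm:openbook} the corresponding Reeb orbits have period less than $\tau_0$; together with the exactness of $\omega'$ near the ends this gives a uniform bound on $E(u_k)$. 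A uniform bound on the topology of the domains then follows exactly as in Step~2 of the proof of Lemma~\ref{lemma:compactness}: the count $u_k * u_k = 0$ is constant, the adjunction formula \eqref{eqn:adjunction} forces $c_N(u_k)$ to be bounded above, and the only term in \eqref{eqn:cN} not determined by the (fixed, Morse-Bott-invariant) asymptotic data is $-\chi(\dot{\Sigma}_k)$, yielding a genus bound. With these bounds in place, the SFT compactness theorem \cite{SFTcompactness} applies directly since $\alpha'$ is Morse-Bott, and we obtain a limit holomorphic building $u_\infty$. Because $u_k$ stays in $\overline{W}_0^\infty$ and $\overline{W}_0$ is compact, $u_\infty$ has no lower levels: it consists of a main level in $W^\infty$ together with some number of upper levels in $\RR \times M'$.

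The heart of the argument is identifying $u_\infty$. The subordination bounds of $u_k$ pass to $u_\infty$: the total multiplicity of its positive ends over each component of $B_0 \cup \p M_0$ is at most $1$ and over each component of $\iI_0$ at most $2$; hence every component inherits these bounds, and every non-trivial somewhere injective component is strongly subordinate to $\pi_0$ in the sense of Definitions~\ref{defn:subordinate} and~\ref{defn:subordinate2}. The dichotomy is now governed by Theorem~\ref{thm:openbook}(6), which applies to components lying in the symplectization $\RR \times M'$, i.e.\ to upper-level components: any non-trivial somewhere injective upper-level component must parametrize one of the planar pages, hence has index~$2$ and no negative ends. Since a page has no negative ends, connectedness of the building then forces $u_\infty$ to consist of that single page together with (covers of) trivial cylinders above it, the main level being empty; the implicit function theorem identifies the page, after $\RR$-translation, with a curve in $\mM_0^*(J')$ with image in $\sS_f^\infty$ (case~(2) of the statement). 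If instead no upper-level component is a page, then all upper-level components are covers of trivial cylinders and the main level carries all the nontrivial behavior, with total index~$0$ (using that the building has index~$2$ and that a page would have used up that~$2$). Here Theorem~\ref{thm:openbook}(6) no longer applies, since main-level curves live in $W^\infty$ rather than a symplectization, so these are genuinely new curves; genericity of $J'$ on $W$ forces each somewhere injective main-level component to have nonnegative index, so in the absence of multiply covered main-level components every such component has index exactly~$0$, and the multiplicity bounds over components of $B_0 \cup \p M_0 \cup \iI_0$ limit the main level to at most two of them (case~(4)); the only remaining possibility is a single connected degree-$2$ cover over a component of $\iI_0$ of an index-$0$ somewhere injective curve (case~(3)). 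Continuity of the Siefring pairing gives $u_\infty * u_\infty = \lim u_k * u_k = 0$ and, via Lemma~\ref{lemma:intersection2}, $u_\infty * u_+ = 0$, so the main-level components are mutually disjoint and disjoint from the reference page, consistent with the foliation picture. Taking $\Theta(W_0)$ to be the set of all index-$0$ somewhere injective curves in $\mM^*(J')$ that are subordinate to $\pi_0$ and have image in $\overline{W}_0^\infty$ — finite by Gromov compactness on the precompact region $\overline{W}_0^\infty$ together with the automatic transversality and consequent rigidity of such embedded index-$0$ curves in dimension four — completes the identification in cases~(3) and~(4).

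The step I expect to be the main obstacle is the detailed combinatorial and index-theoretic analysis underlying case~(3)/(4): excluding three or more main-level components, excluding covers of degree $\ge 3$ and covers over binding or boundary components, and excluding mixed configurations with nontrivial curves simultaneously in the main level and in an upper level. Each exclusion rests on the interplay between the index count for the building, the homotopy-invariant intersection numbers $u_\infty * u_\infty = 0$ and $u_\infty * u_+ = 0$, the adjunction formula, and a careful accounting of asymptotic winding numbers at the Morse-Bott interface tori — the genuinely new subtlety (absent in \cite{AlbersBramhamWendl}) being the degeneration in which two multiplicity-one ends at a component of $\iI_0$ collide through a multiplicity-two breaking orbit, which is exactly what produces the double cover of case~(3). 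Establishing finiteness of $\Theta(W_0)$ — and in particular the rigidity needed to upgrade Gromov compactness of a sequence of index-$0$ curves to a genuinely finite set — also requires care, but follows from the automatic transversality available for these embedded curves.
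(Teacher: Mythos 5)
Your proposal follows the paper's general strategy (energy and genus bounds, SFT compactness, index counting via the constrained index, intersection theory), but there is a genuine gap in the finiteness of $\Theta(W_0)$, and a smaller error in the case analysis.

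The finiteness of $\Theta(W_0)$ is the real issue. You assert this follows ``from Gromov compactness on the precompact region $\overline{W}_0^\infty$ together with the automatic transversality and consequent rigidity of such embedded index-$0$ curves in dimension four.'' That is not sufficient: automatic transversality shows index-$0$ curves are Fredholm regular and isolated, but it does not rule out a sequence of such curves degenerating into a nontrivial holomorphic building, which is exactly what would make the set noncompact and potentially infinite. The subtle case is a curve with one doubly covered asymptotic orbit at a component of $\iI_0$ (the ``Type~2'' curves arising in the paper's Step~6): a sequence of these could a priori degenerate into a pair-of-pants connector in an upper level attached along a doubly covered breaking orbit to two index-$0$ curves with simply covered ends in the main level. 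Ruling this out requires a parity argument, not transversality. In the paper's proof, one observes that the two index-$0$ curves must share a Morse-Bott asymptotic orbit and hence (by genericity among the finitely many Type~1 curves) coincide as a single curve $v_-$; then $v_\infty * v_\infty = 4(v_- * v_-)$ must equal $0$, while the adjunction formula \eqref{eqn:adjunction} together with $c_N(v_-) = -1$ forces $v_- * v_-$ to be odd --- a contradiction. This self-intersection parity computation is a key step of the proof and is not implied by the facts you cite.

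Secondly, your claim that when no upper-level component is a page the main level has ``total index~$0$'' is not correct as written. The constrained index of the full building is~$2$, so absent any nodes or inverted connectors (each of which contributes~$2$), the main level has total constrained index~$2$, corresponding to a single index-$2$ component --- this is precisely case~(1) of the theorem (a smooth curve in $\mM_0^*(J')$), which your case analysis skips over. The index-$0$ configurations (cases~(3) and~(4)) only arise when a node or an inverted pair-of-pants connector supplies the missing~$2$.

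A smaller remark on upper levels: the paper excludes non-trivial-cylinder upper-level components with negative ends by a direct intersection argument with $u_+$ via Lemma~\ref{lemma:intersection2}, rather than by invoking the uniqueness statement of Theorem~\ref{thm:openbook}(6). Your route through Theorem~\ref{thm:openbook}(6) can be made to work, but you should be careful that the strong-subordination hypothesis holds for components in \emph{lower} upper levels, whose positive ends are breaking orbits rather than the building's own positive asymptotics; the multiplicity bounds do propagate downward, but this needs to be said.
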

\begin{proof}
Assume $u_k$ is a sequence of either index~$2$ curves in $\mM_0^*(J')$ or
index~$0$ curves subordinate to~$\pi_0$ with images in $\overline{W}_0^\infty$
and only simply covered asymptotic orbits.
By \cite{SFTcompactness}, $u_k$ has a subsequence converging to a 
stable $J'$-holomorphic building~$u_\infty$.  The main idea is to add up the indices
of all the connected components of~$u_\infty$ and use genericity to derive
restrictions on the configuration of~$u_\infty$.
To facilitate this, we introduce a variation on the
usual Fredholm index formula \eqref{eqn:index}: for any finite energy
holomorphic curve~$v : \dot{\Sigma} \to \RR \times M'$ with positive
and negative asymptotic orbits $\{ \gamma_z\}_{z \in \Gamma^\pm}$,
choose a small number $\epsilon > 0$ and trivializations~$\Phi$ of the 
contact bundle along each~$\gamma_z$ 
and define the \emph{constrained} index
$$
\widehat{\ind}(v) = -\chi(\dot{\Sigma}) + 2 c_1^\Phi(v) + 
\sum_{z \in \Gamma^+}
\muCZ^\Phi(\gamma_z - \epsilon) - \sum_{z \in \Gamma^-}
\muCZ^\Phi(\gamma_z - \epsilon).
$$
The only difference here from \eqref{eqn:index} is that at the negative
punctures we take $\muCZ^\Phi(\gamma_z - \epsilon)$ instead of
$\muCZ^\Phi(\gamma_z + \epsilon)$, which geometrically means we compute
the virtual dimension of a space of curves whose negative ends have all
their Morse-Bott orbits fixed in place.  So for curves without negative
ends $\widehat{\ind}(v) = \ind(v)$, and the constrained index otherwise
has the advantage of being \emph{additive} across levels, i.e.~if the
building~$u_\infty$ has no nodes, then we obtain $\ind(u_k) = \ind(u_\infty)$ if the
latter is defined as the sum of the constrained indices for all its
connected components.  Observe that trivial cylinders over Reeb orbits always
have constrained index~$0$.  If~$u_\infty$ does have nodes, the formula remains true
after adding~$2$ for each node in the building, so we then take this 
as a definition of the index for a nodal curve or nodal holomorphic building.
We now proceed in several steps.

\textsl{Step~1: Curves in upper levels.}  We claim that every connected
component of~$u_\infty$ either has no negative ends or is a
cover of a trivial cylinder (in an upper level).  Indeed, curves in the
main level obviously have no negative ends, and if $v$ is an upper level
component with negative ends, the smallness of the periods in 
$B_0 \cup \iI_0 \cup \p M_0$
constrains these to approach other orbits in $B_0 \cup \iI_0 \cup \p M_0$,
as otherwise~$v$ would have negative energy.  Then if~$v$ does not cover
a trivial cylinder, an intersection argument carried out in
\cite{AlbersBramhamWendl}*{Proof of Theorem~4.8} implies that $v$
must intersect~$u_+$, contradicting Lemma~\ref{lemma:intersection2} above.
The key idea here is to consider the asymptotic winding numbers that
control holomorphic curves approaching orbits at $B_0 \cup \iI_0 \cup \p M_0$,
which differ for positive and negative ends at each of these orbits, and
thus force~$v$ to intersect~$u_+$ in the projection to~$M'$.
We refer to \cite{AlbersBramhamWendl} for the details; note that a
similar argument has also appeared in \cite{Momin:thesis}.

\textsl{Step~2: Indices of connectors.}
Borrowing some terminology from Embedded Contact Homology, we refer to
branched multiple covers of trivial cylinders as \defin{connectors}.
These can appear in the upper levels of~$u_\infty$, but can never have any
curves above them except for further covers of trivial cylinders, 
due to Step~1.  Since the positive ends
of~$u_\infty$ approach any given orbit in $B_0 \cup \iI_0 \cup \p M_0$ 
with total multiplicity at most~$2$, only the following
types of connectors can appear, both with genus zero:
\begin{itemize}
\item \emph{Pair-of-pants} connectors: these have one positive end at a
doubly covered orbit and two negative ends at the same simply covered orbit.
\item \emph{Inverted pair-of-pants} connectors: with two positive ends
at the same simply covered orbit and one negative end at its double cover.
\end{itemize}
The second variety will be especially important, and we'll refer to it
for short as an \defin{inverted connector}.
As we computed in \eqref{eqn:CZMB}, all of the simply covered Morse-Bott
orbits under consideration have $\muCZ^\Phi(\gamma - \epsilon) = 1$ in
the natural trivialization, and
in fact exactly the same argument produces the same result for
their multiple covers.  We thus find that the constrained
Fredholm index is~$0$ for a pair-of-pants connector and~$2$ for the inverted
variant.

\textsl{Step~3: Indices of multiple covers.}
Suppose~$v$ is a connected component of~$u_\infty$ which is not a cover of a
trivial cylinder: then it has no negative ends, and all its positive ends
must approach orbits in $B_0 \cup \iI_0 \cup \p M_0$ with total multiplicity
at most~$2$.  Thus if~$v$ is a
$k$-fold cover of a somewhere injective curve~$v'$, we have $k \in \{1,2\}$,
and all the asymptotic orbits of both $v$ and~$v'$ have
$\muCZ^\Phi(\gamma - \epsilon) = 1$ in the natural trivialization.
Assume $k=2$, and label the positive punctures of~$v$ as 
$\Gamma = \Gamma_1 \cup \Gamma_2$,
where a puncture is defined to belong to $\Gamma_2$ if its asymptotic orbit is
doubly covered, and $\Gamma_1$ otherwise.  For $i=1,2$, let $\Gamma'_i$
denote the punctures of~$v'$ that are covered by~$\Gamma_i$, so the set of
all punctures $\Gamma'$ of $v'$ is $\Gamma'_1 \cup \Gamma'_2$.
Note that in this situation all the asymptotic orbits of $v$ must have 
total multiplicity exactly~$2$, which implies that all asymptotic orbits of
$v'$ are distinct and simply covered, and we have
$\#\Gamma_2 = \#\Gamma_2'$ and $\#\Gamma_1 = 2\#\Gamma_1'$.  Both domains
must also have genus zero, so we have
\begin{equation*}
\begin{split}
\ind(v) &= -(2 - \#\Gamma) + 2 c_1^\Phi(v) + \#\Gamma =
-2 + 2(\#\Gamma_2' + 2\#\Gamma_1') + 2k c_1^\Phi(v'), \\
\ind(v') &= -(2 - \#\Gamma') + 2 c_1^\Phi(v') + \#\Gamma'
= -2 + 2(\#\Gamma_2' + \#\Gamma_1') + 2 c_1^\Phi(v'),
\end{split}
\end{equation*}
hence
\begin{equation}
\label{eqn:indexCover}
\ind(v) = k \ind(v') + 2(k-1) (1 - \#\Gamma_2).
\end{equation}
This formula also trivially holds if $k=1$.  This gives a
lower bound on $\ind(v)$ since $\ind(v')$ is bounded from below by
either~$1$ (in $\RR\times M'$) or~$0$ (in $W^\infty$) due to genericity.
Now observe that whenever $\Gamma_2$ is non-empty, the doubly covered
orbit must connect~$v$ to an inverted connector, whose
constrained index is~$2$, so for $k=2$ we have
\begin{equation}
\label{eqn:Big!}
\ind(v) + \sum_C \widehat{\ind}(C) = k\ind(v') + 2(k-1) \ge 2,
\end{equation}
where the sum is over all inverted connectors that connect
to~$v$ along doubly covered breaking orbits.

\textsl{Step~4: Indices of bubbles.}
There may also be closed components in the main level of~$u_\infty$: these are 
$J'$-holomorphic spheres~$v$ which are either constant (\emph{ghost bubbles})
or are $k$-fold covers of somewhere injective spheres~$v'$ for some
$k \in \NN$.  In the latter case, \eqref{eqn:indexCover} also holds
with $\#\Gamma_2=0$, implying $\ind(v) \ge 0$, and the inequality is
strict whenever $k > 1$.  

If $v$ is a ghost bubble, then $\ind(v) = -2$,
but then the stability condition implies the existence of at least three
nodes connecting~$v$ to other components; let us refer to nodes of this
type as \emph{ghost nodes}.  There is then a graph with vertices representing
the ghost bubbles in~$u_\infty$ 
and edges representing the ghost nodes that connect
two ghost bubbles together, and since~$u_\infty$ 
has arithmetic genus zero, every
connected component of this graph is a tree.  Let~$G$ denote such a connected
component, with $V$ vertices and $E_i$ edges, which therefore satisfy
$V - E_i = 1$, and suppose there are also $E_e$ nodes connecting the ghost
bubbles represented by~$G$ to nonconstant components; we can think of these
as represented by ``external'' edges in~$G$.  By the stability condition,
we have
$$
2E_i + E_e \ge 3V,
$$
which after replacing $E_i$ by $V - 1$, becomes $E_e - 2\ge V$.  Then
the total contribution to $\ind(u_\infty)$ from all the ghost bubbles and
ghost nodes represented by~$G$ is
\begin{equation}
\label{eqn:ghostBubbles}
\begin{split}
-2 V + 2 (E_i + E_e) &= \left[-2V + (2E_i + E_e)\right] + 
E_e \ge V + (2 + V)\\
&= 2V + 2 \ge 4,
\end{split}
\end{equation}
unless $u_\infty$ has no ghost bubbles at all.

\textsl{Step~5: The total index of~$u_\infty$.}
We can now break down $\ind(u_\infty) \in \{0,2\}$ into a sum of nonnegative terms and
use this to rule out most possibilities.  Ghost bubbles are excluded
immediately due to \eqref{eqn:ghostBubbles}.
Similarly, there cannot be any multiply covered
bubbles, because these imply the existence of at least one node and thus
contribute at least $4$ to $\ind(u_\infty)$.  The only remaining possibility
for multiple covers (aside from connectors) is a component with only
positive ends, whose index together with contributions from attached inverted
connectors is given by \eqref{eqn:Big!} and is thus already at least~$2$.  
In fact, if this component exists in an upper level, then the underlying
simple curve must have index at least~$1$, implying an even larger lower
bound in \eqref{eqn:Big!} and hence a contradiction.  The remaining
possibility, which occurs in the case $\ind(u_\infty)=2$, 
is therefore that the main level consists only of 
a connected double cover, and there are no nodes at all,
nor anything other than trivial cylinders and connectors in the upper
levels (Figure~\ref{fig:bubbly1}).  
The underlying simple curve in the main level has index~$0$
and has only simply covered asymptotic orbits, all in separate connected 
components of~$B_0 \cup \iI_0 \cup \p M_0$, thus it is subordinate to~$\pi_0$.

\begin{figure}
\begin{center}
\includegraphics{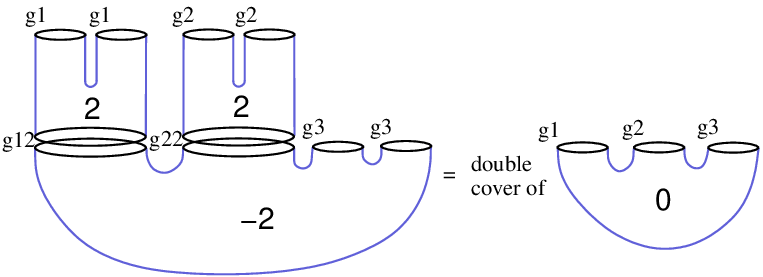}
\caption{\label{fig:bubbly1}
The limit building $u_\infty$ in a case where all asymptotic orbits have total
multiplicity two, so the main level may be a double cover of an index~$0$ curve,
while the upper level includes connectors and trivial cylinders (the latter not
shown in the picture).  The numbers inside each component
indicate the constrained index.}
\end{center}
\end{figure}

Assume now that~$u_\infty$ contains no multiply covered components except possibly
for connectors.  If there is an upper level component~$v$ that is not a
cover of a trivial cylinder, then genericty implies $\ind(v) \ge 1$,
and in fact the index must also be even since all the asymptotic orbits
satisfy $\muCZ^\Phi(\gamma - \epsilon) = 1$.  Then $\ind(u_\infty) = \ind(v)=2$ 
and there are no nodes or inverted connectors; the latter implies that all 
positive asymptotic orbits of~$v$ must be simply covered.
Then there also cannot
be any doubly covered breaking orbits, leaving only the possibility that~$v$
is the only nontrivial component in~$u_\infty$.

Next assume there are only covers of trivial cylinders in the upper levels,
in which case the main level is necessarily non-empty.  Each component
in the main level has a nonnegative even index, so there can be
at most one node or one inverted connector in~$u_\infty$, and only if
$\ind(u_\infty)=2$.  If the main level contains a component~$v$ of index~$2$,
then there are no nodes or inverted connectors.  The latter precludes
doubly covered breaking orbits, thus there are no connectors at all,
and since~$v$ cannot have negative ends, we conclude that $u_\infty=v$
(Figure~\ref{fig:bubbly1a}).
Otherwise all main level components in $u_\infty$ have index~$0$ and
are subordinate to~$\pi_0$.  Examples of the possible configurations
are shown in Figures~\ref{fig:bubbly2}--\ref{fig:bubbly5}.

\begin{figure}
\begin{minipage}[t]{5in}
\begin{center}
\includegraphics{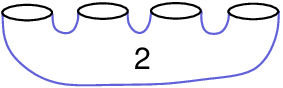}\par
\caption{\label{fig:bubbly1a} The limit building $u_\infty$ in the
simplest case, a smooth index~$2$ curve in the main level.}
\end{center}
\end{minipage}\par
\medskip
\begin{minipage}[t]{2.4in}
\begin{center}
\includegraphics{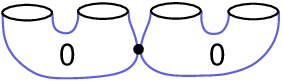}\par
\caption{\label{fig:bubbly2} A nodal curve with two index~$0$ components.}
\end{center}
\end{minipage}
\hfill
\begin{minipage}[t]{2.4in}
\begin{center}
\includegraphics{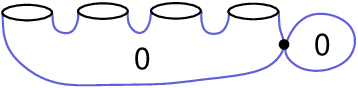}\par
\caption{\label{fig:bubbly3} Another nodal curve, including a bubble.}
\end{center}
\end{minipage}\par
\medskip
\begin{minipage}[t]{2.4in}
\begin{center}
\includegraphics{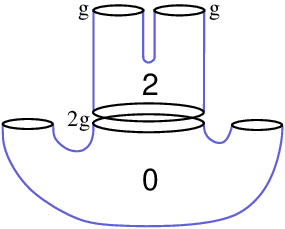}\par
\caption{\label{fig:bubbly4} An inverted connector can appear in an
upper level.}
\end{center}
\end{minipage}
\hfill
\begin{minipage}[t]{2.4in}
\begin{center}
\includegraphics{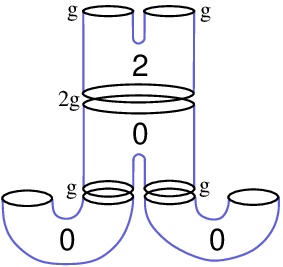}\par
\caption{\label{fig:bubbly5} Both types of connectors can appear.}
\end{center}
\end{minipage}
\end{figure}

\textsl{Step~6: Compactness for index~$0$ curves.}
If $\ind(u_\infty)=2$, then the somewhere injective index~$0$ curves
that can appear in the building~$u_\infty$ are all subordinate to~$\pi_0$
and come in two types:
\begin{itemize}
\item Type~1: Curves with only simply covered asymptotic orbits.
\item Type~2: Curves with exactly one doubly covered asymptotic orbit and
all others simply covered, and satisfying $v * v = 0$.
\end{itemize}
Indeed, the second type can occur as the unique main level curve in~$u_\infty$ 
if there is a single inverted
connector in an upper level, attached along the doubly covered orbit
(Figure~\ref{fig:bubbly4}).
To see that $v * v=0$ for such a curve, we use the continuity of the
intersection number under convergence to buildings, and the fact that
$u_k * u_k = 0$ since $u_k \in \mM_0^*(J')$; a computation shows that 
the contribution to $u_\infty * u_\infty$
from trivial cylinders and connectors in the upper level
plus breaking orbits adds up to~$0$.
The index counting argument of the previous steps shows already that the 
curves of Type~1 form a compact and hence finite set.
To finish the proof, we must show that the same is true for the
Type~2 curves.

Suppose $v_k$ is a sequence of Type~2 curves converging to a
holomorphic building~$v_\infty$.  Applying the index counting argument from
the previous steps, $v_\infty$ cannot contain any nodes or inverted
connectors; the worst case scenario is that the upper levels contain
only trivial cylinders and a single pair-of-pants connector, whose two
negative ends connect to two main level components~$v_-^1$ and $v_-^2$ 
that are both Type~1 curves (Figure~\ref{fig:bubbly6}).  
Since there are finitely many Type~1 curves, we may
assume by genericity of~$J'$ that no two of them approach a common orbit
in the Morse-Bott families~$\iI_0$, but this must be the case for
$v_-^1$ and~$v_-^2$ as they are both attached to a connector over an
orbit in~$\iI_0$, so we conclude that both are the same curve, which
we'll call~$v_-$.  We can rule out this scenario by computing the
self-intersection number $v_\infty * v_\infty$, which must a priori equal
$v_k * v_k = 0$.  Once more the connectors, trivial cylinders and breaking
orbits contribute zero in total, so since the main level includes two
copies of~$v_-$, we deduce
$$
0 = v_\infty * v_\infty = 4(v_- * v_-).
$$
But we can also compute $v_- * v_-$ directly from the adjunction formula
\eqref{eqn:adjunction}; indeed,
$$
v_- * v_- = 2\left[\delta(v_-) + \delta_\infty(v_-)\right] + c_N(v_-),
$$
where we've dropped the last term in \eqref{eqn:adjunction} since all
the asymptotic orbits are simple.  The constrained normal Chern number
$c_N(v_-)$ is defined in \eqref{eqn:cN} and can be deduced from the 
fact that $\ind(v_-) = 0$: since all of the relevant orbits
satisfy $\muCZ^\Phi(\gamma-\epsilon)=1$ and $\alpha^\Phi_-(\gamma+\epsilon)=0$,
we find $2c_1^\Phi(v_-) = \ind(v_-) + \chi(\dot{\Sigma}) - \sum_{z\in\Gamma}
\muCZ^\Phi(\gamma_z - \epsilon) = 2 - 2\#\Gamma$, hence
$$
c_N(v_-) = c_1^\Phi(v_-) - \chi(\dot{\Sigma}) +
\sum_{z \in \Gamma^+} \alpha^\Phi_-(\gamma_z + \epsilon) =
1 - \#\Gamma - (2 - \#\Gamma) = -1.
$$
This implies that $v_- * v_-$ is odd, and is thus a contradiction.
\begin{figure}
\begin{center}
\includegraphics{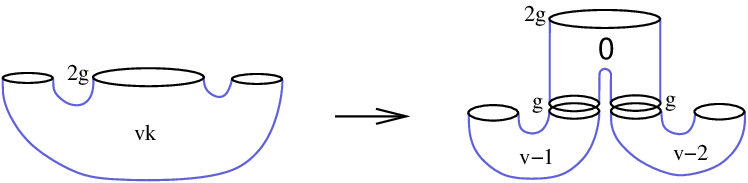}
\caption{\label{fig:bubbly6}
A possible limit of the sequence~$v_k$.}
\end{center}
\end{figure}
\end{proof}

\section{Proofs of the main results}
\label{sec:proofs}

\subsection{Non-fillability}
\label{sec:non-fillable}

We are now in a position to prove the main results on symplectic fillings.

\begin{proof}[Proof of Theorem~\ref{thm:nonseparating} and Corollary~\ref{cor:semifillings}]
Given Proposition~\ref{prop:IFT} (implicit function theorem) 
and Theorem~\ref{thm:compactness} (compactness) above,
the result follows from the same argument as in \cite{AlbersBramhamWendl}.
For completeness let us briefly recall the main idea: if $(M,\xi)$ is a
closed contact $3$-manifold which embeds as
a non-separating contact type hypersurface into some closed symplectic
$4$-manifold $(W,\omega)$, then by cutting $W$ open along~$M$ and gluing
together an infinite chain of copies of the resulting symplectic cobordism
between $(M,\xi)$ and itself, we obtain a noncompact but geometrically 
bounded symplectic manifold  $(\wW,\omega)$ with contact type boundary 
$(M,\xi)$.  Attaching a cylindrical end and considering the moduli space
$\mM_0(J)$ that arises from a partially planar domain, one can use the
monotonicity lemma to prevent the curves in $\mM_0(J)$ from escaping beyond
a compact subset of $\wW$, thus the compactness result 
Theorem~\ref{thm:compactness} applies.  In combination with
Prop.~\ref{prop:IFT}, this implies that outside a subset of codimension~2
(the images of finitely many curves from Theorem~\ref{thm:compactness}),
the set of all points in $\wW$ filled by curves in $\mM_0(J)$ must be 
open and closed, and is therefore
everything; since those curves are confined to a compact subset, this
implies $\wW$ is compact and is thus a contradiction.

By a similar argument one can prove Corollary~\ref{cor:semifillings}
independently of Theorem~\ref{thm:nonseparating}, for if $(W,\omega)$ is
a strong filling with at least two boundary components $(M,\xi)$ and
$(M',\xi')$, then the curves in $\mM_0(J)$ emerging from the cylindrical
end at $M$ will foliate~$W^\infty$ except at a subset of codimension~2;
yet they cannot enter the cylindrical end at $M'$ due to convexity, and
this is again a contradiction.
\end{proof}

\begin{proof}[Proof of Theorem~\ref{thm:non-fillable}]
Assume $(W,\omega)$ is a strong filling of $(M,\xi)$ and the partially planar
domain $M_0 \subset M$ is a planar torsion domain.  It therefore has a planar
piece $M_0^P \subset M_0$, which is a proper subset of its interior.
Combining Prop.~\ref{prop:IFT} (implicit function theorem) and 
Theorem~\ref{thm:compactness} (compactness)
as in the proof of Theorem~\ref{thm:nonseparating} above, the curves
in $\mM_0(J)$ that emerge from $M_0^P$ in the cylindrical end of 
$W^\infty$ form a
foliation of~$W^\infty$ outside a subset of codimension~2.  We can
therefore pick a point $p \in M \setminus M_0^P$ and find a sequence 
of curves $u_k \in \mM_0(J)$ for $k \to \infty$ whose images contain
$(k,p) \in [T,\infty) \times M \subset W^\infty$.  Applying
Theorem~\ref{thm:compactness} again, these have a subsequence which
converges to a
$J_+$-holomorphic curve $u'$ in $\RR\times M$, whose asymptotic orbits
are in the same Morse-Bott families as the curves in $\mM_0(J)$.  
The uniqueness statement in the holomorphic open book result
(Theorem~\ref{thm:openbook}) then implies that $u'$ is a lift of a page
in the blown up summed open book on~$M_0$, which proves that $M_0 = M$, and
$M_0 \setminus M_0^P$ consists of a single family of pages diffeomorphic
to the planar pages in $M_0^P$ and approaching the same Reeb orbits at
their boundaries.  In other words, $M_0$ is a symmetric summed open
book, which contradicts the definition of a planar torsion domain.
\end{proof}

\begin{proof}[Proof of Theorem~\ref{thm:complement}]
The idea is much the same as in the proof of Theorem~\ref{thm:non-fillable},
but instead of working in the compact context of a symplectic filling,
we work in a noncompact symplectic cobordism diffeomorphic to $\RR\times M$,
in which the negative end is ``walled off'' so that curves in
$\mM_0(J)$ cannot reach it.  This wall is created by a family of
holomorphic curves, namely a subset of the generally non-generic family
arising from an open book decomposition 
(see Figure~\ref{fig:torsionCobordism}).

\begin{figure}
\begin{center}
\includegraphics{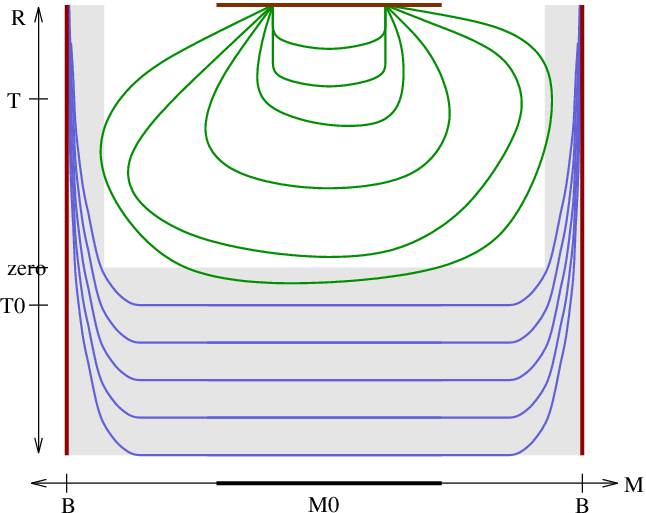}
\caption{\label{fig:torsionCobordism}
The symplectic cobordism used in the proof of Theorem~\ref{thm:complement},
with the negative end ``walled off'' by holomorphic pages of an open book.
The almost complex structure in the shaded region is a non-generic
one for which holomorphic open books always exist.}
\end{center}
\end{figure}

Specifically, suppose $\pi : M\setminus B \to S^1$ is an open book 
decomposition.  Recall from Prop.~\ref{prop:cobordism} that there is a
symplectic cobordism $(W,\Omega) = ([0,1] \times M,\Omega)$ where
$\Omega$ has the form $\omega + d(t\lambda_0)$ near $\{0\}\times M$,
$d(e^t\lambda)$ near $\{1\} \times M$ and $d(\varphi(t)\lambda_0)$ in
a neighborhood of $[0,1] \times B$ for some positive increasing
function~$\varphi$.  Here $\hH_\epsilon = (\lambda_\epsilon,
\omega)$ is a family of stable Hamiltonian structures adapted to the
open book, so $\xi_\epsilon = \ker\lambda_\epsilon$ for some small $\epsilon > 0$
is a supported contact structure and $\lambda$ is a contact form for 
$\xi_\epsilon$.

Arguing by contradiction, assume $(M,\xi_\epsilon)$ contains a planar torsion
domain $M_0$ that is disjoint from~$B$.  We can then find a neighborhood
$\uU \subset M$ of~$B$ such that $M_0 \subset M \setminus \uU$ and
$\Omega = d(\varphi(t)\lambda_0)$ on $[0,1] \times \uU$.  Extend $W$ to a
noncompact symplectic manifold as follows: first attach to $\{1\} \times M$
a positive cylindrical end that contains a half-symplectization of the form
$$
([T \times \infty) \times M, d(e^t\alpha)).
$$
Note that since $\{1\} \times M$ is a convex boundary component of
$(W,\Omega)$, we are free here to choose $\alpha$ as any contact 
form with $\ker\alpha = \xi_\epsilon$: in particular on~$M_0$ we can assume
it is the special Morse-Bott
contact form provided by Theorem~\ref{thm:openbook}, and since
$M_0 \cap \uU = \emptyset$, we can also assume $\alpha = \lambda_0$
in $\uU$ and
$\Omega = d(e^t\lambda_0)$ on $[1,\infty) \times \uU$.  Secondly,
attach to $\{0\} \times M$ a negative cylindrical end of the form
$$
((-\infty,0] \times M,\omega + d(\psi(t)\lambda_0)),
$$
where $\psi : (-\infty,0] \to \RR$ is an increasing function with
sufficiently small magnitude to make the form symplectic.  Denote the
resulting noncompact symplectic manifold by $(W^\infty,\omega)$.

Recall the special almost complex structure $J_0 \in \jJ(\hH_0)$ constructed
in \S\ref{subsec:bigTheorem}, for which all the pages of~$\pi$ admit
$J_0$-holomorphic lifts in $\RR\times M$.
We now can choose an almost complex structure~$J$ on $(W^\infty,\omega)$
that has the following properties:
\begin{enumerate}
\item $J$ is everywhere compatible with $\omega$
\item $J = J_0$ on both $\RR \times \uU$ and $(-\infty,0] \times M$
\item On $[T,\infty) \times M$, $J$ is the special almost complex 
structure compatible with~$\alpha$ provided by Theorem~\ref{thm:openbook}.
\end{enumerate}
Now the moduli space $\mM_0(J)$ of $J$-holomorphic curves emerging from
$M_0$ in the positive end can be defined as in the previous proof.
The important new feature is that we also have $J$-holomorphic curves
in $W^\infty$ coming from the $J_0$-holomorphic lifts of pages of the
open book: in fact for some $T_0 \in \RR$ sufficiently close to~$-\infty$,
every point in $(-\infty,T_0] \times M$ is contained in such a curve
(see Figure~\ref{fig:torsionCobordism}).
The leaves of the foliation in $[T,\infty) \times M_0$ obviously do not
intersect these curves, so positivity of intersections implies that
no curve in $\mM_0(J)$ may intersect them.  It follows that the curves
in $\mM_0(J)$ can never enter $(-\infty,T_0] \times M$,
so the compactness result Theorem~\ref{thm:compactness}
applies, and we conclude as before that $\mM_0(J)$ fills an open and closed
subset of~$W^\infty$ outside a subset of comdimension~2.  
But this forces some curve in $\mM_0(J)$ to enter
the negative end eventually, and we have a contradiction.
\end{proof}

\begin{remark}
For an arguably easier proof of Theorem~\ref{thm:complement}, one can
present it as a corollary of Theorem~\ref{thm:non-fillable} by showing that
whenever $(M,\xi)$ is supported by an open book $\pi : M \setminus B \to S^1$
and $\uU \subset M$ is a neighborhood of the binding, $(M\setminus \uU,\xi)$
can be embedded into a strongly fillable contact manifold.  This can be
constructed by a doubling trick using the binding sum: if $(M',\xi')$ is
supported by an open book that has the same page~$P$ as $\pi$ but inverse
monodromy, then one can construct a larger contact manifold by 
summing every binding component in~$M$ to a binding component in~$M'$.
The result is a symmetric summed open book which
has a strong symplectic filling homeomorphic to
$[0,1] \times S^1 \times P$, in which the natural projection to
$[0,1] \times S^1$ forms a symplectic fibration.  The details of this
construction are carried out in \cite{LisiVanhornWendl}; see also
the appendix of \cite{BaykurVHM}.
\end{remark}

\subsection{Embedded Contact Homology}
\label{sec:ECH}

Our goal in this section is to prove Theorems~\ref{thm:ECH},
\ref{thm:twisted}, \ref{thm:Umap} and~\ref{thm:UmapTwisted}.  We begin with
a quick review of the essential definitions of Embedded Contact Homology,
mainly following the discussions
in \cite{HutchingsSullivan:T3}*{\S 11} and \cite{Taubes:ECH=SWF5}.

\subsubsection{Review of twisted and untwisted ECH}
\label{subsec:twisted}

Assume $(M,\xi)$ is a closed contact $3$-manifold with nondegenerate contact 
form $\lambda$, and $J$ is a generic almost complex structure on $\RR\times M$
compatible with~$\lambda$.  We will refer to Reeb orbits as \defin{even} or
\defin{odd} depending on the parity of their Conley-Zehnder indices:
in dynamical terms, an even orbit is always hyperbolic, while an odd orbit
can be either elliptic or hyperbolic, the latter if and only if its
double cover is even.  In \S\ref{subsec:definitions} we defined the notion
of an \emph{orbit set} $\boldsymbol{\gamma} = \{ (\gamma_1,m_1),\ldots,
(\gamma_N,m_N) \}$, and we say that $\boldsymbol{\gamma}$ is \defin{admissible}
if $m_i = 1$ whenever $\gamma_i$ is hyperbolic.
Given $h \in H_1(M)$, choose a 
\emph{reference cycle}, i.e.~a $1$-cycle $\boldsymbol{\rho}_h$ in $M$ 
with $[\boldsymbol{\rho}_h] = h$; without loss of generality we can assume
$\boldsymbol{\rho}_h$ is represented by an embedded oriented knot in~$M$
that is not contained in any closed
Reeb orbit.  Then adapting the definition of $H_2(M,\boldsymbol{\gamma}^+
- \boldsymbol{\gamma}^-)$ from \S\ref{subsec:definitions}, it makes sense
to speak of relative homology classes in
$H_2(M , \boldsymbol{\rho}_h - \boldsymbol{\gamma})$ for any orbit
set~$\boldsymbol{\gamma}$ with $[\boldsymbol{\gamma}] = h$.

Given two orbit sets 
$\boldsymbol{\gamma}^\pm = \{ (\gamma_1^\pm,m_1^\pm),\ldots,
(\gamma_{N_\pm}^\pm,m_{N_\pm}^\pm) \}$ and a relative homology class
$A \in H_2(M , \boldsymbol{\gamma}^+ - \boldsymbol{\gamma}^-)$
one defines the \defin{ECH index} $I(A) \in \ZZ$ by choosing any
trivialization~$\Phi$ of~$\xi$ along the orbits in $\boldsymbol{\gamma}^\pm$
and setting
\begin{equation}
\label{eqn:ECHindex}
I(A) = c_1^\Phi(\xi|_A) + A \inter_\Phi A + 
\sum_{i=1}^{N_+} \sum_{k=1}^{m_i^+} \muCZ^\Phi(k \gamma_i^+) -
\sum_{i=1}^{N_-} \sum_{k=1}^{m_i^-} \muCZ^\Phi(k \gamma_i^-),
\end{equation}
where the various symbols are to be interpreted as follows:
\begin{itemize}
\item $c_1^\Phi(\xi|_A)$ is the relative first Chern number
$c_1^\Phi(u^*\xi)$ for any asymptotically cylindrical map~$u$ 
representing~$A$,
\item $A \inter_\Phi A$ is the \emph{relative self-intersection number},
computed as an algebraic count of intersections of some asymptotically
cylindrical representative~$u$ with a generic push-off of~$u$ that is
pushed in the direction of~$\Phi$ at the cylindrical ends,
\item $k\gamma$ denotes the $k$-fold cover of a Reeb orbit~$\gamma$.
\end{itemize}
One can check that this expression does not depend on the choice of
trivializations~$\Phi$.
Since every finite energy $J$-holomorphic curve~$u$ in $\RR\times M$
represents a relative homology class, we can define the ECH index of~$u$
as $I(u) := I([u])$.  

\begin{defn}
\label{defn:flowLine}
A (possibly disconnected) finite energy $J$-holomorphic curve 
$u : \dot{\Sigma} \to \RR\times M$ is called a \defin{flow line} if it
is a disjoint union of two curves $u_0$ and~$C$, where $u_0$ is embedded, 
and~$C$ is any collection of trivial cylinders that do not
intersect~$u_0$.
\end{defn}

Hutchings \cite{Hutchings:index} has shown that for generic~$J$,
a flow line~$u$ always satisfies $1 \le \ind(u) \le I(u)$.
Embedded Contact Homology is defined by counting specifically the flow 
lines for which this inequality is an equality.
For any subgroup $G \subset H_2(M)$, define
$$
\widetilde{C}_*(M , \lambda ; h, G)
$$
to be the free $\ZZ$-module generated by symbols of the
form $e^A \boldsymbol{\gamma}$, where $\boldsymbol{\gamma}$ is an
admissible orbit set with $[\boldsymbol{\gamma}] = h$ and
$A \in H_2(M,\boldsymbol{\rho}_h - \boldsymbol{\gamma}) / G$, meaning
$A \sim A'$ whenever $A - A' \in G$.  A differential
$\p : \widetilde{C}_*(M, \lambda ; h ,G) \to 
\widetilde{C}_{*-1}(M,\lambda ; h,G)$ is defined by
$$
\p \left( e^A \boldsymbol{\gamma}\right) = \sum_{\boldsymbol{\gamma}',A'} 
\# \left(\frac{\mM_{\text{emb}}^1(\boldsymbol{\gamma},\boldsymbol{\gamma}',A')}{\RR} \right) 
e^{A + A'} \boldsymbol{\gamma}',
$$
where the sum ranges over all admissible orbit sets $\boldsymbol{\gamma'}$
and $A' \in H_2(M , \boldsymbol{\gamma} - \boldsymbol{\gamma}') / G$,
and $\mM_\text{emb}^1(\boldsymbol{\gamma},\boldsymbol{\gamma}',A') \subset
\mM(J)$ is the
oriented $1$-manifold of (possibly disconnected) finite energy $J$-holomorphic
curves $u : \dot{\Sigma} \to \RR \times M$ satisfying the
following conditions:
\begin{enumerate}
\renewcommand{\labelenumi}{(\roman{enumi})}
\item $I(u) = 1$,
\item $[u] \sim A'$ in $H_2(M,\boldsymbol{\gamma} - \boldsymbol{\gamma}') / G$,
\item $u$ is a flow line in the sense of Definition~\ref{defn:flowLine}.
\end{enumerate}

The orientation of 
$\mM_\text{emb}^1(\boldsymbol{\gamma},\boldsymbol{\gamma}',A')$
is chosen in accordance with \cite{BourgeoisMohnke}, which requires first
choosing an ordering for all the even orbits in~$M$, then ordering the 
punctures of any 
$u \in \mM_\text{emb}^1(\boldsymbol{\gamma},\boldsymbol{\gamma}',A')$
accordingly.  The signed count above is then
finite due to the index inequality and compactness theorem in
\cite{Hutchings:index}.\footnote{The results in \cite{Hutchings:index} are
stated only for a very special class of stable Hamiltonian structures
arising from mapping tori, but they extend to the contact case due to 
the relative asymptotic formulas of Siefring \cite{Siefring:asymptotics}.}  
These same results together with the gluing construction of
\cites{HutchingsTaubes:gluing1,HutchingsTaubes:gluing2} imply that
$\p^2 = 0$, and the resulting homology is denoted by
$\widetilde{\ECH}_*(M,\lambda,J ; h,G)$.  We have two natural choices for
the subgroup~$G$: if $G = H_2(M)$, then the terms $e^A$ are all trivial and
we obtain the usual \emph{untwisted} Embedded Contact Homology,
$$
\ECH_*(M,\lambda,J ; h) := \widetilde{\ECH}_*(M,\lambda,J ; h,H_2(M)).
$$
At the other end of the spectrum, taking $G$ to be the trivial subgroup
leads to the \emph{fully twisted} variant of ECH,
$$
\widetilde{\ECH}_*(M,\lambda,J ; h) :=
\widetilde{\ECH}_*(M,\lambda,J ; h,\{0\}).
$$
Since every nontrivial finite energy $J$-holomorphic curve in $\RR\times M$ 
has at least one positive puncture, the empty orbit set 
$\boldsymbol{\emptyset}$  always satisfies $\p\boldsymbol{\emptyset} = 0$,
and thus represents a homology class which we call the (untwisted)
\defin{contact class},
$$
c(\lambda,J) = [\boldsymbol{\emptyset}] \in \ECH_*(M,\lambda,J ; 0).
$$
To define the twisted contact class, we note that for $h=0$ there is a
canonical choice of reference cycle $\boldsymbol{\rho}_0$, namely the
empty set, so 
$H_2(M, \boldsymbol{\rho}_0 - \boldsymbol{\emptyset}) = H_2(M)$ and it is
natural to define
$$
\tilde{c}(\lambda,J) = [e^0 \boldsymbol{\emptyset}] \in
\widetilde{\ECH}_*(M,\lambda,J ; 0).
$$
A chain map $U : \widetilde{C}_*(M, \lambda ; h ,G) \to 
\widetilde{C}_{*-2}(M,\lambda ; h,G)$ can be defined by choosing a
generic point $p \in M$ and counting index~$2$ holomorphic curves that
pass through the point $(0,p)$, that is
$$
U \left( e^A \boldsymbol{\gamma}\right) = \sum_{\boldsymbol{\gamma}',A'} 
\# \left(\mM_{\text{emb}}^2(\boldsymbol{\gamma},\boldsymbol{\gamma}',A' ; p) \right) 
e^{A + A'} \boldsymbol{\gamma}',
$$
where $\mM_{\text{emb}}^2(\boldsymbol{\gamma},\boldsymbol{\gamma}',A' ; p)$
consists of $J$-holomorphic flow lines~$u$ with $I(u)=2$ and
one marked point which is mapped to the point $(0,p)$.
We denote by
$$
U : \ECH_*(M,\lambda,J ; h) \to \ECH_{*-2}(M,\lambda,J ; h)
$$
and
$$
\widetilde{U} : \widetilde{\ECH}_*(M,\lambda,J ; h) \to 
\widetilde{\ECH}_{*-2}(M,\lambda,J ; h)
$$
respectively the untwisted and fully twisted variants of the resulting
map on homology.

It follows from Taubes's isomorphism \cites{Taubes:ECH=SWF1,Taubes:ECH=SWF5}
that none of the above depends on the choice of~$\lambda$ and~$J$, and
the $U$-map also does not depend on the choice of generic point $p \in M$.

\subsubsection{Proof of the vanishing theorems}
\label{subsubsec:vanishing}

We now prove Theorems~\ref{thm:ECH} and~\ref{thm:twisted}.
Assume $(M,\xi)$ contains a planar $k$-torsion domain $M_0$ with planar piece
$M_0^P \subset M_0$.  Note that for some planar torsion domains, there may
be multiple subsets of $M_0$ that could sensibly be called the planar piece
(e.g.~$M_0$ could contain multiple planar open books summed together as
in Figure~\ref{fig:torsionAmbiguity}), 
so whenever such
an ambiguity exists, we choose $M_0^P$ to make~$k$ as small as possible.
Let $\lambda$ and~$J$ denote the special Morse-Bott
contact form and compatible Fredholm regular almost complex structure
provided by Theorem~\ref{thm:openbook}.  Then $\p M_0^P$ is a non-empty 
union of tori
$$
\p M_0^P = T_1 \cup \ldots \cup T_n
$$
which are Morse-Bott families of Reeb orbits, and the interior
of~$M_0^P$ may also contain interface tori, which we denote by
$$
\iI_0 = T_{n+1} \cup \ldots \cup T_{n+r},
$$
and binding circles
$$
B_0 = \beta_1 \cup\ldots \cup \beta_m.
$$
The planar pages in $M_0^P$
have embedded $J$-holomorphic lifts to $\RR \times M$, forming a
family of curves,
$$
u_{\sigma,\tau} \in \mM(J),
\qquad (\sigma,\tau) \in \RR \times S^1,
$$
which have no negative punctures and $m+n+2r$ positive punctures, each
asymptotic to simply covered orbits in $B_0 \cup \iI_0 \cup \p M_0^P$,
exactly one in each connected component of $B_0 \cup \p M_0^P$ and two
in each component of~$\iI_0$.
Moreover, other than these curves and the obvious trivial cylinders, 
there is no other 
connected finite energy $J$-holomorphic curve in $\RR\times M$ with
its positive ends approaching any subcollection of the asymptotic orbits 
of~$u_{\sigma,\tau}$.

\begin{figure}
\begin{center}
\includegraphics{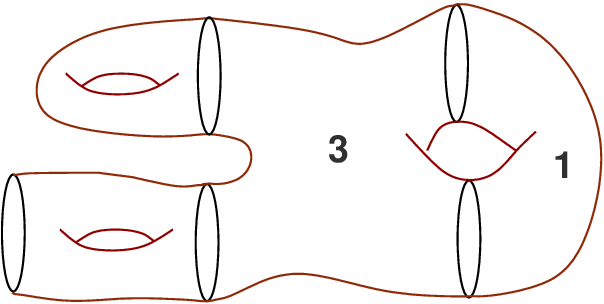}
\caption{\label{fig:torsionAmbiguity}
A planar torsion domain for which the order is not uniquely defined:
depending on the choice of planar piece, the order could be either~$1$
or~$3$.}
\end{center}
\end{figure}

We now perturb $\lambda$ to a nondegenerate contact form $\lambda'$ by the
scheme described in \cite{Bourgeois:thesis}, so that each of the original
Morse-Bott tori $T_j \subset \iI_0 \cup \p M_0^P$ 
contains exactly two nondegenerate
Reeb orbits, one elliptic and one hyperbolic,
$$
\gamma_j^e \cup \gamma_j^h \subset T_j.
$$
Denoting by~$\Phi_0$ the natural trivialization along these orbits 
determined by the $(\theta,\rho,\phi)$-coordinates, 
they satisfy $\muCZ^{\Phi_0}(\gamma_j^e) = 1$
and $\muCZ^{\Phi_0}(\gamma_j^h) = 0$, and for any number $k_0 \in \NN$ we
can also arrange that $\muCZ^{\Phi_0}(k\gamma_j^e) = 1$ for all $k \le k_0$.
Perturbing $J$ to a generic $J'$ compatible with $\lambda'$, the family of 
curves $u_{\sigma,\tau}$ gives rise to embedded
$J'$-holomorphic curves (Figure~\ref{fig:MorseBott})
asymptotic to various combinations of these orbits
and the components of~$B_0$.  If $u : \dot{\Sigma} \to \RR \times M$
is such a curve, then genericity implies $\ind(u) \ge 1$, so we deduce from
the index formula that such curves come in two types:
\begin{itemize}
\item $\ind(u) = 2$ if all ends approaching $\iI_0 \cup \p M_0^P$ 
approach elliptic orbits,
\item $\ind(u) = 1$ if $u$ has exactly one end approaching a hyperbolic
orbit in~$\iI_0 \cup \p M_0^P$.
\end{itemize}
All of these curves also have genus zero and satisfy 
$u \bullet_{\Phi_0} u = 0$ and $c_N(u) = 
c_1^{\Phi_0}(u) - \chi(\dot{\Sigma}) = 0$, so one can then deduce from
\eqref{eqn:ECHindex} and the index formula \eqref{eqn:index} that
$I(u) = \ind(u)$.

Up to $\RR$-translation there is now exactly one $J'$-holomorphic flow line
$u_0 : \dot{\Sigma} \to \RR\times M$ with all punctures positive and asymptotic
to the orbits
$$
\gamma_1^h,\gamma_2^e,\ldots,\gamma_n^e,\gamma_{n+1}^e,\gamma_{n+1}^e,\ldots,
\gamma_{n+r}^e,\gamma_{n+r}^e,\beta_1,\ldots,\beta_m.
$$
Let us therefore define the orbit set
$$
\boldsymbol{\gamma}_0 = \{ (\gamma_1^h,1),(\gamma_2^e,1),\ldots,
(\gamma_{n}^e,1),(\gamma_{n+1}^e,2),\ldots,(\gamma_{n+r}^e,2),
(\beta_1,1),\ldots,(\beta_m,1) \},
$$
for which $[\boldsymbol{\gamma}_0] = 0$, and define also
the relative homology class
$$
A_0 = - [u_0] \in H_2(M , \boldsymbol{\rho}_0 - \boldsymbol{\gamma}_0).
$$
The perturbation from $J$ to $J'$ creates some additional $J'$-holomorphic 
cylinders which arise from gradient flow lines along the Morse-Bott
families of orbits, as described in \cite{Bourgeois:thesis}.
Namely for each $j=1,\ldots,n+r$, there are two
embedded cylinders
$$
v_j^+ , v_j^- : \RR \times S^1 \to \RR \times M,
$$
each with positive end at $\gamma_j^e$ and negative end at $\gamma_j^h$;
the images of these cylinders in~$M$ are the two connected components of
$T_j \setminus (\gamma_j^e \cup \gamma_j^h)$,
thus after choosing the labels appropriately, we can assume their
relative homology classes are related by
$$
[v_j^+] - [v_j^-] = [T_j] \in H_2(M).
$$
These cylinders satisfy $\ind(v_j^\pm) = I(v_j^\pm) = 1$.
\begin{figure}
\begin{center}
\includegraphics{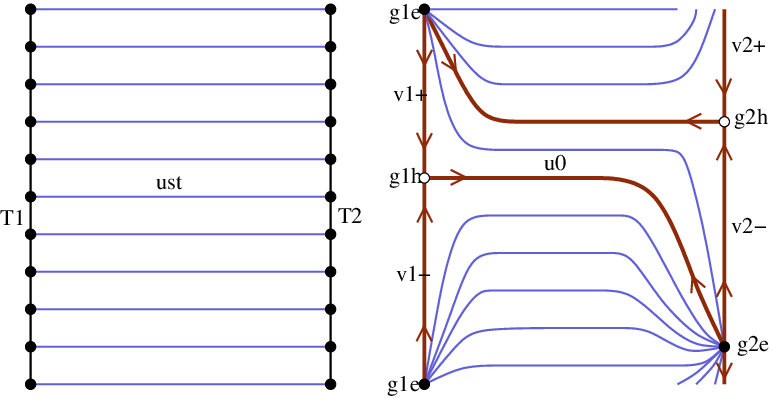}
\caption{\label{fig:MorseBott}
The perturbation from Morse-Bott (left) to nondegenerate (right), shown
here in the simple case where $u_{\sigma,\tau}$ is a family of cylinders
asymptotic to two Morse-Bott tori.  All the orbits in the picture point
along an $S^1$-factor through the page, and the top and bottom are identified.
Arrows indicate the signs of the ends of the rigid curves in the nondegenerate
picture: an end is positive if and only if the arrow points \emph{away
from} the orbit.}
\end{center}
\end{figure}

Now in the twisted ECH complex, the only curves other than $u_0$
counted by $\p\left( e^{A_0} \boldsymbol{\gamma}\right)$ 
are the disjoint unions of
$v_j^\pm$ with collections of trivial cylinders for $j=2,\ldots,n+r$.
The negative ends of such a disjoint union give rise to the orbit set
\begin{equation*}
\begin{split}
\boldsymbol{\gamma}_j := \{ & (\gamma_1^h,1),(\gamma_2^e,1),\ldots,
(\gamma_{j-1}^e,1),(\gamma_j^h,1),(\gamma_{j+1}^e,1),\ldots,(\gamma_{n}^e,1),\\
& (\gamma_{n+1}^e,2),\ldots,(\gamma_{n+r}^e,2) , (\beta_1,1),\ldots,(\beta_m,1) \}
\end{split}
\end{equation*}
for $j=1,\ldots,n$, and a similar expression for $j=n+1,\ldots,n+r$
which will appear twice due to the multiplicity attached to~$\gamma_j^e$.
Choosing appropriate coherent orientations and adding all this together,
we find
\begin{equation*}
\begin{split}
\p\left( e^{A_0} \boldsymbol{\gamma}_0\right) &= 
e^0 \boldsymbol{\emptyset} + \sum_{j=2}^n e^{A_0 + [v_j^-]}
\left( e^{[T_j]} - 1 \right) \boldsymbol{\gamma}_j \\
&\qquad + \sum_{j=n+1}^{n+r} 2 e^{A_0 + [v_j^-]}
\left( e^{[T_j]} - 1 \right) \boldsymbol{\gamma}_j.
\end{split}
\end{equation*}
We thus have $\p\left( e^{A_0} \boldsymbol{\gamma}_0\right) =
e^0\boldsymbol{\emptyset}$ whenever $[T_j] = 0 \in H_2(M)$ for all
$j=2,\ldots,n+r$, which proves Theorem~\ref{thm:twisted}.  For untwisted
coefficients, we divide the entire calculation by $H_2(M)$ so that
$e^{[T_j]} - 1 = 0$ always, thus $\p\boldsymbol{\gamma}_0 = 
\boldsymbol{\emptyset}$ holds with no need for any topological condition.
With that, the proof of Theorem~\ref{thm:ECH} is complete.

\subsubsection{The $U$-map}
\label{subsubsec:Umap}

The proof of Theorems~\ref{thm:Umap} and~\ref{thm:UmapTwisted} is a minor
variation on the argument given above.  Assume 
$(M,\xi)$ contains a partially planar domain $M_0$ with planar piece
$M_0^P \subset M_0$, and choose the Morse-Bott data $\lambda,J$ and
nondegenerate perturbation $\lambda',J'$ exactly as described in the
previous section, but adding the following condition: for any
given $d \in \NN$, Theorem~\ref{thm:openbook} allows us to choose
$\lambda$ so that the uniqueness statement for holomorphic curves
subordinate to the planar piece up to multiplicity~$k$ holds for any
$k \le d$.

Now consider the $J'$-holomorphic curves of index~$2$ with positive ends
asymptotic to the elliptic orbits,
$$
\gamma_1^e,\ldots,\gamma_n^e,\gamma_{n+1}^e,\gamma_{n+1}^e,\ldots,
\gamma_{n+r}^e,\gamma_{n+r}^e,\beta_1,\ldots,\beta_m.
$$
These curves have embedded projections to~$M$ which foliate an open 
subset of $M_0^P$,
thus if we choose~$p$ in this open subset, there is exactly one curve
with the given asymptotics that passes through $(0,p)$.  Denote this
curve by~$u_p$, and for any $k \in \{1,\ldots,d\}$, 
define the orbit set
$$
\boldsymbol{\gamma}^{(k)} = \{ (\gamma_1^e,k),\ldots,
(\gamma_{n}^e,k),(\gamma_{n+1}^e,2k),\ldots,(\gamma_{n+r}^e,2k),
(\beta_1,k),\ldots,(\beta_m,k) \}
$$
with $[\boldsymbol{\gamma}^{(k)}] = 0$, and the relative homology class
$$
k A_p = - k [u_p] \in H_2(M, \boldsymbol{\rho}_0 - \boldsymbol{\gamma}^{(k)}).
$$
The uniqueness statement in
Theorem~\ref{thm:openbook} for curves subordinate to the planar piece up
to multiplicity~$d$ now implies that 
$\p\left( e^{k A_p} \boldsymbol{\gamma}^{(k)} \right)$ counts only the
disjoint unions of the embedded index~$1$ cylinders $v_j^\pm$ with trivial
cylinders.  As in the previous section, the contributions from
$v_j^+$ and $v_j^-$ cancel each other out in the untwisted theory, and also
in the twisted theory if $[T_j] = 0 \in H_2(M)$, so we conclude in either
case that $e^{k A_p} \boldsymbol{\gamma}^{(k)}$ is a cycle in the chain
complex.  The uniqueness result also implies that there is exactly one
curve counted by $U\left( e^{k A_p} \boldsymbol{\gamma}^{(k)} \right)$,
namely the disjoint union of $u_p$ with a collection of trivial cylinders.
We thus find,
$$
U\left( e^{k A_p} \boldsymbol{\gamma}^{(k)} \right) = e^{(k-1) A_p}
\boldsymbol{\gamma}^{(k-1)}
$$
for each $k \in \{2,\ldots,d\}$, and for $k=1$,
$$
U\left( e^{A_p} \boldsymbol{\gamma}^{(1)} \right) = e^{0}
\boldsymbol{\emptyset}.
$$
Since the ECH does not depend on the choice of contact form, this shows
that for all $d \in \NN$ the homology contains an element whose image
under~$d$ iterations of the $U$-map is the contact class.
The proof of Theorems~\ref{thm:Umap} and~\ref{thm:UmapTwisted} is
thus complete.

\subsection*{Acknowledgments}
I am grateful to Dietmar Salamon, Peter Albers, Barney Bramham,
Klaus Niederkr\"uger
and Paolo Ghiggini for helpful conversations, and especially 
to Michael Hutchings
for explaining to me the compactness theorem for holomorphic currents,
and Janko Latschev and Patrick Massot for several useful comments on 
earlier versions of the paper.
The idea of blowing up binding orbits of open books was inspired by a
talk I heard Hutchings give at Stanford's Holomorphic Curves FRG Workshop
in August 2008; I'd like to thank the organizers of that
workshop for the invitation.

\end{document}